\newcolumntype{C}[1]{%
 >{\vbox to 9ex\bgroup\vfill\centering}%
 p{#1}%
 <{\egroup}}
\DeclareMathOperator*{\st}{s.t.}
\newtheorem{lemma}{Lemma}[section]
\newtheorem{prop}{Proposition}[section]
\newtheorem{thm}{Theorem}[section]
\newtheorem{rem}{Remark}[section]
\newtheorem{cor}{Corollary}[section]
\newcommand{\dd}{{\rm d}}
\newenvironment{proof}[1][Proof:]{\begin{trivlist}
\item[\hskip \labelsep {\bfseries #1}]}{\end{trivlist}}
\newcommand{\qed}{\nobreak \ifvmode \relax \else
      \ifdim\lastskip<1.5em \hskip-\lastskip
      \hskip1.5em plus0em minus0.5em \fi \nobreak
      \vrule height0.75em width0.5em depth0em\fi}
\title{Chebyshev Inequalities for Products of Random Variables}
\author[1]{Napat Rujeerapaiboon}
\author[1]{Daniel Kuhn}
\author[2]{Wolfram Wiesemann}
\affil[1]{\small \textit{Risk Analytics and Optimization Chair, 
    	  		 $\acute{\text{E}}$cole Polytechnique F$\acute{\text{e}}$d$\acute{\text{e}}$rale de Lausanne, Switzerland}}
\affil[2]{\small \textit{Imperial College Business School, Imperial College London, United Kingdom}}
\begin{document}

\maketitle

\begin{abstract}
We derive sharp probability bounds on the tails of a product of symmetric non-negative random variables using only information about their first two moments. If the covariance matrix of the random variables is known exactly, these bounds can be computed numerically using semidefinite programming.  If only an upper bound on the covariance matrix is available, the probability bounds on the right tails can be evaluated analytically. The bounds under precise and imprecise covariance information coincide for all left tails as well as for all right tails corresponding to quantiles that are either sufficiently small or sufficiently large. We also prove that all left probability bounds reduce to the trivial bound~$1$ if the number of random variables in the product exceeds an explicit threshold. Thus, in the worst case, the weak-sense geometric random walk defined through the running product of the random variables is absorbed at~$0$ with certainty as soon as time exceeds the given threshold. The techniques devised for constructing Chebyshev bounds for products can also be used to derive Chebyshev bounds for sums, maxima and minima of non-negative random variables.
%Products of random variables are instrumental for modeling many complex random phenomena.
\vspace{0.2cm}
\newline
\textbf{Keywords.} Chebyshev inequality, probability bounds, distributionally robust optimization, convex optimization
\end{abstract}

%\doublespace
\onehalfspacing

%%%%%%%%%%%%%%%%%%%%%%%%%%%%%%%%%%%%%%%%%%%%%%%%%%%%%%%%%%%%%%%%%
\section{Introduction}
\label{section:introduction}
The classical one-sided Chebyshev inequality \cite{Bienayme53,Chebyshev67} for a random variable $\tilde\xi$ with mean $\mu$ and variance $\sigma^2$ can be represented as
\begin{equation}
	\label{eq:chebyshev}
	\mathbb P(\tilde \xi\geq \gamma)\leq \left\{ \begin{array}{ll}
	\frac{\sigma^2}{\sigma^2+(\gamma-\mu)^2} & \text{if }\gamma\geq \mu,\\
	1 & \text{if } \gamma<\mu.
	\end{array}\right.
\end{equation}
This inequality is sharp. Indeed, for $\gamma\neq \mu$ it is binding under the two-point distribution
\begin{equation}
	\label{eq:chebyshev-wcdist}
	\mathbb P^\star = \left\{ \begin{array}{ll}
	\frac{\sigma^2}{\sigma^2+(\gamma-\mu)^2} \delta_\gamma + \frac{(\gamma-\mu)^2}{\sigma^2+(\gamma-\mu)^2} \delta_{\mu-\sigma^2/(\gamma-\mu)} & \text{if }\gamma> \mu,\\[1ex]
	\frac{\sigma^2}{\sigma^2+ (\mu-\gamma)^2} \delta_\gamma + \frac{(\mu-\gamma)^2}{\sigma^2+ (\mu-\gamma)^2} \delta_{\mu+\sigma^2/(\mu-\gamma)} & \text{if } \gamma<\mu.
	\end{array}\right.
\end{equation}
In the degenerate case $\gamma=\mu$, the inequality~\eqref{eq:chebyshev} is still sharp because the distributions
\[
	\mathbb P_\kappa = \frac{1}{1+ \kappa^2} \delta_{\gamma-\sigma \kappa} + \frac{\kappa^2}{1+ \kappa^2} \delta_{\gamma+\sigma/\kappa}
\]
have mean $\mu$ and variance $\sigma^2$ for every $\kappa>0$, while $\lim_{\kappa\uparrow \infty}\mathbb P_\kappa(\tilde \xi\geq \gamma)=1$. Note, however, that no single distribution with mean $\mu=\gamma$ and variance $\sigma^2>0$ can satisfy $\mathbb P(\tilde \xi\geq \gamma)=1$.

If we have the extra information that the random variable $\tilde\xi$ is non-negative (and without much loss of generality that $\mu>0$), then one can strengthen the Chebyshev inequality~\eqref{eq:chebyshev} to
\begin{equation}
	\label{eq:chebyshev-nonnegative}
	\mathbb P(\tilde \xi\geq \gamma)\leq \left\{ \begin{array}{ll}
	\frac{\sigma^2}{\sigma^2+(\gamma-\mu)^2} & \text{if } \gamma\geq  \mu+ \sigma^2/\mu,\\
	\frac{\mu}{\gamma} & \text{if }\mu\leq \gamma<\mu+ \sigma^2/\mu,\\
	1 & \text{if } \gamma<\mu,
	\end{array}\right.
\end{equation}
see, {\em e.g.},~\cite{Godwin55,Shohat43}. The extremal distributions~\eqref{eq:chebyshev-wcdist} are supported on the non-negative real line if either $\gamma\geq  \mu+ \sigma^2/\mu>\mu$ or if $\gamma<\mu$. Thus, they certify the sharpness of~\eqref{eq:chebyshev-nonnegative} in the respective parameter domains. For $\mu\leq \gamma<\mu+ \sigma^2/\mu$ the Chebyshev inequality~\eqref{eq:chebyshev-nonnegative} for non-negative random variables reduces in fact to the classical Markov inequality $\mathbb P(\tilde \xi\geq \gamma)\leq\mu/\gamma$. In this Markov regime, the Chebyshev inequality~\eqref{eq:chebyshev-nonnegative} remains sharp because the distributions
\[
	\mathbb P_\kappa = \left[1+\frac{\sigma^2}{\kappa\gamma} - \frac{\mu(\kappa-\mu)}{\gamma(\kappa-\gamma)} - \frac{\mu(\gamma-\mu)}{\kappa(\kappa- \gamma)}\right]  \delta_0+\frac{\mu(\kappa-\mu)-\sigma^2}{\gamma(\kappa-\gamma)}\delta_\gamma +\frac{\sigma^2-\mu(\gamma-\mu)}{\kappa(\kappa-\gamma)}\delta_\kappa
\]
have mean $\mu$ and variance $\sigma^2$ for every $\kappa>\mu+\sigma^2/\mu$, while $\lim_{\kappa\uparrow \infty}\mathbb P_\kappa(\tilde \xi\geq \gamma)=\mu/\gamma$. From the textbook proof of Markov's inequality it follows that $\mathbb P^\star=[1-\mu/\gamma]\delta_0+[\mu/\gamma]\delta_\gamma$ is the only distribution on the non-negative reals that has mean $\mu$ and satisfies $\mathbb P^\star(\tilde \xi\geq \gamma)=\mu/\gamma$. However, the additional requirement that the variance of $\tilde \xi$ under $\mathbb P^\star$ must equal $\sigma^2$ implies $\gamma=\mu+\sigma^2/\mu$. Thus, for $\mu\leq \gamma<\mu+ \sigma^2/\mu$ there cannot exist any single distribution with $\mathbb P(\tilde \xi\geq \gamma)=\mu/\gamma$.

In the rest of the paper we consider a sequence of $T$ random variables $\tilde{\xi}_1, \tilde{\xi}_2, \ldots, \tilde{\xi}_T$ and assume that the first two moments of these random variables are known and permutation symmetric. Specifically, assume that all random variables share the same mean $\mu$ and variance $\sigma^2$, respectively, while all pairs of mutually distinct random variables share the same correlation coefficient $\rho$. Thus, the mean vector and the covariance matrix of $\bm{\tilde \xi}=(\tilde{\xi}_1, \ldots, \tilde{\xi}_T)^\intercal$ are given~by
\begin{equation}
\label{eq:mean-cov}
\bm \mu=\begin{bmatrix}\mu \\ \mu \\ \vdots \\ \mu \end{bmatrix}\in\mathbb R^T \quad \text{and} \quad \bm \Sigma=\begin{bmatrix} \sigma^2 & \rho \sigma^2 & \cdots & \rho\sigma^2 \\ \rho \sigma^2 & \sigma^2 & \cdots & \rho \sigma^2 \\ \vdots & \vdots & \ddots & \vdots \\
\rho \sigma^2 & \rho\sigma^2 & \cdots & \sigma^2 \end{bmatrix} \in\mathbb S^T,
\end{equation}
respectively. Throughout the paper we assume that $\sigma>0$ and $-\frac{1}{T-1}<\rho<1$. These conditions are necessary and sufficient for the covariance matrix~$\bm \Sigma$ to be strictly positive definite. Note that $\bm{\tilde\xi}$ constitutes a weak-sense stationary stochastic process in the sense of~\cite{Lindgren12}.

An elementary calculation reveals that the sum $\sum_{t=1}^T \tilde\xi_t$ has mean value $T\mu$ and variance $T\sigma^2(1 + (T-1)\rho)$. The classical Chebyshev inequality~\eqref{eq:chebyshev} applied to $\sum_{t=1}^T \tilde\xi_t$ thus implies
\begin{equation}
	\label{eq:chebyshev-sum}
	\textstyle \mathbb P(\sum_{t=1}^T \tilde\xi_t \geq \gamma)\leq \left\{ \begin{array}{ll}
	\frac{T\sigma^2(1 + (T-1)\rho)}{T\sigma^2(1 + (T-1)\rho)+(\gamma-T\mu)^2} & \text{if }\gamma\geq T\mu,\\
	1 & \text{if } \gamma<T\mu.
	\end{array}\right.
\end{equation}
This inequality is still sharp due to a projection property of distribution families with compatible first and second moments. Indeed, for any distribution $\mathbb P_\zeta$ of a random variable $\tilde \zeta$ with mean value $T\mu$ and variance $T\sigma^2(1 + (T-1)\rho)$ there exists a distribution $\mathbb P$ of the random vector $\bm{\tilde \xi}$ with mean vector $\bm \mu$ and covariance matrix $\bm \Sigma$ such that $\mathbb P_\zeta$ coincides with the marginal distribution of $\sum_{t=1}^T \tilde\xi_t$ under $\mathbb P$, that is, $\mathbb P_\zeta(\tilde \zeta\in B)=\mathbb P(\sum_{t=1}^T \tilde\xi_t\in B)$ for every Borel set $B\subseteq\mathbb R$~\cite{Yu09}. The extremal distributions~\eqref{eq:chebyshev-wcdist} certifying the sharpness of~\eqref{eq:chebyshev} can therefore be used to construct multivariate extremal distributions of $\bm{\tilde \xi}$ certifying the sharpness of~\eqref{eq:chebyshev-sum}. This result may be unexpected. Indeed, if $\tilde\xi_1,\ldots,\tilde\xi_T$ are independent and identically distributed, then,  by the central limit theorem, their sum is approximately normally distributed with mean $T\mu$ and variance $T\sigma^2$. In contrast, if $\tilde\xi_1,\ldots,\tilde\xi_T$ are only known to be uncorrelated with a common mean and variance (but not necessarily independent and identically distributed), then, by the projection theorem, their sum may follow {\em any} distribution with mean $T\mu$ and variance $T\sigma^2$.

Assume now that $\tilde\xi_t$ is non-negative for every $t=1,\ldots, T$ (and without much loss of generality that $\mu>0$). As we will prove in Proposition~\ref{prop:non-emptiness} below, a distribution $\mathbb P$ supported on $\mathbb R_+^T$ with mean vector $\bm\mu$ and covariance matrix $\bm \Sigma$ as given in~\eqref{eq:mean-cov} exists iff $\mu^2 + \rho\sigma^2 \geq 0$. We will assume that this condition holds throughout the rest of the paper. In this setting, the generalized Chebyshev inequality~\eqref{eq:chebyshev-nonnegative} applied to the non-negative random variable $\sum_{t=1}^T \tilde\xi_t$ implies
\begin{equation}
	\label{eq:chebyshev-sum-nonnegative}
	\textstyle \mathbb P(\sum_{t=1}^T \tilde\xi_t \geq \gamma)\leq \left\{ \begin{array}{ll}
	\frac{T\sigma^2(1 + (T-1)\rho)}{T\sigma^2(1 + (T-1)\rho)+(\gamma-T\mu)^2} & \text{if } \gamma\geq  T\mu+\sigma^2(1 + (T-1)\rho)/\mu,\\
	\frac{T\mu}{\gamma} & \text{if }T\mu\leq \gamma<T\mu+\sigma^2(1 + (T-1)\rho)/\mu,\\
	1 & \text{if } \gamma<T\mu.
	\end{array}\right.
\end{equation}
Even though the multivariate extension~\eqref{eq:chebyshev-sum-nonnegative} of the univariate Chebyshev inequality~\eqref{eq:chebyshev-nonnegative} can still be shown to be sharp, we are not aware of an elementary proof; see~Theorem~\ref{thm:sum2} below.
%left-sided Chebyshev inequality for sums of non-negative random variables
%\begin{equation*}
%\begin{aligned}
%	\textstyle \mathbb{P} ( \sum_{t=1}^T \tilde{\xi}_t \leq \gamma ) \leq \left\{ \begin{array}{ll}
%		1 & \text{if } \gamma \geq T\mu, \\
%		\frac{T\sigma^2(1 + (T-1)\rho)}{(T\mu-\gamma)^2 + T\sigma^2(1 + (T-1)\rho)} & \text{if } \gamma < T\mu.
%	\end{array} \right.
%\end{aligned}
%\end{equation*}

In this paper we aim to derive Chebyshev inequalities for {\em products} of non-negative random variables. Specifically, we will derive sharp upper bounds on the left and right tail probabilities $\mathbb P(\prod_{t=1}^T \tilde\xi_t\leq\gamma)$ and $\mathbb P(\prod_{t=1}^T \tilde\xi_t\geq\gamma)$, respectively. %These bounds provide distribution-free confidence intervals for the random product $\prod_{t=1}^T \tilde\xi_t$. 
Products of random variables frequently arise in physics, statistics, finance, number theory and many other branches of science~\cite{Galambos04}. Indeed, they are at the heart of stochastic models of many complex phenomena. When rocks are crushed, for example, the size of a fragment is multiplied by a random factor (that is smaller than 1) in every single breakup event \cite{Frisch97}. Similar multiplicative phenomena explain the distribution of body weights, stock prices, the sizes of biological populations, income, rainfall etc.~\cite{Aitchison57}. 

Note that the stochastic process $\bm{\tilde \pi}=\{\tilde\pi_T\}_{T\in\mathbb N}$ defined through $\tilde \pi_T=\prod_{t=1}^T \tilde\xi_t$ can be interpreted as a geometric random walk driven by the weak-sense stationary process $\bm{\tilde\xi}=\{\tilde\xi_t\}_{t\in\mathbb N}$. Chebyshev inequalities for the products of the $\tilde\xi_t$ thus provide tight bounds on the quantiles of a geometric random walk when there is limited distributional information. Consequently, they are potentially relevant for the many applications in economics and operations research, where geometric Brownian motions are traditionally used to model the prices of assets \cite{Karatzas91}. An improved understanding of weak-sense geometric random walks may also stimulate new research directions in distributionally robust optimziation~\cite{Delage09, Goh10, Wiesemann14} and optimal uncertainty quantification~\cite{Hanasusanto15,Owhadi13}.

\begin{rem}[Chebyshev in Log-Space]
It seems natural to reduce Chebyshev inequalities for products of non-negative random variables to Chebyshev inequalities for their logarithms. Assume thus that the first two moments of the logarithmic random variables $\tilde\eta_t=\log(\tilde\xi_t)$, $1,\ldots,T$, are known and permutation symmetric. Specifically, denote by $\mu_\eta$, $\sigma^2_\eta$ and $\rho_\eta$ the mean, variance and correlation coefficient in log-space. Then, the Chebyshev inequality~\eqref{eq:chebyshev-sum} for sums implies
\begin{equation}
	\label{eq:chebyshev-log-space}
	\textstyle \mathbb P(\prod_{t=1}^T \tilde\xi_t\geq\gamma)= \mathbb P(\sum_{t=1}^T \tilde\eta_t\geq\log\gamma) \leq \left\{ \begin{array}{ll}
	\frac{T\sigma_\eta^2(1 + (T-1)\rho_\eta)}{T\sigma_\eta^2(1 + (T-1)\rho_\eta)+(\log\gamma-T\mu_\eta)^2} & \text{if }\log\gamma\geq T\mu_\eta,\\
	1 & \text{if } \log\gamma<T\mu_\eta.
	\end{array}\right.
\end{equation}
Note that~\eqref{eq:chebyshev-log-space} is sharp because~\eqref{eq:chebyshev-sum} is sharp. However, there is no one-to-one correspondence between the moments of the original and the logarithmic random variables. Even worse, it is possible that $\mu$ is finite while $\mu_\eta=-\infty$ (e.g., if $\xi_t=0$ with positive probability), or that $\mu_\eta$ is finite while $\mu=+\infty$ (e.g., if $\tilde\xi_t$ follows a Pareto distribution with unit shape parameter). In this work we focus on the case where the $\tilde\xi_t$ have known finite first and second moments, and we explicitly allow the event $\tilde\xi_t=0$ to have positive probability. This assumption can be crucial for truthfully capturing the bankruptcy risks in financial applications, for instance.
\end{rem}

The starting point of this paper is the intriguing observation that modern optimization theory provides powerful tools for constructing and analyzing probability inequalities~\cite{Bertsimas05}. Assume for instance that we aim to find a sharp probability inequality for a target event characterized through finitely many polynomial inequalities on a random vector $\bm{\tilde \xi}$. Assume further that the desired inequality should hold for all distributions of $\bm{\tilde\xi}$ satisfying finitely many polynomial support and moment constraints. In the special case of the Chebyshev inequality~\eqref{eq:chebyshev}, the target event corresponds to the set $\{\xi\in\mathbb R:\xi\geq \gamma\}$, while the relevant distribution family corresponds to the class of all distributions on $\mathbb R$ with mean~$\mu$ and variance~$\sigma^2$. Constructing the desired probability inequality is thus tantamount to maximizing the probability of the target event over the given distribution family. This leads to a generalized moment problem over probability measures. Under a mild regularity condition, this moment problem admits a strong dual linear program subject to polynomially parameterized semi-infinite constraints~\cite{Isii60, Isii62, Karlin66}. A key insight of~\cite{Bertsimas05} is that this dual problem can be approximated systematically by tractable semidefinite programs. The resulting approximations are safe ({\em i.e.}, they are guaranteed to provide {\em upper} bounds on the probability of the semialgebraic event). Moreover, these approximations are always tight in the univariate case but generically loose in the multivariate setting.

Stronger statements are available for probability inequalities that rely exclusively on first- and second-order moments. Specifically, if the support of the random vector $\bm{\tilde \xi}$ is unrestricted, the best upper bound on the probability of a {\em convex} target event is given by $1/(1+d^2)$, where $d$ represents the distance of the target event from the mean vector of $\bm{\tilde \xi}$ under the Mahalanobis norm induced by the covariance matrix of $\bm{\tilde \xi}$ \cite{Marshall60}. More generally, if the target event constitutes a {\em union} of finitely many convex sets, over each of which convex quadratic optimization problems can be solved in polynomial time, then the best Chebyshev bound can be computed by an efficient algorithm reminiscent of the ellipsoid method of convex optimization~\cite{Bertsimas05}. Recently it has been observed that if the target event is defined by quadratic inequalities, the best Chebyshev bound coincides exactly with the optimal value of a single tractable semidefinite program~\cite{Vandenberghe07}. In spite of these encouraging results, the computation of Chebyshev bounds becomes hard in the presence of support constraints. Specifically, if $\bm{\tilde \xi}$ is supported on the non-negative orthant,  it is already NP-hard to find sharp Chebyshev bounds for convex polyhedral target events~\cite{Bertsimas05}. 

For a random vector $\bm{\tilde \xi}$ with zero mean and unrestricted support, the above methods have been used to derive a sharp Chebyshev bound on %$\mathbb P(|\prod_{t=1}^T \tilde\xi_t|\geq 1\text{ and }\bm{\tilde\xi}> \bm 0 \text{ or }\bm{\tilde\xi}< \bm 0)$ and 
$\mathbb P(\prod_{t=1}^T \tilde\xi_t \geq 1,\,  \tilde\xi_t> 0\,~\forall t)$, which is expressed in terms of the solution of a tractable convex program \cite{Marshall60}. As the $\tilde\xi_t$ are allowed to adopt negative values, however, we believe that the practical relevance of this bound is limited. In this paper we aim to derive sharp Chebyshev bounds on $\mathbb P(\prod_{t=1}^T \tilde\xi_t\geq\gamma)$ and $\mathbb P(\prod_{t=1}^T \tilde\xi_t\leq\gamma)$ under the explicit assumption that $\bm{\tilde \xi}$ is supported on the non-negative orthant. Note that the second target event $\{\bm\xi\in\mathbb R^T_+:\prod_{t=1}^T\xi_t\leq \gamma\}$ is neither convex nor representable as a finite union of convex sets, nor representable through finitely many quadratic constraints in $\bm{\xi}$. Thus, none of the existing techniques could be used to bound its probability even if there were no support constraints. As support constraints generically lead to intractability~\cite{Bertsimas05}, we focus here on the special case where the first- and second-order moments are permutation-symmetric.

The main results of this paper can be summarized as follows.
\begin{itemize}
\item[(i)] If the distribution $\mathbb P$ of the non-negative random variables has mean $\bm\mu$ and covariance matrix $\bm\Sigma$ as given in~\eqref{eq:mean-cov}, then the sharp upper Chebyshev bounds on $\mathbb P(\prod_{t=1}^T \tilde\xi_t\geq\gamma)$ and $\mathbb P(\prod_{t=1}^T \tilde\xi_t\leq\gamma)$ can both be expressed as the optimal values of explicit semidefinite programs, which are amenable to efficient numerical solution via interior point algorithms.
\item[(ii)] If the distribution $\mathbb P$ of the non-negative random variables has mean $\bm\mu$ and a covariance matrix bounded above by $\bm\Sigma$ in a positive semidefinite sense, then we obtain an explicit analytical formula for the sharp upper Chebyshev bound on $\mathbb P(\prod_{t=1}^T \tilde\xi_t\geq\gamma)$.
\item[(iii)] The Chebyshev bound in~(ii) coincides with the corresponding bound in~(i) for all values of $\gamma$ that are either sufficiently small or sufficiently large. For intermediate values of $\gamma$ the numerical bound in~(i) may be strictly smaller than the analytical bound in~(ii).
\item[(iv)] If the distribution $\mathbb P$ of the non-negative random variables has mean $\bm\mu$ and a covariance matrix bounded above by $\bm\Sigma$ in a positive semidefinite sense, then the sharp upper Chebyshev bound on $\mathbb P(\prod_{t=1}^T \tilde\xi_t\leq\gamma)$ coincides with the corresponding numerical bound in~(i). Thus, there is a distribution that makes this bound sharp and has covariance matrix $\bm\Sigma$.
\item[(v)] The Chebyshev bound in~(iv) reduces to the trivial bound $1$ for every $\gamma>0$ if $T$ exceeds an explicit threshold $T_0$. Thus, in the worst case, the weak-sense geometric random walk $\bm{\tilde \pi}=\{\tilde\pi_T\}_{T\in\mathbb N}$ defined through $\tilde \pi_T=\prod_{t=1}^T \tilde\xi_t$ is absorbed at $0$ {\em with certainty} if $T\geq T_0$.
\item[(vi)] The techniques devised for constructing Chebyshev bounds for products of random variables can also be used to derive Chebyshev bounds on sums, maxima and minima (and possibly other permutation-symmetric functionals) of non-negative random variables.
\end{itemize}

The rest of the paper is structured as follows. In Section~\ref{sec:optimization-perspective} we formalize the connection between probability inequalities and convex optimization. Left- and right-sided Chebyshev inequalities for products of random variables are then derived in Sections~\ref{section:right-prob} and~\ref{section:left-prob}, respectively, while generalized Chebyshev inequalities that account for imprecise knowledge of the covariances are discussed in Section~\ref{section:cov_bounds}. Chebyshev inequalities for other permutation-symmetric functionals of the random variables are presented in Section~\ref{sec:extensions}, and examples are given in Section~\ref{sec:examples}.

\paragraph{Notation} The symbol $\mathbb I$ stands for the identity matrix, $\bm 1$ for the vector of all ones, and $\mathbf e_i$ for the $i$-th standard basis vector. Their dimensions will always be clear from the context. The space of symmetric $T\times T$ matrices is denoted by $\mathbb S^T$, and its subset of all positive (negative) semidefinite matrices is denoted by $\mathbb S_+^T$. For $\bm A,\bm B\in\mathbb S^T$, the statements $\bm A\succeq\bm B$ and $\bm B\preceq\bm A$ both mean that $\bm A-\bm B\in \mathbb S^T_+$. The indicator function $1_\mathcal E$ of a logical statement $\mathcal E$ is defined through $1_\mathcal E=1$ if $\mathcal E$ holds true; $=0$ otherwise. Random variables are denoted by tilde signs, while their realizations are denoted by the same symbols without tildes. The Dirac distribution concentrating unit mass at $\bm\xi$ is denoted by $\delta_{\bm\xi}$. For any closed set $\mathcal S\subseteq \mathbb R^T$, we let $\mathcal{M}_+(\mathcal{S})$ be the cone of all non-negative Borel measures supported on $\mathcal S$.

%%%%%%%%%%%%%%%%%%%%%%%%%%%%%%%%%%%%%%%%%%%%%%%%%%%%%%%%%%%%%%%%%
\section{Optimization Perspective on Chebyshev Inequalities}
\label{sec:optimization-perspective}
To analyze probability bounds using tools from optimization, we first introduce an {\em ambiguity set} $\mathcal P$, that is, a family of distributions for which the desired probability bound should hold. In this paper we mainly focus on the ambiguity set of all distributions supported on $\mathbb R_+^T$ that share the permutation-symmetric mean and covariance matrix defined in \eqref{eq:mean-cov}, that is, we set 
\begin{equation}
\begin{aligned}\label{eq:our_ambiguity_set}
	\mathcal{P} = 
	\left \lbrace 
		\mathbb{P} \in \mathcal M_+(\mathbb R^T_+)~: 
		\begin{array}{l}
			\mathbb{P} \left( \tilde{\bm{\xi}} \geq \bm 0 \right) = 1 ,\; \mathbb{E}_\mathbb{P} \left( \tilde{\bm{\xi}} \right) = \bm \mu ,\;
			\mathbb{E}_\mathbb{P} \left( \tilde{\bm{\xi}}\tilde{\bm{\xi}}^\intercal \right) = \bm \Sigma+\bm \mu\bm\mu^\intercal  %(1 - \rho)\sigma^2\mathbb{I} + \left( \mu^2 + \rho\sigma^2 \right) \bm{11}^\intercal
		\end{array}
	\right \rbrace.
\end{aligned}
\end{equation}
We highlight that $\mathcal P$ is characterized by only four parameters: $T, \mu, \sigma, \rho$. Without much loss of generality, we assume henceforth that $\mu > 0$, $\sigma>0$ and $-\frac{1}{T-1}<\rho<1$. The last two conditions are equivalent to~$\bm \Sigma\succ\bm 0$. To rule out trivial special cases, we further restrict attention to $T \geq 2$. However, all of these conditions do not yet guarantee that $\mathcal P$ is non-empty. Proposition~\ref{prop:non-emptiness} below provides a necessary and sufficient condition for the non-emptiness of $\mathcal P$.

\begin{prop}[Non-emptiness of $\mathcal P$]
\label{prop:non-emptiness}
The ambiguity set $\mathcal{P}$ is non-empty iff $\mu^2 + \rho\sigma^2 \geq 0$.
\end{prop}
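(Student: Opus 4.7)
The forward implication is straightforward: for any $\mathbb{P}\in\mathcal{P}$, non-negativity of $\tilde\xi_1,\tilde\xi_2$ forces $\mathbb{E}_{\mathbb{P}}[\tilde\xi_1\tilde\xi_2]\geq 0$, but this expectation coincides with the off-diagonal entry of $\bm\Sigma+\bm\mu\bm\mu^{\intercal}$, namely $\rho\sigma^2+\mu^2$. This settles the ``only if'' direction with no further work.

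\textbf{Sufficiency by explicit construction.} For the converse I plan to exhibit a concrete permutation-symmetric three-atom mixture
\[
\mathbb{P}^\star \;=\; q_0\,\delta_{\bm 0}+q_1\,\delta_{c\bm 1}+\frac{q_2}{T}\sum_{t=1}^T\delta_{d\mathbf e_t}\in\mathcal{P}.
\]
The three atoms are, up to scaling, the natural permutation-invariant building blocks in $\mathcal{M}_+(\mathbb R^T_+)$: they induce zero mass, perfect correlation $\rho=1$, and extreme anti-correlation $\rho=-1/(T-1)$ respectively, so intuitively mixtures of them should be able to realise any admissible correlation. Introducing the shorthands $\alpha:=\rho\sigma^2+\mu^2$ and $\beta:=(1-\rho)\sigma^2>0$, together with $A:=q_1 c$ and $B:=q_2 d/T$, the three scalar moment conditions $\mathbb{E}[\tilde\xi_i]=\mu$, $\mathbb{E}[\tilde\xi_i^2]=\sigma^2+\mu^2$, and $\mathbb{E}[\tilde\xi_i\tilde\xi_j]=\rho\sigma^2+\mu^2$ for $i\neq j$ collapse, after eliminating $c$ and $d$, to the two equations $A+B=\mu$, $q_1=A^2/\alpha$ (interpreted as $q_1=A=0$ when $\alpha=0$), and $q_2=TB^2/\beta$.

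\textbf{Feasibility of the mixture.} The only remaining requirement is $q_0=1-q_1-q_2\geq 0$, i.e.\ the existence of $A,B\geq 0$ with $A+B=\mu$ satisfying $A^2/\alpha+TB^2/\beta\leq 1$. Minimising the left-hand side along the segment $\{A+B=\mu,\,A,B\geq 0\}$ is a routine one-variable convex optimisation that returns the minimum value $T\mu^2/(T\alpha+\beta)=T\mu^2/[T\mu^2+\sigma^2(1+(T-1)\rho)]$, and this is strictly less than one precisely when $1+(T-1)\rho>0$ — which is exactly the standing assumption ensuring $\bm\Sigma\succ\bm 0$. Plugging the minimiser back in yields explicit values $(q_0^\star,q_1^\star,q_2^\star,c^\star,d^\star)$, and hence the desired $\mathbb{P}^\star\in\mathcal{P}$.

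\textbf{Principal obstacle.} The only point requiring any delicacy is the degenerate boundary case $\alpha=0$, where the atom $\delta_{c\bm 1}$ must be discarded and the construction collapses to a two-atom distribution $q_0\,\delta_{\bm 0}+\tfrac{1-q_0}{T}\sum_t \delta_{d\mathbf e_t}$; feasibility of this reduced ansatz still follows from the same strict positive definiteness of $\bm\Sigma$, and the identity $\alpha+\beta=\sigma^2+\mu^2$ guarantees that the diagonal second-moment condition is automatically consistent. I expect the main conceptual hurdle to be identifying the correct three-atom ansatz; once it is in place, every subsequent verification is elementary algebra and a one-variable quadratic minimisation.
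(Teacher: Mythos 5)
Your proof is correct, and the sufficiency direction takes a genuinely different route from the paper's. The paper builds a $(T{+}1)$-atom distribution with atoms $y\bm 1+(x-y)\mathbf e_i$ (probability $p/T$ each) and $z\bm 1$ (probability $1-p$), chooses $p$ by an explicit case distinction on the sign of $\rho$, and then verifies the moment conditions in three separate steps. You instead mix the three extremal permutation-invariant building blocks $\delta_{\bm 0}$, $\delta_{c\bm 1}$ and $\tfrac{1}{T}\sum_t\delta_{d\mathbf e_t}$, which collapses the moment conditions to $A+B=\mu$, $q_1=A^2/\alpha$, $q_2=TB^2/\beta$, and reduces feasibility to the single quadratic minimisation $\min\{A^2/\alpha+TB^2/\beta: A+B=\mu,\ A,B\ge 0\}=T\mu^2/[T\mu^2+\sigma^2(1+(T-1)\rho)]<1$. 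This is cleaner: no sign-of-$\rho$ case analysis, and the role of the standing assumption $\rho>-1/(T-1)$ is laid bare (I checked the boundary case $\alpha=0$ as well: there $(T-1)\mu^2<\sigma^2$ follows from the same assumption, so $q_2=T\mu^2/\beta<1$). What the paper's messier construction buys is reuse: its atoms can be arranged to have strictly positive coordinates and strictly positive product-gap $x+(T-1)y>Tz$, which is exactly what the proof of the Slater condition (Theorem~\ref{thm:slater}) needs in order to perturb the atoms via the implicit function theorem while staying in $\mathbb R^T_+$. Your atoms $\bm 0$ and $d\mathbf e_t$ sit on the boundary of the support, so your distribution certifies non-emptiness but would not directly serve as the interior point in that later argument.
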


\begin{proof}
If $\mathcal{P}$ is non-empty, then any $\mathbb P \in \mathcal P$ satisfies
\begin{equation*}
	\bm 0 \leq \mathbb{E}_\mathbb{P} \left( \tilde{\bm\xi} \tilde{\bm\xi}^\intercal \right) = \bm{\Sigma} + \bm{\mu\mu}^\intercal
	\Longleftrightarrow
	\left\lbrace \begin{array}{l}
		\mu^2 + \sigma^2 \geq 0 \\
		\mu^2 + \rho\sigma^2 \geq 0
	\end{array} \right.
	\Longleftrightarrow
	\mu^2 + \rho\sigma^2 \geq 0,
\end{equation*}
where the equivalences follow from the definition of $\bm{\Sigma}$ and the assumption that $\rho < 1$.

Assume now that $\mu^2 + \rho\sigma^2 \geq 0$. We show that $\mathcal{P}$ contains a discrete distribution $\mathbb{P}$ satisfying
\begin{equation}
\label{eq:construct-xyz}
\begin{aligned}
	\mathbb{P} \left( \tilde{\bm\xi} = y\bm{1} + (x-y)\mathbf e_i \right) = \frac{p}{T}, \quad i = 1, \ldots, T,  \quad \text{and} \quad
	\mathbb{P} \left( \tilde{\bm\xi} = z\bm{1} \right) = 1- p
\end{aligned}
\end{equation}
for $x \geq y \geq 0$, $z \geq 0$ and $p \in [0,1]$. For this distribution to be contained in $\mathcal{P}$, it must also satisfy the following moment conditions:
\begin{enumerate}[(i)]
	\item $\displaystyle \mathbb{E}_\mathbb{P} [\bm{\tilde{\xi}}] = \bm{\mu} \mspace {76mu} \quad \Longleftrightarrow \quad \frac{p}{T}(x + (T-1)y) + (1-p)z = \mu$;
	\item $\displaystyle \mathbb{E}_\mathbb{P} [\bm{\tilde{\xi}} \bm{\tilde{\xi}}{}^\intercal] = \bm{\Sigma} + \bm{\mu} \bm{\mu}^\intercal \quad \Longleftrightarrow \quad \frac{p}{T}(x^2 + (T-1)y^2) + (1-p)z^2 = \mu^2 + \sigma^2$, \\[3mm]
		     \hphantom{a} $\mspace{216mu} \displaystyle \frac{p}{T}(2xy + (T-2)y^2) + (1-p)z^2 = \mu^2 + \rho\sigma^2$.
\end{enumerate}
To construct $\mathbb{P}$, it is notationally convenient to perform the change of variables $m_1 \leftarrow \frac{1}{T}(x + (T-1)y)$ and $m_2 \leftarrow \frac{1}{T}(x^2 + (T-1)y^2)$. For a given $(m_1, m_2)$, we can then recover $(x,y)$ via
\begin{equation*}
	x = m_1 + \sqrt{(T-1)(m_2-m_1^2)} \quad \text{and} \quad 
	y = m_1 - \sqrt{(m_2-m_1^2)/(T-1)}.
\end{equation*}
Note that the correspondence between $(x, y)$ and $(m_1, m_2)$ is one-to-one and onto over $\{ (x, y) \in \mathbb{R}^2_+ \, : \, x \geq y \}$ and $\{ (m_1, m_2) \in \mathbb{R}^2_+ \, : \, m_1^2 \leq m_2 \leq Tm_1^2 \}$. Now, for $\mathbb P$ to be in $\mathcal P$, we require that 
\begin{enumerate}[(i')]
	\item $\displaystyle \mathbb{E}_\mathbb{P} [\bm{\tilde{\xi}}] = \bm{\mu} \mspace {76mu} \quad \Longleftrightarrow \quad pm_1 + (1-p)z = \mu$;
	\item $\displaystyle \mathbb{E}_\mathbb{P} [\bm{\tilde{\xi}} \bm{\tilde{\xi}}{}^\intercal] = \bm{\Sigma} + \bm{\mu} \bm{\mu}^\intercal \quad \Longleftrightarrow \quad pm_2 + (1-p)z^2 = \mu^2 + \sigma^2$, \\[3mm]
		     \hphantom{a} $\mspace{216mu} \displaystyle \frac{p}{T-1}(Tm_1^2 - m_2) + (1-p)z^2 = \mu^2 + \rho\sigma^2$.
\end{enumerate}
In the remainder of the proof, we thus need to show that there is $m_1, m_2, z \geq 0$, $m_1^2 \leq m_2 \leq Tm_1^2$, and $p \in [0,1]$ satisfying (i') and (ii'). To this end, consider the choice
\begin{equation}
\label{eq:construct-p}
	p = \left\{ \begin{array}{ll}
		\min \left\{ \frac{T\mu^2}{T\mu^2 + (1+(T-1)\rho)\sigma^2}, \frac{\rho T}{1 + (T-1)\rho} \right\} & \text{if } \rho > 0, \\
		\vspace{2mm}
		\frac{T\mu^2}{T\mu^2 + \sigma^2} & \text{if } \rho = 0, \\
		\vspace{2mm}
		\frac{-\rho T}{1 - \rho} & \text{if } \rho < 0,
	\end{array} \right.
\end{equation}
which satisfies $p \in [0,1]$ by construction, as well as
%where it is clear that $p \in [0,1]$ as well as,
\begin{equation*}
	m_1 = \mu + \sigma\sqrt{\frac{(1-p)(1+(T-1)\rho)}{pT}}, \
	m_2 = m_1^2 + \frac{(1-\rho)(T-1)\sigma^2}{pT}, \
	z = \mu - \sigma\sqrt{\frac{p(1+(T-1)\rho)}{(1-p)T}}.
\end{equation*}
Note that the terms inside the square roots are non-negative since $\rho > - 1 / (T - 1)$.
%To this end, we readily verify that $m_1, m_2 \geq 0$. 

\paragraph{Step 1:} We show that $m_1, m_2, z \geq 0$. The non-negativity of $m_1$ and $m_2$ holds by construction. To check that $z \geq 0$, we distinguish the cases $\rho > 0$, $\rho = 0$ and $\rho < 0$. For $\rho > 0$, we obtain $z = 0$ for $p = \frac{T\mu^2}{T\mu^2 + (1+(T-1)\rho)\sigma^2}$. Since the square root term in the expression for $z$ is increasing in $p$, we thus conclude that $z \geq 0$. The case where $\rho = 0$ is analogous since $\frac{T\mu^2}{T\mu^2 + \sigma^2} = \frac{T\mu^2}{T\mu^2 + (1+(T-1)\rho)\sigma^2}$ for $\rho = 0$. For $\rho < 0$, on the other hand, we obtain $z = \mu - \sigma \sqrt{-\rho}$ for our choice of $p$. The resulting $z$ is thus non-negative due to the assumption that $\mu^2 + \rho\sigma^2 \geq 0$.

\paragraph{Step 2:} To check that $m_1^2 \leq m_2 \leq Tm_1^2$, we first use the definition of $m_2$ and the assumption that $\rho < 1$ to verify that $m_1^2 \leq m_2$. The other inequality holds if and only if
\begin{equation}
\label{eq:m1m2}
\begin{aligned}
	m_2 \leq Tm_1^2 
	\quad &\Longleftrightarrow \quad
	\sqrt{\frac{1-\rho}{pT}}\sigma \leq m_1 \\
	\quad &\Longleftrightarrow \quad
	\mu\sqrt{pT} + \left( \sqrt{(1+(T-1)\rho)(1-p)} - \sqrt{1-\rho} \right)\sigma \geq 0,
\end{aligned}
\end{equation}
where the first and second equivalence follow from the definitions of $m_2$ and $m_1$, respectively. We now show that the last inequality holds by distinguishing the cases $\rho > 0$, $\rho = 0$ and $\rho < 0$.

For $\rho > 0$, we observe that the expression $\sqrt{(1+(T-1)\rho)(1-p)} - \sqrt{1-\rho}$ in~\eqref{eq:m1m2} evaluates to 0 for $p = \frac{T\rho}{1 + (T-1)\rho}$ and that it is decreasing in $p$. Since $\mu\sqrt{pT} \geq 0$ by construction, we thus conclude that the last inequality in~\eqref{eq:m1m2} holds, and hence $m_2 \leq Tm_1^2$ when $\rho \geq 0$.
%, implying that
%\begin{equation*}
%	\mu\sqrt{pT} + (\sqrt{(1+(T-1)\rho)(1-p)} - \sqrt{1-\rho})\sigma \geq 
%	(\sqrt{(1+(T-1)\rho)(1-p)} - \sqrt{1-\rho})\sigma \geq 0.
%\end{equation*}
In combination with~\eqref{eq:construct-p} and~\eqref{eq:m1m2}, the above inequality ensures that $m_2 \leq Tm_1^2$.

For $\rho = 0$, equation~\eqref{eq:m1m2} simplifies to
\begin{equation*}
	\mu\sqrt{pT} + (\sqrt{1-p} - 1)\sigma \geq 0
	\quad \Longleftrightarrow \quad \frac{\mu \sqrt{T}}{\sigma} \geq \frac{1 - \sqrt{1 - p}}{\sqrt{p}} \quad
 	     \Longleftarrow \quad \frac{\mu \sqrt{T}}{\sigma} \geq \sqrt{p}, 
\end{equation*}
where the two implications follow from algebraic manipulations and the fact that $\sqrt{p} \geq \frac{1 - \sqrt{1 - p}}{\sqrt{p}}$ for $p \in [0, 1]$, respectively. One readily verifies that the last inequality is satisfied by $p = \frac{T\mu^2}{T\mu^2 + \sigma^2}$.

For $\rho < 0$, substituting $p$ in~\eqref{eq:m1m2} with its definition from~\eqref{eq:construct-p} yields
\begin{equation*}
\begin{aligned}
	\mu\sqrt{pT} + (\sqrt{(1+(T-1)\rho)(1-p)} - \sqrt{1-\rho})\sigma 
	&= \frac{T\mu\sqrt{-\rho}}{\sqrt{1-\rho}} + \left( \frac{1+(T-1)\rho}{\sqrt{1-\rho}} - \sqrt{1-\rho} \right)\sigma \\
	&\geq \frac{-T\rho\sigma}{\sqrt{1-\rho}} + \left( \frac{1+(T-1)\rho}{\sqrt{1-\rho}} - \sqrt{1-\rho} \right)\sigma  \\
	&= 0, 
\end{aligned}
\end{equation*}
where the equalities follow from direct calculations and the inequality holds since $\mu^2 + \rho \sigma^2 \geq 0$. We thus conclude that $m_2 \leq Tm_1^2$ whenever $\rho < 0$ as postulated.

\paragraph{Step 3:} We show that our choice of $m_1, m_2$ and $z$ meets the requirements (i') and (ii'), regardless of the value of $p$. First, a direct calculation shows that requirement (i') follows from the definitions of $m_1$ and $z$. Next, the first requirement in (ii') follows from
\begin{equation*}
\begin{aligned}
	pm_2 + (1-p)z^2 &= pm_2 + (1-p)z^2 - \left( pm_1 + (1-p)z \right)^2 + \mu^2 \\
	&= p (m_2 - m_1^2) + \left( pm_1^2 + (1-p)z^2 \right) - \left( pm_1 + (1-p)z \right)^2 + \mu^2 \\
	&= p (m_2 - m_1^2) + p(1-p) (m_1 - z)^2 + \mu^2 \\
	&= \textstyle\frac{1}{T}(1-\rho)(T-1)\sigma^2 + \frac{1}{T}(1 + (T-1)\rho)\sigma^2 + \mu^2 \\
	&= \sigma^2 + \mu^2,
\end{aligned}
\end{equation*}
where the first equality holds since the requirement (i') is met, and the fourth equality follows from the definitions of $m_1$, $m_2$ and $z$.

Finally, to prove the second requirement in (ii'), we first observe that
\begin{equation*}
	pm_2 - \frac{p}{T-1}(Tm_1^2 - m_2) 
	= \frac{pT}{T-1}(m_2 - m_1^2)
	= (1 - \rho)\sigma^2,
\end{equation*}
where the second equality follows from the definition of $m_2$. Note that the term on the left (right) side of this equality constitutes the difference between the left (right) sides of the requirements in (ii'). The second requirement in (ii') and the claim thus follow. 
\qed
\end{proof}

In order to establish Chebyshev bounds for products of random variables, we will formulate generalized moment problems that optimize over the probability measures in the ambiguity set $\mathcal{P}$. We can then leverage powerful duality results from convex optimization to reformulate these moment problems as explicit semidefinite programs that are amenable to efficient solution via interior point methods. The \emph{weak duality} principle, which holds true for every optimization problem, states that the optimal value of a (primal) minimization problem is bounded from below by the optimal value of its associated dual (maximization) problem. To establish tight probability bounds, we need to invoke the \emph{strong duality} principle, which states that under certain conditions the optimal values of the primal and dual optimization problems coincide. In our setting, strong duality holds whenever $\mu^2 + \rho \sigma^2 > 0$.

%From optimization perspective, Chebyshev inequalities for products of random variables can be cast as optimization problems
%\begin{equation*}
%\begin{aligned}
%	\text{L}(\gamma) = \sup_{\mathbb{P} \in \mathcal{P}} \mathbb{P} \left( \prod_{t=1}^T \tilde{\xi}_t \leq \gamma \right)
%	\quad \text{and}\quad
%	\text{R}(\gamma) = \sup_{\mathbb{P} \in \mathcal{P}} \mathbb{P} \left( \prod_{t=1}^T \tilde{\xi}_t \geq \gamma \right).
%\end{aligned}
%\end{equation*}
%These optimization problems aim to identify a probability measure from the ambiguity set $\mathcal P$ which maximizes the probability of their respective event. In the remainder of the paper, we quantify the Chebyshev inequalities with techniques from convex optimization. However, our main results are mostly from dual problems, where we refer to $\text{L}(\gamma)$ and $\text{R}(\gamma)$ as our primal problems. This is possible because, under a mild technical condition, strong duality holds and implies that the optimal values of primal and dual problem coincides. Theorem \ref{thm:slater} below provides such a condition. Moreover, it can be observed that our primal problems $\text{L}(\gamma)$ and $\text{R}(\gamma)$ are linear programs with measure decisions because the first and the second moments of the distribution $\mathbb P$ are linear in the probability measures. Therefore, the dual problems are also linear programs, albeit with infinite number of constraints. 

\begin{thm}[Slater Condition]
\label{thm:slater}
If $\mu^2 + \rho\sigma^2 > 0$, then the moment vector $(1, \bm{\mu}, \bm{\Sigma} + \bm{\mu} \bm{\mu}^\intercal)$ is contained in the interior of the moment cone $\mathcal K$ defined through
\begin{equation*}
	\mathcal{K} = \left\{	\left( 
		\int_{\mathbb{R}^T_+} \mathbb{P}(\dd \bm\xi), \;
		\int_{\mathbb{R}^T_+} \bm{\xi} \, \mathbb{P}(\dd \bm\xi), \;
		\int_{\mathbb{R}^T_+} \bm{\xi}\bm{\xi}^\intercal \, \mathbb{P}(\dd \bm\xi)
	\right): \mathbb{P} \in \mathcal{M}_+(\mathbb{R}^T_+) \right\}.
\end{equation*}
\end{thm}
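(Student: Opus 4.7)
My strategy is to combine a dual-cone characterization of the interior with the explicit construction of a measure $\mathbb P^\star\in\mathcal P$ whose support contains a non-empty open subset of $\mathbb R^T_+$. The key observation is that for a convex cone $\mathcal K$ with full-dimensional linear span, a point $v^\star := (1,\bm\mu,\bm\Sigma+\bm\mu\bm\mu^\intercal)\in\mathcal K$ lies in $\mathrm{int}(\mathcal K)$ iff $\langle y, v^\star\rangle>0$ for every non-zero $y$ in the dual cone
\[
\mathcal K^\star = \left\{(c_0,\bm c_1,\bm C_2)\in\mathbb R\times\mathbb R^T\times\mathbb S^T \;:\; c_0 + \bm c_1^\intercal \bm\xi + \bm\xi^\intercal \bm C_2 \bm\xi \geq 0 \ \forall\,\bm\xi\in\mathbb R^T_+\right\}.
\]
Granting the existence of such a $\mathbb P^\star$, the conclusion follows immediately: if $y=(c_0,\bm c_1,\bm C_2)\in\mathcal K^\star$ satisfies $\langle y, v^\star\rangle=0$, then the non-negative polynomial $p(\bm\xi)=c_0+\bm c_1^\intercal\bm\xi+\bm\xi^\intercal\bm C_2\bm\xi$ integrates to zero against $\mathbb P^\star$ and must therefore vanish on its open support, which forces $p\equiv 0$ on $\mathbb R^T$ and hence $y=0$.

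\textbf{Constructing $\mathbb P^\star$.} I would obtain $\mathbb P^\star$ by convolving a perturbed-moment version of the distribution from Proposition~\ref{prop:non-emptiness} with a small uniform distribution on a box. For a small parameter $a>0$, I would apply Proposition~\ref{prop:non-emptiness} to the ambiguity set with mean $\bm\mu_a:=\bm\mu-(a/2)\bm 1$ and covariance $\bm\Sigma_a:=\bm\Sigma-(a^2/12)\mathbb I$, which preserves the permutation-symmetric structure of~\eqref{eq:mean-cov} with modified variance $\sigma_a^2=\sigma^2-a^2/12$ and correlation $\rho_a=\rho\sigma^2/\sigma_a^2$; the non-emptiness condition for this perturbed problem reduces to $(\mu-a/2)^2+\rho\sigma^2\geq 0$, which holds for all sufficiently small $a$ \emph{precisely because} of the strict inequality $\mu^2+\rho\sigma^2>0$, while the other hypotheses (positivity of $\mu_a,\sigma_a$ and the strict bounds $-1/(T-1)<\rho_a<1$) are robust to small perturbations. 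Let $\mathbb P_a$ be the resulting measure, let $\mathbb U_a$ be the uniform distribution on $[0,a]^T$ (with mean $(a/2)\bm 1$ and covariance $(a^2/12)\mathbb I$), and set $\mathbb P^\star$ to be the law of $\tilde{\bm X}+\tilde{\bm Y}$ for independent $\tilde{\bm X}\sim\mathbb P_a$ and $\tilde{\bm Y}\sim\mathbb U_a$. Additivity of means and covariances under independence then gives $\mathbb P^\star\in\mathcal P$, while its support $\mathrm{supp}(\mathbb P_a)+[0,a]^T$ contains the open set $\bm\xi_0+(0,a)^T$ for any $\bm\xi_0\in\mathrm{supp}(\mathbb P_a)\subseteq\mathbb R^T_+$.

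\textbf{Main obstacle.} The essential use of the strict hypothesis $\mu^2+\rho\sigma^2>0$ enters through the persistence of the non-emptiness condition for the \emph{perturbed} ambiguity set; if only the weak inequality held, the shift $\bm\mu\to\bm\mu_a$ would push the problem out of feasibility and $v^\star$ could legitimately lie on the boundary of $\mathcal K$. The remaining ingredients---the dual-cone characterization, the convolution step, and the open-support polynomial-vanishing argument---are routine once $\mathbb P_a$ has been secured.
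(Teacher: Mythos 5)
Your proof is correct, but it follows a genuinely different route from the paper's. The paper establishes interiority constructively: starting from the $(T+1)$-atom distribution of Proposition~\ref{prop:non-emptiness}, it arranges the atoms to be strictly positive with $x+(T-1)y\neq Tz$, and then invokes the implicit function theorem---after a fairly lengthy verification that the associated Jacobian has full row rank---to perturb the atoms so as to match \emph{every} moment vector in a small ball around $(1,\bm\mu,\bm\Sigma+\bm\mu\bm\mu^\intercal)$. You instead exhibit a \emph{single} distribution in $\mathcal P$ whose support contains a non-empty open set (the discrete distribution with deflated moments, convolved with a uniform box), and conclude via the separation-based characterization of $\mathrm{int}\,\mathcal K$ through the dual cone of quadratic polynomials non-negative on $\mathbb R^T_+$. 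The strict hypothesis $\mu^2+\rho\sigma^2>0$ plays the same role in both arguments---it buys room to perturb---and your bookkeeping is clean: since $\rho_a\sigma_a^2=\rho\sigma^2$, the perturbed non-emptiness condition is exactly $(\mu-a/2)^2+\rho\sigma^2\geq 0$, and the remaining hypotheses of Proposition~\ref{prop:non-emptiness} are open conditions. Your route is shorter and entirely avoids the Jacobian computation; the paper's route is longer but yields explicit feasible distributions for all nearby moment vectors. Two points you should make explicit to close the argument: (i) the moment cone $\mathcal K$ need not be closed, so the dual characterization must be applied to $\mathrm{cl}\,\mathcal K$ together with the standard facts $\mathcal K^\star=(\mathrm{cl}\,\mathcal K)^\star$ and $\mathrm{int}\,\mathcal K=\mathrm{int}\,\mathrm{cl}\,\mathcal K$ for convex sets with non-empty interior; and (ii) the full-dimensionality of $\mathcal K$, which you list as a hypothesis of the characterization, is itself delivered by your polynomial-vanishing argument, because any $y\neq 0$ orthogonal to the span of $\mathcal K$ would lie in $\mathcal K^\star$ with $\langle y,(1,\bm\mu,\bm\Sigma+\bm\mu\bm\mu^\intercal)\rangle=0$. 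Neither point is a gap, merely a matter of stating the standard facts you are using.
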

\begin{proof}
We first show that $\mathcal{P}$ contains a distribution of the form~\eqref{eq:construct-xyz} where the inequalities $x \geq y \geq 0$, $z \geq 0$ and $p \in [0, 1]$ hold \emph{strictly}, as well as $x + (T-1)y > Tz$ (Step~1). This distribution allows us to show that $(1, \bm{\mu}, \bm{\Sigma} + \bm{\mu} \bm{\mu}^\intercal)$ is in the relative interior of $\mathcal{K}_1 = \mathcal{K} \cap (\{ 1\} \times \mathbb{R}^T_+ \times \mathbb{S}^T_+)$ (Step~2), from which the result follows directly by re-scaling the measures in $\mathcal{K}_1$ (Step~3).

\paragraph{Step 1:} %For $\mu, \sigma, \rho$ satisfying $\mu^2 + \rho\sigma^2 > 0$, it is possible to alter the argument in Proposition \ref{prop:non-emptiness} slightly in order to identity $\mathbb P \in \mathcal P$ in the form of \eqref{eq:construct-xyz} for some $p \in (0,1)$ and $x > y > 0$, $z > 0$ with $x + (T-1)y > Tz$. In particular, we need to adapt our choice of $p$ in \eqref{eq:construct-p} for the case that $\rho \geq 0$ by selecting $p$ to be any positive number strictly less than $\min \left\{ \frac{T\mu^2}{T\mu^2 + (1+(T-1)\rho)\sigma^2}, \frac{\rho T}{1 + (T-1)\rho} \right\}$\footnote{For $\rho = 0$, it is possible further adjust the choice of $p$ to ensure that $p \neq 0$.} to ensure that $z \neq 0$.
We distinguish the cases $\rho < 0$ and $\rho \geq 0$. For $\rho < 0$, one readily verifies that the choice of $p$, $x$, $y$ and $z$ in the proof of Proposition~\ref{prop:non-emptiness} satisfies $x > y > 0$, $z > 0$, $p \in (0, 1)$ and $x + (T-1)y > Tz$ by construction. Moreover, these inequalities are also satisfied strictly for $\rho \geq 0$ if we replace $p$ in \eqref{eq:construct-p} with any value from the open interval $(0, p)$.

\paragraph{Step 2:} To prove that $(1, \bm{\mu}, \bm{\Sigma} + \bm{\mu} \bm{\mu}^\intercal) \in \text{rel} \, \text{int} \, \mathcal{K}_1$, we show that all perturbed ambiguity sets
%We first construct a distribution $\mathbb P$ that is contained in the perturbed ambiguity set
\begin{equation*}
\begin{aligned}
	\mathcal P (\bm\mu^\epsilon, \bm\Omega^\epsilon) = 
	\left \lbrace 
		\mathbb{P} \in \mathcal M_+(\mathbb R^T_+)~:~
		%\begin{array}{l}
			\mathbb{P} \left( \tilde{\bm{\xi}} > \bm 0 \right) = 1 ,\; \mathbb{E}_\mathbb{P} \left( \tilde{\bm{\xi}} \right) = \bm \mu^\epsilon ,\;
			\mathbb{E}_\mathbb{P} \left( \tilde{\bm{\xi}}\tilde{\bm{\xi}}^\intercal \right) = \bm\Omega^\epsilon  %(1 - \rho)\sigma^2\mathbb{I} + \left( \mu^2 + \rho\sigma^2 \right) \bm{11}^\intercal
		%\end{array}
	\right \rbrace
\end{aligned}
\end{equation*}
with $\bm\mu^\epsilon \in \mathcal{B}_\epsilon (\bm\mu)$ and $\bm\Omega^\epsilon \in \mathcal{B}_\epsilon (\bm\Sigma + \bm\mu \bm\mu^\intercal)$ are non-empty for sufficiently small $\epsilon$, where $\mathcal{B}_\epsilon (\bm{x})$ denotes the $\epsilon$-ball around $\bm{x}$ in the respective space. Note that the covariance matrix of any distribution in $\mathcal{P} (\bm\mu^\epsilon, \bm\Omega^\epsilon)$ is positive definite for small $\epsilon$ since $\bm{\Sigma} \succ \bm{0}$ and the eigenvalues are continuous functions of the second-order moment matrix.
%since $\bm\Omega^\epsilon - \mu^\epsilon(\mu^\epsilon)^\intercal \succ \bm 0$
%We assume that $\mathbb P$ is of the form
In the following, we construct a discrete distribution $\mathbb{P}^\epsilon \in\mathcal P(\bm \mu^\epsilon,\bm \Omega^\epsilon)$ with
\begin{equation}
\label{eq:disc_dist}
\begin{aligned}
	\mathbb{P}^\epsilon \left( \tilde{\bm\xi} = \bm{\xi}^{\epsilon, i} \right) = \frac{p}{T} \quad i = 1, \ldots, T  \quad \text{and} \quad
	\mathbb{P}^\epsilon \left( \tilde{\bm\xi} = \bm{\xi}^{\epsilon, T+1} \right) = 1- p,
\end{aligned}
\end{equation}
where $p$ is the constant chosen in Step~1. The moment conditions for $\mathbb P^\epsilon$ then simplify to:
\begin{enumerate}[(i)]
	\item $\displaystyle \mathbb{E}_{\mathbb{P}^\epsilon} [\bm{\tilde{\xi}}] = \bm{\mu}^\epsilon \mspace {21mu} \quad \Longleftrightarrow \quad \frac{p}{T} \sum_{i=1}^T \xi^{\epsilon, i}_t + (1-p) \xi^{\epsilon, T+1}_t = \mu^\epsilon_t \mspace{87mu} \forall t = 1, \ldots, T$;
	\item $\displaystyle \mathbb{E}_{\mathbb{P}^\epsilon} [\bm{\tilde{\xi}} \bm{\tilde{\xi}}{}^\intercal] = \bm{\Omega}^\epsilon \quad \Longleftrightarrow \quad \frac{p}{T} \sum_{i=1}^T \left( \xi^{\epsilon, i}_t \right)^2 + (1-p) \left( \xi^{\epsilon, T+1}_t \right)^2 = \Omega^\epsilon_{tt} \mspace{25mu} \forall t = 1, \ldots, T$, \\[3mm]
		     \hphantom{a} $\mspace{166mu} \displaystyle \frac{p}{T} \sum_{i=1}^T \xi^{\epsilon, i}_s \xi^{\epsilon, i}_t + (1-p) \xi^{\epsilon, T+1}_s \xi^{\epsilon, T+1}_t = \Omega^\epsilon_{st} \mspace{30mu} \forall 1 \leq s < t \leq T$.
\end{enumerate}
These moment conditions represent a system of nonlinear equations $\bm{F} (\bm{\mu}^\epsilon, \bm{\Omega}^\epsilon; \{ \bm{\xi}^{\epsilon, i} \}_{i=1}^{T+1}) = \bm{0}$ in the moments $\bm{\mu}^\epsilon$ and $\bm{\Omega}^\epsilon$ as well as the atoms $\bm{\xi}^{\epsilon, i}$, $i = 1, \ldots, T + 1$, of the distribution~$\mathbb P^\epsilon$. From Step~1 we know that $\bm{F} (\bm{\mu}, \bm{\Sigma} + \bm{\mu} \bm{\mu}^\intercal; \{ \bm{\xi}^i \}_{i=1}^{T+1}) = \bm{0}$ for $\bm\xi^i = y\bm 1 + (x-y)\mathbf e_i$, $i = 1, \ldots, T$, $\bm\xi^{T+1} = z \bm 1$ and for some $x, y, z \in \mathbb{R}_+$ satisfying $x > y > 0$, $z > 0$ and $x + (T - 1) y > T z$. Moreover, the implicit function theorem proves the existence of continuously differentiable functions $\bm{g}^i : \mathbb{R}^T_+ \times \mathbb{S}^T_+ \rightarrow \mathbb{R}^T$, $i = 1, \ldots, T + 1$, such that $\bm{F} (\bm{\mu}^\epsilon, \bm{\Omega}^\epsilon; \{ \bm{g}^i (\bm{\mu}^\epsilon, \bm{\Omega}^\epsilon) \}_{i=1}^{T+1}) = \bm{0}$ for all $\bm\mu^\epsilon \in \mathcal{B}_\epsilon (\bm\mu)$ and $\bm\Omega^\epsilon \in \mathcal{B}_\epsilon (\bm\Sigma + \bm\mu \bm\mu^\intercal)$, provided that $\epsilon$ is sufficiently small, $\bm{F}$ is continuously differentiable, and the Jacobian of $\bm{F}$ with respect to $\bm{\xi}^{\epsilon, i}$ has full row rank at $(\bm{\mu}^\epsilon, \bm{\Omega}^\epsilon, \{ \bm{\xi}^{\epsilon, i} \}_{i=1}^{T+1}) = (\bm{\mu}, \bm\Sigma + \bm\mu \bm\mu^\intercal, \{ \bm{\xi}^i \}_{i=1}^{T+1})$. Thus, the functions $\bm{g}^i$ allow us to construct distributions of the form~\eqref{eq:disc_dist} that satisfy the moment conditions of the perturbed ambiguity sets $\mathcal P (\bm\mu^\epsilon, \bm\Omega^\epsilon)$ for all $\bm\mu^\epsilon \in \mathcal{B}_\epsilon (\bm\mu)$ and $\bm\Omega^\epsilon \in \mathcal{B}_\epsilon (\bm\Sigma + \bm\mu \bm\mu^\intercal)$. Since each $\bm{g}^i$ is continuous, we have $\bm{g}^i (\bm{\mu}^\epsilon, \bm{\Omega}^\epsilon) > \bm{0}$ for all $\bm\mu^\epsilon \in \mathcal{B}_\epsilon (\bm\mu)$ and $\bm\Omega^\epsilon \in \mathcal{B}_\epsilon (\bm\Sigma + \bm\mu \bm\mu^\intercal)$ when $\epsilon$ is sufficiently small, that is, the support of $\mathbb P^\epsilon$ is contained in $\mathbb R^T_+$, and thus $\mathbb P^\epsilon$ is indeed contained in $\mathcal P (\bm\mu^\epsilon, \bm\Omega^\epsilon)$.

The moment function $\bm{F}$ is continuously differentiable by construction. To apply the implicit function theorem, we therefore only need to show that the Jacobian $\mathbf{J}$ of $\bm{F}$ with respect to $\bm\xi^{\epsilon, 1}$, \ldots, $\bm\xi^{\epsilon, T+1}$ has full row rank at $(\bm{\mu}^\epsilon, \bm{\Omega}^\epsilon, \{ \bm{\xi}^{\epsilon, i} \}_{i=1}^{T+1}) = (\bm{\mu}, \bm\Sigma + \bm\mu \bm\mu^\intercal, \{ \bm{\xi}^i \}_{i=1}^{T+1})$. For ease of exposition, we divide the first $T^2$ and the last $T$ columns of $\mathbf{J}$ by $\frac{p}{T}$ and $1-p$, respectively, and we divide the rows corresponding to the first requirement in (ii) by 2. We then obtain % We highlight that these operations preserve the rank of the Jacobian matrix as $0 < p < 1$. -- we don't seem to need this, as the rank would if at all decrease!
\begin{equation*}
	\mathbf J = \left[ \begin{array}{c|c|c|c|c}
		\mathbb I & \mathbb I & \cdots & \mathbb I & \mathbb I \\[-1mm]
		\hline
		& & & & \\[-5mm]
		y \mathbb I + (x-y) \mathbf e_1 \mathbf e_1^\intercal & y \mathbb I + (x-y) \mathbf e_2 \mathbf e_2^\intercal & \cdots & y \mathbb I + (x-y) \mathbf e_T \mathbf e_T^\intercal & z \mathbb I \\
		\hline
		& & & & \\[-5mm]
		\mathbf C^1 & \mathbf C^2 & \cdots & \mathbf C^T & \mathbf C^{T+1} 
	\end{array} \right],
\end{equation*}
where for $i = 1, \ldots, T$, the matrix $\mathbf{C}^i \in \mathbb{R}^{\binom{T}{2} \times T}$ satisfies
\begin{equation*}
	C^i_{st,j} = \begin{dcases}
		x & \text{if } (s,t) \in \{ (i, j), (j, i) \}, \\
		y & \text{if } (s,t) \in \{ (j, \tau) \, : \, \tau \neq i \} \cup \{ (\tau, j) \, : \, \tau \neq i \}, \\
		0 & \text{otherwise.}
	\end{dcases}
\end{equation*}
Here, the indices $s$ and $t$, $1 \leq s < t \leq T$, encode the row and the index $j$ refers to the column of $\mathbf{C}^i$, respectively. The matrix $\mathbf C^{T+1}$ is defined analogously with $x$ and $y$ replaced by $z$.

%\iffalse
%\begin{equation*}
%	A = \left\{ \alpha = (\alpha_1, \alpha_2) \in \{1, \hdots, T \}^2: i < j \right\}
%	\quad \text{and} \quad
%	(\mathbf{C}_t)_{\alpha \beta} = \left\{ \begin{array}{ll}
%		y, & \beta = \alpha_1~\&~t \neq \alpha_2 \\
%		x, & \beta = \alpha_1~\&~t = \alpha_2 \\
%		y, & \beta = \alpha_2~\&~t \neq \alpha_1 \\
%		x, & \beta = \alpha_2~\&~t = \alpha_1 \\
%		0, & \text{otherwise,}
%	\end{array} \right.
%\end{equation*}
%\fi
Consider the linear combination $(\bm{m}^\intercal, \bm{v}^\intercal, \bm{c}^\intercal) \, \mathbf{J}$ of all rows of $\mathbf{J}$ with the coefficients $m_t$ $(t = 1, \hdots, T)$ for the first block of $T$ rows, $v_t$ $(t = 1, \hdots, T)$ for the second block of $T$ rows, and $c_{st}$ for the third block of $\binom{T}{2}$ rows. For notational convenience, we define $c_{st} = c_{ts}$ for $s > t$. To prove that $\mathbf{J}$ has full row rank, we need to show that $(\bm{m}^\intercal, \bm{v}^\intercal, \bm{c}^\intercal) \, \mathbf{J}$ evaluates to $\bm{0}^\intercal$ only if $\bm{m}$, $\bm{v}$ and $\bm{c}$ vanish. To this end, consider the first and the $(T+1)$th element (i.e., the first elements of the first two column blocks) of the equation $(\bm{m}^\intercal, \bm{v}^\intercal, \bm{c}^\intercal) \, \mathbf{J}=\bm 0^\intercal$, which are equivalent to
\begin{equation*}
\begin{aligned}
	m_1 + xv_1 + y \sum_{t=2}^T c_{1t} = 0 \quad \text{and} \quad
	m_1 + yv_1 + x c_{12} + y \sum_{t=3}^T c_{1t} = 0.
\end{aligned}
\end{equation*}
Subtracting the two equations implies that $(x-y)(v_1 - c_{12}) = 0$, which in turn yields $v_1 = c_{12}$ since $x \neq y$. Generalizing this observation to the $t$th columns in each pair of column blocks $s$ and $t$, we find that all $v_t$ and $c_{st}$ must be equal to a single variable $v$. Next, consider the $(T^2+1)$th and $(T^2+2)$th columns (i.e., the first two elements of the last column block) of  the equation $(\bm{m}^\intercal, \bm{v}^\intercal, \bm{c}^\intercal) \, \mathbf{J}=\bm 0^\intercal$, which are equivalent to

\begin{equation*}
\begin{aligned}
	m_1 + zv_1 + z\sum_{t = 2}^T c_{1t} = 0 \quad \text{and} \quad
	m_2 + zv_2 + z\left( c_{21} + \sum_{t = 3}^T c_{2t}\right) = 0.
\end{aligned}
\end{equation*}
However, since $v_t = c_{st} = v$ for all $s$ and $t$, we conclude that $m_1 = m_2$. Again, generalizing this observation to each pair of columns in the last column block, we can identify all $m_t$ by a single number $m$. Replacing $v_t$ and $c_{st}$ by $v$ and $m_t$ by $m$, the previous two equations simplify to
\begin{equation*}
	m + (x + (T-1)y)v = 0 \quad \text{and} \quad
	m + Tzv = 0,
\end{equation*}
and we conclude that $m = v = 0$ since we established earlier that $x + (T-1)y \neq Tz$. Hence, the Jacobian $\mathbf J$ indeed has full row rank, which concludes Step 2.

\paragraph{Step 3:} We have shown in Step~2 that $\mathcal{P} (\bm{\mu}^\epsilon, \bm{\Omega}^\epsilon) \neq \emptyset$ for all $\bm{\mu}^\epsilon \in \mathcal{B}_\epsilon (\bm{\mu})$ and $\bm{\Omega}^\epsilon \in \mathcal{B}_\epsilon (\bm{\Sigma} + \bm{\mu} \bm{\mu}^\intercal)$, which implies that $(1, \bm{\mu}, \bm{\Sigma} + \bm{\mu} \bm{\mu}^\intercal) \in \text{rel} \, \text{int} \, \mathcal{K}_1$. Since $\{ \lambda \mathcal{K}_1 \, : \, \lambda \in \mathbb{R}_+ \} \subseteq \mathcal{K}$, we have $\lambda \mathcal{P} (\bm{\mu}^\epsilon, \bm{\Omega}^\epsilon) \subseteq \mathcal{K}$ for all $\lambda \geq 0$. As the moments are linear in the measure, we thus conclude that $(1, \bm{\mu}, \bm{\Sigma} + \bm{\mu} \bm{\mu}^\intercal) \in \text{int} \, \mathcal{K}$ as desired.
%We need to show that, there exists a ball with strictly positive radius centered at $(1, \bm\mu, \bm\Sigma + \bm\mu\bm\mu^\intercal)$ such that every point $(a, \mathbf{b}, \mathbf{C})$ in the ball is in
%\begin{equation*}
%	\mathcal{A} = \left\{ \left( \begin{array}{c}
%		\int_{\mathbb{R}^T_+} \mathbb{P}(\dd \bm\xi) \\
%		\int_{\mathbb{R}^T_+} \bm{\xi} \mathbb{P}(\dd \bm\xi) \\
%		\int_{\mathbb{R}^T_+} \bm{\xi}\bm{\xi}^\intercal \mathbb{P}(\dd \bm\xi)
%	\end{array} \right): \mathbb{P} \in \mathcal{M}_+(\mathbb{R}^T_+) \right\}.
%\end{equation*}
%From step 1, we know that if we restrict the ball to be small enough, then for every $(a, \mathbf{b}, \mathbf{C})$ in the ball, there exists a distribution $\mathbb{P}_0$ such that 
%\begin{equation*}
%\begin{aligned}
%	\int_{\mathbb{R}^T_+} \mathbb{P}_0(\dd \bm\xi) = 1, \quad 
%	\int_{\mathbb{R}^T_+} \bm{\xi} \mathbb{P}_0(\dd \bm\xi) = \frac{1}{a} \bm b, \quad
%	\int_{\mathbb{R}^T_+} \bm{\xi\xi}^\intercal \mathbb{P}_0(\dd \bm\xi) = \frac{1}{a} \mathbf C.
%\end{aligned}
%\end{equation*}
%Hence, it becomes immediately clear that $(a, \mathbf{b}, \mathbf{C}) \in \mathcal{A}$ corresponding to the measure $\mathbb{P} = a\mathbb{P}_0$.
\qed
\end{proof}

Theorem~\ref{thm:slater} will allow us to use the strong duality theorem of~\cite[Proposition~3.4]{Shapiro01}, which states that a linear optimization problem over the distributions in $\mathcal{P}$ has the same optimal value as its associated dual problem. In the remainder of the paper, we will make extensive use of this insight, and we therefore assume from now on that $\mu^2 + \rho \sigma^2 > 0$.

%%%%%%%%%%%%%%%%%%%%%%%%%%%%%%%%%%%%%%%%%%%%%%%%%%%%%%%%%%%%%%%%%
\section{Left-Sided Chebyshev Bounds}
\label{section:right-prob}

In this section we study \emph{left-sided Chebyshev bounds} of the form
\begin{equation*}
\text{L}(\gamma) = \sup_{\mathbb{P} \in \mathcal{P}} \mathbb{P} \left( \prod_{t=1}^T \tilde{\xi}_t \leq \gamma \right),
\end{equation*}
where the ambiguity set $\mathcal{P}$ is defined in~\eqref{eq:our_ambiguity_set}. We begin with the main result of this section.

\begin{thm}[Left-Sided Chebyshev Bound]
\label{thm:rprob}
Let $\gamma > 0$. For all $T \geq 3$, the left-sided Chebyshev bound $\text{\em L}(\gamma)$ coincides with the optimal objective value of the semidefinite program
\begin{equation}
\label{eq:rprob0}
\begin{aligned}
	&\inf && \alpha + T\mu\beta + 
			 T(\mu^2 + \sigma^2)\gamma_1 +
			 T\left[ T\mu^2 + \sigma^2 + (T-1)\rho\sigma^2\right] \gamma_2 \\
	&\st  && \alpha, \beta, \gamma_1, \gamma_2 \in \mathbb{R},~
	         \lambda_1,\lambda_2,\lambda_3 \geq 0,~
	         %u \in \mathbb{R}^3, \mathbf{U} \in \mathbb{S}^2_+,
	         \bm p \in \mathbb{R}^{2T+1}, ~\bm{P} \in \mathbb{S}^{T+1}_+,~
	         \bm q \in \mathbb{R}^{2T-1}, ~ \bm{Q} \in \mathbb{S}^{T}_+ \\
	&     && \alpha \geq 1, \quad
	         \gamma_1 + \gamma_2 \geq 0, \quad
	         %\gamma_2 + \frac{\gamma_1}{T} \geq 0 \quad
	         \gamma_1 + T\gamma_2 \geq 0 \\
	&     && \gamma_2 + \frac{\gamma_1}{T} + \alpha \geq \left\Vert \left( \beta - \lambda_1, \gamma_2 + \frac{\gamma_1}{T} - \alpha \right) \right\Vert_2 \\
	&     && \gamma_2 + \gamma_1 + \alpha - 1 \geq \left\Vert \left( \beta - \lambda_2, \gamma_2 + \gamma_1 - \alpha + 1 \right) \right\Vert_2 \\
	&     && \gamma_2 +\frac{\gamma_1}{T} + \lambda_3 + \alpha - 1 \geq \left\Vert \left( \beta - \lambda_3 T\gamma^{1/T}, \gamma_2+ \frac{\gamma_1}{T} + \lambda_3 - \alpha + 1 \right) \right\Vert_2 \\
&     && p_0 = (T-1)\gamma_1\gamma^{\frac{2}{T-1}} + (T-1)^2\gamma_2\gamma^{\frac{2}{T-1}}, \quad  p_1+q_0 =(T-1)\beta\gamma^{\frac{1}{T-1}} \\
	&     && p_2+q_1= \alpha-1, \quad p_T+q_{T-1}=  2(T-1)\gamma_2\gamma^{\frac{1}{T-1}} ,\quad p_{T+1}+q_T= \beta\\ 
	&     && p_{2T}= \gamma_1+\gamma_2,\quad p_t + q_{t-1} = 0 \quad \forall t = 3, \hdots, T-1,T+2,\ldots , 2T-1 \\
	&     && p_t = \textstyle\sum_{i+j = t} P_{i,j} \quad \forall t = 0, \hdots, 2T, \quad
			 q_t = \textstyle\sum_{i+j = t} Q_{i,j} \quad \forall t = 0, \hdots, 2T-2,
\end{aligned}
\end{equation}
where we use the convention that the entries of $\bm p$, $\bm P$, $\bm q$ and $\bm Q$ are numbered starting from~$0$. For $T=2$, $\text{\em L}(\gamma)$ is given by a variant of~\eqref{eq:rprob0} where the constraints $p_2+q_1= \alpha-1$ and $p_T+q_{T-1}=  2(T-1)\gamma_2\gamma^{\frac{1}{T-1}}$ are combined to $p_2+q_1= \alpha-1+2(T-1)\gamma_2\gamma^{\frac{1}{T-1}}$.
\end{thm}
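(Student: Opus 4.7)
My plan is to cast $\mathrm{L}(\gamma)$ as a generalized moment problem over $\mathcal P$, dualize via strong duality, use permutation symmetry to collapse the dual to four scalar multipliers, and then reformulate the resulting semi-infinite constraint by identifying the one-parameter families of $\bm\xi$ at which it can bind. First, I would write $\mathrm{L}(\gamma) = \sup_{\mathbb P \in \mathcal P} \mathbb E_{\mathbb P}[\mathbf 1_{\{\prod_t\tilde\xi_t \leq \gamma\}}]$, which is a linear program over the measures in $\mathcal P$. Theorem~\ref{thm:slater} together with \cite[Proposition~3.4]{Shapiro01} then provides strong duality, so $\mathrm{L}(\gamma)$ equals the infimum of $\alpha + \bm\beta^\intercal\bm\mu + \operatorname{tr}(\bm\Gamma(\bm\Sigma+\bm\mu\bm\mu^\intercal))$ over $(\alpha,\bm\beta,\bm\Gamma)\in\mathbb R\times\mathbb R^T\times\mathbb S^T$ subject to $\alpha + \bm\beta^\intercal\bm\xi + \bm\xi^\intercal\bm\Gamma\bm\xi \geq \mathbf 1_{\{\prod_t\xi_t\leq \gamma\}}(\bm\xi)$ for every $\bm\xi\in\mathbb R_+^T$.

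\textbf{Symmetry reduction and the unconditional piece.} The ambiguity set, the target event, and the domain $\mathbb R_+^T$ are all permutation invariant, so averaging any dual feasible triple over the symmetric group preserves both feasibility and objective. I would therefore restrict to $\bm\beta = \beta\bm 1$ and $\bm\Gamma = \gamma_1\mathbb I + \gamma_2\bm 1\bm 1^\intercal$, which reproduces the objective in~\eqref{eq:rprob0} after expansion and separates the semi-infinite constraint into (C1) $P(\bm\xi)\geq 1$ on $\{\bm\xi \in \mathbb R_+^T : \prod_t\xi_t \leq \gamma\}$ and (C2) $P(\bm\xi)\geq 0$ on $\mathbb R_+^T$, where $P(\bm\xi) = \alpha + \beta\sum_t\xi_t + \gamma_1\sum_t\xi_t^2+\gamma_2(\sum_t\xi_t)^2$. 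Since $P$ depends on $\bm\xi$ only through $s := \sum_t\xi_t$ and $u := \sum_t\xi_t^2$ and is linear in $u$, which varies over $[s^2/T,\, s^2]$ for $\bm\xi\in\mathbb R_+^T$ with $\sum_t\xi_t = s$, the minimum of $P$ subject to $\sum_t\xi_t = s$ is attained either on the diagonal $\bm\xi=(s/T)\bm 1$ or on a coordinate axis $\bm\xi=s\mathbf e_1$. This reduces (C2) to the nonnegativity of two univariate quadratics in $s$ on $\mathbb R_+$, which the Positivstellensatz on the half-line represents as SOC constraints with nonnegative slacks $\lambda_1,\lambda_2$; these are the first two SOC constraints of~\eqref{eq:rprob0}, and $\gamma_1+\gamma_2\geq 0$ together with $\gamma_1+T\gamma_2\geq 0$ are the accompanying leading-coefficient conditions.

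\textbf{Encoding (C1) via three extremal families.} For (C1), three classes of configurations may be active. On an axis $\bm\xi=s\mathbf e_1$ one has $\prod_t\xi_t=0\leq\gamma$ automatically, so the same quadratic-on-$\mathbb R_+$ argument gives the second SOC constraint, now with right-hand side $1$. On the diagonal $\bm\xi=(s/T)\bm 1$ with $s\in[0,\,T\gamma^{1/T}]$, I would apply a Lagrangian/S-procedure argument with multiplier $\lambda_3\geq 0$ for the upper bound $s\leq T\gamma^{1/T}$, producing the third SOC constraint, while $\alpha\geq 1$ follows from evaluating at $s=0$. The third and most delicate class consists of interior critical points of the constrained minimization of $P(\bm\xi)-1$ on $\{\bm\xi>\bm 0, \prod_t\xi_t=\gamma\}$: the KKT equations $2\gamma_1\xi_t^2+(\beta+2\gamma_2 s)\xi_t = \lambda\gamma$ form a single quadratic in $\xi_t$, so each coordinate takes one of at most two distinct values, and by the permutation symmetry of $P$ any such configuration may be parameterized (up to relabeling) as $\bm\xi(x) = (x^{T-1},\, \gamma^{1/(T-1)}/x,\, \ldots,\, \gamma^{1/(T-1)}/x)$ for some $x>0$. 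Multiplying $P(\bm\xi(x))-1$ by $x^2$ produces a polynomial $R(x)$ of degree $2T$ whose coefficients match exactly the right-hand sides of the linear equations in~\eqref{eq:rprob0}. The Positivstellensatz on $\mathbb R_+$ then certifies $R(x)\geq 0$ on $\mathbb R_+$ through an SOS decomposition $R(x)=p(x)+xq(x)$ with $\deg p\leq 2T$ and $\deg q\leq 2T-2$, and the standard Hankel representation of univariate SOS polynomials realizes $p$ and $q$ via PSD Gram matrices $\bm P\in\mathbb S_+^{T+1}$ and $\bm Q\in\mathbb S_+^T$ with $p_t=\sum_{i+j=t}P_{ij}$ and $q_t=\sum_{i+j=t}Q_{ij}$.

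\textbf{The main obstacle and the $T=2$ case.} The hardest step will be verifying that the three families identified above really exhaust the minimizers of $P$ over $\{\bm\xi\in\mathbb R_+^T : \prod_t\xi_t\leq\gamma\}$, so that the finite collection of conditions in~\eqref{eq:rprob0} is not merely necessary but also sufficient for (C1) and (C2). This demands a careful case analysis over the faces of the nonnegative orthant (coordinates pinned at zero versus strictly positive) combined with the KKT and permutation-symmetry arguments above, plus ruling out critical configurations with three or more distinct coordinate values. Finally, the case $T=2$ is degenerate because the coefficient indices $T$ and $2$ coincide in $R(x)$: the two otherwise distinct linear constraints $p_2+q_1=\alpha-1$ and $p_T+q_{T-1}=2(T-1)\gamma_2\gamma^{1/(T-1)}$ must then be merged into the single aggregated constraint $p_2+q_1=\alpha-1+2(T-1)\gamma_2\gamma^{1/(T-1)}$ recorded at the end of the theorem.
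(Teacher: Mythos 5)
Your overall route is the same as the paper's: moment problem, strong dual via Theorem~\ref{thm:slater}, permutation-symmetric reduction to $(\alpha,\beta,\gamma_1,\gamma_2)$, exploitation of the fact that the dual constraint function is linear in $u=\Vert\bm\xi\Vert_2^2$ for fixed $s=\Vert\bm\xi\Vert_1$ (so only the extreme values of $u$ matter), the $\mathcal S$-lemma for the quadratic pieces, and Markov--Lukacs/SOS Gram matrices for the degree-$2T$ polynomial coming from the product surface. The polynomial $R(x)$ you obtain from the substitution $\bm\xi(x)=(x^{T-1},\gamma^{1/(T-1)}/x,\ldots)$ matches the coefficients in~\eqref{eq:rprob0}, and your treatment of the sign of $\gamma_1$ via "check both endpoints of the $u$-range" is clean.

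There is, however, one genuine gap, and it sits exactly at the step you gloss over. Your KKT analysis on the surface $\{\prod_t\xi_t=\gamma\}$ correctly shows that each coordinate of a critical point solves a single quadratic, hence takes one of at most two values $\underline\xi,\overline\xi$; ruling out "three or more distinct coordinate values" is therefore immediate and is \emph{not} the hard part. What does not follow from permutation symmetry is your claim that the configuration "may be parameterized (up to relabeling) as $(x^{T-1},\gamma^{1/(T-1)}/x,\ldots)$": a two-value critical point can split the coordinates as $k$ copies of $\underline\xi$ and $T-k$ copies of $\overline\xi$ for any $k\in\{1,\ldots,\lfloor T/2\rfloor\}$, and your SDP only certifies $P\geq 1$ along the $k=1$ family. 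To close the argument you must show that the minimum of $\Vert\bm\xi\Vert_2^2$ over the slice $\{\bm\xi\geq\bm 0:\Vert\bm\xi\Vert_1=1,\ \prod_t\xi_t=\overline\gamma\}$ is always attained at a $1{:}(T-1)$ split, i.e.\ that $f_{T,1}(\overline\gamma)\leq f_{T,k}(\overline\gamma)$ for all $k\geq 2$. This is the content of the paper's Lemmas~\ref{prop:f_t-decomposition} and~\ref{prop:aux-f-optimal}, and it is the most delicate part of the whole proof: the paper first establishes $f_{4,1}<f_{4,2}$ by an explicit computation with the roots $\zeta^{\pm}$ of a quartic, and then reduces general $T\geq 5$ to the $T=4$ case by freezing all but four coordinates of a hypothetical $k\geq 2$ minimizer and deriving a contradiction. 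Without an argument of this kind your certificate is only necessary, not sufficient, so the SDP value could a priori undershoot $\text{L}(\gamma)$.
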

\begin{proof} We first reformulate the maximum probability of the left tail of the product~$\prod_{t=1}^T \tilde{\xi}_t$ falling below $\gamma$ as the generalized moment problem
\begin{equation}
\label{eq:rprob}
\begin{array}{r@{}cl}
	\text{L}(\gamma) =
	&\displaystyle \sup 
		& \displaystyle \int_{\mathbb{R}_+^{T}}  1_{\lbrace \prod_{t=1}^T \xi_t \leq \gamma \rbrace}\ \mathbb{P} (\dd \bm\xi)  \\
	&\st& \displaystyle \mathbb P\in\mathcal M_+(\mathbb R^T_+) \\
	&   &  \displaystyle \int_{\mathbb{R}_+^{T}} \mathbb{P} (\dd\bm\xi) = 1 \\
	&   & \displaystyle \int_{\mathbb{R}_+^{T}} \bm\xi\ \mathbb{P} (\dd\bm\xi) = \bm \mu \\
	&   & \displaystyle \int_{\mathbb{R}_+^{T}} \bm\xi\bm\xi^\intercal\ \mathbb{P} (\dd\bm\xi) = \bm \Sigma+\bm{\mu\mu}^\intercal.
\end{array}
\end{equation}
This moment problem admits a strong conic dual in the Lagrange multipliers $\alpha \in \mathbb{R}$, $\bm{\beta} \in \mathbb{R}^T$ and $ \mathbf{\Gamma} \in \mathbb{S}^T$ corresponding to the normalization, mean and covariance constraints in~\eqref{eq:rprob}, respectively, see Theorem~\ref{thm:slater} and~\cite[Proposition~3.4]{Shapiro01}. Recalling that $\bm \mu = \mu \bm 1$ and $\bm \Sigma=(1-\rho)\sigma^2\mathbb I+ \rho\sigma^2\bm 1\bm 1^\intercal$, the dual problem can be expressed as
\begin{equation}
\label{opt:rprob_dual}
\begin{array}{rl}
	\text{L}(\gamma) =\
	\inf & \alpha + \mu \bm{1^{\intercal} \beta} + 
			 \left< (1 - \rho)\sigma^2\mathbb{I} + \left( \mu^2 + \rho\sigma^2 \right) \bm{11}^\intercal, \bm{\Gamma}\right> \\
	\st  & \alpha \in \mathbb{R},~ \bm{\beta} \in \mathbb{R}^T,~ \bm{\Gamma} \in \mathbb{S}^T \\
	%&	  && \mathbf{\Gamma} \in \mathbb{S}^T \label{eq:lprob_dual_1} \\
	& \alpha + \bm{\xi^{\intercal} \beta} + \bm{\xi^{\intercal}\Gamma \xi} \geq 0 \quad \forall \bm{\xi} \geq \bm{0} \\
	& \alpha + \bm{\xi^{\intercal} \beta} + \bm{\xi^{\intercal}\Gamma \xi} \geq 1 \quad \forall \bm{\xi} \geq \bm{0}:\ \textstyle\prod_{t=1}^{T} \xi_t \leq \gamma.
	\end{array}
\end{equation}
%As $\bm \Sigma\succ\bm 0$, the dual optimization problems~\eqref{eq:rprob} and \eqref{opt:rprob_dual} share indeed the same optimal value; see {\em e.g.},~\cite[Proposition~3.4]{Shapiro01}.
By Lemma~\ref{prop:rprob_symmetry} below, the symmetry of problem~\eqref{opt:rprob_dual} implies that we may restrict attention to permutation-symmetric solutions of the form $\left( \alpha, \bm{\beta}, \bm{\Gamma}\right)$  with $\bm{\beta}= \beta\bm{1}$ and $\bm{\Gamma} = \gamma_1\mathbb{I} + \gamma_2 \bm{11^{\intercal}}$ for some $\beta, \gamma_1, \gamma_2 \in \mathbb{R}$. Thus, problem~\eqref{opt:rprob_dual} simplifies to
\begin{equation}
\label{opt:rprob_dual_simpl}
\begin{array}{rl}
	\text{L}(\gamma) =\
	\inf & \alpha + T\mu\beta +	T(\mu^2 + \sigma^2)\gamma_1 + T\left[ T\mu^2 + \sigma^2 + (T-1)\rho\sigma^2\right] \gamma_2 \\
	\st  & \alpha, \beta, \gamma_1, \gamma_2 \in \mathbb{R} \\
	& \alpha + \beta \Vert \bm{\xi} \Vert_1 + \gamma_1 \Vert \bm{\xi} \Vert_2^2 + \gamma_2 \Vert \bm{\xi} \Vert_1^2 \geq 0 \quad \forall \bm{\xi} \geq \bm{0} \\
	& \alpha + \beta \Vert \bm{\xi} \Vert_1 + \gamma_1 \Vert \bm{\xi} \Vert_2^2 + \gamma_2 \Vert \bm{\xi} \Vert_1^2 \geq 1
\quad \forall \bm{\xi} \geq \bm{0}:\ \textstyle\prod_{t=1}^{T} \xi_t \leq \gamma.
	\end{array}
\end{equation}
Lemma~\ref{lem:semi_infinite_constraints} then implies that~(\ref{opt:rprob_dual_simpl}) can be reduced to
\begin{equation}
\label{opt:rprob_dual_simpl2}
\begin{array}{rl}
	\text{L}(\gamma) =\
	\inf  & \alpha + T\mu\beta +	T(\mu^2 + \sigma^2)\gamma_1 + T\left[ T\mu^2 + \sigma^2 + (T-1)\rho\sigma^2 \right] \gamma_2 \\
	\st   & \alpha, \beta, \gamma_1, \gamma_2 \in \mathbb{R} \\
	& \displaystyle \inf_{s \geq 0}\ \alpha + \beta s + \gamma_2 s^2 + \frac{\gamma_1}{T}s^2 \geq 0 \\
	& \displaystyle \inf_{s \geq 0}\ \alpha + \beta s + \gamma_2 s^2 + \gamma_1 s^2 \geq 1 \\
	& \displaystyle \inf_{s \geq 0}\ \alpha + \beta s + \gamma_2 s^2 + \gamma_1 s^2 f_T\left(0, \frac{\gamma}{s^T} \right) \geq 1.
\end{array}
\end{equation}
By assigning a Lagrange multiplier $\lambda_1\geq 0$ to the constraint $s\geq 0$ and using the $\mathcal{S}$-lemma \cite{Polik07}, the first constraint in~\eqref{opt:rprob_dual_simpl2} can be reformulated as the linear matrix inequality
\begin{equation*}
\begin{aligned}
	\left[ \begin{array}{cc}
		\gamma_2 + \frac{\gamma_1}{T} & \frac{\beta - \lambda_1}{2} \\
		\frac{\beta - \lambda_1}{2} & \alpha
	\end{array} \right] \succeq \bm{0}\
	&\iff
	\left\lbrace \begin{array}{l}
	\alpha \geq 0 \\ 
	\gamma_2 + \frac{\gamma_1}{T} \geq 0 \\
	(\gamma_2 + \frac{\gamma_1}{T})\alpha \geq  \frac{1}{4} ( \beta - \lambda_1)^2
	\end{array} \right. \\
	&\iff
	\left\lbrace \begin{array}{l}
	\alpha \geq 0 \\ 
	\gamma_1 + T\gamma_2 \geq 0 \\
	\gamma_2 + \frac{\gamma_1}{T} + \alpha \geq \left\Vert \left( \beta - \lambda_1, \gamma_2 + \frac{\gamma_1}{T} - \alpha \right) \right\Vert_2,
	\end{array} \right.
\end{aligned}
\end{equation*}
where the first equivalence follows from the observation that a $2\times 2$-matrix is positive semidefinite iff it has non-negative diagonal elements as well as a non-negative determinant, while the second equivalence uses a well-known reformulation of hyperbolic constraints as second-order cone constraints \cite[p.~197]{Boyd04}. Similarly, the second constraint in~\eqref{opt:rprob_dual_simpl2} holds iff there exists $\lambda_2\geq 0$~with
\begin{equation*}
\begin{aligned}
	\left[ \begin{array}{cc}
		\gamma_2 + \gamma_1 & \frac{\beta - \lambda_2}{2} \\
		\frac{\beta - \lambda_2}{2} & \alpha - 1
	\end{array} \right] \succeq \bm{0}\
	&\iff
	\left\lbrace \begin{array}{l}
	\alpha \geq 1 \\ 
	\gamma_2 + \gamma_1 \geq 0 \\
	\gamma_2 + \gamma_1 + \alpha - 1 \geq \left\Vert \left( \beta - \lambda_2, \gamma_2 + \gamma_1 - \alpha + 1 \right) \right\Vert_2.
	\end{array} \right.
\end{aligned}
\end{equation*}
Lemma~\ref{prop:f_t-decomposition} below further allows us to decompose the third constraint in~\eqref{opt:rprob_dual_simpl2} into two simpler semi-infinite constraints.
\begin{subequations}
\begin{alignat}{2}
	&\hspace{-3mm}\inf_{s \in \left[ 0, T\gamma^{1/T} \right]}   \alpha + \beta s + \gamma_2 s^2 + \gamma_1 \frac{s^2}{T} \geq 1 \label{eq:rprob_dual_3_3} \\
	&\hspace{-3mm}\inf_{s \geq T\gamma^{1/T}} \left\{ \alpha + \beta s + \gamma_2 s^2 + \gamma_1 \min_{\underline\xi, \overline\xi \geq 0} \left\{ \underline\xi{}^2 + (T-1)\overline\xi{}^2 : \underline\xi + (T-1)\overline\xi = s,~ \underline\xi \,\overline\xi{}^{T-1} = \gamma \right\}\right\}\geq 1 \label{eq:rprob_dual_3_4}
\end{alignat}
\end{subequations}
As $s \in \left[ 0, T\gamma^{1/T} \right]$ iff $s ( T\gamma^{1/T} - s )\geq 0$, we can once again use the $\mathcal S$-lemma to show that~\eqref{eq:rprob_dual_3_3} holds iff there exists $\lambda_3\geq 0$ with 
\begin{equation*}
\begin{aligned}
	\left[ \begin{array}{cc}
		\gamma_2 + \frac{\gamma_1}{T} + \lambda_3 & \frac{\beta - \lambda_3 T\gamma^{1/T}}{2} \\
		\frac{\beta - \lambda_3 T\gamma^{1/T}}{2} & \alpha - 1
	\end{array} \right] \succeq \bm{0}\
	&\iff
	\left\lbrace \begin{array}{l}
	\alpha \geq 1 \\ 
	\gamma_2 + \frac{\gamma_1}{T} + \lambda_3 \geq 0 \\
	\gamma_2 + \frac{\gamma_1}{T} + \lambda_3 + \alpha - 1 \\ \quad  \geq \left\Vert \left( \beta - \lambda_3 T\gamma^{1/T}, \gamma_2+ \frac{\gamma_1}{T} + \lambda_3 - \alpha + 1 \right) \right\Vert_2.
	\end{array} \right.
\end{aligned}
\end{equation*}
Finally, it remains to be shown that~(\ref{eq:rprob_dual_3_4}) also admits a conic reformulation. To do so, we first argue that one can replace~(\ref{eq:rprob_dual_3_4}) with
\begin{equation}
\label{eq:rprob_dual_3_5}
	\inf_{s \geq T\gamma^{1/T}, \,\underline\xi, \overline\xi \geq 0} \left\{ \alpha + \beta s + \gamma_2 s^2 + \gamma_1 \left[\underline\xi{}^2 + (T-1)\overline\xi{}^2\right] : \underline\xi + (T-1)\overline\xi = s,~ \underline\xi \,\overline\xi{}^{T-1} = \gamma\right\} \geq 1
\end{equation}
without changing the optimal value of problem~\eqref{opt:rprob_dual_simpl2}. If $\gamma_1 \geq 0$, then~\eqref{eq:rprob_dual_3_5} is indeed equivalent to~\eqref{eq:rprob_dual_3_4}. On the other hand, if $\gamma_1 < 0$, we find
\begin{equation*}
\begin{aligned}
	&\inf_{s \geq T\gamma^{1/T}} \left\{ \alpha + \beta s + \gamma_2 s^2 + \gamma_1 \min_{\underline\xi, \overline\xi \geq 0} \left\{ \underline\xi{}^2 + (T-1)\overline\xi{}^2 : \underline\xi + (T-1)\overline\xi = s,~ \underline\xi \,\overline\xi{}^{T-1} = \gamma \right\}\right\} \\
	\geq &\inf_{s \geq T\gamma^{1/T}} \left\{ \alpha + \beta s + \gamma_2 s^2 + \gamma_1 \max_{\underline\xi, \overline\xi \geq 0} \left\{ \underline\xi{}^2 + (T-1)\overline\xi{}^2 : \underline\xi + (T-1)\overline\xi = s,~ \underline\xi \,\overline\xi{}^{T-1} = \gamma \right\}\right\} \\
	= &\inf_{s \geq T \gamma^{1/T}, \,\underline\xi, \overline\xi \geq 0} \left\{ \alpha + \beta s + \gamma_2 s^2 + \gamma_1 \left[\underline\xi{}^2 + (T-1)\overline\xi{}^2\right] : \underline\xi + (T-1)\overline\xi = s,~ \underline\xi \,\overline\xi{}^{T-1} = \gamma\right\} \\
	\geq &\inf_{s \geq T \gamma^{1/T}}\ \alpha + \beta s + \gamma_2 s^2 + \gamma_1 s^2 ,
\end{aligned} 
\end{equation*}
which means that~\eqref{eq:rprob_dual_3_4} is implied by the second semi-infinite constraint in problem~\eqref{opt:rprob_dual_simpl2}. By eliminating $s=\underline\xi + (T-1)\overline\xi$, the maximization problem on the left hand side of~(\ref{eq:rprob_dual_3_5}) reduces to
\begin{equation*}
\begin{aligned}
	\inf_{\underline\xi,\,\overline\xi \geq 0,~\underline\xi\, \overline\xi{}^{T-1} = \gamma}~\alpha + \beta \left[ \underline\xi + (T-1)\overline\xi \right] + \gamma_2 \left[ \underline\xi + (T-1)\overline\xi \right]^2 + \gamma_1 \left[ \underline\xi{}^2 + (T-1)\overline\xi{}^2 \right].
\end{aligned}
\end{equation*}
Note that the constraint $s \geq T\gamma^{1/T}$ has been dropped in the above formulation. This constraint is redundant due to the inequality of arithmetic and geometric means, which implies that
\[
	s = \underline\xi + (T-1)\overline\xi \geq T (\underline\xi \overline\xi^{T-1})^{1/T} = T \gamma^{1/T}.
\]
By setting $\kappa = \underline \xi^{1/(T-1)}$, we can further replace $\underline\xi$ and $\overline\xi$ with $\kappa^{T-1}$ and $\gamma^{1/(T-1)}/\kappa$, respectively. Using elementary manipulations, one can then show that~(\ref{eq:rprob_dual_3_5}) reduces to
\begin{align}
	\inf_{\kappa\geq 0}~~&
	(T-1)\gamma_1\gamma^{\frac{2}{T-1}} + 
	(T-1)^2\gamma_2\gamma^{\frac{2}{T-1}} + 
	(T-1)\beta\gamma^{\frac{1}{T-1}}\kappa + 
	(\alpha-1)\kappa^2 \nonumber \\ 
	&\hspace{3cm} + 2(T-1)\gamma_2\gamma^{\frac{1}{T-1}}\kappa^T +  \beta\kappa^{T+1} + 
	(\gamma_1 + \gamma_2)\kappa^{2T} ~ \geq ~ 0. \label{eq:polynomial}
\end{align}
Note that the objective of the maximization problem on the left hand side of~\eqref{eq:polynomial} constitutes a polynomial of degree $2T$ in $\kappa$ and is therefore representable as $l(\kappa)=\sum_{i=0}^{2T} a_i\kappa^i$, where
\begin{equation}
	\label{eq:a-coefficients}
	a_i=\left\{ \begin{array}{ll}
		(T-1)\gamma_1\gamma^{\frac{2}{T-1}} + (T-1)^2\gamma_2\gamma^{\frac{2}{T-1}}& \text{if } i=0,\\
		(T-1)\beta\gamma^{\frac{1}{T-1}} & \text{if } i=1,\\
		\alpha-1 & \text{if } i=2,\\
		2(T-1)\gamma_2\gamma^{\frac{1}{T-1}} & \text{if } i=T,\\
		\beta & \text{if }i=T+1,\\
		\gamma_1 + \gamma_2 & \text{if }i=2T,\\
		0 & \text{otherwise.}
	\end{array}
	\right.
\end{equation}
Here we assumed that $T>2$. For $T=2$, the quadratic monomial in $l(\kappa)$ would have the coefficient $\alpha-1+2(T-1)\gamma_2\gamma^{\frac{1}{T-1}}$ instead of $\alpha-1$. Thus, the case $T=2$ could be handled via a case distinction, which we omit for the sake of brevity. 

Constraint~\eqref{eq:rprob_dual_3_5} thus requires the polynomial $l(\kappa)$ to be non-negative for all $\kappa\geq 0$. By the Markov-Lukacs Theorem~\cite{Nudelman}, this is equivalent to postulating that $l(\kappa)$ admits a sum-of-squares representation of the form $l(\kappa) = p(\kappa) + \kappa q(\kappa)$, where $p(\kappa) = \sum_{i=0}^{2T}p_i\kappa^i$ and $q(\kappa) = \sum_{i=0}^{2T-2} q_i\kappa^i$ are sum-of-squares polynomials of degrees $2T$ and $2T-2$, respectively. By matching the coefficients of all monomials, one verifies that the identity $l(\kappa) = p(\kappa) + \kappa q(\kappa)$ holds iff
\begin{equation}
\label{eq:coefficient-matching}
	p_0 = a_0, \quad 
	p_t + q_{t-1} = a_t \quad \forall t = 1, \hdots, 2T-1 \quad \text{and} \quad
	p_{2T} = a_{2T},
\end{equation}
Moreover, by~\cite[Theorem~3]{Nesterov00}, $p(\kappa)$ and $q(\kappa)$ are sum-of-squares polynomials iff there exist positive semidefinite matrices $\bm{P} \in \mathbb{S}^{T+1}_+$ and $\bm{Q} \in \mathbb{S}^{T}_+$ such that
\begin{equation}
\label{eq:sum-of-squares}
	p_t = \sum_{i+j = t} P_{i,j} \quad \forall t = 0, \hdots, 2T
	\quad \text{and} \quad
	q_t = \sum_{i+j = t} Q_{i,j} \quad \forall t = 0, \hdots, 2T-2.
\end{equation}
Thus, \eqref{eq:rprob_dual_3_5} holds iff the conic constraints~\eqref{eq:coefficient-matching} and~\eqref{eq:sum-of-squares} are satisfied. The claim now follows by replacing the three semi-infinite constraints in~\eqref{opt:rprob_dual_simpl2} with their explicit conic reformulations. \qed
\end{proof}

The proof of Theorem~\ref{thm:rprob} relies on 4 auxiliary lemmas, which we prove next.

\begin{lemma}
\label{prop:rprob_symmetry}
Problem~(\ref{opt:rprob_dual}) has a permutation symmetric minimizer $\left( \alpha^\star, \bm{\beta}^\star, \bm{\Gamma}^\star\right)$ that satisfies $\bm{\beta}^\star = \beta^\star\bm{1}$ and $\bm{\Gamma}^\star = \gamma_1^\star \mathbb{I} + \gamma_2^\star \bm{11^{\intercal}}$ for some $\beta^\star, \gamma_1^\star, \gamma_2^\star \in \mathbb{R}$.
\end{lemma}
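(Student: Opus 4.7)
The plan is a standard Reynolds-type averaging argument that exploits the invariance of problem~\eqref{opt:rprob_dual} under the natural action of the symmetric group $S_T$. For any permutation matrix $\bm\Pi$, the triple $(\alpha,\bm\beta,\bm\Gamma)$ is mapped to $(\alpha, \bm\Pi\bm\beta, \bm\Pi\bm\Gamma\bm\Pi^\intercal)$. I would first check that this action preserves feasibility: the change of variables $\bm\xi\mapsto\bm\Pi^\intercal\bm\xi$ is a bijection on $\mathbb R^T_+$ and on the product-constrained set $\{\bm\xi\geq\bm 0:\prod_t\xi_t\leq\gamma\}$, so both semi-infinite constraints in~\eqref{opt:rprob_dual} are stable under the action. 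The action also leaves the objective invariant because $\bm\mu=\mu\bm 1$ satisfies $\bm\Pi\bm\mu=\bm\mu$ and $\bm\Sigma=(1-\rho)\sigma^2\mathbb I+\rho\sigma^2\bm{11}^\intercal$ satisfies $\bm\Pi\bm\Sigma\bm\Pi^\intercal=\bm\Sigma$ for every $\bm\Pi\in S_T$.

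Second, given any minimizer $(\alpha,\bm\beta,\bm\Gamma)$, I would average over the group by defining
\[
    \bar{\bm\beta}=\frac{1}{T!}\sum_{\bm\Pi\in S_T}\bm\Pi\bm\beta,\qquad \bar{\bm\Gamma}=\frac{1}{T!}\sum_{\bm\Pi\in S_T}\bm\Pi\bm\Gamma\bm\Pi^\intercal.
\]
Convexity of the feasible set (each semi-infinite inequality is linear in $(\bm\beta,\bm\Gamma)$ for fixed $\bm\xi$) and linearity of the objective imply that $(\alpha,\bar{\bm\beta},\bar{\bm\Gamma})$ is still feasible and still optimal. By construction, $\bar{\bm\beta}$ is invariant under every coordinate permutation and must therefore be a scalar multiple of $\bm 1$, say $\beta^\star\bm 1$. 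Similarly, $\bar{\bm\Gamma}\in\mathbb S^T$ is fixed by simultaneous row--column permutations, which forces all diagonal entries to be equal and all off-diagonal entries to be equal. Such a matrix can be written as $\gamma_1^\star\mathbb I+\gamma_2^\star\bm{11}^\intercal$ for suitable scalars $\gamma_1^\star,\gamma_2^\star$, yielding the claimed form.

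The one subtlety I foresee is the question of attainment of the dual infimum, which is tacitly used when we speak of a minimizer. I would resolve this by appealing to Theorem~\ref{thm:slater} together with~\cite[Proposition~3.4]{Shapiro01}: strong duality holds, and since $\text{L}(\gamma)$ is a probability and hence finite, one can apply the averaging step directly to a minimizing sequence, obtaining a sequence of four-parameter symmetric triples $(\alpha^n,\beta^n,\gamma_1^n,\gamma_2^n)\in\mathbb R^4$ whose objective converges to $\text{L}(\gamma)$. Coercivity of the symmetrized objective on the feasible set (controlled by the second-moment weights $\mu^2+\sigma^2>0$ and $T\mu^2+\sigma^2+(T-1)\rho\sigma^2>0$ on $\gamma_1,\gamma_2$, by $T\mu>0$ on $\beta$, and by the normalization term on $\alpha$) then extracts a convergent subsequence whose limit is the desired symmetric minimizer. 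This attainment argument, rather than the averaging itself, is where I would expect the only real work to lie.
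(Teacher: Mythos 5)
Your proposal is correct and follows essentially the same route as the paper: verify that the objective and the semi-infinite constraints of~\eqref{opt:rprob_dual} are invariant under the action $(\alpha,\bm\beta,\bm\Gamma)\mapsto(\alpha,\mathbf P_\pi\bm\beta,\mathbf P_\pi\bm\Gamma\mathbf P_\pi^\intercal)$, average the optimal solution over all of $\mathfrak P$ using convexity, and read off the form $\beta^\star\bm 1$ and $\gamma_1^\star\mathbb I+\gamma_2^\star\bm{11}^\intercal$ from permutation invariance. The attainment issue you flag is handled in the paper simply by citing~\cite[Proposition~3.4]{Shapiro01}, which already yields a dual minimizer, so your auxiliary minimizing-sequence and compactness argument (whose appeal to coercivity of a linear objective would in any case need care) is not needed.
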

\begin{proof}
Let $\mathfrak P$ be the set of all permutations of the index set $\{1,\ldots, T\}$. For any $\pi\in \mathfrak P$ we denote by $\mathbf{P}_\pi\in \mathbb{R}^{T \times T}$ the permutation matrix defined through $(\mathbf{P}_\pi)_{ij}=1$ if $\pi(i)=j$; $=0$ otherwise. Let $\left( \alpha, \bm{\beta}, \bm{\Gamma}\right)$ by any optimal solution to~(\ref{opt:rprob_dual}), which exists by~\cite[Proposition~3.4]{Shapiro01}. We first show that the permuted solution $(\alpha_\pi,\bm\beta_\pi,\bm \Gamma_\pi)= \left( \alpha, \mathbf{P}_\pi\bm{\beta}, \mathbf{P}_\pi\bm{\Gamma}\mathbf{P}_\pi^\intercal\right)$ is also optimal in~(\ref{opt:rprob_dual}). To this end, we observe that
\begin{equation*}
\begin{aligned}
	&\alpha_\pi + \mu \bm{1^{\intercal}}\bm{\beta}_\pi + 
		\left< (1 - \rho)\sigma^2\mathbb{I} + \left( \mu^2 + \rho\sigma^2 \right) \bm{11}^\intercal, \bm{\Gamma}_\pi \right> \\
	=~&\alpha + \mu \bm{1^{\intercal}}\mathbf{P}_\pi\bm{\beta} + 
		\left< (1 - \rho)\sigma^2\mathbb{I} + \left( \mu^2 + \rho\sigma^2 \right) \bm{11}^\intercal, \mathbf{P}_\pi\bm{\Gamma}\mathbf{P}_\pi^\intercal \right> \\
	=~&\alpha + \mu (\mathbf{P}_\pi^\intercal\bm{1})^{\intercal}\bm{\beta} + 
		\left< (1 - \rho)\sigma^2 \mathbf{P}_\pi^\intercal \mathbf{P}_\pi + \left( \mu^2 + \rho\sigma^2 \right) \mathbf{P}_\pi^\intercal \bm{1} (\mathbf{P}_\pi^\intercal \bm{1})^\intercal , \bm{\Gamma} \right> \\
	=~&\alpha + \mu \bm{1}^\intercal \bm{\beta} + 
		\left< 1 - \rho)\sigma^2\mathbb{I} + \left( \mu^2 + \rho\sigma^2 \right) \bm{11}^\intercal, \bm{\Gamma} \right>,
\end{aligned}
\end{equation*}
where the first equality follows from the definition of $\alpha_\pi$, $\bm \beta_\pi$ and $\bm \Gamma_\pi$, the second equality exploits the cyclicity property of the trace scalar product, and the third equality holds due to the permutation symmetry of $\bm 1$ and the fact that $\mathbf{P}_\pi^\intercal = \mathbf{P}_{\pi^{-1}}=\mathbf{P}_\pi^{-1}$. Thus, $(\alpha_\pi,\bm\beta_\pi,\bm \Gamma_\pi)$ has the same objective value as $\left( \alpha, \bm{\beta}, \bm{\Gamma}\right)$.
To show that $\left( \alpha_\pi, \bm{\beta}_\pi, \bm{\Gamma}_\pi\right)$ is feasible in~\eqref{opt:rprob_dual}, we note that
\begin{equation*}
\begin{array}{cll}
	& \alpha_\pi + \bm{\xi}^{\intercal}\bm{\beta}_\pi + \bm{\xi}^{\intercal}\bm{\Gamma}_\pi \bm{\xi} \geq 1_{\lbrace \prod_{t=1}^T \xi_t \leq \gamma \rbrace} & \forall \bm{\xi} \geq \bm{0} \\
	\iff & \alpha + (\mathbf{P}_{\pi^{-1}}\bm{\xi})^{\intercal} \bm{\beta} + (\mathbf{P}_{\pi^{-1}}\bm{\xi})^{\intercal}\ \bm{\Gamma} (\mathbf{P}_{\pi^{-1}}\bm{\xi}) \geq 1_{\lbrace \prod_{t=1}^T \xi_t \leq \gamma \rbrace} & \forall \bm{\xi} \geq \bm{0} \\
	\iff & \alpha + \bm{\xi^{\intercal} \beta} + \bm{\xi}^{\intercal} \bm{\Gamma}\bm{\xi} \geq 1_{\lbrace \prod_{t=1}^T \xi_{\pi(t)} \leq \gamma \rbrace} & \forall \bm{\xi} \geq \bm{0} \\
	\iff & \alpha + \bm{\xi^{\intercal} \beta} + \bm{\xi}^{\intercal} \bm{\Gamma}\bm{\xi} \geq 1_{\lbrace \prod_{t=1}^T \xi_t \leq \gamma \rbrace} & \forall \bm{\xi} \geq \bm{0},
\end{array}
\end{equation*}
where the first equivalence follows from the definition of $\alpha_\pi$, $\bm \beta_\pi$ and $\bm \Gamma_\pi$ and because $\mathbf{P}_\pi^\intercal = \mathbf{P}_\pi^{-1}$, the second equivalence holds because permutations are bijective, and the third equivalence relies on the permutation symmetry of the non-negative orthant. Thus, $(\alpha_\pi,\bm\beta_\pi,\bm \Gamma_\pi)$ satisfies the semi-infinite constraints in~\eqref{opt:rprob_dual} whenever $(\alpha,\bm\beta,\bm \Gamma)$ does. We conclude that $(\alpha_\pi,\bm\beta_\pi,\bm \Gamma_\pi)$ is feasible and thus optimal in~\eqref{opt:rprob_dual} for every $\pi\in\mathfrak P$.

Due to the convexity of the (semi-infinite) linear program~(\ref{opt:rprob_dual}), the equally weighted average $\left( \alpha^\star, \bm{\beta}^\star, \bm{\Gamma}^\star\right)= \frac{1}{T!}\sum_{\pi\in\mathfrak P} (\alpha_\pi,\bm\beta_\pi,\bm \Gamma_\pi)$ constitutes another optimal solution. It is now clear that $\mathbf P_\pi \bm \beta^\star=\bm \beta^\star$ and $\mathbf P_\pi \bm \Gamma^\star\mathbf P_\pi^\intercal=\bm \Gamma^\star$ for any $\pi\in \mathfrak P$ since $\pi(\mathfrak P)=\mathfrak P$. Thus, the claim follows. \qed
\end{proof}

\begin{lemma}\label{lem:semi_infinite_constraints}
For $\alpha, \beta, \gamma_1, \gamma_2, \Delta \in \mathbb{R}$ and $\underline{\gamma}, \overline{\gamma} \in \mathbb{R}_+ \cup \{ \infty \}$, $\underline{\gamma} \leq \overline{\gamma}$, we have
\begin{equation}
\label{eq:rprob_dual_lvl1}
\begin{aligned}
	&\inf_{\bm{\xi} \geq \bm{0}} \left\{ \alpha + \beta \Vert \bm{\xi} \Vert_1 + \gamma_1 \Vert \bm{\xi} \Vert_2^2 + \gamma_2 \Vert \bm{\xi} \Vert_1^2 \, : \, \textstyle \prod_{t = 1}^T \xi_t \in [ \underline{\gamma}, \overline{\gamma} ] \right\} \geq \Delta \\
	\iff\quad & \left\lbrace \begin{array}{l} 
	\displaystyle \inf_{s \geq T \underline{\gamma}^{1/T}} ~ \alpha + \beta s + \gamma_2 s^2 + \gamma_1 s^2 ~ f_T (\underline{\gamma} / s^T, \overline{\gamma} / s^T) \geq \Delta \\
	\displaystyle \inf_{s \geq T \underline{\gamma}^{1/T}} ~ \alpha + \beta s + \gamma_2 s^2 + \gamma_1 s^2 ~ g_T (\underline{\gamma} / s^T, \overline{\gamma} / s^T) \geq \Delta,
	\end{array} \right.
\end{aligned}
\end{equation}
where
\begin{subequations}
 \label{opt:aux-fg}
\begin{align}
	 \label{opt:aux-f}
	f_T(\underline{\gamma}, \overline{\gamma}) &= \inf_{\bm \xi\geq \bm 0}  \left\{\Vert \bm{\xi} \Vert_2^2 :  \Vert \bm{\xi} \Vert_1 = 1,~ \textstyle \prod_{t=1}^T \xi_t \in [ \underline{\gamma}, \overline{\gamma} ] \right\} \\
\text{and} \quad
	g_T(\underline{\gamma}, \overline{\gamma}) &= \sup_{\bm \xi\geq \bm 0}  \left\{\Vert \bm{\xi} \Vert_2^2 :  \Vert \bm{\xi} \Vert_1 = 1,~ \textstyle \prod_{t=1}^T \xi_t \in [ \underline{\gamma}, \overline{\gamma} ] \right\}.
	 \label{opt:aux-g}
	\end{align}
\end{subequations}
Moreover, we have $f_T (\underline{\gamma}, \infty) = 1/T$ for $\underline{\gamma} \leq T^{-T}$ and $g_T (0, \overline{\gamma}) = 1$ for $\overline{\gamma} \in \mathbb{R}_+ \cup \{ \infty \}$.
\end{lemma}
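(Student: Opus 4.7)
The plan is to exploit the $\ell_1$-homogeneity of the objective and the product constraint in the LHS of~\eqref{eq:rprob_dual_lvl1} by writing every feasible vector as $\bm\xi = s\bm\eta$ with $s = \|\bm\xi\|_1 \geq 0$ and $\bm\eta$ on the unit simplex whenever $s > 0$. Under this substitution $\|\bm\xi\|_2^2 = s^2\|\bm\eta\|_2^2$ and $\prod_t \xi_t = s^T \prod_t\eta_t$, so the objective rewrites as $\alpha + \beta s + \gamma_2 s^2 + \gamma_1 s^2 \|\bm\eta\|_2^2$ and the product constraint becomes $\prod_t \eta_t \in [\underline\gamma/s^T,\,\overline\gamma/s^T]$. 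The AM--GM inequality gives $\prod_t \eta_t \leq T^{-T}$ on the simplex with equality at $\bm{1}/T$, so this renormalized constraint is non-vacuous iff $\underline\gamma/s^T \leq T^{-T}$, i.e., iff $s \geq T\underline\gamma^{1/T}$; this fixes the domain of the reduced outer infimum. (The degenerate case $s=0$ is admissible only when $\underline\gamma = 0$, in which case $\bm\xi = \bm{0}$ and the objective collapses to $\alpha$ by continuity of the reduced one-dimensional expression as $s\downarrow 0$.)

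The reason two reduced conditions arise is that the inner objective is linear in $\|\bm\eta\|_2^2$ with coefficient $\gamma_1 s^2$ whose sign is not fixed a priori. For the forward direction, suppose the LHS holds. For any admissible $s$, the minimizer of~\eqref{opt:aux-f} (respectively the maximizer of~\eqref{opt:aux-g}) at parameters $(\underline\gamma/s^T,\,\overline\gamma/s^T)$ provides a feasible $\bm\eta$ whose lifting $\bm\xi = s\bm\eta$ is admissible for the LHS; evaluating the LHS inequality at this lifting and then taking the infimum over $s$ yields the $f_T$- (respectively $g_T$-) condition. For the reverse direction, assume both reduced conditions. For any admissible $\bm\xi = s\bm\eta$ we have $\|\bm\eta\|_2^2 \in [f_T(\underline\gamma/s^T,\overline\gamma/s^T),\,g_T(\underline\gamma/s^T,\overline\gamma/s^T)]$ by the very definitions of $f_T$ and $g_T$, so
\[
\alpha + \beta s + \gamma_2 s^2 + \gamma_1 s^2 \|\bm\eta\|_2^2 \ \geq\ \min\bigl\{\alpha + \beta s + \gamma_2 s^2 + \gamma_1 s^2 f_T,\ \alpha + \beta s + \gamma_2 s^2 + \gamma_1 s^2 g_T\bigr\} \ \geq\ \Delta,
\]
where the first inequality uses the monotonicity of a linear function on the interval $[f_T,g_T]$ (handling both signs of $\gamma_1$ uniformly) and the second invokes each reduced condition in turn. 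Taking the infimum over $\bm\xi$ then yields the LHS.

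Finally, I would verify the two explicit formulas. For $f_T(\underline\gamma,\infty)$ with $\underline\gamma \leq T^{-T}$, the Cauchy--Schwarz inequality (equivalently QM--AM) gives $\|\bm\eta\|_2^2 \geq \|\bm\eta\|_1^2/T = 1/T$ on the simplex, with equality attained at $\bm\eta = \bm{1}/T$; this minimizer is feasible precisely under the stated hypothesis, since $\prod_t \eta_t = T^{-T} \geq \underline\gamma$. For $g_T(0,\overline\gamma)$, the bound $\|\bm\eta\|_2^2 \leq \|\bm\eta\|_1 \|\bm\eta\|_\infty \leq 1$ on the simplex is tight at any vertex $\mathbf{e}_i$, which satisfies $\prod_t \eta_t = 0 \in [0,\overline\gamma]$ trivially. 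The main obstacle here is not conceptual but bookkeeping: carefully handling the two boundary regimes $\underline\gamma = 0$ and $\overline\gamma = \infty$, and verifying that the homogenization argument remains consistent at $s=0$ when $\underline\gamma = 0$ (which it does by continuity). The conceptual heart of the proof is simply the scaling reduction that converts a $T$-dimensional semi-infinite constraint into a one-dimensional one parameterized by $s$ and the auxiliary functions $f_T,\,g_T$.
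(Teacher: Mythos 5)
Your proof is correct and follows essentially the same route as the paper: decompose the infimum over $\bm\xi$ into an outer infimum over $s=\Vert\bm\xi\Vert_1\geq T\underline{\gamma}^{1/T}$ and an inner extremal problem over the rescaled simplex, then reduce the inner problem to $f_T$ and $g_T$ and verify the two closed-form values via the norm inequalities $\frac{1}{T}\Vert\bm\xi\Vert_1^2\leq\Vert\bm\xi\Vert_2^2\leq\Vert\bm\xi\Vert_1^2$. The only (cosmetic) difference is that you handle the sign of $\gamma_1$ uniformly by observing that a linear function of $\Vert\bm\eta\Vert_2^2$ on the interval $[f_T,g_T]$ is minimized at an endpoint, whereas the paper performs an explicit case distinction on $\gamma_1\geq 0$ versus $\gamma_1<0$; both arguments are equivalent.
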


\begin{figure}[tb]
 	\centering
	\includegraphics[width=0.37\paperwidth]{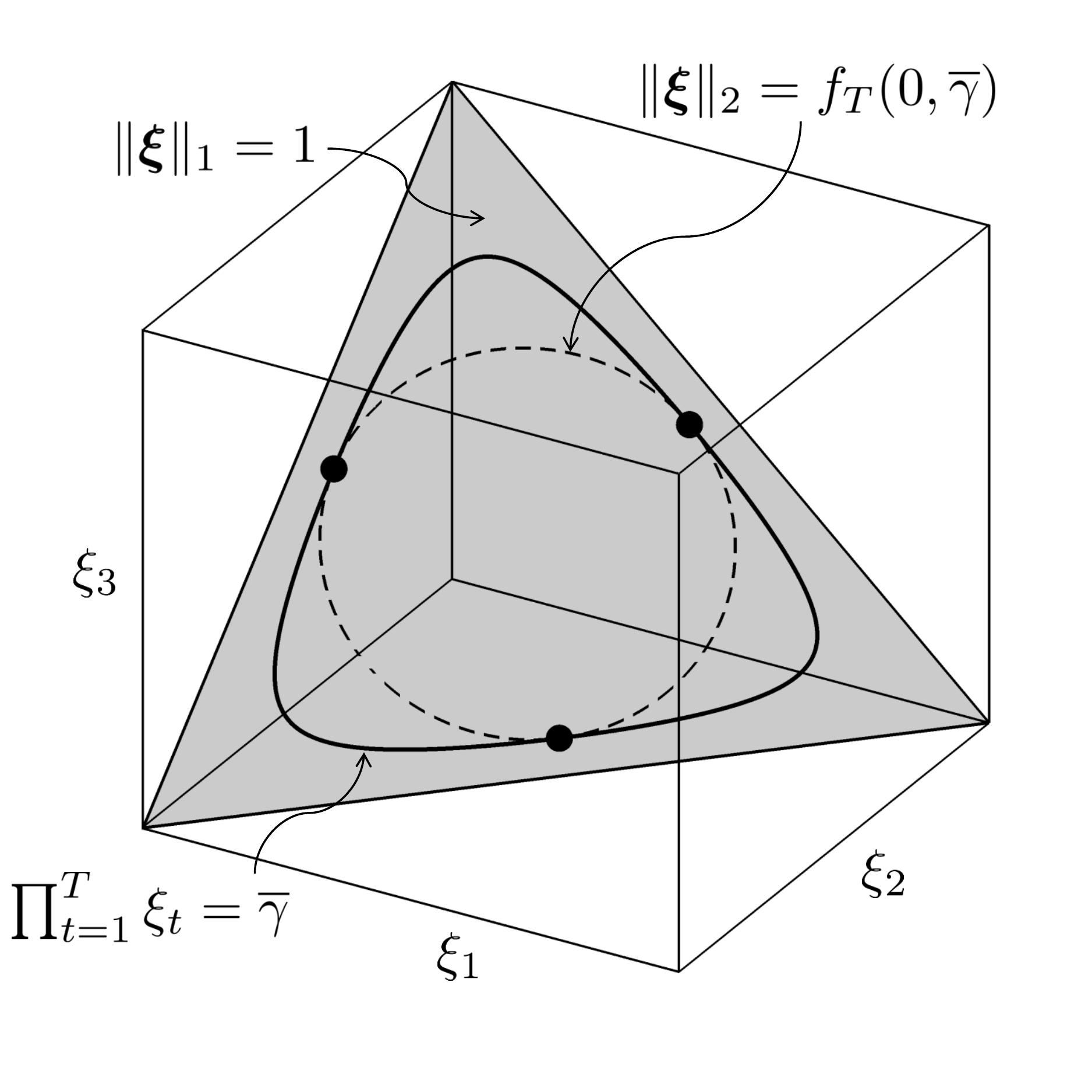}
	\includegraphics[width=0.37\paperwidth]{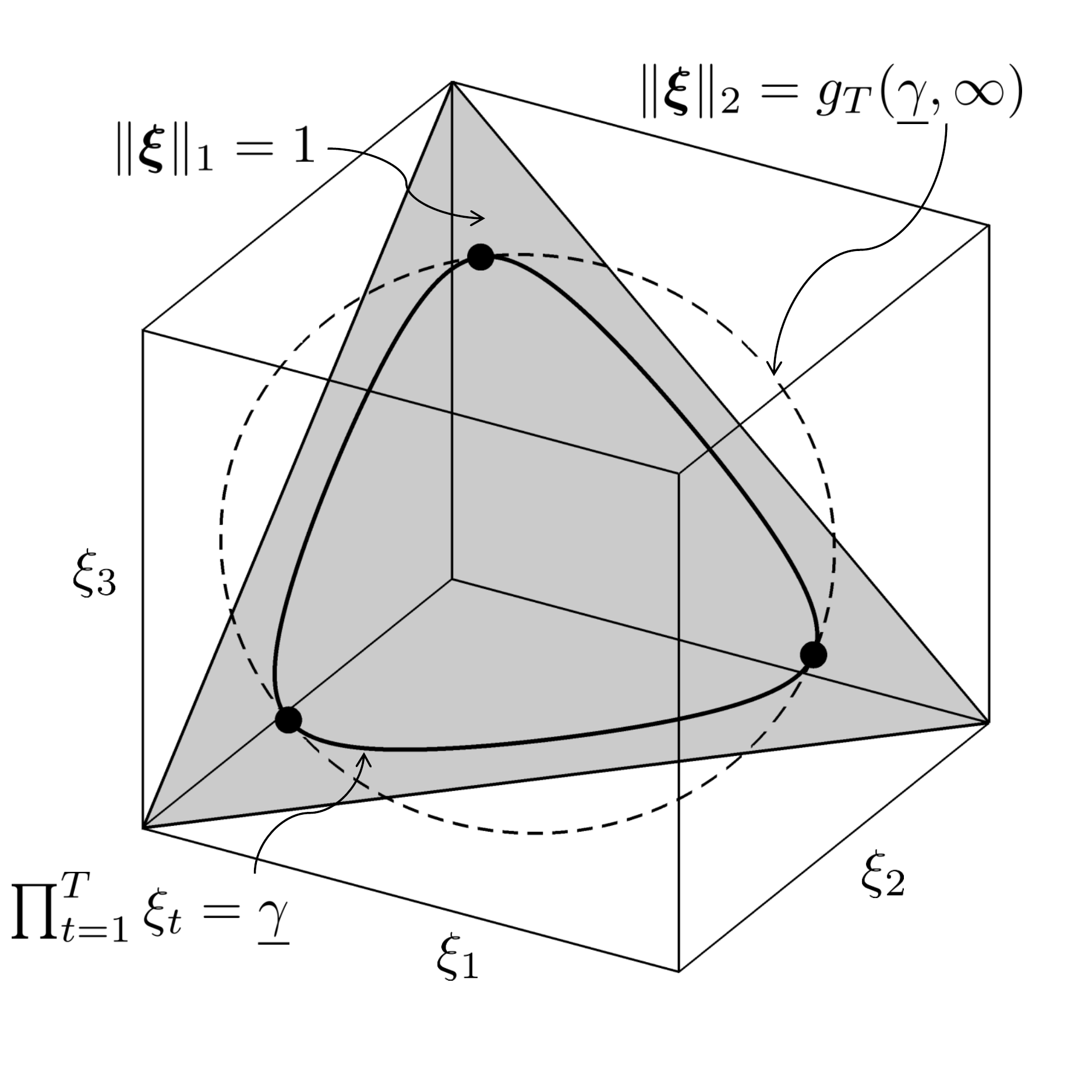}
	\caption{The subproblems \eqref{opt:aux-f} (left) and \eqref{opt:aux-g} (right) determine the smallest and the largest spheres centered at the origin that intersect with the hyperplane $\Vert \bm\xi \Vert_1 = 1$ (shaded areas) and the hyperbola $\prod_{t=1}^T \xi_t = \overline\gamma, \underline\gamma$ (solid lines). The dashed circles represent level sets of the objective function $\Vert \bm\xi \Vert_2^2$. Both graphs illustrate the case where $T = 3$.}
	\label{fig:f_and_g}
\end{figure}

\noindent Figure~\ref{fig:f_and_g} visualizes the two parametric subproblems \eqref{opt:aux-f} and \eqref{opt:aux-g}. Note that both problems are non-convex whenever $\overline{\gamma} < \infty$ as their last constraints are equivalent to $(\prod_{t=1}^T \xi_t)^{1/T} \in [ \underline{\gamma}^{1/T}, \overline{\gamma}^{1/T} ]$ and because geometric means are concave \cite[\S~3.1]{Boyd04}. Moreover, the subproblem~\eqref{opt:aux-g} remains non-convex for $\overline{\gamma} = \infty$ since it maximizes a convex objective function.

\begin{proof}[Proof of Lemma~\ref{lem:semi_infinite_constraints}:]
The first constraint in~\eqref{eq:rprob_dual_lvl1} can be reduced to
\begin{equation}\label{eq:aux_constraint_lemma}
\inf_{s \geq T \underline{\gamma}^{1/T}} ~~ \alpha + \beta s + \gamma_2 s^2 +  
	\inf_{\bm{\xi} \geq \bm{0}} \left\{ \gamma_1 \Vert \bm{\xi} \Vert_2^2 : \Vert \bm{\xi} \Vert_1 = s,~ \textstyle \prod_{t = 1}^T \xi_t \in [ \underline{\gamma}, \overline{\gamma} ] \right\} ~\geq \Delta \\
	\end{equation}
by decomposing the maximization over all $\bm{\xi} \geq \bm{0}$ into two nested maximization problems over all $s \geq T \underline{\gamma}^{1/T}$ and over all $\bm{\xi} \geq \bm{0}$ with $\|\bm\xi\|_1=s$, respectively. Here, the lower bound on $s$ is owed to the fact that there is $\bm{\xi} \geq \bm{0}$ satisfying $\|\bm\xi\|_1=s$ and $\prod_{t = 1}^T \xi_t \in [\underline{\gamma}, \overline{\gamma}]$ if and only if $s \geq T \underline{\gamma}^{1/T}$. A case distinction on the sign of $\gamma_1$ shows that constraint~\eqref{eq:aux_constraint_lemma} holds if and only if
\begin{equation*}
%\label{eq:rprob_dual_lvl1}
\begin{aligned}
& \left\lbrace \begin{array}{l} 
	\displaystyle  \inf_{s \geq T \underline{\gamma}^{1/T}} ~ \alpha + \beta s + \gamma_2 s^2 +  
	\gamma_1 \inf_{\bm{\xi} \geq \bm{0}} \left\{ \Vert \bm{\xi} \Vert_2^2 : \Vert \bm{\xi} \Vert_1 = s, ~\textstyle \prod_{t = 1}^T \xi_t \in [ \underline{\gamma}, \overline{\gamma} ] \right\} ~\geq \Delta \\
	\displaystyle  \inf_{s \geq T \underline{\gamma}^{1/T}} ~ \alpha + \beta s + \gamma_2 s^2 +  
	\gamma_1 \sup_{\bm{\xi} \geq \bm{0}} \left\{ \Vert \bm{\xi} \Vert_2^2 : \Vert \bm{\xi} \Vert_1 = s,~ \textstyle \prod_{t = 1}^T \xi_t \in [ \underline{\gamma}, \overline{\gamma} ] \right\} ~\geq \Delta %\\
%	\left\lbrace \max_{s \geq 0} ~~ \alpha + \beta s + \gamma_2 s^2 +  
%	\gamma_1 \displaystyle \max_{\bm{\xi} \geq \bm{0}:~\Vert \bm{\xi} \Vert_1 = s} \Vert \bm{\xi} \Vert_2^2 \right\rbrace \leq 1
	\end{array} \right. 
\end{aligned}
\end{equation*}
is satisfied. The change of variables $\bm{\xi} \leftarrow s \bm{\xi}$ shows that this constraint system is equivalent to the second constraint system in~\eqref{eq:rprob_dual_lvl1}. Finally, we have $f_T (\underline{\gamma}, \infty) = 1/T$ for $\underline{\gamma} \leq T^{-T}$ and $g_T (0, \overline{\gamma}) = 1$ for $\overline{\gamma} \in \mathbb{R}_+ \cup \{ \infty \}$ since the inequalities $\frac{1}{T}\Vert \bm{\xi} \Vert_1^2 \leq \Vert \bm{\xi} \Vert_2^2 \leq \Vert \bm{\xi} \Vert_1^2$ are tight for $\bm \xi = \bm 1$ and $\bm \xi = \mathbf e_i$, respectively. \qed
\end{proof}

\begin{lemma}
\label{prop:f_t-decomposition}
For $T \geq 2$, $\underline{\gamma}= 0$ and $\overline{\gamma}\geq 0$, the optimal value $f_T (0, \overline{\gamma})$ of~\eqref{opt:aux-f} equals
\begin{equation}
\label{eq:f_t-decomposition0}
f_T(0, \overline{\gamma}) = \left\{ \begin{array}{ll}
\displaystyle \min_{\underline \xi \geq 0,\, \overline \xi \geq 0}\left\{  \underline \xi^2 + (T-1) \overline \xi{}^2 : \underline \xi + (T-1) \overline \xi = 1,~ \underline \xi \, \overline \xi{}^{T-1} = \overline{\gamma}\right\}  & \text{if } 0\leq \overline{\gamma} \leq T^{-T},\\[2ex]
\frac{1}{T} & \text{if } \overline{\gamma} > T^{-T}.
\end{array} \right.
\end{equation}
\end{lemma}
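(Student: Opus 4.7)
I would first dispose of the regime $\overline{\gamma} > T^{-T}$ by taking $\bm{\xi} = \bm{1}/T$: it is feasible for~\eqref{opt:aux-f} because $\prod_{t=1}^T \xi_t = T^{-T} < \overline{\gamma}$, and attains $\|\bm{\xi}\|_2^2 = 1/T$, which is the Cauchy--Schwarz lower bound under $\|\bm{\xi}\|_1 = 1$. Hence $f_T(0, \overline{\gamma}) = 1/T$ in that case, matching the second branch of~\eqref{eq:f_t-decomposition0}. The remainder of the argument handles $0 \le \overline{\gamma} \le T^{-T}$ and proves both inequalities in the first branch.

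The direction $f_T(0, \overline{\gamma}) \le \text{RHS}$ is immediate: any $(\underline{\xi}, \overline{\xi})$ feasible for the right-hand side lifts to the vector $\bm{\xi} = (\underline{\xi}, \overline{\xi}, \ldots, \overline{\xi}) \in \mathbb{R}_+^T$, which is feasible for~\eqref{opt:aux-f} and whose squared $2$-norm equals the right-hand side objective. For the converse, I would pick an optimizer $\bm{\xi}^\star$ (which exists by compactness and continuity). Whenever $\overline{\gamma} < T^{-T}$, the product constraint must be active at $\bm{\xi}^\star$, for otherwise $\bm{\xi}^\star$ would coincide with the unconstrained minimizer $\bm{1}/T$ of the convex quadratic $\|\cdot\|_2^2$ over the simplex, contradicting $\prod_{t=1}^T \xi_t^\star \le \overline{\gamma} < T^{-T}$. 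The degenerate case $\overline{\gamma} = 0$ forces some coordinate of $\bm{\xi}^\star$ to vanish, and Cauchy--Schwarz on the remaining $T-1$ coordinates yields $\|\bm{\xi}^\star\|_2^2 \ge 1/(T-1)$, matching the right-hand side at $(\underline{\xi}, \overline{\xi}) = (0, 1/(T-1))$. For $\overline{\gamma} \in (0, T^{-T}]$ all coordinates of $\bm{\xi}^\star$ are strictly positive, so the KKT stationarity condition $2\xi_t^\star = \lambda - \mu\overline{\gamma}/\xi_t^\star$ forces every $\xi_t^\star$ to satisfy the single univariate quadratic $2x^2 - \lambda x + \mu\overline{\gamma} = 0$; hence $\bm{\xi}^\star$ takes at most two distinct values $\underline{\xi} \le \overline{\xi}$, and I denote by $k$ the multiplicity of $\underline{\xi}$.

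The crux is to show $k = 1$. I would argue by contradiction: assuming $k \in \{2, \ldots, T-1\}$, I select two coordinates equal to $\underline{\xi}$ and one coordinate equal to $\overline{\xi}$, and perturb them by $(a, b, c) = (\epsilon + C\epsilon^2/2,\; -\epsilon + C\epsilon^2/2,\; -C\epsilon^2)$, which preserves $\|\bm{\xi}\|_1 = 1$ for every $C$. A direct second-order Taylor expansion yields
\begin{equation*}
\Delta \|\bm{\xi}\|_2^2 = 2\epsilon^2\bigl(1 - C(\overline{\xi} - \underline{\xi})\bigr) + O(\epsilon^4)
\quad\text{and}\quad
\Delta \log\!\Bigl(\prod_{t=1}^T \xi_t\Bigr) = \epsilon^2 \Bigl( \frac{C(\overline{\xi} - \underline{\xi})}{\underline{\xi}\,\overline{\xi}} - \frac{1}{\underline{\xi}^2} \Bigr) + O(\epsilon^4).
\end{equation*}
Both leading coefficients are strictly negative whenever $C$ lies in the open interval $\bigl(1/(\overline{\xi} - \underline{\xi}),\; \overline{\xi}/(\underline{\xi}(\overline{\xi} - \underline{\xi}))\bigr)$, which is non-empty precisely because $\underline{\xi} < \overline{\xi}$. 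For sufficiently small $\epsilon > 0$ the perturbation therefore keeps all coordinates positive, preserves the mean constraint, does not increase the product, and strictly decreases $\|\bm{\xi}\|_2^2$, contradicting optimality of $\bm{\xi}^\star$. Hence $k = 1$, so $\bm{\xi}^\star = (\underline{\xi}, \overline{\xi}, \ldots, \overline{\xi})$ up to permutation for some $(\underline{\xi}, \overline{\xi})$ satisfying the constraints of the right-hand side, which closes the matching lower bound.

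\textbf{Main obstacle.} The delicate step is calibrating the descent direction: the unavoidable first-order increase of $\log\prod_t \xi_t$ produced by $c < 0$ must be absorbed by the second-order decrease stemming from the dispersion $(a - b)/2 = \epsilon$ of the two perturbed $\underline{\xi}$-coordinates, while the first-order change in $\|\bm{\xi}\|_2^2$ remains strictly negative. This forces the mismatched scaling $c = O(\epsilon^2)$ against $a, b = O(\epsilon)$ and the specific range of $C$; a uniformly-scaled perturbation of the three coordinates would not yield a feasible descent direction, which is why a more naive local analysis of the KKT point does not suffice.
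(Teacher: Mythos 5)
Your proposal is correct, and it shares the paper's overall skeleton (the $\overline{\gamma}>T^{-T}$ branch via $\bm{1}/T$, the lifting argument for the easy inequality, activity of the product constraint by strict convexity of $\|\cdot\|_2^2$ over the simplex, the degenerate case $\overline{\gamma}=0$, and the stationarity argument showing the optimizer takes at most two values). Where you genuinely depart from the paper is the crux $k=1$. The paper isolates this as a separate lemma on the family $f_{T,k}(\overline{\gamma})=\min\{k\underline{\xi}^2+(T-k)\overline{\xi}{}^2: k\underline{\xi}+(T-k)\overline{\xi}=1,\ \underline{\xi}^k\overline{\xi}{}^{T-k}=\overline{\gamma}\}$ and proves $\min_k f_{T,k}=f_{T,1}$ by an explicit computation for $T=4$ (showing $f_{4,1}<f_{4,2}$ via the intermediate value theorem applied to the roots of a quartic) followed by a reduction for $T\geq 5$ that freezes all but four coordinates of a hypothetical counterexample and derives a contradiction with the $T=4$ case. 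Your route instead rules out any optimizer with two or more coordinates at the smaller value by a single second-order descent argument: the tilted perturbation $(a,b,c)=(\epsilon+C\epsilon^2/2,\,-\epsilon+C\epsilon^2/2,\,-C\epsilon^2)$ preserves the sum exactly, and your expansions of $\Delta\|\bm{\xi}\|_2^2$ and $\Delta\log\prod_t\xi_t$ are correct (the $\epsilon^3$ terms cancel by the antisymmetry of $a$ and $b$ at leading order), with the window $C\in\bigl(1/(\overline{\xi}-\underline{\xi}),\,\overline{\xi}/(\underline{\xi}(\overline{\xi}-\underline{\xi}))\bigr)$ non-empty exactly when $\underline{\xi}<\overline{\xi}$. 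What your approach buys is uniformity in $T$ and $k$ and the elimination of the auxiliary family $f_{T,k}$ and its dedicated lemma entirely; the same idea with reversed signs would also cover the companion statement for $g_T$. What the paper's approach buys is that it stays entirely within elementary algebra on the reduced two-value parametrization and never needs the delicate $O(\epsilon)$-versus-$O(\epsilon^2)$ calibration you rightly flag as the main obstacle. Both arguments are equally (in)formal about existence of KKT multipliers at the optimizer, so no additional gap arises there.
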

\begin{proof} We first observe that the non-convex optimization problem~\eqref{opt:aux-f} is bounded below by its relaxation $\min_{\Vert \bm{\xi} \Vert_1 = 1}  \Vert \bm{\xi} \Vert_2^2$. Note, however, that the optimal solution $\bm \xi=\frac{1}{T}\bm 1$ of this relaxation is feasible and thus optimal in~\eqref{opt:aux-f} whenever $\overline{\gamma} \geq T^{-T}$. Thus, we have $f_T(0, \overline{\gamma})= \frac{1}{T}$ for $\overline{\gamma} \geq T^{-T}$. For $0\leq \overline{\gamma}<T^{-T}$, on the other hand, the product constraint $\prod_{t=1}^T \xi_t \leq \overline{\gamma}$ must be binding, for otherwise convex combinations of the optimal solution $\bm{\xi}$ with $\frac{1}{T}\bm 1$ would improve the objective function of $f_T (0, \overline{\gamma})$, which is a contradiction. In summary, we thus find
\begin{equation}
\label{eq:f_t-decomposition}
f_T(0, \overline{\gamma}) = \left\{ \begin{array}{ll}
\inf_{\bm \xi \geq \bm 0}  \left\{\Vert \bm{\xi} \Vert_2^2 :  \Vert \bm{\xi} \Vert_1 = 1,~ \textstyle \prod_{t=1}^T \xi_t = \overline{\gamma} \right\} & \text{if } 0\leq \overline{\gamma} <T^{-T},\\
\frac{1}{T} & \text{if } \overline{\gamma} \geq T^{-T}.
\end{array} \right.
\end{equation}
When $\overline{\gamma}=0$, the product constraint in the first line of~\eqref{eq:f_t-decomposition} can only be satisfied if $\xi_t=0$ for at least one $t$. By permutation symmetry, we may assume without loss of generality that $\xi_T=0$. Then, the product constraint is automatically satisfied and may be disregarded, implying that the minimization problem in the first line of~\eqref{eq:f_t-decomposition} is solved by $\xi_1=\xi_2=\cdots =\xi_{T-1}=\frac{1}{T-1}$ and $\xi_T=0$. We thus conclude that $f_T(0, 0)=\frac{1}{T-1}$ and therefore
%.  As the function $f_T(0, \overline{\gamma})$ is non-increasing in $\overline{\gamma}$ and as every $\xi_t$ must be strictly positive for $\overline{\gamma}>0$, we also have $f_T(0, \overline{\gamma})<\frac{1}{T-1}$ whenever $\overline{\gamma}>0$. Thus,
\begin{equation}
\label{eq:f_t-decomposition1}
f_T(0, \overline{\gamma}) = \left\{ \begin{array}{ll}
\frac{1}{T-1} & \text{if } \overline{\gamma} =0,\\
\inf_{\bm \xi > \bm 0}  \left\{\Vert \bm{\xi} \Vert_2^2 :  \Vert \bm{\xi} \Vert_1 = 1,~ \textstyle \prod_{t=1}^T \xi_t = \overline{\gamma} \right\} & \text{if } 0< \overline{\gamma} <T^{-T},\\
\frac{1}{T} & \text{if } \overline{\gamma} \geq T^{-T}.
\end{array} \right.
\end{equation}
%In the remainder we study
We now study the non-convex  parametric optimization problem
\begin{equation}
	\label{eq:aux2}
	\min_{\bm \xi > \bm 0}  \left\{\Vert \bm{\xi} \Vert_2^2 :  \Vert \bm{\xi} \Vert_1 = 1,~ \textstyle \prod_{t=1}^T \xi_t = \overline{\gamma} \right\}
\end{equation}
on the domain $0< \overline{\gamma} <T^{-T}$. Observe that~\eqref{eq:aux2} has a non-empty compact feasible set for any admissible $\overline{\gamma}$ and is therefore solvable. Assigning Lagrange multipliers $a$ and $b$ to the norm and product constraints, respectively, we find that any optimal solution to~\eqref{eq:aux2} must satisfy the stationarity conditions
\begin{align*}
	2\xi_t + a +\frac{b}{\xi_t} \prod_{t'=1}^T \xi_{t'} = 0  \quad \forall t = 1, \ldots, T \quad \iff \quad 2\xi_t^2 + a \xi_t+ b\overline{\gamma} = 0 \quad \forall t = 1, \hdots, T,
\end{align*}
where the equivalence follows from primal feasibility. Note that each $\xi_t$ needs to satisfy an identical quadratic equation, which must have two distinct positive real roots\footnote{The existence of at least one real root is guaranteed because~\eqref{eq:aux2} is solvable and because any optimal solution must satisfy the stationarity conditions. In fact, the stationarity conditions must admit {\em two} distinct positive real roots because otherwise~$\bm \xi=\frac{1}{T}\bm 1$ would be the only conceivable optimal solution, which is impossible for $\overline{\gamma}<T^{-T}$.} $\underline \xi$ and $\overline\xi$. The roots depend on $a$, $b$ and $\overline{\gamma}$, but this dependence is notationally suppressed to avoid clutter. At optimality, the decision variables $\xi_1,\xi_2 \hdots, \xi_T$ can thus be partitioned into two groups, where all variables in the first group are equal to~$\underline \xi$, and all variables in the second group are equal to~$\overline \xi$. This structural insight allows us to simplify problem~\eqref{eq:aux2}. Indeed, by permutation symmetry, it is sufficient to consider only solutions that satisfy $\xi_1=\cdots=\xi_k=\underline\xi$ and $\xi_{k+1}=\cdots = \xi_T=\overline\xi$ for some $\underline \xi, \overline \xi>0$ and for some $k\in\{1,\ldots, \lfloor \frac{T}{2} \rfloor \}$. Thus, the optimal value of~\eqref{eq:aux2} coincides with
\begin{equation}
	\label{eq:aux3}
	\min_{k \in\{ 1, \ldots, \lfloor \frac{T}{2}\rfloor\}} f_{T,k}(\overline{\gamma}),
\end{equation}
where the functions $f_{T,k}:(0, T^{-T}) \rightarrow \mathbb{R}$ for $k = 1, 2, \hdots, \lfloor \frac{T}{2} \rfloor$ are defined through
\begin{equation}
\label{opt:aux-f-enum}
\begin{aligned}
	f_{T,k}(\overline{\gamma}) = \min_{\underline \xi > 0,\, \overline \xi > 0}\left\{  k \underline \xi^2 + (T-k) \overline \xi{}^2 : k \underline \xi + (T-k) \overline \xi = 1,~ \underline \xi^k \overline \xi{}^{T-k} = \overline{\gamma}\right\}.
\end{aligned}
\end{equation}
By Lemma~\ref{prop:aux-f-optimal} below, the optimal value of~\eqref{eq:aux3} is given by $f_{T,1}(\overline{\gamma})$. % for $T \geq 2$ and $0<\overline{\gamma}< T^{-T}$. 
%The claim then follows by replacing the minimization problem in~\eqref{eq:f_t-decomposition1} with $f_{T,1}(\overline{\gamma})$. \qed
Hence, if we replace the minimization problem in~\eqref{eq:f_t-decomposition1} with $f_{T,1}(\overline{\gamma})$, we obtain
\begin{equation*}
f_T(0, \overline{\gamma}) = \left\{ \begin{array}{ll}
\displaystyle \min_{\underline \xi \geq 0,\, \overline \xi \geq 0}\left\{  \underline \xi^2 + (T-1) \overline \xi{}^2 : \underline \xi + (T-1) \overline \xi = 1,~ \underline \xi \, \overline \xi{}^{T-1} = \overline{\gamma}\right\}  & \text{if } 0\leq \overline{\gamma} < T^{-T},\\[2ex]
\frac{1}{T} & \text{if } \overline{\gamma} \geq T^{-T}.
\end{array} \right.
\end{equation*}
The statement of the lemma now follows since the minimization problem in the equation above evaluates to $1 / T$ at $\overline{\gamma} = T^{-T}$. Indeed, the minimization problem is bounded below by $\min_{\Vert \bm{\xi} \Vert_1 = 1}  \Vert \bm{\xi} \Vert_2^2$, and the optimal value $1/T$ of this bound is achieved by the feasible solution $\underline{\xi} = \overline{\xi} = 1/T$ of the minimization problem at $\overline{\gamma} = T^{-T}$. \qed
\end{proof}

\begin{lemma}
\label{prop:aux-f-optimal}
For $T \geq 2$ and $0<\overline{\gamma}< T^{-T}$, the optimal value of~\eqref{eq:aux3} is given by $f_{T,1}(\overline{\gamma})$.
\end{lemma}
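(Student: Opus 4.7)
My plan is to reduce the lemma to a one-dimensional monotonicity statement by reparameterizing the problem. Introduce $\lambda = k/T$, rescale $a = Tu$, $b = Tv$, and set $\mu = (\overline{\gamma} T^T)^{1/T} \in (0,1)$. The constraints defining $f_{T,k}(\overline{\gamma})$ then read $\lambda a + (1-\lambda)b = 1$ and $a^\lambda b^{1-\lambda} = \mu$, and for $\overline{\gamma} < T^{-T}$ this system admits exactly two positive solutions, one with $a < b$ and its swap partner with $a > b$. Writing $F(k)$ for the objective $\lambda a^2 + (1-\lambda)b^2$ evaluated at the $a<b$ branch, a direct check using the symmetry $(u,v) \leftrightarrow (v,u)$ shows that the $a>b$ branch at parameter $k$ corresponds to the $a<b$ branch at parameter $T-k$, so $f_{T,k}(\overline{\gamma}) = \min\{F(k),F(T-k)\}/T$. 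Hence if $F$ is strictly increasing on $\{1,\dots,T-1\}$, then $\min_{k\in\{1,\dots,\lfloor T/2\rfloor\}} f_{T,k}(\overline{\gamma}) = F(1)/T = f_{T,1}(\overline{\gamma})$, as desired.

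For the monotonicity of $F$, combine the identity $\lambda a^2 + (1-\lambda)b^2 = 1 + \lambda(1-\lambda)(b-a)^2$ (which follows immediately from $\lambda a + (1-\lambda)b = 1$) with the implicit function theorem. The latter applies because the Jacobian of the two constraints with respect to $(a,b)$ has determinant $\lambda(1-\lambda)(a-b)/(ab)$, nonzero on the $a \neq b$ branch, so $(a(\lambda), b(\lambda))$ is smooth in $\lambda$. Implicit differentiation of both constraints produces explicit expressions for $a'(\lambda)$ and $b'(\lambda)$, and after careful algebraic cancellation the derivative of $H(\lambda) := \lambda(1-\lambda)(b-a)^2 = \lambda a^2 + (1-\lambda)b^2 - 1$ simplifies dramatically to
\[
H'(\lambda) = b^2 - a^2 - 2ab \log(b/a).
\]
Setting $t := b/a > 1$, this reads $a^2(t^2 - 1 - 2t\log t)$, and the scalar inequality $t^2 - 1 > 2t\log t$ for $t>1$ is a one-line calculus exercise: the function $\psi(t) = t^2 - 1 - 2t\log t$ satisfies $\psi(1)=0$ and $\psi'(t) = 2(t - 1 - \log t) > 0$ for $t>1$ by the classical bound $\log t < t-1$. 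Thus $H'(\lambda) > 0$, $H$ (and hence $F$) is strictly increasing, and the claim follows.

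The main obstacle I anticipate is verifying the precise algebraic collapse $H'(\lambda) = b^2 - a^2 - 2ab\log(b/a)$: naive implicit differentiation of the two constraints produces several terms involving $\log(b/a)$, $b-a$, $a/b$, and $b/a$ that must cancel in exactly the right pattern. Upstream of this identity the reparameterization and the algebraic identity for $\lambda a^2 + (1-\lambda) b^2$ are routine, and downstream the monotonicity of $\psi$ is standard. The branch-counting argument underlying $f_{T,k} = \min\{F(k), F(T-k)\}/T$ is a direct consequence of the Lagrangian stationarity analysis already carried out in the proof of the preceding lemma, so no genuinely new ideas are needed there either.
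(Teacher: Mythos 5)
Your proof is correct, and it takes a genuinely different route from the paper's. The paper first settles $T=4$ by explicit computation — reducing $f_{4,2}$ to $f_{2,1}$ to get $f_{4,2}(\overline{\gamma})=\tfrac12-4\sqrt{\overline{\gamma}}$, then exhibiting a feasible point of $f_{4,1}$ strictly below that value via an intermediate value argument between the roots of an auxiliary quadratic — and then handles $T\geq 5$ by contradiction: if a $k$-versus-$(T-k)$ split with $k\geq 2$ were optimal, freezing all but four coordinates and rescaling would produce a $2$-versus-$2$ optimizer of the four-dimensional subproblem, contradicting the $T=4$ structure. You instead embed the discrete family $\{f_{T,k}\}_k$ into a continuous one indexed by $\lambda=k/T$, identify the two feasible points of $f_{T,k}$ with the $a<b$ branch at $\lambda$ and at $1-\lambda$ (so that $f_{T,k}(\overline{\gamma})=\min\{F(k),F(T-k)\}/T$), and prove monotonicity of $F$ by implicit differentiation. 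The crucial collapse does check out: from $\lambda a'+(1-\lambda)b'=b-a$ and $b\,\lambda a'+a\,(1-\lambda)b'=ab\log(b/a)$ one gets $a\,\lambda a'+b\,(1-\lambda)b'=(a+b)(b-a)-ab\log(b/a)$, hence $H'(\lambda)=b^2-a^2-2ab\log(b/a)=a^2\psi(b/a)$ with $\psi(t)=t^2-1-2t\log t>0$ for $t>1$ by $\psi(1)=0$ and $\psi'(t)=2(t-1-\log t)>0$. What your approach buys is uniformity and insight: it needs no case distinction on $T$, it exposes the monotone structure of the objective along the branch, and the identical computation delivers Lemma~\ref{prop:aux-g-optimal} for free (the paper omits that proof), since $g_{T,k}=\max\{F(k),F(T-k)\}/T$ and the same monotonicity gives $\max_k g_{T,k}=F(T-1)/T=g_{T,1}$. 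What the paper's approach buys is that it stays within elementary algebra and the intermediate value theorem, with no implicit-function or derivative bookkeeping; your version requires verifying the two-point structure of the feasible set and that the $a<b$ branch stays smooth and never crosses $a=b$ as $\lambda$ varies, both of which hold but deserve a sentence each in a polished write-up.
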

\begin{proof}
	The statement holds trivially true when $\lfloor \frac{T}{2} \rfloor = 1$, that is, for $T \in\{2,3\}$. Next, we show that $f_{4,1}(\overline{\gamma}) < f_{4,2}(\overline{\gamma})$ for any $\overline{\gamma}\in(0,4^{-4})$. This inequality not only implies that the statement holds true for $T = 4$ but will also be instrumental for proving the statement for $T >4$. 

Fix $\overline{\gamma}\in (0,4^{-4})$ and note that
\begin{equation*}
\begin{aligned}
	f_{4,2}(\overline{\gamma}) &= \min_{ \underline \xi  > 0,\overline \xi  > 0}  \left\{ 2 \underline \xi^2 + 2 \overline \xi{}^2 : 2 \underline \xi + 2 \overline \xi = 1, ~\underline \xi^2 \overline \xi{}^2 = \overline{\gamma}\right\} \\
	&=\frac{1}{2} \min_{ \underline \xi  > 0,\overline \xi  > 0}  \left\{  \underline \xi^2 +  \overline \xi{}^2 :  \underline \xi +  \overline \xi = 1, ~\underline \xi \overline \xi = 4\sqrt{\overline{\gamma}}\right\}\\
	&= \frac{1}{2} f_{2,1}(4\sqrt{\overline{\gamma}}) ~=~ \frac{1}{2} - 4\sqrt{\overline{\gamma}} ,
\end{aligned}
\end{equation*}
where the second equality follows from the substitution $\underline\xi\leftarrow 2\underline\xi$ and $\overline\xi\leftarrow 2\overline\xi$, and the last equality holds because $f_{2,1}(\overline{\gamma})=1-2\overline{\gamma}$ for any $\overline{\gamma}\in(0,2^{-2})$, which can be verified by direct calculation. Thus, we need to show that $f_{4,1}(\overline{\gamma})< \frac{1}{2} - 4\sqrt{\overline{\gamma}}$, where
\begin{align}
	f_{4,1}(\overline{\gamma}) &= \min_{ \underline \xi  > 0,\overline \xi  > 0}  \left\{  \underline \xi^2 + 3 \overline \xi{}^2 : \underline \xi + 3 \overline \xi = 1, ~\underline \xi  \overline \xi{}^3 = \overline{\gamma}\right\} \nonumber \\
	&= \min_{\overline \xi  > 0}  \left\{  (1-3\overline\xi)^2 + 3 \overline \xi{}^2 : (1-3\overline \xi)  \overline \xi{}^3 = \overline{\gamma}\right\}. \label{eq:f41}
\end{align}
It is therefore sufficient to find $\overline\xi{}^\star$ feasible in~\eqref{eq:f41} with
\begin{eqnarray*}
	(1-3\overline\xi{}^\star)^2 + 3 (\overline \xi{}^\star)^2< 1/2 - 4\sqrt{\overline{\gamma}} &  \iff &  12(\overline\xi{}^\star)^2 - 6\overline\xi{}^\star + (1/2 + 4\sqrt{\overline{\gamma}}) < 0\\
	& \iff &  \overline\xi{}^\star \in \left(\zeta^-, \zeta^+ \right),
\end{eqnarray*}
where $\zeta^\pm= (3\pm \sqrt{3 - 48\sqrt{\overline{\gamma}}})/12$ are the roots of $12(\overline\xi{}^\star)^2 - 6\overline\xi{}^\star + (1/2 + 4\sqrt{\overline{\gamma}})$. Equivalently, we should demonstrate the existence of some $\overline\xi{}^\star\in (\zeta^-, \zeta^+)$ with $(1-3\overline \xi{}^\star)  (\overline \xi{}^\star)^3 - \overline{\gamma}=0$. By the intermediate value theorem, this holds if
\begin{equation}
	\label{eq:zetapm}
	(1-3\zeta^-)  (\zeta^-)^3 - \overline{\gamma}>0\quad \text{and}\quad (1-3\zeta^+)  (\zeta^+)^3 - \overline{\gamma}<0.
\end{equation}
But these inequalities are automatically satisfied under the assumption that $\overline{\gamma} \in (0,4^{-4})$. Indeed, recalling the definition of $\zeta^-$ and defining $z^-=12\zeta^--3= - \sqrt{3 - 48\sqrt{\overline{\gamma}}}$, we have
\begin{eqnarray*}
	(1-3\zeta^-)  (\zeta^-)^3 - \overline{\gamma}= \left(1 - \frac{3+z^-}{4} \right) \left( \frac{3+z^-}{12}\right)^3 - \left( \frac{3-(z^-)^2}{48}\right)^2=  - \frac{1}{12^3} (z^-)^3(z^- + 2)>0,
\end{eqnarray*}
where the inequality holds because $z^- \in (-\sqrt{3},0)$ for $\overline{\gamma} \in (0,4^{-4})$. Similarly, defining $z^+=12\zeta^+-3= \sqrt{3 - 48\sqrt{\overline{\gamma}}}$, we can prove that $(1-3\zeta^+)  (\zeta^+)^3 - \overline{\gamma}<0$. Thus, we have shown that $f_{4,1}(\overline{\gamma}) < f_{4,2}(\overline{\gamma})$ for any $\overline{\gamma}\in(0,4^{-4})$, which establishes the assertion for $T=4$.

Fix now some $T \geq 5$ and assume for the sake of argument that there exist $k \in \lbrace 2, \hdots, \lfloor \frac{T}{2} \rfloor \rbrace$ and $\overline{\gamma}\in (0, T^{-T})$ with $f_T(0, \overline{\gamma}) = f_{T,k}(\overline{\gamma}) < f_{T,1} (\overline{\gamma})$. Hence, there are some $\underline \xi > 0$ and $\overline \xi> 0$ with $\underline\xi\neq \overline \xi$ such that the minimum of $f_T (0, \overline{\gamma})$ in~\eqref{opt:aux-fg} is attained by the solution $\xi_1=\cdots=\xi_k=\underline\xi$ and $\xi_{k+1}=\cdots =\xi_T=\overline \xi$. Fixing $\xi_1,\ldots,\xi_{k-2}$ and $\xi_{k+3},\ldots,\xi_T$ at their optimal values and optimizing only over the remaining four decision variables in $f_T (0, \overline{\gamma})$ yields
\begin{equation*}
\begin{array}{lcl}
f_T(0, \overline{\gamma}) ~= & \displaystyle \min_{\xi_{k-1},\xi_k,\xi_{k+1}, \xi_{k+2}\geq 0} & (k-2)\underline \xi^2 + (T-k-2)\overline \xi{}^2 +\sum_{t=k-1}^{k+2}\xi_t^2\\[2ex]
& \st & (k-2)\underline \xi + (T-k-2)\overline \xi+\sum_{t=k-1}^{k+2}\xi_t=1\\
& & \underline \xi ^{k-2}\overline \xi{}^{T-k-2}\,\prod_{t=k-1}^{k+2} \xi_t \leq \overline{\gamma}.
\end{array}
\end{equation*}
Defining the strictly positive constant $c=1- (k-2)\underline \xi - (T-k-2)\overline \xi=2\underline\xi+2\overline\xi$ and using the substitution $y_t\leftarrow \xi_{k-2+t}/c$ for $t=1,\ldots,4$ further yields
\begin{eqnarray}
f_T(0, \overline{\gamma}) &= &(k-2)\underline \xi^2 + (T-k-2)\overline \xi{}^2 +\nonumber \\ &&\quad \min_{y_1,y_2,y_3,y_4\geq 0} 
\textstyle \left\{ \sum_{t=1}^{4} c^2\, y_t^2: \sum_{t=1}^4 y_t=1,~ \prod_{t=1}^{4} y_t \leq \frac{\overline{\gamma}}{c^4\,\underline \xi ^{k-2}\,\overline \xi{}^{T-k-2}}\right\} \label{eq:min-ys}\\
&=& (k-2)\underline \xi^2 + (T-k-2)\overline \xi{}^2 + c^2 f_4\left( \frac{\overline{\gamma}}{c^4\,\underline \xi ^{k-2}\,\overline \xi{}^{T-k-2}} \right), \nonumber
\end{eqnarray}
where the second equality follows from the definition of~$f_4(0, \overline{\gamma})$ in~\eqref{opt:aux-fg}. By construction, the minimization problem in~\eqref{eq:min-ys} must be solved by $y_1=y_2=\underline\xi$ and $y_3=y_4=\overline\xi$. However, this contradicts our previous results. In fact, we know that the solution of $f_4 (0, \overline{\gamma})$ must have the following properties for $T=4$. If $\overline{\gamma}/[c^4\,\underline \xi ^{k-2}\,\overline \xi{}^{T-k-2}]<4^{-4}$, then three out of the four $\xi_t$ variables must be equal at optimality. Conversely, if $\overline{\gamma}/[c^4\,\underline \xi ^{k-2}\,\overline \xi{}^{T-k-2}]\geq 4^{-4}$, then all four $\xi_t$ variables must be equal. This contradicts our assumption that there exist $k \in \lbrace 2, \hdots, \lfloor \frac{T}{2} \rfloor \rbrace$ and $\overline{\gamma}\in (0, T^{-T})$ with $f_T(0, \overline{\gamma}) = f_{T,k}(\overline{\gamma}) < f_{T, 1} (\overline{\gamma})$. Thus, the assertion holds for all $T>4$. \qed
\end{proof}

We now show that in the worst case, the weak-sense geometric random walk $\bm{\tilde \pi}=\{\tilde\pi_T\}_{T\in\mathbb N}$ defined through $\tilde \pi_T=\prod_{t=1}^T \tilde\xi_t$ is absorbed at $0$ with certainty if $T$ exceeds a threshold $T_0$.

\begin{thm}[Certainty of Absorption]
\label{thm:rprob_eq0}
For $T > \frac{\mu^2 + \sigma^2}{(1 - \rho)\sigma^2} + 1$ we have $\text{\em L}(\gamma) = 1$ for every $\gamma> 0$.
%
%If $T > \frac{\mu^2 + \sigma^2}{(1 - \rho)\sigma^2} + 1$, there exists a permutation-symmetric probability distribution $\mathbb{P} \in \mathcal{P}$ of $\tilde{\bm{\xi}}$ such that $\prod_{t=1}^T \tilde{\xi}_t = 0\ \mathbb{P}\text{-a.s.}$, \ie, $\text{\em R}(\gamma) = 0$ for every non-negative $\gamma$.
\end{thm}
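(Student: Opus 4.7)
The plan is to exhibit, for any $\gamma > 0$, a single distribution $\mathbb{P}^\star \in \mathcal{P}$ whose entire support lies in the set $\{\bm\xi \in \mathbb{R}^T_+ : \prod_{t=1}^T \xi_t = 0\}$. Once such a $\mathbb{P}^\star$ is produced,
\begin{equation*}
\text{L}(\gamma) \;\geq\; \mathbb{P}^\star\Bigl(\textstyle\prod_{t=1}^T \tilde\xi_t \leq \gamma\Bigr) \;=\; 1
\end{equation*}
for every $\gamma > 0$, while $\text{L}(\gamma) \leq 1$ is trivial. Observe that the hypothesis $T > \frac{\mu^2+\sigma^2}{(1-\rho)\sigma^2} + 1$ forces $T \geq 3$: for $T = 2$ it would read $(1-\rho)\sigma^2 > \mu^2 + \sigma^2$, which contradicts the standing assumption $\mu^2 + \rho\sigma^2 \geq 0$.

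I would take $\mathbb{P}^\star$ to be a permutation-symmetric discrete mixture with three types of atoms: mass $p_1/T$ at each of the $T$ vectors $a(\bm 1 - \mathbf{e}_k)$ (one coordinate zero, the remaining $T-1$ equal to $a$), mass $p_2/T$ at each of the $T$ vectors $c\,\mathbf{e}_k$ (only the $k$-th coordinate nonzero), and residual mass $1 - p_1 - p_2$ at $\bm 0$. Every such atom has at least one vanishing coordinate, so $\prod_{t=1}^T \tilde\xi_t = 0$ almost surely. Computing the permutation-symmetric moments of this mixture, the pairwise-second-moment equation forces $p_1 a^2 = T(\mu^2+\rho\sigma^2)/(T-2)$, and subtracting it from the variance equation yields
\begin{equation*}
p_2 c^2 \;=\; \frac{T}{T-2}\bigl[\sigma^2\bigl(T-2 - (T-1)\rho\bigr) - \mu^2\bigr].
\end{equation*}
The bracket is strictly positive \emph{precisely} under the theorem's hypothesis, so both $a^2$ and $c^2$ are non-negative and the construction is admissible.

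The remaining mean constraint $(T-1)\,p_1 a + p_2 c = T\mu$, after substituting the identities $p_1 a = \sqrt{p_1 \cdot p_1 a^2}$ and $p_2 c = \sqrt{p_2 \cdot p_2 c^2}$, reduces to the scalar equation
\begin{equation*}
(T-1)\sqrt{p_1 A} + \sqrt{p_2 B} \;=\; \sqrt{T(T-2)}\,\mu, \qquad A := \mu^2+\rho\sigma^2,\quad B := (T-2)(1-\rho)\sigma^2 - A,
\end{equation*}
to be solved in $p_1, p_2 \geq 0$ with $p_1 + p_2 \leq 1$. This is the main obstacle. I would dispatch it with Cauchy--Schwarz plus the intermediate value theorem: the maximum of the left-hand side over $p_1 + p_2 = 1$ equals $\sqrt{(T-1)^2 A + B}$, and a short simplification collapses this to $\sqrt{(T-2)[T\mu^2 + (1 + (T-1)\rho)\sigma^2]}$, which strictly exceeds $\sqrt{T(T-2)}\,\mu$ because $1 + (T-1)\rho > 0$. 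Scaling the Cauchy--Schwarz maximizer $(p_1^\star, p_2^\star)$ by $t \in [0,1]$ produces a continuous path along which the left-hand side rises from $0$ at $t = 0$ to a value strictly above $\sqrt{T(T-2)}\,\mu$ at $t = 1$, so the intermediate value theorem delivers the desired $t^\star \in (0,1)$; the residual mass $1 - t^\star$ is placed at $\bm 0$. The resulting $\mathbb{P}^\star$ belongs to $\mathcal{P}$ and is supported entirely on product-zero vectors, which completes the proof.
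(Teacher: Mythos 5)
Your proof is correct, and it takes a genuinely different route from the paper's. The paper argues non-constructively: it starts from an arbitrary discrete distribution in $\mathcal P$ (supplied by Proposition~\ref{prop:non-emptiness}), symmetrizes it, reduces each scenario to a minimizer of the product given its arithmetic and quadratic means (Lemma~\ref{prop:minprod}), and then iteratively transfers ``excess'' quadratic mean from already-absorbing scenarios to a non-absorbing one via an intermediate-value argument in a homotopy parameter $\lambda$, repeating until every scenario satisfies $\prod_t \xi_t^k = 0$. You instead exhibit the extremal distribution in closed form: a symmetric mixture of the $T$ atoms $a(\bm 1 - \mathbf e_k)$, the $T$ atoms $c\,\mathbf e_k$, and the origin, all of which lie on the boundary of the orthant. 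I checked your moment computations: the off-diagonal equation indeed gives $p_1 a^2 = T(\mu^2+\rho\sigma^2)/(T-2)$, subtracting it from the diagonal equation gives $p_2c^2 = \tfrac{T}{T-2}\bigl[(T-2)(1-\rho)\sigma^2 - (\mu^2+\rho\sigma^2)\bigr]$, and positivity of that bracket is exactly equivalent to $T > \tfrac{\mu^2+\sigma^2}{(1-\rho)\sigma^2}+1$; the Cauchy--Schwarz maximum $\sqrt{(T-1)^2A+B}=\sqrt{(T-2)[T\mu^2+(1+(T-1)\rho)\sigma^2]}$ strictly exceeds $\sqrt{T(T-2)}\,\mu$ because $1+(T-1)\rho>0$, and the $\sqrt{t}$-scaling plus the intermediate value theorem (together with $p_1^\star,p_2^\star>0$, which follows from $\mu^2+\rho\sigma^2>0$ and $B>0$) delivers admissible $p_1,p_2$. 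Your remark that the hypothesis rules out $T=2$ is also needed and correct, since it licenses the division by $T-2$. What your approach buys is brevity, an explicit worst-case distribution (useful for checking tightness and for intuition about how mass migrates to the coordinate hyperplanes), and independence from Lemma~\ref{prop:minprod}; what the paper's approach buys is a template that also shows how \emph{any} feasible distribution can be deformed into an absorbing one, which meshes with the machinery (scenario means $m_1^k,m_2^k$) it reuses elsewhere.
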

\begin{proof}
% Recall that our assumptions on $\mu$, $\sigma$ and $\rho$ imply that $\mathcal P\neq\emptyset$.
From the proof of Proposition~\ref{prop:non-emptiness} we know that there exists a discrete distribution $\mathbb P_0 = \sum_{k\in \mathcal K} p_k \delta_{\bm{\xi}^k}\in\mathcal P$ with scenarios $\bm\xi^k$ and associated probabilities $p_k>0$, where $k$ ranges over a finite index set $\mathcal K$ of cardinality $T+1$. By the permutation symmetry, any discrete distribution of the form $\mathbb P_0\in\mathcal P$ can be used to construct a corresponding symmetric distribution %$\mathbb P\in \mathcal P$, which is defined through
\begin{equation}
	\label{eq:symm-dist}
	\mathbb{P} =  \frac{1}{T!}\sum_{\pi \in \mathfrak P} \sum_{k \in \mathcal{K}} p_k \delta_{\mathbf{P}_\pi \bm{\xi}^k},
\end{equation}
which is also an element of $\mathcal P$. Here, $\mathfrak P$ denotes the group of all permutations of $\{1,\ldots, T\}$, while $\mathbf P_\pi\in\mathbb R^{T\times T}$ denotes the permutation matrix induced by $\pi\in\mathfrak P$; see also Lemma~\ref{prop:rprob_symmetry}. Next, we define $m_1^k=\frac{1}{T} \sum_{t=1}^T\xi_t^k$ and $m_2^k = \frac{1}{T} \sum_{t=1}^T(\xi_t^k)^2$ as the arithmetic and quadratic means of scenario $\bm{\xi}^k$, respectively. It turns out that the first two moments of $\bm{\tilde \xi}$ can be expressed in terms of $m_1^k$ and $m_2^k$. Note, for instance, that for any $t\neq s$ we have
\begin{equation*}
\begin{aligned}
\mathbb E_{\mathbb P}\left( \tilde\xi_t \tilde\xi_s \right)& =\frac{1}{T!}\sum_{\pi \in \mathfrak P} \sum_{k \in \mathcal{K}} p_k \, \xi^k_{\pi(t)} \xi^k_{\pi(s)}  = \sum_{k \in \mathcal{K}} \frac{p_k}{T!} \sum_{r=1}^T \xi^k_r \sum_{\pi \in \mathfrak P:\, \pi(s)=r} \xi^k_{\pi(t)}\\
& = \sum_{k \in \mathcal{K}} \frac{p_k}{T!} \sum_{r=1}^T \xi^k_r (T-2)! \left(T m_1^k-\xi^k_r\right) = \sum_{k \in \mathcal{K}} \frac{p_k}{T-1} \left(T (m_1^k )^2-m_2^k \right),
\end{aligned}
\end{equation*}
where the first equality follows from the definition of $\mathbb P$ and because the $t$-th component of $\mathbf{P}_\pi \bm{\xi}^{(k)}$ is given by $\xi^k_{\pi(t)}$, while the third equality holds because there are $(T-2)!$ permutations that map $s$ to $r$ and $t$ to any fixed index different from $r$. Similarly, one can show that
\begin{equation*}
\mathbb E_{\mathbb P}\left( \tilde\xi_t \right) = \sum_{k \in \mathcal{K}} p_k m_1^k \quad\mbox{and} \quad \mathbb E_{\mathbb P}\left( \tilde\xi_t^2 \right)= \sum_{k \in \mathcal{K}} p_k m_2^k.
\end{equation*}
The moment conditions in the definition of $\mathcal P$ thus reduce to
\begin{subequations}
\label{eq:mk}
\begin{align}
	\label{eq:mk-prob} &\sum_{k \in \mathcal{K}} p_k \hspace{-4.5cm}&&= 1 \\  \label{eq:mk-mean}&\sum_{k \in \mathcal{K}} p_k m_1^k \hspace{-4.5cm}&& = \mu  \\
	\label{eq:mk-variance} &\sum_{k \in \mathcal{K}} p_k m_2^k \hspace{-4.5cm}&& = \mu^2 + \sigma^2 \\ \label{eq:mk-covariance} &\sum_{k \in \mathcal{K}} \frac{p_k}{T-1} \left(T ( m_1^k)^2 - m_2^k \right) \hspace{-4.5cm}&&= \mu^2 + \rho\sigma^2.
\end{align}
\end{subequations}

In the following we will update the scenarios $\bm \xi^k$ of the distribution $\mathbb P$ iteratively in finitely many steps, always ensuring that $\mathbb P$ remains within $\mathcal P$ after each update. The terminal distribution will have the property that $\prod_{t=1}^T \xi_t^k=0$ for every $k\in\mathcal K$, which means that we will have constructed a distribution $\mathbb P\in\mathcal P$ with $\mathbb P(\prod_{t=1}^T \tilde \xi_t=0)=1$. This will establish the claim.

\paragraph{Step 1:}  Keeping the scenario probabilities as well as the scenario-wise arithmetic and quadratic means constant, we first replace each~$\bm \xi^k$ with a minimizer of the problem
\begin{equation}
	\label{opt:aux-xi0}
	\inf_{\bm\xi \geq \bm 0}~ \left\lbrace \prod_{t=1}^T \xi_t : \frac{1}{T}\sum_{t=1}^T \xi_t = m^k_1,\ \frac{1}{T}\sum_{t=1}^T \xi_t^2 = m^k_2 \right\rbrace,
\end{equation}
which depends parametrically on $m_1^k$ and $m_2^k$. By  Lemma~\ref{prop:minprod}~(i) below, problem~\eqref{opt:aux-xi0} is indeed solvable for every $k\in\mathcal K$. The new distribution with updated scenarios still belongs to $\mathcal P$ because we did not change $p_k$, $m_1^k$ and $m_2^k$, implying that the moment conditions~\eqref{eq:mk} remain valid. To gain a better understanding of the updated distribution, we define the disjoint index~sets
\begin{equation*}
\begin{aligned}
	\mathcal{K}^+ = \left\lbrace k \in \mathcal{K}: T\geq \frac{m_2^k}{( m_1^k)^2} \geq \frac{T}{T-1}  \right\rbrace \quad\text{and}\quad
	\mathcal{K}^- = \left\lbrace k \in \mathcal{K}: 1\leq \frac{m_2^k}{( m_1^k)^2}  < \frac{T}{T-1} \right\rbrace,
\end{aligned}
\end{equation*}
and note that $\mathcal K=\mathcal{K}^+ \cup \mathcal{K}^-$ by Lemma~\ref{prop:minprod}~(i) below. Lemma~\ref{prop:minprod}~(ii) further implies that 
\begin{subequations}
\begin{equation}
	\label{eq:K+}
	k\in \mathcal K^+  \iff T\geq \frac{m_2^k}{( m_1^k)^2} \geq \frac{T}{T-1}\iff \frac{1}{T}\leq \frac{m_2^k - ( m_1^k)^2 }{m_2^k} \leq \frac{T-1}{T} \iff \prod_{t=1}^T \xi_t^k=0
\end{equation}
and
\begin{equation}
	\label{eq:K-}
	k\in \mathcal K^- \iff 1\leq \frac{m_2^k}{( m_1^k)^2}  < \frac{T}{T-1}\iff 0\leq \frac{m_2^k - ( m_1^k)^2 }{m_2^k} < \frac{1}{T} \iff \prod_{t=1}^T \xi_t^k>0.
\end{equation}
\end{subequations}
We will henceforth say that $\mathcal K^+$ ($\mathcal K^-$) is the index set of the {\em absorbing} ({\em non-absorbing}) scenarios. If all scenarios are absorbing (that is, if $\mathcal K^+=\mathcal K$), then $\mathbb P(\prod_{t=1}^T \tilde \xi_t=0)=1$, and we are done.

\paragraph{Step 2:} If there exists a non-absorbing scenario $i\in \mathcal K^-$, we will alter both the scenarios and their quadratic means to make scenario $i$ absorbing, while ensuring that all scenarios $k\in\mathcal K^+$ remain absorbing. To achieve this, we consider the following family of quadratic means parameterized in $\lambda\in [0,1]$.
\begin{align} 
\label{eq:mk-lambda}
m_2^k(\lambda)=\left\{ \begin{array}{ll} 
	(1-\lambda) m_2^k+\lambda \frac{T}{T-1} (m_1^k)^2 & \text{for } k\in\mathcal K^+\\
	m_2^i + \lambda \sum_{k \in \mathcal{K}^+} \frac{p_k}{p_i} (m_2^k - \frac{T}{T-1}(m_1^k)^2) & \text{for } k=i\\
	m_2^k & \text{for } k\in\mathcal K^-\backslash\{i\}
	\end{array}\right.
\end{align}
By construction, $p_k$, $m_1^k$ and $m_2^k=m_2^k(\lambda)$ satisfy the moment conditions~\eqref{eq:mk} for every $\lambda\in [0,1]$. As in Step~1, the scenario $\bm\xi^k(\lambda)$ is then chosen to be a minimizer of problem~\eqref{opt:aux-xi0} with inputs $m_1^k$ and $m_2^k=m_2^k(\lambda)$. However, \eqref{opt:aux-xi0} could fail to be solvable for $\lambda\lesssim 1$, in which case the proposed construction would fail. Indeed, Lemma~\ref{prop:minprod}~(i) shows that \eqref{opt:aux-xi0} is only solvable when $1\leq m_2^k(\lambda)/(m_1^k)^2\leq T$. In the remainder we will demonstrate that there is $\lambda^\star\in(0,1)$ such that $\bm\xi^k(\lambda^\star)$ exists for every $k\in\mathcal K$ and such that all scenarios $k\in\mathcal K^+\cup\{i\}$ are absorbing.

Subtracting \eqref{eq:mk-covariance} from \eqref{eq:mk-variance} and dividing the difference by \eqref{eq:mk-variance} yields
\begin{equation*}
	\frac{T \sum_{k \in \mathcal{K}} p_k \left( m_2^k - ( m_1^k)^2 \right)}{(T-1)\sum_{k \in \mathcal{K}} p_k m_2^k} = 
	\frac{(1 - \rho)\sigma^2}{\mu^2 + \sigma^2} > \frac{1}{T-1},
\end{equation*}
where the inequality follows from the assumption that $T > \frac{\mu^2 + \sigma^2}{(1 - \rho)\sigma^2} + 1$. Multiplying both sides of the inequality by $\frac{T-1}{T}$ and partitioning $\mathcal{K}$ into $\mathcal{K}^+$ and $\mathcal{K}^-$ further reveals that
\begin{equation}
	\label{eq:fraction-ineq}
	\frac{\sum_{k \in \mathcal{K}^+} p_k \,m_2^k\, \frac{ m_2^k - ( m_1^k)^2}{m_2^k} + \sum_{k \in \mathcal{K}^-} p_k \,m_2^k\,\frac{m_2^k - ( m_1^k)^2}{m_2^k}}{\sum_{k \in \mathcal{K}^+} p_k m_2^k + \sum_{k \in \mathcal{K}^-} p_k m_2^k} > \frac{1}{T}.
\end{equation}
The expression on the left hand side of the above inequality represents a weighted average of the fractions $(m_2^k - ( m_1^k)^2)/m_2^k$ across all $k\in\mathcal K$. Recall from~\eqref{eq:K+} and~\eqref{eq:K-} that the fractions indexed by $k\in\mathcal K^+$ are larger or equal to $1/T$, while those indexed by $k\in\mathcal K^-$ are strictly smaller than $1/T$. The inequality~\eqref{eq:fraction-ineq} asserts that the fractions corresponding to $k\in\mathcal K^+$ dominate those corresponding to $k\in\mathcal K^-$. Thus,~\eqref{eq:fraction-ineq} remains valid if we replace $\mathcal K^-$ with $\{i\}$, that is,

\begin{equation*}
	\frac{\sum_{k \in \mathcal{K}^+} p_k\, m_2^k\, \frac{m_2^k - ( m_1^k)^2 }{m_2^k} + p_i \,m_2^i\, \frac{m_2^i - ( m_1^i)^2 }{m_2^i}}{\sum_{k \in \mathcal{K}^+} p_k m_2^k+ p_i m_2^i} > \frac{1}{T},
\end{equation*}
which is equivalent to
\begin{equation}
	\label{eq:fraction-ineq2}
	\frac{\sum_{k \in \mathcal{K}^+} p_k \frac{( m_1^k)^2}{T-1}  + p_i \left( m_2^i + \sum_{k \in \mathcal{K}^+} \frac{p_k}{p_i} \left( m_2^k - \frac{T}{T-1}(m_1^k)^2 \right)- ( m_1^i)^2  \right)}{\sum_{k \in \mathcal{K}^+} p_k \frac{T}{T-1} (m_1^k)^2 + p_i \left( m_2^i + \sum_{k \in \mathcal{K}^+} \frac{p_k}{p_i} \left( m_2^k - \frac{T}{T-1}(m_1^k)^2 \right) \right)} > \frac{1}{T}.
\end{equation}
Using the notation introduced in~\eqref{eq:mk-lambda}, the inequality~\eqref{eq:fraction-ineq2} can be reformulated as
\begin{equation*}
	\frac{\sum_{k \in \mathcal{K}^+} p_k\, m_2^k(1)\, \frac{m_2^k(1) - (m_1^k)^2 }{m_2^k(1)} + p_i \,m_2^i(1)\, \frac{m_2^i(1) - (m_1^i)^2}{m_2^i(1)}}{\sum_{k \in \mathcal{K}^+} p_k m_2^k(1)+ p_i m_2^i(1)} > \frac{1}{T},
\end{equation*}
which constitutes a weighted average of the fractions $(m_2^k(1) - (m_1^k)^2)/m_2^k(1)$ across all $k\in\mathcal K^+\cup\{i\}$. By construction, we have $(m_2^k(1) - (m_1^k)^2)/m_2^k(1)=\frac{1}{T}$ for every $k\in\mathcal K^+$, and thus the average on the left hand side of the above inequality can exceed $\frac{1}{T}$ only if 
\begin{equation*}
	\frac{m_2^i(1) - (m_1^i)^2}{m_2^i(1)}>\frac{1}{T}.
\end{equation*}
As $i\in\mathcal K^-$, the relation~\eqref{eq:K-} further implies that
\begin{equation*}
	\frac{m_2^i(0) - (m_1^i)^2}{m_2^i(0)}=\frac{m_2^i - (m_1^i)^2}{m_2^i}<\frac{1}{T}.
\end{equation*}
The intermediate value theorem then guarantees the existence of $\lambda^\star\in (0,1)$ with 
\begin{equation*}
	\frac{m_2^i(\lambda^\star) - (m_1^i)^2}{m_2^i(\lambda^\star)}=\frac{1}{T} \quad\iff\quad \frac{m_2^i(\lambda^\star)}{(m_1^i)^2}=\frac{T}{T-1}.
\end{equation*}
By construction, we thus have $1\leq m_2^k(\lambda^\star)/(m_1^k)^2\leq T$ for every $k\in\mathcal K$, which implies via Lemma~\ref{prop:minprod}~(i) that the corresponding scenarios $\bm\xi^k(\lambda^\star)$ are well-defined. Our construction also guarantees that $\frac{T}{T-1}\leq m_2^k(\lambda^\star)/(m_1^k)^2\leq T$ for every $k\in\mathcal K^+\cup\{i\}$, which implies via Lemma~\ref{prop:minprod}~(ii) that the corresponding scenarios $\bm\xi^k(\lambda^\star)$ are absorbing. Thus, by replacing $\bm\xi^k$ with $\bm\xi^k(\lambda^\star)$ in \eqref{eq:symm-dist} we obtain a new distribution $\mathbb P\in\mathcal P$ with more absorbing scenarios. As the total number of scenarios is finite, we can repeat Step~2 finitely many times to construct a distribution $\mathbb P\in\mathcal P$ that has only absorbing scenarios. Thus, the claim follows. \qed
\end{proof}

The proof of Theorem~\ref{thm:rprob_eq0} relies on the following auxiliary result.

\begin{lemma}
\label{prop:minprod}
Assume that $m_1,m_2>0$ and consider the parametric program
\begin{equation}
\label{opt:aux-xi}
\begin{aligned}
%	\bm{\xi}^*(m_1, m_2)\ \in\ 
%	\argmin_{\bm\xi \geq 0} \left\lbrace \prod_{t=1}^T \xi_t : \frac{1}{T}\sum_{t=1}^T \xi_t = m_1,\ \frac{1}{T}\sum_{t=1}^T \xi_t^2 = m_2 \right\rbrace
	\inf_{\bm\xi \geq \bm 0}~ \left\lbrace \prod_{t=1}^T \xi_t : \frac{1}{T}\sum_{t=1}^T \xi_t = m_1,\ \frac{1}{T}\sum_{t=1}^T \xi_t^2 = m_2 \right\rbrace.
\end{aligned}
\end{equation}
Then, the following statements hold:
\begin{enumerate}
	\item[(i)]
	Problem~\eqref{opt:aux-xi} is feasible and solvable iff $T \geq \frac{m_2}{m_1^2} \geq 1$.
	\item[(ii)]
	The optimal value of \eqref{opt:aux-xi} is zero iff $T \geq \frac{m_2}{m_1^2} \geq \frac{T}{T-1}$.
	%\item[(iii)]
	%The function $\psi(m_1,m_2)$ is strictly decreasing in $m_2$ whenever $\frac{T}{T-1} > \frac{m_2}{m_1^2} \geq 1$.
%	If $\bm \xi$ and $\bm \xi'$ are minimizers of~\eqref{opt:aux-xi} with inputs $m_1,m_2$ and $m_1,m_2'$, respectively, then
%	\begin{equation*}
%		\frac{T}{T-1} > \frac{m'_2}{m_1^2} > \frac{m_2}{m_1^2} \geq 1\quad \implies \quad  \prod_{t=1}^T \xi'_t < \prod_{t=1}^T \xi_t.
%	\end{equation*}
\end{enumerate}
\end{lemma}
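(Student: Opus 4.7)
The plan is to establish the two characterizations in parallel, reusing Cauchy-Schwarz and an explicit two-level construction for both.

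For part~(i), I would first pin down necessity. The Cauchy-Schwarz inequality gives $Tm_2 = \sum_t \xi_t^2 \geq \tfrac{1}{T}\bigl(\sum_t \xi_t\bigr)^2 = Tm_1^2$, so $m_2/m_1^2\geq 1$. For the upper bound, note that $0\leq \xi_t \leq \sum_s \xi_s = Tm_1$ implies $Tm_2 = \sum_t \xi_t^2 \leq (\max_t \xi_t)\sum_t \xi_t \leq T^2 m_1^2$, hence $m_2/m_1^2\leq T$. For sufficiency I would exhibit an explicit two-level feasible point
\begin{equation*}
\xi_1 = m_1 + \sqrt{(T-1)(m_2-m_1^2)},\qquad \xi_2=\cdots=\xi_T = m_1 - \sqrt{(m_2-m_1^2)/(T-1)},
\end{equation*}
and verify that non-negativity of $\xi_2$ reduces precisely to $m_2/m_1^2\leq T$ while non-negativity of $\xi_1$ is automatic from $m_2\geq m_1^2$. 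A direct computation confirms that the two moment constraints are satisfied. Solvability then follows from the fact that the feasible set is closed and bounded in $[0,Tm_1]^T$, hence compact, and the product $\prod_t \xi_t$ is continuous.

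For part~(ii), I would reduce both directions to part~(i). If the optimal value is zero, compactness yields a minimizer $\bm\xi^\star$ with $\xi_t^\star=0$ for some coordinate, say (by permutation symmetry) $\xi_T^\star=0$. Applying Cauchy-Schwarz to the remaining $T-1$ coordinates gives
\begin{equation*}
Tm_2=\sum_{t=1}^{T-1}(\xi_t^\star)^2\geq \frac{1}{T-1}\Bigl(\sum_{t=1}^{T-1}\xi_t^\star\Bigr)^2 = \frac{T^2 m_1^2}{T-1},
\end{equation*}
i.e.\ $m_2/m_1^2\geq T/(T-1)$; combined with the upper bound from part~(i) this yields the claimed range. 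For the converse, assuming $T/(T-1)\leq m_2/m_1^2 \leq T$, I would fix $\xi_T=0$ and invoke part~(i) on the reduced $(T-1)$-variable problem with prescribed moments $\tilde m_1 = Tm_1/(T-1)$ and $\tilde m_2 = Tm_2/(T-1)$. An elementary calculation shows that the reduced feasibility condition $1\leq \tilde m_2/\tilde m_1^2 \leq T-1$ is exactly $T/(T-1)\leq m_2/m_1^2\leq T$, so the reduction produces a feasible point with $\xi_T=0$; since the objective is non-negative, this point is a minimizer with value zero.

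The main obstacle is essentially algebraic bookkeeping: keeping the two-level solution of part~(i) clean and verifying that the threshold $T/(T-1)$ in part~(ii) matches the Cauchy-Schwarz bound on $T-1$ non-negative reals rather than $T$. Once those two ingredients are correctly aligned, both equivalences drop out without further machinery.
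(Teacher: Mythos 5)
Your proposal is correct and follows essentially the same route as the paper: necessity of the moment bounds via the norm inequalities $\Vert\bm\xi\Vert_1\geq\Vert\bm\xi\Vert_2\geq\frac{1}{\sqrt T}\Vert\bm\xi\Vert_1$, sufficiency via a two-level feasible point (you write the closed form that the paper obtains implicitly by sweeping $z$ and invoking the intermediate value theorem), and part~(ii) by setting one coordinate to zero and applying part~(i) to the remaining $T-1$ coordinates with rescaled moments. The only difference is cosmetic.
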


\begin{figure}[tb]
 	\centering
	\includegraphics[width=0.37\paperwidth]{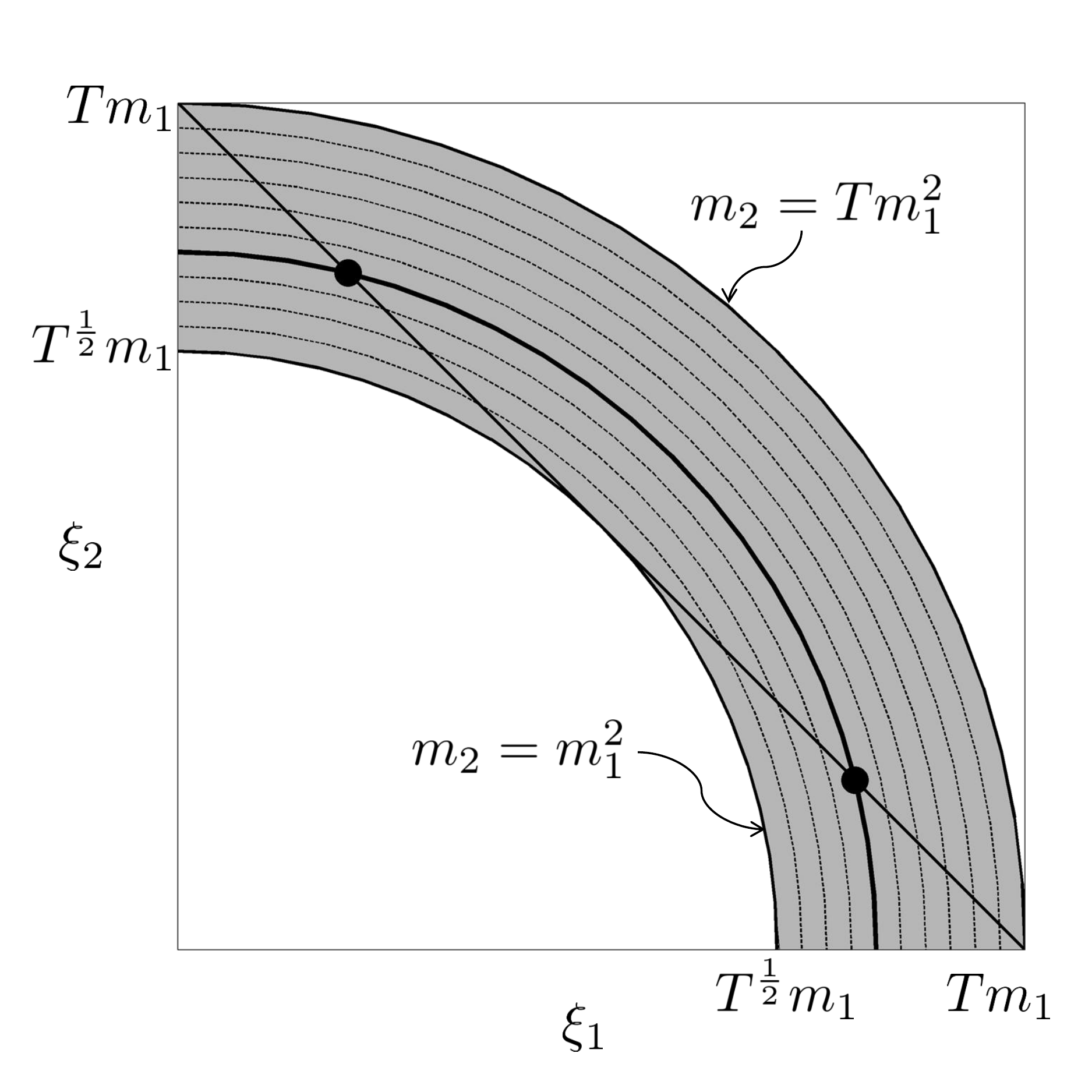}
	\caption{Feasible region of problem~\eqref{opt:aux-xi} for $T = 2$ and different values of $m_1$ and $m_2$. The diagonal line corresponds to the constraint $\frac{1}{T}\sum_{t=1}^T \xi_t = m_1$, and each dotted curve corresponds to the constraint $\frac{1}{T}\sum_{t=1}^T \xi_t^2 = m_2$ for some combination of $m_1$ and $m_2$. The innermost and the outermost curves correspond to the cases where $m_2/m_1^2 = 1$ and $m_2/m_1^2 = T$, respectively. The feasible region for the $(m_1, m_2)$-combination represented by the bold curve is given by the two dots.}
	\label{fig:minprod}
\end{figure}

\noindent Figure~\ref{fig:minprod} visualizes how the feasible set of problem~\eqref{opt:aux-xi} depends on $m_1$ and $m_2$.

\begin{proof}[Proof of Lemma~\ref{prop:minprod}:]
As for assertion~(i), assume that there is $\bm \xi$ feasible in~\eqref{opt:aux-xi}. We then have $\frac{1}{T}\sum_{t=1}^T \xi_t = m_1$, which implies that $Tm_1^2 \geq m_2 \geq m_1^2$ since $\left \Vert \bm{\xi} \right \rVert_1 \geq \left \Vert \bm{\xi} \right \rVert_2 \geq \frac{1}{\sqrt{T}} \left \Vert \bm{\xi} \right \rVert_1$. Conversely, if $T \geq \frac{m_2}{m_1^2} \geq 1$, we may define $\bm{\xi} = (z, \frac{m_1T - z}{T-1}, \hdots, \frac{m_1T - z}{T-1})$ for some $z\in [m_1, T m_1]$ to be chosen later. By construction, we have $\frac{1}{T}\sum_{t=1}^T \xi_t = m_1$ irrespective of $z$, while
	\begin{equation*}
	\begin{aligned}
		\frac{1}{T} \sum_{t=1}^T \xi_t^2 
		~=~  \frac{z^2}{T} + \frac{T-1}{T}\left( \frac{m_1T - z}{T-1}\right)^2
%		= \left\lbrace \begin{array}{ll}
%			m_1^2 & \text{if } z = m_1, \\
%			T m_1^2 & \text{if } z = T m_1,
%		\end{array} \right.
	\end{aligned}
	\end{equation*}
changes continuously from $m_1^2$ to $Tm_1^2$ when $z$ is swept from $m_1$ to $Tm_1$. Thus, by the intermediate value theorem, we may assume that $\frac{1}{T} \sum_{t=1}^T \xi_t^2=m_2\in [m_1^2, Tm_1^2]$ for some suitably chosen $z\in [m_1,Tm_1]$. We conclude that~\eqref{opt:aux-xi} is feasible whenever $T \geq \frac{m_2}{m_1^2} \geq 1$. In that case, however, \eqref{opt:aux-xi} is also solvable as the objective function is continuous and the feasible set is compact.

To prove assertion~(ii), we observe that the optimal value of~\eqref{opt:aux-xi} vanishes iff the problem admits a minimizer %\footnote{Note also that if~\eqref{opt:aux-xi} has a finite minimum, if must be feasible and, due to its compact feasible set, solvable.} 
$\bm \xi$ with $\prod_{t=1}^T \xi_t = 0$. More precisely, by permutation symmetry, the minimum of~\eqref{opt:aux-xi} vanishes iff there exists $\bm \xi$ with $\xi_T=0$, $\frac{1}{T} \sum_{t=1}^{T-1} \xi_t = m_1$ and $\frac{1}{T} \sum_{t=1}^{T-1} \xi^2_t = m_2$. By assertion~(i), however, the last two inequalities are satisfiable iff 
\[
	T - 1 \geq \frac{m_2 \left( \frac{T}{T-1} \right)}{\left( m_1\left( \frac{T}{T-1} \right) \right)^2} \geq 1 \quad \iff\quad T \geq \frac{m_2}{m_1^2} \geq \frac{T}{T-1},
\]
and thus the claim follows.
\qed
\end{proof}

%%%%%%%%%%%%%%%%%%%%%%%%%%%%%%%%%%%%%%%%%%%%%%%%%%%%%%%%%%%%%%%%%
\section{Right-Sided Chebyshev Bounds}
\label{section:left-prob}

We now study \emph{right-sided Chebyshev bounds} of the form
\begin{equation*}
\text{R}(\gamma) = \sup_{\mathbb{P} \in \mathcal{P}} \mathbb{P} \left( \prod_{t=1}^T \tilde{\xi}_t \geq \gamma \right),
\end{equation*}
where the ambiguity set $\mathcal{P}$ is defined in~\eqref{eq:our_ambiguity_set}. We first present the main result of this section.

\begin{thm}[Right-Sided Chebyshev Bound]
\label{thm:lprob}
Let $\gamma > 0$. For all $T \geq 3$ the right-sided Chebyshev bound $\text{\em R} (\gamma)$ coincides with the optimal objective value of the semidefinite program
\begin{equation}
\label{eq:lprob0}
\begin{aligned}
	&\inf && \alpha + T\mu\beta + 
			 T(\mu^2 + \sigma^2)\gamma_1 +
			 T(T\mu^2 + \sigma^2 + (T-1)\rho\sigma^2)\gamma_2 \\
	&\st  && \alpha, \beta, \gamma_1, \gamma_2 \in \mathbb{R},~
	         \lambda_1,\lambda_2,\lambda_3 \geq 0,~
	         \bm p \in \mathbb{R}^{2T+1}, ~\bm P \in \mathbb{S}^{T+1}_+,~
	         \bm q \in \mathbb{R}^{2T-1}, ~\bm Q \in \mathbb{S}^{T}_+ \\
	&     && \alpha \geq 0, \quad
	         \alpha \geq 1 - \lambda_3 T \gamma^{1/T}, \quad
	         \gamma_1 + T\gamma_2 \geq 0, \quad
	         \gamma_1 + \gamma_2 \geq 0 \\
	&	  && \gamma_2 + \frac{\gamma_1}{T} + \alpha \geq \left\Vert \left( \beta - \lambda_1, \gamma_2 + \frac{\gamma_1}{T} - \alpha \right)\right\Vert_2 \\
	&     && \gamma_2 + \gamma_1 + \alpha \geq \left\Vert \left( \beta - \lambda_2, \gamma_2 + \gamma_1 - \alpha \right) \right\Vert_2 \\
	&     && \gamma_2 + \frac{\gamma_1}{T} + \lambda_3 T \gamma^{1/T} + \alpha - 1 \geq \left\Vert \left( \beta - \lambda_3, \gamma_2 + \frac{\gamma_1}{T} - \lambda_3 T \gamma^{1/T} - \alpha + 1 \right) \right\Vert_2 \\
	&     && p_0 = (T-1)\gamma_1\gamma^{\frac{2}{T-1}} + (T-1)^2\gamma_2\gamma^{\frac{2}{T-1}}, \quad  p_1+q_0 =(T-1)\beta\gamma^{\frac{1}{T-1}} \\
	&     && p_2+q_1= \alpha-1, \quad p_T+q_{T-1}=  2(T-1)\gamma_2\gamma^{\frac{1}{T-1}} ,\quad p_{T+1}+q_T= \beta\\ 
	&     && p_{2T}= \gamma_1+\gamma_2,\quad p_t + q_{t-1} = 0 \quad \forall t = 3, \hdots, T-1,T+2,\ldots , 2T-1 \\
	&     && p_t = \textstyle\sum_{i+j = t} P_{i,j} \quad \forall t = 0, \hdots, 2T, \quad
			 q_t = \textstyle\sum_{i+j = t} Q_{i,j} \quad \forall t = 0, \hdots, 2T-2,
\end{aligned}
\end{equation}
where we use the convention that the entries of $\bm p$, $\bm P$, $\bm q$ and $\bm Q$ are numbered starting from~$0$. For $T=2$, $\text{\em R} (\gamma)$ is given by a variant of \eqref{eq:lprob0} where the constraints $p_2+q_1= \alpha-1$ and $p_T+q_{T-1}=  2(T-1)\gamma_2\gamma^{\frac{1}{T-1}}$ are combined to $p_2+q_1= \alpha-1+2(T-1)\gamma_2\gamma^{\frac{1}{T-1}}$.
\end{thm}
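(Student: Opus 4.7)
My plan is to mirror the proof of Theorem~\ref{thm:rprob}, with the indicator $\mathbf{1}_{\{\prod_t\xi_t\leq\gamma\}}$ replaced by $\mathbf{1}_{\{\prod_t\xi_t\geq\gamma\}}$ throughout. First I would cast $\text{R}(\gamma)$ as a generalized moment problem analogous to~\eqref{eq:rprob} and invoke Theorem~\ref{thm:slater} together with~\cite[Proposition~3.4]{Shapiro01} to obtain a strong conic dual in multipliers $(\alpha,\bm\beta,\bm\Gamma)$. The dual's two semi-infinite constraints then read $\alpha+\bm\xi^\intercal\bm\beta+\bm\xi^\intercal\bm\Gamma\bm\xi\geq 0$ for all $\bm\xi\geq\bm 0$ and $\alpha+\bm\xi^\intercal\bm\beta+\bm\xi^\intercal\bm\Gamma\bm\xi\geq 1$ for all $\bm\xi\geq\bm 0$ with $\prod_t\xi_t\geq\gamma$. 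The permutation-symmetry reduction of Lemma~\ref{prop:rprob_symmetry} carries over verbatim because it only exploits the permutation symmetry of the ambiguity set and the target set, so I may restrict to $\bm\beta=\beta\bm 1$ and $\bm\Gamma=\gamma_1\mathbb{I}+\gamma_2\bm{1}\bm{1}^\intercal$.

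Next I would apply Lemma~\ref{lem:semi_infinite_constraints} to each semi-infinite constraint. The non-negativity constraint uses $(\underline\gamma,\overline\gamma)=(0,\infty)$, with $f_T=1/T$ and $g_T=1$, and therefore yields \emph{two} univariate constraints $\inf_{s\geq 0}[\alpha+\beta s+(\gamma_2+\gamma_1/T)s^2]\geq 0$ and $\inf_{s\geq 0}[\alpha+\beta s+(\gamma_2+\gamma_1)s^2]\geq 0$; unlike in the left-sided proof, neither of these is dominated by a target-event constraint (the latter lives only on $s\geq T\gamma^{1/T}$), so both must be retained. For the target event I take $(\underline\gamma,\overline\gamma)=(\gamma,\infty)$; the restriction $s\geq T\gamma^{1/T}$ forces $\gamma/s^T\leq T^{-T}$ and hence $f_T(\gamma/s^T,\infty)=1/T$, so the $f_T$ companion reads $\inf_{s\geq T\gamma^{1/T}}[\alpha+\beta s+(\gamma_2+\gamma_1/T)s^2]\geq 1$. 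The $\mathcal S$-lemma applied with slack multipliers $\lambda_1,\lambda_2,\lambda_3\geq 0$ for the affine support restrictions $s\geq 0$, $s\geq 0$, $s\geq T\gamma^{1/T}$ respectively converts these three quadratic semi-infinite constraints into LMIs~1--3 of~\eqref{eq:lprob0}; the $2\times 2$ PSD diagonal conditions give the linear constraints $\alpha\geq 0$, $\gamma_1+T\gamma_2\geq 0$, $\gamma_1+\gamma_2\geq 0$ and $\alpha\geq 1-\lambda_3 T\gamma^{1/T}$.

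The main work is the remaining $g_T$ constraint $\inf_{s\geq T\gamma^{1/T}}[\alpha+\beta s+\gamma_2 s^2+\gamma_1 s^2 g_T(\gamma/s^T,\infty)]\geq 1$. The plan is to prove a max-variant of Lemmas~\ref{prop:f_t-decomposition} and~\ref{prop:aux-f-optimal}: for $0<\underline\gamma<T^{-T}$, the maximum defining $g_T(\underline\gamma,\infty)$ is attained at a permutation-symmetric binary-split configuration with a single large value $\overline\xi$ and $T-1$ equal small values $\underline\xi$ satisfying $\overline\xi+(T-1)\underline\xi=1$ and $\overline\xi\underline\xi^{T-1}=\underline\gamma$ (the product constraint must bind at the maximum, for otherwise $\bm\xi$ could be pushed further toward a vertex of the simplex). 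Lagrange stationarity once again yields a single quadratic in each $\xi_t$ with two positive roots, preserving the two-group partition of coordinates; optimality of $k=1$ for the maximum should follow by an inductive scheme parallel to Lemma~\ref{prop:aux-f-optimal} (direct algebra for $T\in\{2,3,4\}$, then an embedding of the general $T$-variable problem into the four-variable subproblem by fixing $T-4$ of the coordinates). This is the step I expect to be the main obstacle, since adapting the min-inequalities of Lemma~\ref{prop:aux-f-optimal} to the max setting requires care with which root of the stationarity quadratic maximizes rather than minimizes $\|\bm\xi\|_2^2$.

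The remainder of the reduction follows the sign-of-$\gamma_1$ analysis after~\eqref{eq:rprob_dual_3_5}. If $\gamma_1\leq 0$, the identity $\gamma_1\cdot\max=\min\gamma_1\cdot$ turns the $g_T$ constraint into the joint infimum
\[
\inf_{\overline\xi,\underline\xi\geq 0,~\overline\xi\underline\xi^{T-1}=\gamma}\alpha+\beta[\overline\xi+(T-1)\underline\xi]+\gamma_2[\overline\xi+(T-1)\underline\xi]^2+\gamma_1[\overline\xi^2+(T-1)\underline\xi^2]\geq 1,
\]
with $s=\overline\xi+(T-1)\underline\xi\geq T\gamma^{1/T}$ automatic by AM-GM. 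If $\gamma_1>0$, the $g_T$ constraint is already implied by LMI~3 (since $g_T\geq f_T$), and the same joint infimum is a conservative---in fact, after carrying out the inner minimization, equivalent---reformulation that collapses back to LMI~3. Substituting $\kappa=\underline\xi^{1/(T-1)}$ exactly as in Theorem~\ref{thm:rprob} turns the joint infimum into non-negativity on $[0,\infty)$ of precisely the polynomial $l(\kappa)$ of degree $2T$ with coefficients~\eqref{eq:a-coefficients}; the Markov--Lukacs theorem and Nesterov's PSD characterization then translate this into the $(\bm p,\bm P,\bm q,\bm Q)$ constraints of~\eqref{eq:lprob0}, and the $T=2$ case is handled by the same merging of the $\kappa^2$ and $\kappa^T$ coefficients as in the left-sided proof.
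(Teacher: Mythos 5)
Your proposal follows essentially the same route as the paper: strong duality via Theorem~\ref{thm:slater}, the symmetry reduction of Lemma~\ref{prop:rprob_symmetry}, the decomposition of Lemma~\ref{lem:semi_infinite_constraints} into the four univariate constraints of~\eqref{opt:lprob_dual_simpl2}, the $\mathcal S$-lemma for the first three, and a max-analogue of Lemmas~\ref{prop:f_t-decomposition} and~\ref{prop:aux-f-optimal} (the paper's Lemmas~\ref{prop:g_t-decomposition} and~\ref{prop:aux-g-optimal}) to collapse $g_T(\gamma/s^T,\infty)$ to the same two-variable joint infimum~\eqref{eq:rprob_dual_3_5} and hence the same sum-of-squares reformulation. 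Your explicit sign-of-$\gamma_1$ argument for why the joint-infimum replacement is valid (exact for $\gamma_1\leq 0$, dominated by the third constraint for $\gamma_1>0$) is sound and in fact spells out a step the paper leaves implicit.
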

\begin{proof}
Using similar arguments as in the proof of Theorem~\ref{thm:rprob}, one first shows that the worst-case probability problem $\sup_{\mathbb{P} \in \mathcal{P}} \mathbb{P} ( \prod_{t=1}^T \tilde{\xi}_t \geq \gamma)$ admits a strong dual which constitutes a semi-infinite optimization problem. Exploiting this problem's permutation symmetry, one can further show that its optimal value amounts to
\begin{equation}
\label{opt:lprob_dual_simpl}
\begin{array}{rl}
	\text{R}(\gamma) =\
	\inf & \alpha + T\mu\beta +	T(\mu^2 + \sigma^2)\gamma_1 + T\left[ T\mu^2 + \sigma^2 + (T-1)\rho\sigma^2\right] \gamma_2 \\
	\st  & \alpha, \beta, \gamma_1, \gamma_2 \in \mathbb{R} \\
	& \alpha + \beta \Vert \bm{\xi} \Vert_1 + \gamma_1 \Vert \bm{\xi} \Vert_2^2 + \gamma_2 \Vert \bm{\xi} \Vert_1^2 \geq 0 \quad \forall \bm{\xi} \geq \bm{0} \\
	& \alpha + \beta \Vert \bm{\xi} \Vert_1 + \gamma_1 \Vert \bm{\xi} \Vert_2^2 + \gamma_2 \Vert \bm{\xi} \Vert_1^2 \geq 1
\quad \forall \bm{\xi} \geq \bm{0}:\ \textstyle\prod_{t=1}^{T} \xi_t \geq \gamma.
	\end{array}
\end{equation}
Details are omitted for brevity of exposition.
Lemma~\ref{lem:semi_infinite_constraints} then implies that~(\ref{opt:lprob_dual_simpl}) reduces to
\begin{equation}
\label{opt:lprob_dual_simpl2}
\begin{array}{rl}
	\text{R}(\gamma) =\
	\inf  & \alpha + T\mu\beta +	T(\mu^2 + \sigma^2)\gamma_1 + T(T\mu^2 + \sigma^2 + (T-1)\rho\sigma^2)\gamma_2 \\
	\st   & \alpha, \beta, \gamma_1, \gamma_2 \in \mathbb{R} \\
	& \displaystyle \inf_{s \geq 0}\ \alpha + \beta s + \gamma_2 s^2 + \frac{\gamma_1}{T}s^2 \geq 0 \\
	& \displaystyle \inf_{s \geq 0}\ \alpha + \beta s + \gamma_2 s^2 + \gamma_1 s^2 \geq 0 \\
	& \displaystyle \inf_{s \geq T\gamma^{1/T}}\ \alpha + \beta s + \gamma_2 s^2 + \frac{\gamma_1}{T} s^2 \geq 1 \\
	& \displaystyle \inf_{s \geq T\gamma^{1/T}}\ \alpha + \beta s + \gamma_2 s^2 + \gamma_1 s^2 g_T\left( \frac{\gamma}{s^T}, \infty \right) \geq 1.
\end{array}
\end{equation}
By leveraging the $\mathcal S$-lemma and a well-known reformulation of hyperbolic constraints as second-order cone constraints, one can use similar arguments as in the proof of Theorem~\ref{thm:rprob} to show that the first three constraints in~\eqref{opt:lprob_dual_simpl2} hold iff there exist $\lambda_1, \lambda_2, \lambda_3 \geq 0$ satisfying
\begin{align*}
	&\alpha \geq 0, \quad
	         \alpha \geq 1 - \lambda_3 T \gamma^{1/T}, \quad
	         \gamma_1 + T\gamma_2 \geq 0, \quad
	         \gamma_1 + \gamma_2 \geq 0 \\
	&\gamma_2 + \frac{\gamma_1}{T} + \alpha \geq \left\Vert \left( \beta - \lambda_1, \gamma_2 + \frac{\gamma_1}{T} - \alpha \right)\right\Vert_2 \\
	&\gamma_2 + \gamma_1 + \alpha \geq \left\Vert \left( \beta - \lambda_2, \gamma_2 + \gamma_1 - \alpha \right) \right\Vert_2 \\
	&\gamma_2 + \frac{\gamma_1}{T} + \lambda_3 T \gamma^{1/T} + \alpha - 1 \geq \left\Vert \left( \beta - \lambda_3, \gamma_2 + \frac{\gamma_1}{T} - \lambda_3 T \gamma^{1/T} - \alpha + 1 \right) \right\Vert_2.
\end{align*}
By Lemma~\ref{prop:g_t-decomposition} below, the last semi-infinite constraint in~\eqref{opt:lprob_dual_simpl2} can be re-expressed as
\begin{equation*}
	\inf_{s \geq T\gamma^{1/T}, \,\underline\xi, \overline\xi \geq 0} \left\{ \alpha + \beta s + \gamma_2 s^2 + \gamma_1 \left[\underline\xi{}^2 + (T-1)\overline\xi{}^2\right] : \underline\xi + (T-1)\overline\xi = s,~ \underline\xi \,\overline\xi{}^{T-1} = \gamma\right\}\geq 1,
\end{equation*}
which is identical to~\eqref{eq:rprob_dual_3_5}. The claim then follows by replacing this constraint with its explicit semidefinite reformulation familiar from Theorem~\ref{thm:rprob}. \qed
\end{proof}

The proof of Theorem~\ref{thm:lprob} relies on 2 auxiliary lemmas, which we prove next.

\begin{lemma}
\label{prop:g_t-decomposition}
For $T \geq 2$, $\overline{\gamma} =\infty$ and $\underline{\gamma} \geq 0$, the optimal value $g_T (\underline{\gamma}, \infty)$ of~\eqref{opt:aux-g} equals
\begin{equation*}
	g_T(\underline{\gamma}, \infty) = \left\{ \begin{array}{ll} 
	\displaystyle \max_{\underline \xi \geq 0,\, \overline \xi \geq 0} \left\{  \underline \xi^2 + (T-1) \overline \xi{}^2 : \underline \xi + (T-1) \overline \xi = 1,~ \underline \xi \, \overline \xi{}^{T-1} = \underline{\gamma} \right\} & \text{if } 0 \leq \underline{\gamma} \leq \underline{\gamma} T^{-T}\\
	-\infty & \text{if } \underline{\gamma} > T^{-T}.
	\end{array} \right.
\end{equation*}
\end{lemma}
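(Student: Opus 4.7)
The plan is to follow the same structural strategy as the proof of Lemma~\ref{prop:f_t-decomposition}, adapted to maximization rather than minimization. First, for $\underline{\gamma} > T^{-T}$, I would invoke the AM-GM inequality: any $\bm\xi \geq \bm 0$ with $\|\bm\xi\|_1 = 1$ satisfies $\prod_{t=1}^T \xi_t \leq T^{-T} < \underline{\gamma}$, so the feasible set of~\eqref{opt:aux-g} is empty and, by the convention $\sup \emptyset = -\infty$, the optimal value is $-\infty$.

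For $0 \leq \underline{\gamma} \leq T^{-T}$, the feasible set is compact, so the supremum is attained. The constraint $\prod_{t=1}^T \xi_t \geq \underline{\gamma}$ is equivalent to $(\prod_{t=1}^T \xi_t)^{1/T} \geq \underline{\gamma}^{1/T}$, which is convex by concavity of the geometric mean, so the feasible set is convex. Since $\|\bm\xi\|_2^2$ is convex, its maximum over this convex compact set is attained at an extreme point. For $\underline{\gamma} = 0$ the feasible set is the whole probability simplex, whose vertices $\mathbf{e}_i$ give value $1$; this matches the right-hand side of the claimed formula, which is attained by $(\underline{\xi},\overline{\xi}) = (1,0)$. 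For $0 < \underline{\gamma} \leq T^{-T}$ every feasible point satisfies $\xi_t > 0$ and no vertex of the simplex is feasible, so any optimum lies on the surface $\prod_{t=1}^T \xi_t = \underline{\gamma}$.

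At such a point the KKT stationarity conditions, with multipliers $\lambda$ and $\mu$ attached to $\sum \xi_t = 1$ and $\prod \xi_t = \underline{\gamma}$, reduce to the single quadratic $2\xi_t^2 - \lambda \xi_t - \mu \underline{\gamma} = 0$ holding for every $t$. Hence each $\xi_t$ takes at most two positive values, and by permutation symmetry I may place $k$ of them at $\underline{\xi}$ and the remaining $T-k$ at $\overline{\xi}$. This yields the decomposition $g_T(\underline{\gamma}, \infty) = \max_{k \in \{1,\ldots,T-1\}} g_{T,k}(\underline{\gamma})$ with
\[
g_{T,k}(\underline{\gamma}) = \max_{\underline{\xi}, \overline{\xi} > 0} \left\{ k \underline{\xi}^2 + (T-k)\overline{\xi}^2 : k\underline{\xi} + (T-k)\overline{\xi} = 1,~ \underline{\xi}^k \overline{\xi}^{T-k} = \underline{\gamma} \right\},
\]
and the symmetry $(\underline{\xi}, k) \leftrightarrow (\overline{\xi}, T-k)$ gives $g_{T,k} = g_{T,T-k}$, so it suffices to examine $k \in \{1, \ldots, \lfloor T/2 \rfloor\}$.

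The remaining task is to identify the optimal $k$ as $k=1$, which will reproduce the formula in the statement. I would mirror Lemma~\ref{prop:aux-f-optimal}: the cases $T \in \{2,3\}$ are trivial; for $T=4$ a direct computation gives $g_{4,2}(\underline{\gamma}) = \tfrac{1}{2} - 4\sqrt{\underline{\gamma}}$, and an intermediate value theorem argument applied to the cubic $(1-3b)b^3 = \underline{\gamma}$ exhibits a feasible root whose objective $\underline{\xi}^2 + 3\overline{\xi}^2$ strictly exceeds $g_{4,2}(\underline{\gamma})$ on $(0, 4^{-4})$; finally, for $T \geq 5$ one argues by contradiction, freezing all but four coordinates (two of value $\underline{\xi}$ and two of value $\overline{\xi}$) at their optima to reduce the residual four-variable problem to $g_4$ with an optimal $k' = 2$ split, contradicting the $T=4$ case. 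I expect this last step to be the main obstacle: in contrast to $f_{4,1}$, the cubic for $g_{4,1}$ has two positive roots on $(0, 1/3)$, and care is needed to check that one of them yields an objective strictly above $\tfrac{1}{2} - 4\sqrt{\underline{\gamma}}$, with all inequalities in the intermediate value argument reversed relative to the minimization case.
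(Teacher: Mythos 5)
Your proposal is correct and follows essentially the same route as the paper: infeasibility via AM--GM for $\underline{\gamma}>T^{-T}$, an argument that the product constraint binds at optimality for $0<\underline{\gamma}<T^{-T}$ (the paper perturbs the optimum toward a vertex $\mathbf e_i$ rather than invoking extreme points of the convex feasible set, but this is the same idea), the two-valued structure from the stationarity conditions leading to $\max_k g_{T,k}$, and the reduction to $k=1$ by mirroring Lemma~\ref{prop:aux-f-optimal} --- which is exactly what the paper does, since its Lemma~\ref{prop:aux-g-optimal} is stated with the proof omitted as ``widely parallel.'' Your closing remark about the reversed inequalities and the two positive roots of $(1-3\overline\xi)\,\overline\xi^3=\underline{\gamma}$ is well taken and resolves correctly: the root lying in $(0,\zeta^-)$ is the one whose objective strictly exceeds $g_{4,2}(\underline{\gamma})=\tfrac12-4\sqrt{\underline{\gamma}}$.
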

\begin{proof}
If $\underline{\gamma} > T^{-T}$, then the maximization problem~\eqref{opt:aux-g} is infeasible due to the inequality of arithmetic and geometric means, and thus we have $g_T (\underline{\gamma}, \infty) = -\infty$. For $\underline{\gamma} = T^{-T}$, the unique feasible solution of~\eqref{opt:aux-g} is $\bm{\xi} = \frac{1}{T}\bm 1$, which implies that $g_T(\underline{\gamma}, \infty) = \frac{1}{T}$. Moreover, for $\underline{\gamma} = 0$, the last constraint in~\eqref{opt:aux-g} becomes redundant. In this case $g_T (\underline{\gamma}, \infty)$ is optimized by $\bm{\xi} = \mathbf{e}_i$, and thus we find $g_T(\underline{\gamma}, \infty) =1$. Lastly, for $0 < \underline{\gamma} < T^{-T}$, the maximization problem~\eqref{opt:aux-g} is feasible, and every feasible solution has strictly positive components. In addition, the product constraint $\prod_{t=1}^T \xi_t\geq \underline{\gamma}$ is binding at optimality for otherwise convex combinations of the optimal solution $\bm{\xi}$ with $\mathbf{e}_i$, where $i \in \arg \max \{ \bm{\xi}_j \, : \, j = 1, \ldots, T \}$, would improve the objective function of~\eqref{opt:aux-g}, which is a contradiction. We thus conclude that
\begin{equation*}
g_T(\underline{\gamma}, \infty) = \left\{ \begin{array}{ll}
	1 & \text{if } \underline{\gamma} =0,\\
	\max_{\bm \xi > \bm 0}  \left\{\Vert \bm{\xi} \Vert_2^2 :  \Vert \bm{\xi} \Vert_1 = 1,~ \textstyle \prod_{t=1}^T \xi_t = \underline{\gamma} \right\} & \text{if } 0< \underline{\gamma} <T^{-T},\\
	\frac{1}{T} & \text{if } \underline{\gamma} = T^{-T}, \\
	-\infty & \text{if } \underline{\gamma} > T^{-T}.
\end{array} \right.
\end{equation*}
As in the proof of Lemma~\ref{prop:f_t-decomposition}, for $0< \underline{\gamma} <T^{-T}$ one can use the optimality conditions of~\eqref{opt:aux-g} to show that
\begin{equation}
	\label{eq:aux-g2}
	g_T(\underline{\gamma}, \infty) =\max_{k \in\{ 1, \ldots, \lfloor \frac{T}{2}\rfloor\}} g_{T,k}(\underline{\gamma}),
\end{equation} 
%
%the  imply that $\xi_t$ is one of the roots of an identical quadratic polynomial for every $T$. Therefore, the optimal vector $\bm{\xi}$ can be partitioned into two groups, where all $\xi_t$'s belonging to the same partition share the same value. Thus, the optimal value of \eqref{opt:aux-g} coincides with 
%\begin{equation*}
%	\max_{k \in\{ 1, \ldots, \lfloor \frac{T}{2}\rfloor\}} g_{T,k}(x),
%\end{equation*}
where the functions $g_{T,k}:(0, T^{-T}) \rightarrow \mathbb{R}$, $k = 1, 2, \hdots, \lfloor \frac{T}{2} \rfloor$, are defined through
\begin{equation}
\label{opt:aux-g-enum}
\begin{aligned}
	g_{T,k}(\underline{\gamma}) = \max_{\underline \xi > 0,\, \overline \xi > 0}\left\{  k \underline \xi^2 + (T-k) \overline \xi{}^2 : k \underline \xi + (T-k) \overline \xi = 1,~ \underline \xi^k \overline \xi{}^{T-k} = \underline{\gamma}\right\}.
\end{aligned}
\end{equation}
Lemma~\ref{prop:aux-g-optimal} below asserts that the maximum in~\eqref{eq:aux-g2} is attained at $k=1$. We thus obtain
\begin{equation*}
g_T(\underline{\gamma}, \infty) = \left\{ \begin{array}{ll}
	1 & \text{if } \underline{\gamma} =0,\\
	\max_{\underline \xi \geq 0,\, \overline \xi \geq 0} \left\{  \underline \xi^2 + (T-1) \overline \xi{}^2 : \underline \xi + (T-1) \overline \xi = 1,~ \underline \xi \, \overline \xi{}^{T-1} = \underline{\gamma} \right\} & \text{if } 0< \underline{\gamma} <T^{-T},\\
	\frac{1}{T} & \text{if } \underline{\gamma} = T^{-T}, \\
	-\infty & \text{if } \underline{\gamma} > T^{-T}.
\end{array} \right.
\end{equation*}
The statement of the lemma now follows since the maximization problem in the equation above evaluates to $1$ at $\underline{\gamma} = 0$ and to $1 / T$ at $\underline{\gamma} = T^{-T}$. Indeed, the maximization problem is bounded above by $\max_{\Vert \bm{\xi} \Vert_1 = 1}  \Vert \bm{\xi} \Vert_2^2$, and the optimal value $1$ of this bound is achieved by the feasible solution $(\underline{\xi}, \overline{\xi}) = (1, 0)$ of the maximization problem at $\underline{\gamma} = 0$. Likewise, $g_T (T^{-T}, \infty)$ is bounded above by $\max_{\bm\xi \geq \bm 0} \{ \Vert \bm{\xi} \Vert_2^2 \, : \, \Vert \bm\xi \Vert_1 = 1,\, \prod_t \xi_t = T^{-T} \}$, and the optimal value $1/T$ of this bound is achieved by the feasible solution $(\underline{\xi}, \overline{\xi}) = (\frac{1}{T}, \frac{1}{T})$ of the maximization problem.
\qed
\end{proof}

\begin{lemma}
\label{prop:aux-g-optimal}
For $T \geq 2$ and $0 < \underline{\gamma} < T^{-T}$, the optimal value of~\eqref{eq:aux-g2} is given by $g_{T,1}(\underline{\gamma})$.
\end{lemma}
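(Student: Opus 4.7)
The plan is to follow the proof of Lemma~\ref{prop:aux-f-optimal} with inequalities reversed throughout, exploiting the fact that $f_{T,k}$ and $g_{T,k}$ are the minimization and maximization problems over the same (finite, at most two-point) feasible set. For $T \in \{2,3\}$ the claim is trivial since $\lfloor T/2 \rfloor = 1$. The substantive work is the base case $T = 4$ and its bootstrapping to $T \geq 5$ via a contradiction argument.

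For the base case, I would first compute $g_{4,2}(\underline{\gamma})$ explicitly. Its two constraints $2\underline{\xi} + 2\overline{\xi} = 1$ and $(\underline{\xi}\overline{\xi})^2 = \underline{\gamma}$ are symmetric in $(\underline{\xi}, \overline{\xi})$, so both feasible points yield the same objective, giving $g_{4,2}(\underline{\gamma}) = \tfrac{1}{2} - 4\sqrt{\underline{\gamma}}$ (the same value as for $f_{4,2}$). As in the proof of Lemma~\ref{prop:aux-f-optimal}, establishing $g_{4,1}(\underline{\gamma}) > g_{4,2}(\underline{\gamma})$ reduces to exhibiting a feasible $\overline{\xi}^*$ for the $k=1$ instance of~\eqref{opt:aux-g-enum} that satisfies $\overline{\xi}^* \notin [\zeta^-, \zeta^+]$, where $\zeta^\pm = (3 \pm \sqrt{3 - 48\sqrt{\underline{\gamma}}})/12$. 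The estimate $h(\zeta^-) > \underline{\gamma}$ for $h(\overline{\xi}) = (1-3\overline{\xi})\overline{\xi}^3$ is already established in the proof of Lemma~\ref{prop:aux-f-optimal}. Combined with $h(0) = 0 < \underline{\gamma}$ and the intermediate value theorem, this yields an $\overline{\xi}^* \in (0, \zeta^-)$ with $h(\overline{\xi}^*) = \underline{\gamma}$; this $\overline{\xi}^*$ is feasible and lies strictly to the left of $[\zeta^-, \zeta^+]$, so its objective exceeds $g_{4,2}(\underline{\gamma})$.

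For $T \geq 5$ I would argue by contradiction, mirroring the proof of Lemma~\ref{prop:aux-f-optimal}. Assume $g_T(\underline{\gamma}, \infty) = g_{T,k}(\underline{\gamma})$ for some $k \in \{2, \ldots, \lfloor T/2 \rfloor\}$, attained by a solution with $k$ components equal to some $\underline{\xi} > 0$ and $T-k$ components equal to some $\overline{\xi} > 0$, $\underline{\xi} \neq \overline{\xi}$. Freezing $k-2$ components at $\underline{\xi}$ and $T-k-2$ at $\overline{\xi}$, then maximizing over the remaining four variables and substituting $y_t = \xi_t/c$ with $c = 2\underline{\xi} + 2\overline{\xi}$, reduces the subproblem to $g_4(\underline{\gamma}', \infty)$ with $\underline{\gamma}' = \underline{\gamma}/(c^4 \underline{\xi}^{k-2}\overline{\xi}^{T-k-2})$ (up to an additive constant and the factor $c^2$). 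Feasibility of the assumed $(2,2)$-structured subsolution together with $\underline{\xi} \neq \overline{\xi}$ forces $\underline{\gamma}' < 4^{-4}$ via the arithmetic-geometric mean inequality, and the base case then gives $g_4(\underline{\gamma}', \infty) = g_{4,1}(\underline{\gamma}') > g_{4,2}(\underline{\gamma}')$. Thus the subproblem's maximum is attained at a $(1,3)$-structure rather than the assumed $(2,2)$-structure, a contradiction.

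The main obstacle is the $T = 4$ base case, whose crucial inequality $h(\zeta^-) > \underline{\gamma}$ has already been proved within Lemma~\ref{prop:aux-f-optimal}. The conceptual addition needed here is simply to observe that the same intermediate value theorem which produced a feasible point inside $[\zeta^-, \zeta^+]$ for the $f$ bound simultaneously produces a feasible point outside this interval for the $g$ bound, exploiting the fact that there are exactly two feasible $\overline{\xi}$ whenever $\underline{\gamma} \in (0, 4^{-4})$. The reduction from $T \geq 5$ to $T = 4$ is then a routine replay of the $f$-case argument.
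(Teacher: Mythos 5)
Your proposal is correct and is exactly the argument the paper intends: the paper omits this proof with the remark that it ``widely parallels'' that of Lemma~\ref{prop:aux-f-optimal}, and you have carried out that parallel faithfully, including the one genuinely new observation needed --- that the inequality $(1-3\zeta^-)(\zeta^-)^3>\underline{\gamma}$ already established there, combined with $h(0)=0<\underline{\gamma}$ and the intermediate value theorem, produces a feasible point \emph{outside} $[\zeta^-,\zeta^+]$ and hence $g_{4,1}(\underline{\gamma})>g_{4,2}(\underline{\gamma})=\tfrac{1}{2}-4\sqrt{\underline{\gamma}}$. The base-case computation of $g_{4,2}$ and the bootstrapping contradiction for $T\geq 5$ are both sound.
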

\begin{proof}
	The proof widely parallels that of Lemma~\ref{prop:aux-f-optimal} and is therefore omitted. 
%	Specifically, t
%	The statement trivially holds for $T \in\{2,3\}$. For $T = 4$ we need to prove that $g_{4,1}(x) > g_{4,2}(x)$ for any $x \in (0, 4^{-4})$. This inequality not only establishes the statement for $T=4$ but will also enable us to prove the statement for $T>4$.
%	
%	A direct calculation reveals that $g_{2,1}(x) = 1 - 2x$ for $x \in (0,2^{-2})$ and that $g_{4,2} = \frac{1}{2}g_{2,1}(4\sqrt{x}) = \frac{1}{2} - 4\sqrt{x}$ for $x \in (0, 4^{-4})$; see also the corresponding derivations in the proof of Lemma~\ref{prop:aux-f-optimal}. Fix $x \in (0, 4^{-4})$, from the proof of Lemma \ref{prop:aux-f-optimal}, $g_{4,1}(x) > g_{4,2}(x)$ if there exists $\overline{\xi}^* \in (0, \zeta^-) \cup (\zeta^+, \infty)$ with $(1 - 3\overline{\xi}^*)(\overline{\xi}^*)^3 - x = 0$. This latest statement holds because of the intermediate value theorem and that $(1 - 3\zeta^-)(\zeta^-)^3 - x > 0$ and $(1 - 3\cdot 0) 0^3 - x < 0$. Hence, the Lemma follows.
%	
%	Thus, it is sufficient to prove only for the case when $T = 4$ because for $T = 2,3$ the statement trivially holds true and the case where $T \geq 5$ can be prove by extending the result from the case where $T = 4$. 
\qed
\end{proof}

We now show that in the extreme case, the weak-sense geometric random walk $\bm{\tilde \pi}=\{\tilde\pi_T\}_{T\in\mathbb N}$ defined through $\tilde \pi_T=\prod_{t=1}^T \tilde\xi_t$ weakly exceeds the deterministic growth process $\{ \mu^T \}_{T\in\mathbb N}$ with certainty for any time horizon $T$, assuming that $\rho \geq 0$. The result can be viewed as the right-sided analogue of Theorem~\ref{thm:rprob_eq0}.

\begin{prop}\label{proposition_xy}
If $\rho \geq 0$, then $\text{\em R}(\gamma) = 1$ for all $\gamma \leq \mu^T$.
\end{prop}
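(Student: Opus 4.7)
The plan is to construct, for each $p \in (0,1)$, an explicit distribution $\mathbb{P}_p \in \mathcal{P}$ satisfying $\mathbb{P}_p\bigl(\prod_{t=1}^T \tilde\xi_t \geq \gamma\bigr) \geq 1 - p$, and then to let $p\downarrow 0$. The key idea is to obtain $\mathbb{P}_p$ as a mixture of a Dirac mass at the diagonal point $\mu\bm 1$, whose product automatically exceeds $\gamma$, and a second distribution whose covariance matrix is inflated by a factor of $1/p$. The room required to perform this inflation is precisely what the assumption $\rho\geq 0$ provides.

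Specifically, I would set
\[
	\mathbb{P}_p = (1-p)\,\delta_{\mu\bm 1} + p\, \mathbb{Q}_p,
\]
where $\mathbb{Q}_p$ denotes any distribution in $\mathcal{M}_+(\mathbb{R}_+^T)$ with mean vector $\mu\bm 1$ and covariance matrix $\bm\Sigma/p$. Since the inflated covariance $\bm\Sigma/p$ remains permutation-symmetric with correlation $\rho$ and (rescaled) variance $\sigma^2/p$, Proposition~\ref{prop:non-emptiness} applied to the rescaled problem guarantees the existence of $\mathbb{Q}_p$ as soon as $\mu^2 + \rho\sigma^2/p \geq 0$, which under the hypothesis $\rho\geq 0$ holds trivially for every $p\in(0,1)$.

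A direct moment computation then confirms that $\mathbb{P}_p \in \mathcal{P}$: by linearity, $\mathbb{E}_{\mathbb{P}_p}[\tilde{\bm\xi}] = \mu\bm 1$ and
\[
	\mathbb{E}_{\mathbb{P}_p}[\tilde{\bm\xi}\tilde{\bm\xi}^\intercal] = (1-p)\mu^2\bm 1\bm 1^\intercal + p\bigl(\mu^2\bm 1\bm 1^\intercal + \bm\Sigma/p\bigr) = \mu^2\bm 1\bm 1^\intercal + \bm\Sigma,
\]
so the covariance matrix of $\mathbb{P}_p$ equals $\bm\Sigma$. The atom at $\mu\bm 1$ satisfies $\prod_{t=1}^T\mu = \mu^T \geq \gamma$ by assumption, and therefore
\[
	\mathbb{P}_p\Bigl(\prod_{t=1}^T \tilde\xi_t \geq \gamma\Bigr) \;\geq\; \mathbb{P}_p\bigl(\tilde{\bm\xi}=\mu\bm 1\bigr) \;=\; 1-p.
\]
Letting $p\downarrow 0$ gives $\text{R}(\gamma) \geq 1$, and combined with the trivial upper bound $\text{R}(\gamma)\leq 1$ this yields the claim. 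The only non-routine step is the invocation of Proposition~\ref{prop:non-emptiness} for the inflated ambiguity set; this pinpoints $\rho\geq 0$ as the minimal assumption needed for the construction to go through as $p$ becomes arbitrarily small.
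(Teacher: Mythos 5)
Your proof is correct, but it takes a genuinely different route from the paper's. You argue on the primal side: you exhibit an explicit family of feasible distributions $\mathbb{P}_p=(1-p)\delta_{\mu\bm 1}+p\,\mathbb{Q}_p$ whose tail probability tends to $1$, where the existence of the inflated-covariance component $\mathbb{Q}_p$ is guaranteed by Proposition~\ref{prop:non-emptiness} precisely because $\rho\geq 0$ keeps $\mu^2+\rho\sigma^2/p\geq 0$ for arbitrarily small $p$. The paper instead works on the dual side: it takes the reformulation~\eqref{opt:lprob_dual_simpl2}, evaluates the third semi-infinite constraint at $s=T\mu$ to bound part of the objective below by $1$, and writes the remaining term $T\sigma^2(\gamma_1+(1+(T-1)\rho)\gamma_2)$ as $\rho(\gamma_1+T\gamma_2)+(1-\rho)(\gamma_1+\gamma_2)$ times $T\sigma^2$, which is a non-negative combination of the explicit constraints $\gamma_1+T\gamma_2\geq 0$ and $\gamma_1+\gamma_2\geq 0$ exactly when $\rho\geq 0$; hence every dual-feasible point has objective at least $1$. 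Your argument is more elementary and self-contained in that it bypasses the duality machinery of Theorems~\ref{thm:slater} and~\ref{thm:lprob} entirely, and it has the added benefit of displaying the near-extremal distributions and of pinpointing why the construction (though not necessarily the statement) fails for $\rho<0$; the paper's argument buys brevity given that the dual program~\eqref{eq:lprob0} has already been derived, and it exposes which dual constraints become active at the threshold $\rho=0$. Both proofs are valid; the only point worth making explicit in yours is that the rescaled parameters $(\mu,\sigma^2/p,\rho,T)$ still satisfy the standing assumptions $\sigma^2/p>0$ and $-\frac{1}{T-1}<\rho<1$ under which Proposition~\ref{prop:non-emptiness} is stated, which is immediate.
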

\begin{proof}
The objective function of problem~\eqref{opt:lprob_dual_simpl2} can be reformulated as
\begin{equation*}
	(\alpha + T\mu\beta + T\mu^2\gamma_1 + T^2\mu^2\gamma_2) + T\sigma^2(\gamma_1 + (1+(T-1)\rho)\gamma_2).
\end{equation*}
For $\gamma \leq \mu^T$, the first term equals the left hand side of the third semi-infinite constraint in~\eqref{opt:lprob_dual_simpl2} if we set $s = T \mu$, and it must therefore be greater than or equal to 1. In the second term, the factor $(\gamma_1 + (1+(T-1)\rho)\gamma_2)$ can be expressed as the linear combination $\rho \cdot (\gamma_1 + T\gamma_2) + (1 - \rho) \cdot (\gamma_1 + \gamma_2)$. For $\rho \geq 0$, this linear combination becomes a convex combination, and the claim follows since $\gamma_1 + T \gamma_2 \geq 0$ and $\gamma_1 + \gamma_2 \geq 0$ are explicit constraints in the equivalent reformulation~\eqref{eq:lprob0}.
%	Assume that $\gamma^{1/T} \leq \mu$ and $\rho \geq 0$. $\text{R}(\gamma)$ is represented by problem~\eqref{opt:lprob_dual_simpl2}, whose objective function can be decomposed as a sum of two terms
%\begin{equation*}
%	(\alpha + T\mu\beta + T\mu^2\gamma_1 + T^2\mu^2\gamma_2) + T\sigma^2(\gamma_1 + (1+(T-1)\rho)\gamma_2).
%\end{equation*}
%We will now argue that the first term is greater than or equal to 1 and the second term is non-negative, implying that the optimal objective function value of~(34) is exactly 1 and thus conclude the proof. The first term is greater than or equal to 1 because of the third semi-infinite constraint of~\eqref{opt:lprob_dual_simpl2} where we can substitute $T\mu$ for $s$. Next, consider the second term where we can cast $\gamma_1 + (1+(T-1)\rho)\gamma_2$ as a convex combination between $\gamma_1 + \gamma_2$ and $\gamma_1 + T\gamma_2$. The claim thus now follows because both $\gamma_1 + \gamma_2 \geq 0$ and $\gamma_1 + T\gamma_2 \geq 0$ are explicit constraints in an equivalent reformulation~\eqref{eq:lprob0}.
\qed
\end{proof}

We highlight that Proposition~\ref{proposition_xy} breaks down for $\rho < 0$.

%%%%%%%%%%%%%%%%%%%%%%%%%%%%%%%%%%%%%%%%%%%%%%%%%%%%%%%%%%%%%%%%%
\section{Covariance Bounds}
\label{section:cov_bounds}

The ambiguity set $\mathcal{P}$ reflects the assumption that the covariance matrix $\bm{\Sigma}$ is known \emph{precisely} and that the (co-)variances of the components of $\bm{\tilde{\xi}}$ are permutation symmetric. Either assumption may prove overly restrictive in practice. In this section, we therefore assume that only an upper bound on the covariance matrix is available. More precisely, we consider the ambiguity set
\begin{equation*}
\begin{aligned}
	\mathcal{P}' = 
	\left \lbrace 
		\mathbb{P} \in \mathcal M_+(\mathbb R^T_+)~: 
		\begin{array}{l}
			\mathbb{P} \left( \tilde{\bm{\xi}} \geq \bm 0 \right) = 1 ,\; \mathbb{E}_\mathbb{P} \left( \tilde{\bm{\xi}} \right) = \bm \mu ,\;
			\mathbb{E}_\mathbb{P} \left( \tilde{\bm{\xi}}\tilde{\bm{\xi}}^\intercal \right) \preceq \bm \Sigma+\bm \mu\bm\mu^\intercal  %(1 - \rho)\sigma^2\mathbb{I} + \left( \mu^2 + \rho\sigma^2 \right) \bm{11}^\intercal
		\end{array}
	\right \rbrace,
\end{aligned}
\end{equation*}
where $\bm \mu$ and $\bm\Sigma$ are defined as in Section~\ref{section:introduction}. For $\gamma > 0$, we are then interested in quantifying \emph{relaxed left-sided and right-sided Chebyshev bounds} of the form
\begin{equation*}
\begin{aligned}
	\text{L}' (\gamma) = \sup_{\mathbb{P} \in \mathcal{P}'} \mathbb{P} \left( \prod_{t=1}^T \tilde{\xi}_t \leq \gamma \right)\quad \text{and}\quad
	\text{R}'(\gamma) = \sup_{\mathbb{P} \in \mathcal{P}'} \mathbb{P} \left( \prod_{t=1}^T \tilde{\xi}_t \geq \gamma \right).
\end{aligned}
\end{equation*}

In the following, we analyze each of these relaxed bounds in turn.

\begin{thm}[Relaxed Left-Sided Chebyshev Bound]
\label{thm:rprob_bnd}
The relaxed left-sided Chebyshev bound satisfies $\text{\em L}'(\gamma) = \text{\em L}(\gamma)$ for all $\gamma > 0$.
\end{thm}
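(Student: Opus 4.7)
Since $\mathcal P\subseteq\mathcal P'$, the inequality $\text L(\gamma)\leq\text L'(\gamma)$ is immediate, and the task reduces to the reverse. My plan is to repeat the argument of Theorem~\ref{thm:rprob} verbatim, but starting from the relaxed generalized moment problem that defines $\text L'(\gamma)$. The only structural change is that the second-moment equality is replaced by the Loewner inequality $\int\bm\xi\bm\xi^\intercal\,\dd\mathbb P\preceq\bm\Sigma+\bm\mu\bm\mu^\intercal$. Because this is a conic constraint in the self-dual cone $\mathbb S^T_+$, conic duality forces the associated Lagrange multiplier $\bm\Gamma$ to satisfy $\bm\Gamma\succeq\bm 0$ rather than being free as in~\eqref{opt:rprob_dual}. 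Every other step of the derivation goes through unchanged: the Slater condition of Theorem~\ref{thm:slater} persists \emph{a fortiori} because the relaxed moment cone has at least as much interior as the original one; Lemma~\ref{prop:rprob_symmetry} still applies because conjugation by a permutation matrix preserves positive semidefiniteness; and Lemmas~\ref{lem:semi_infinite_constraints}--\ref{prop:aux-f-optimal} together with the $\mathcal S$-lemma and the Markov--Lukacs sum-of-squares reformulation never reference the sign of $\gamma_1$. After the symmetric restriction $\bm\Gamma=\gamma_1\mathbb I+\gamma_2\bm 1\bm 1^\intercal$, the PSD condition becomes $\gamma_1\geq 0$ and $\gamma_1+T\gamma_2\geq 0$ (which together imply the weaker $\gamma_1+\gamma_2\geq 0$ that appears in~\eqref{eq:rprob0}).

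Comparing the two derivations term by term, I expect to conclude that the SDP for $\text L'(\gamma)$ is exactly \eqref{eq:rprob0} augmented with the single extra inequality $\gamma_1\geq 0$. The theorem thus reduces to the purely algebraic claim that this additional inequality does not alter the optimum of \eqref{eq:rprob0}. To establish it, I would take any feasible point of \eqref{eq:rprob0} with $\gamma_1<0$ and consider the shift $(\gamma_1,\gamma_2)\mapsto(\gamma_1+t,\gamma_2-t)$ at $t^\star=(\gamma_1+T\gamma_2)/(T-1)\geq 0$. This shift leaves $\gamma_1+\gamma_2$ invariant, drives $\gamma_1+T\gamma_2$ to zero, produces a new $\gamma_1^{\mathrm{new}}=T(\gamma_1+\gamma_2)/(T-1)\geq 0$, and changes the objective by exactly $-T(T-1)(\mu^2+\rho\sigma^2)\,t^\star$, which is non-positive under the standing assumption $\mu^2+\rho\sigma^2\geq 0$.

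The main obstacle will be to verify that the remaining constraints of~\eqref{eq:rprob0}---the three second-order cone constraints and the sum-of-squares decomposition---stay feasible after the shift, possibly after readjusting the auxiliary variables $\lambda_1,\lambda_2,\lambda_3,\bm p,\bm P,\bm q,\bm Q$ and, if necessary, $\alpha$ and $\beta$. At the shifted point one has $\gamma_2+\gamma_1/T=0$, so the first SOC constraint collapses to the forced equality $\beta=\lambda_1\geq 0$; the technical heart of the argument is to show that, if the original optimum has $\beta<0$, the bound $|\beta|\leq 2\sqrt{\alpha(\gamma_1+T\gamma_2)/T}$ provided by that very SOC constraint, combined again with $\mu^2+\rho\sigma^2\geq 0$, permits setting $\beta=\lambda_1=0$ at an objective cost $-T\mu\beta$ that is dominated by the objective gain from the main shift. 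On the SOS side, the leading and mid-degree coefficients $a_0,a_T,a_{2T}$ collapse to multiples of the invariant $\gamma_1+\gamma_2$ and combine into the perfect square $(\gamma_1+\gamma_2)\bigl(\gamma^{1/(T-1)}-\kappa^T\bigr)^2$, which reduces the SOS certificate to a residual polynomial in $\alpha-1$, $\beta$ and the lower-order coefficients, and this residual is easily certified from $\alpha\geq 1$ and $\beta\geq 0$. Once these verifications are in place, the modified solution is feasible for the extended SDP with $\gamma_1\geq 0$, its objective is no larger than the original, and we obtain $\text L'(\gamma)\leq\text L(\gamma)$, closing the proof.
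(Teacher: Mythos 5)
Your setup — dualizing with $\bm\Gamma\succeq\bm 0$ as the multiplier of the Loewner inequality, symmetrizing via Lemma~\ref{prop:rprob_symmetry}, and reducing the theorem to the claim that appending $\gamma_1\geq 0$ to \eqref{eq:rprob0} (equivalently, to \eqref{opt:rprob_dual_simpl2}) leaves the optimal value unchanged — is exactly the paper's route. The divergence, and the gap, is in how you establish that last claim. Your shift with $t^\star=(\gamma_1+T\gamma_2)/(T-1)$ drives $\gamma_2+\gamma_1/T$ to zero, which forces $\beta\geq 0$ in the first constraint, and your proposed repair of raising $\beta$ to $0$ does \emph{not} have its cost dominated by the gain from the shift: the first second-order cone constraint only gives $|\beta|\leq 2\sqrt{\alpha(\gamma_1+T\gamma_2)/T}$, so the cost $T\mu|\beta|$ scales like $\sqrt{\gamma_1+T\gamma_2}$ while the gain $T(T-1)(\mu^2+\rho\sigma^2)\,t^\star$ scales linearly in $\gamma_1+T\gamma_2$; for $\gamma_1+T\gamma_2$ small and positive the cost exceeds the gain, so the modified point can have a strictly larger objective and the argument collapses. (A shift that stops at $t=-\gamma_1$, i.e.\ $(\gamma_1,\gamma_2)\mapsto(0,\gamma_1+\gamma_2)$, would actually work at the level of \eqref{opt:rprob_dual_simpl2}, because all three resulting semi-infinite constraints are implied by the original second one; but that is not what you wrote, and your SOS-feasibility discussion is in any case only a sketch.)

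The paper sidesteps all of this with a much shorter argument that modifies nothing: for any feasible point of \eqref{opt:rprob_dual_simpl2} with $\gamma_1<0$, evaluating the second semi-infinite constraint at $s=T\mu$ gives $\alpha+T\mu\beta+T^2\mu^2(\gamma_1+\gamma_2)\geq 1$, and the objective decomposes as this quantity plus $T(1-T)(\mu^2+\rho\sigma^2)\gamma_1$ plus $T\sigma^2\bigl(1+(T-1)\rho\bigr)(\gamma_1+\gamma_2)$. The second summand is strictly positive when $\gamma_1<0$ (using $\mu^2+\rho\sigma^2>0$) and the third is non-negative, so the objective strictly exceeds $1$; since $\text{L}(\gamma)$ is a probability and hence at most $1$, no such point can be optimal. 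You should replace your shift construction with this observation, or at minimum correct the shift to $t=-\gamma_1$ and carry out the feasibility verification you currently only gesture at.
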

\begin{proof}
%As $\mathcal P'\supseteq \mathcal P$, we have $\text{L}'(\gamma) \geq \text{L}(\gamma)$. If $\text{L}(\gamma) =1$, it is therefore clear that $\text{L}'(\gamma) =1$ because probabilities are less than or equal to 1. Thus, the claim trivially holds when $\text{L}(\gamma) =1$.

%In the remainder assume that $\text{L}(\gamma) <1$.
By repeating the first few steps of the proof of Theorem~\ref{thm:rprob}, one can show that $\text{L}'(\gamma)$ coincides with the optimal value of~\eqref{opt:rprob_dual} with the extra constraint $\bm\Gamma\succeq \bm 0$. In this case Lemma~\ref{prop:rprob_symmetry} remains valid and implies that we can restrict attention to permutation-symmetric solutions of the form $\bm\Gamma=\gamma_1 \mathbb I + \gamma_2 \bm 1 \bm 1^\intercal$ for some $\gamma_1, \gamma_2 \in \mathbb R$. As $\bm\Gamma=\gamma_1 \mathbb I + \gamma_2 \bm 1 \bm 1^\intercal \succeq \bm 0$ iff $\gamma_1 + T\gamma_2 \geq 0$ and $\gamma_1 \geq 0$ by virtue of~\cite[Proposition~4]{Rujeerapaiboon16}, we may then conclude that $\text{L}'(\gamma)$ coincides with the optimal value of~\eqref{opt:rprob_dual_simpl} with the extra constraints $\gamma_1 + T\gamma_2 \geq 0$ and $\gamma_1 \geq 0$. Note that~\eqref{opt:rprob_dual_simpl} is equivalent to~\eqref{eq:rprob0} and~\eqref{opt:rprob_dual_simpl2}. As $\gamma_1 + T\gamma_2 \geq 0$ is an explicit constraint of problem~\eqref{eq:rprob0}, it is necessarily an implicit constraint of the problems \eqref{opt:rprob_dual_simpl} and~\eqref{opt:rprob_dual_simpl2}. Thus, $\text{L}'(\gamma)$ coincides with the optimal value of~\eqref{opt:rprob_dual_simpl2} with the extra constraint $\gamma_1 \geq 0$. To prove the identity $\text{L}(\gamma) = \text{L}'(\gamma)$, it is therefore sufficient to show that appending the extra constraint $\gamma_1 \geq 0$ has no impact on the optimal value of~\eqref{opt:rprob_dual_simpl2}. 

To this end, fix any feasible solution of problem~\eqref{opt:rprob_dual_simpl2} with $\gamma_1 < 0$. As this solution must satisfy the constraint $\alpha + s\beta + s^2 \gamma_2 + s^2 \gamma_1 \geq 1$ for every $s\geq 0$ and as $s=T\mu > 0$, we have\begin{equation}
	\label{eq:Napat-strikes-back}
	\alpha + T \mu\beta + T^2\mu^2\gamma_2 + T^2\mu^2\gamma_1 \geq 1.
\end{equation}
Moreover, the objective function of \eqref{opt:rprob_dual_simpl2} can be reformulated as
\begin{equation*}
\begin{aligned}
	\alpha + &T\mu\beta + T(\mu^2 + \sigma^2)\gamma_1 + T\left[T\mu^2 + \sigma^2 + (T-1)\rho\sigma^2\right]\gamma_2 \\
	&= 
	\left( \alpha + T \mu\beta + T^2\mu^2\gamma_2 + T^2\mu^2\gamma_1 \right) + T(1-T) \left( \mu^2 + \rho\sigma^2 \right)\gamma_1 + T \sigma^2 (1 + (T-1)\rho)(\gamma_1 + \gamma_2),
\end{aligned}
\end{equation*}
which constitutes a sum of three terms. The first term in the sum is greater than or equal to 1 because of~\eqref{eq:Napat-strikes-back}, and the second term is strictly positive because $T \geq 2$, $\gamma_1<0$ and $\mu^2 + \rho\sigma^2 > 0$. The third term is non-negative because $\rho > -1 / (T - 1)$ and $\gamma_1+\gamma_2\geq 0$ is an explicit constraint of~\eqref{eq:rprob0} and thus an implicit constraint of~\eqref{opt:rprob_dual_simpl2}. In summary, we have shown that the objective value of any feasible solution of~\eqref{opt:rprob_dual_simpl2} with $\gamma_1<0$ is strictly greater than 1. As the optimal value $\text{L}(\gamma)$ of~\eqref{opt:rprob_dual_simpl2} represents a probability, however, we conclude that no feasible solution with $\gamma_1 < 0$ can optimize~\eqref{opt:rprob_dual_simpl2}. Thus, the extra constraint $\gamma_1\geq 0$ does not change the optimal value of~\eqref{opt:rprob_dual_simpl2}, and the claim follows. \qed
\end{proof}

\begin{thm}[Relaxed Right-Sided Chebyshev Bound]
\label{thm:lprob_bnd}
The relaxed right-sided Chebyshev bound admits the analytical solution
\begin{equation*}
\text{\em R}'(\gamma) = 
\begin{cases}
	1 & \text{if}\ 0<\gamma \leq \mu^T, \\
    \mu \gamma^{-1/T} & \text{if}\ \mu^T<\gamma <\left( \mu + \frac{\sigma^2\theta}{T\mu} \right)^T, \\
    \frac{\sigma^2\theta}{\sigma^2\theta + T(\mu - \gamma^{1/T})^2}  & \text{if}\ \gamma \geq \left( \mu + \frac{\sigma^2\theta}{T\mu} \right)^T,
    %\min \left\lbrace \frac{T\mu^2}{T\mu^2 + \sigma^2\theta}, \frac{\sigma^2\theta}{\sigma^2\theta + T(\mu - \gamma^{1/T})^2}\right\rbrace & \text{if}\ \gamma^{1/T} \geq \mu + \frac{\sigma^2\theta}{T\mu},
\end{cases}
\end{equation*}
where $\theta=1 + (T-1)\rho>0$.
\end{thm}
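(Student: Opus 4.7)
The plan is to adapt the dual approach used in Theorem~\ref{thm:lprob}, exploiting the fact that the relaxed covariance constraint in $\mathcal{P}'$ forces the dual matrix variable $\bm\Gamma$ to be positive semidefinite. First I would invoke strong duality (as in Theorem~\ref{thm:slater}; the Slater condition carries over since $\mathcal{P}\subseteq\mathcal{P}'$), yielding a semi-infinite dual identical in form to~\eqref{opt:lprob_dual_simpl} but augmented with the constraint $\bm\Gamma\succeq\bm 0$. The permutation-symmetry argument of Lemma~\ref{prop:rprob_symmetry} applies verbatim, so I may restrict attention to $\bm\beta = \beta\bm 1$ and $\bm\Gamma = \gamma_1 \mathbb{I} + \gamma_2 \bm{11}^\intercal$. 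By \cite[Proposition~4]{Rujeerapaiboon16}, the PSD constraint then becomes $\gamma_1 \geq 0$ and $\gamma_1 + T\gamma_2 \geq 0$.

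The crucial simplification comes next. Since $\gamma_1 \geq 0$, the Cauchy--Schwarz inequality $\|\bm\xi\|_2^2 \geq \|\bm\xi\|_1^2/T$, which is tight at the uniform vector $\bm\xi = (s/T)\bm 1$, gives a sharp lower bound on $\gamma_1\|\bm\xi\|_2^2$ over $\{\bm\xi\geq\bm 0:\|\bm\xi\|_1 = s\}$. Because the uniform vector also satisfies the product constraint $\prod_t \xi_t \geq \gamma$ whenever $s \geq T\gamma^{1/T}$ (by AM--GM), both semi-infinite constraints in $\bm\xi$ collapse to one-dimensional constraints in $s=\|\bm\xi\|_1$ that depend on $\gamma_1,\gamma_2$ only through $c := \gamma_2 + \gamma_1/T$. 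Rewriting the dual objective as $\alpha + T\mu\beta + T^2\mu^2 c + T\sigma^2(\gamma_1 + \theta\gamma_2)$ and substituting $\gamma_2 = c - \gamma_1/T$ shows that the coefficient of $\gamma_1$ equals $\sigma^2(T-\theta) = \sigma^2(T-1)(1-\rho) > 0$. Hence the minimum over $\gamma_1 \geq 0$ is attained at $\gamma_1 = 0$, forcing $\gamma_2 = c$, and the residual constraint $\gamma_1 + T\gamma_2 \geq 0$ reduces to $c \geq 0$.

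After rescaling $u := s/T$, $b := T\beta$, $d := T^2 c$, the reduced dual takes the form
\begin{equation*}
\min\ \alpha + \mu b + (\mu^2 + \sigma^2\theta/T)\,d \quad \text{s.t.}\quad d \geq 0,\ \alpha + bu + du^2 \geq 0\ \forall u \geq 0,\ \alpha + bu + du^2 \geq 1\ \forall u \geq \gamma^{1/T},
\end{equation*}
which is precisely the strong dual of the sharp upper bound on $\mathbb{P}(\tilde\eta \geq \gamma^{1/T})$ over non-negative distributions of $\tilde\eta$ with mean $\mu$ and variance bounded above by $\sigma^2\theta/T$. The formula in~\eqref{eq:chebyshev-nonnegative} is non-decreasing in the variance, so the sharp upper bound over this variance-relaxed class coincides with the bound at $\mathrm{Var}(\tilde\eta) = \sigma^2\theta/T$; direct substitution into~\eqref{eq:chebyshev-nonnegative} with threshold $\gamma^{1/T}$ yields the three cases of the theorem. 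For the matching lower bound I would lift a univariate extremal $\tilde\eta$ to $\bm{\tilde\xi} = \tilde\eta\bm 1$; a short eigenvalue check confirms that $\mathbb{E}[\bm{\tilde\xi}\bm{\tilde\xi}^\intercal] = (\mu^2 + \sigma^2\theta/T)\bm{11}^\intercal \preceq \bm\Sigma + \bm\mu\bm\mu^\intercal$, so the lifted distribution lies in $\mathcal{P}'$ and satisfies $\mathbb{P}(\prod_t \tilde\xi_t \geq \gamma) = \mathbb{P}(\tilde\eta \geq \gamma^{1/T})$.

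The main obstacle will be the middle step: the PSD constraint $\gamma_1 \geq 0$ is precisely what forces the uniform vector to be the worst case in both semi-infinite constraints simultaneously, replacing the delicate $g_T$-analysis needed in Theorem~\ref{thm:lprob} by a one-line Cauchy--Schwarz argument. Verifying this collapse carefully --- in particular, that the uniform vector is feasible for the product constraint in the relevant range of $s$, and that the monotone dependence of the objective in $\gamma_1$ really does pin $\gamma_1$ to zero --- is the conceptual heart of the proof; once established, the identification with the univariate non-negative Chebyshev problem is essentially automatic.
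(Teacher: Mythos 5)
Your proof is correct, but it takes a genuinely different route from the paper's. The paper dualizes the semi-infinite dual a second time via the ``primal worst equals dual best'' scheme of Beck and Ben-Tal, arriving at a finite-dimensional convex program over two-point distributions (problem~\eqref{opt:lprob_bidual} and its reformulations), which it then solves by a direct case analysis on $\gamma$. You instead stay in the dual: after symmetrization, the constraint $\bm\Gamma\succeq\bm 0$ forces $\gamma_1\geq 0$, so on each slice $\Vert\bm\xi\Vert_1=s$ the term $\gamma_1\Vert\bm\xi\Vert_2^2$ is minimized at the uniform vector, which is also feasible for the product constraint exactly when $s\geq T\gamma^{1/T}$; hence both semi-infinite constraints collapse to quadratic constraints in $s$ depending only on $c=\gamma_2+\gamma_1/T$, and the strictly positive coefficient $\sigma^2(T-1)(1-\rho)$ of $\gamma_1$ in the objective pins $\gamma_1=0$. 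The residual problem is the dual of the univariate non-negative Chebyshev problem with mean $\mu$, variance cap $\sigma^2\theta/T$ and threshold $\gamma^{1/T}$, and the lifted distribution $\tilde\eta\bm 1$ (your eigenvalue check $\mathrm{Var}(\tilde\eta)\leq\sigma^2\theta/T \iff \mathbb{E}[\tilde\eta^2]\bm{11}^\intercal\preceq\bm\Sigma+\bm\mu\bm\mu^\intercal$ is exactly right) supplies the matching primal certificate, so you do not even need strong duality for the multivariate problem --- weak duality suffices. Your route is shorter and explains structurally why $\text{R}'(\gamma)$ is precisely the bound~\eqref{eq:chebyshev-nonnegative} evaluated at $(\mu,\sigma^2\theta/T,\gamma^{1/T})$; the paper's route is more computational but produces the explicit two-point extremal distribution of Corollary~\ref{cor:dist} as a by-product, which is subsequently reused in the proof of Proposition~\ref{prop:r_equals_r_prime}. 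One point worth making explicit in a write-up: passing from the variance-capped univariate class to the formula at the cap requires the monotonicity of~\eqref{eq:chebyshev-nonnegative} in the variance, and in the Markov regime the extremal two-point distribution has variance strictly below the cap --- your argument handles this correctly precisely because $\mathcal P'$ only bounds the covariance from above.
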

\begin{proof}
Using similar arguments as in the proof of the previous theorem, one can show that $\text{R}'(\gamma)$ coincides with the optimal value of the following semi-infinite optimization problem:
%~\eqref{opt:lprob_dual} with the extra constraint $\bm\Gamma\succeq \bm 0$.}
%We first reformulate the worst-case probability of the right tail of the product~$\prod_{t=1}^T \tilde{\xi}_t$ above $\gamma$ as the generalized moment problem
%\begin{equation}
%\label{eq:lprob}
%\begin{array}{r@{}cl}
%	\text{R}'(\gamma) =
%	&\displaystyle \sup_{\mathbb P\in\mathcal M_+(\mathbb R^T_+)} 
%		& \displaystyle\int_{\mathbb{R}_+^{T}}  1_{\lbrace \prod_{t=1}^T \xi_t \geq \gamma \rbrace}\ \mathbb{P} (\dd \bm\xi)  \\
%	&\st& \displaystyle\int_{\mathbb{R}_+^{T}} \mathbb{P} (\dd\bm\xi) = 1 \\
%	&   & \displaystyle\int_{\mathbb{R}_+^{T}} \bm\xi\ \mathbb{P} (\dd\bm\xi) = \bm \mu \\
%	&   & \displaystyle\int_{\mathbb{R}_+^{T}} \bm\xi\bm\xi^\intercal\ \mathbb{P} (\dd\bm\xi) \preceq \bm \Sigma+\bm{\mu\mu}^\intercal.
%\end{array}
%\end{equation}
%This moment problem admits a strong conic dual in the Lagrange multipliers $\alpha \in \mathbb{R}$, $\bm{\beta} \in \mathbb{R}^T$ and $ \mathbf{\Gamma} \in \mathbb{S}^T_+$ corresponding to the normalization, mean and covariance constraints in~\eqref{eq:lprob}, respectively.
%Recalling that $\bm \mu = \mu \bm 1$ and $\bm \Sigma=(1-\rho)\sigma^2\mathbb I+ \rho\sigma^2\bm 1\bm 1^\intercal$, the dual problem can be expressed as
\begin{equation}
\label{opt:lprob_dual}
\begin{array}{rl}
	\text{R}'(\gamma) =\
	\inf & \alpha + \mu \bm{1^{\intercal} \beta} + 
			 \left< (1 - \rho)\sigma^2\mathbb{I} + \left( \mu^2 + \rho\sigma^2 \right) \bm{11}^\intercal, \bm{\Gamma}\right> \\
	\st  & \alpha \in \mathbb{R},~ \bm{\beta} \in \mathbb{R}^T,~ \bm{\Gamma} \in \mathbb{S}^T_+ \\
	%&	  && \mathbf{\Gamma} \preceq \bm{0} \label{eq:lprob_dual_1} \\
	& \alpha + \bm{\overline \xi{}^{\intercal} \beta} + \bm{\overline \xi{}^{\intercal}\Gamma \overline \xi} \geq 0 \quad \forall \bm{\overline \xi} \geq \bm{0} \\
	& \alpha + \bm{\underline \xi^{\intercal} \beta} + \bm{\underline \xi^{\intercal}\Gamma \underline \xi} \geq 1 \quad \forall \bm{\underline \xi} \geq \bm{0}:\ \textstyle\prod_{t=1}^{T} \underline \xi_t \geq \gamma
	\end{array}
\end{equation}
Without loss of generality, we use different symbols $\bm{\overline\xi}$ and $\bm{\underline{\xi}}$ to denote the uncertain parameters in the two semi-infinite constraints, respectively.
%Strong duality holds due to~\cite[Proposition~3.4]{Shapiro01}, which applies because {\color{red} $\mu^2 + \rho\sigma^2 > 0$}.
Note that~(\ref{opt:lprob_dual}) can be viewed as the robust counterpart of an uncertain convex program with constraint-wise uncertainty sets~\cite{BenTal09}. As the left hand sides of the robust constraints are convex in the respective uncertainties, the `{\em primal worst equals dual best}' duality scheme portrayed in~\cite[Theorem~4.1]{Beck09} implies that~\eqref{opt:lprob_dual} is equivalent to
\begin{equation}
\label{opt:lprob_bidual}
\begin{array}{rl}
	\text{R}'(\gamma) =\
	\sup\	& q \\
	\st	& p,q \in \mathbb{R}_+,~ \bm{\overline \xi},\bm{\underline\xi} \in \mathbb{R}^+_T,~ \prod_{t=1}^{T}\underline \xi_t \geq \gamma \\
			& p + q = 1 \\
			& p \bm{\overline \xi} + q \bm{\underline\xi} = \mu\bm{1} \\
			& p \bm{\overline\xi\, \overline\xi}^\intercal + q \bm{\underline \xi\, \underline \xi}^\intercal \preceq (1 - \rho)\sigma^2\mathbb{I} + \left( \mu^2 + \rho\sigma^2 \right) \bm{11}^\intercal,
\end{array}
\end{equation}
where $p$ and $q$ represent dual variables assigned to the two robust constraints in~\eqref{opt:lprob_dual}. Thus, the primal uncertain convex program~\eqref{opt:lprob_dual} is solved under the worst possible realizations of $\bm{\overline \xi}$ and $\bm{\underline\xi}$, while the dual uncertain convex program~\eqref{opt:lprob_bidual} is solved under the best possible realizations, in which case $\bm{\overline \xi}$ and $\bm{\underline\xi}$ become decision variables. Problem~\eqref{opt:lprob_bidual} has intuitive appeal as it can be interpreted as a restriction of the original worst-case probability problem that minimizes over all two-point distributions in the ambiguity set $\mathcal P'$ with scenarios $\bm{\overline \xi}$ and $\bm{\underline \xi}$ and corresponding probabilities $p$ and $q$, respectively. Note that~\eqref{opt:lprob_bidual} constitutes a non-convex program because it involves multilinear terms in the decisions. Using the variable transformations $\bm u\leftarrow p\bm{\overline\xi}$ and $\bm v\leftarrow q\bm{\underline\xi}$ we can reformulate~\eqref{opt:lprob_bidual} as
\begin{equation}
\label{opt:lprob_dual_dual}
\begin{array}{rl}
	\text{R}'(\gamma) =\
	\sup\	& q \\
	\st	& p,q \in \mathbb{R}_+,~ \bm{u},\bm{v} \in \mathbb{R}_+^T\\
			& \prod_{t=1}^T v_t \geq q^T \gamma \\
			& p + q = 1 \\
			& \bm{u} + \bm{v} = \mu\bm{1} \\
			& \textstyle\frac{1}{p}\bm{uu}^\intercal + \frac{1}{q}\bm{vv}^\intercal \preceq (1 - \rho)\sigma^2\mathbb{I} + \left( \mu^2 + \rho\sigma^2 \right) \bm{11}^\intercal.
\end{array}
\end{equation}
Note that if $p=0$ ($q=0$), then $\bm u=\bm 0$ ($\bm v=\bm 0$) for otherwise the matrix inequality is not satisfiable. In~\eqref{opt:lprob_dual_dual} and below we adhere to the convention that $0/0=0$, which reflects the idea that a scenario with zero probability mass should have zero weight in the covariance matrix. 
%A standard Schur complement argument implies that the last constraint in~\eqref{opt:lprob_dual_dual} is equivalent~to
%\begin{equation}
%\label{eq:lprob_dual_dual_sdp}
%	\left[ \begin{array}{ccc}
%		(1 - \rho)\sigma^2\mathbb{I} + \left( \mu^2 + \rho\sigma^2 \right) \bm{11}^\intercal & \bm{u} & \bm{v} \\
%		\bm{u}^\intercal & p & 0 \\
%		\bm{v}^\intercal & 0 & q
%	\end{array} \right] \succeq \bm{0}.
%\end{equation}
Observe that problem~\eqref{opt:lprob_dual_dual} is a convex program. In particular, the first constraint is convex because of the concavity of geometric means, and the last constraint is convex due to a standard Schur complement argument. Exploiting the problem's permutation symmetry and convexity, one can proceed as in Lemma \ref{prop:rprob_symmetry} to show that~\eqref{opt:lprob_dual_dual} has a permutation symmetric minimizer of the form $\bm{u} = u\bm{1}$ and $\bm{v} = v\bm{1}$ for some scalar decision variables $u, v \in \mathbb{R}_+$. Restricting the search to permutation symmetric solutions, problem~\eqref{opt:lprob_dual_dual} can therefore be reformulated as
\begin{equation}
\label{opt:lprob_dual_dual_1}
\begin{array}{rl}
	\text{R}'(\gamma) =\
	\sup\ & q \\
	\st	& p, q, u, v \in \mathbb{R}_+ \\
			& v \geq q \gamma^{1/T} \\
			& p + q = 1 \\
			& u + v = \mu \\
			& (1 - \rho)\sigma^2\mathbb{I} + \left( \mu^2 + \rho\sigma^2 - \textstyle\frac{u^2}{p} - \frac{v^2}{q} \right) \bm{11}^\intercal \succeq \bm{0}.
\end{array}
\end{equation}
It can be shown that the eigenvalues of the matrix $(1 - \rho)\sigma^2\mathbb{I} + ( \mu^2 + \rho\sigma^2 - \textstyle\frac{u^2}{p} - \frac{v^2}{q}) \bm{11}^\intercal$ are given by $(1-\rho)\sigma^2$ and $(1 - \rho)\sigma^2 + T(\mu^2 + \rho\sigma^2 - \textstyle\frac{u^2}{p} - \textstyle\frac{v^2}{q})$; see {\em e.g.}~\cite[Proposition~4]{Rujeerapaiboon16}. Since $(1-\rho)\sigma^2 > 0$ by assumption, the matrix inequality in~\eqref{opt:lprob_dual_dual_1} is equivalent to the scalar constraint
\begin{equation}
\label{eq:lprob_dual_dual_3_1}
	(1 - \rho)\sigma^2 + T \left(\mu^2 + \rho\sigma^2 - \textstyle\frac{u^2}{p} - \textstyle\frac{v^2}{q} \right) \geq 0.
\end{equation}
Any feasible solution of~\eqref{opt:lprob_dual_dual_1} satisfies $q\gamma^{1/T} \leq v \leq \mu$, implying that the optimal value of~\eqref{opt:lprob_dual_dual_1} is bounded above by $\min\{1, \mu\gamma^{-1/T}\}$. For $0<\gamma^{1/T} \leq \mu$, an optimal solution of~\eqref{opt:lprob_dual_dual_1} is then given by $(p, q, u, v) = (0, 1,0, \mu)$, and the optimal value is equal to 1. For $\mu<\gamma^{1/T} < \mu + \frac{(1+(T-1)\rho)\sigma^2}{T\mu}$, on the other hand, an optimal solution is given by $(p, q, u, v) = (1 - \mu \gamma^{-1/T}, \mu \gamma^{-1/T}, 0, \mu)$ with corresponding optimal value $\mu \gamma^{-1/T}$. Indeed, any larger value of $q$ would require a larger value of $v$, which in turn would violate the non-negativity of $u$ as $u + v = \mu$. One can show that the constraint~\eqref{eq:lprob_dual_dual_3_1} is always {\em in}active at this solution. For $\gamma^{1/T} \geq \mu + \frac{(1+(T-1)\rho)\sigma^2}{T\mu}$, finally, the constraint~\eqref{eq:lprob_dual_dual_3_1} implies that $q$ must not exceed $\mu \gamma^{-1/T}$, which in turn implies that the constraint must be binding. Furthermore, $q$ has to be strictly positive for otherwise \eqref{opt:lprob_dual_dual_1} would be solved by $(p, q, u, v) = (1, 0, \mu, 0)$, which contradicts our earlier finding that the constraint~\eqref{eq:lprob_dual_dual_3_1} is binding. Substituting $p=1-q$ and $u=\mu-v$, the left hand side of~\eqref{eq:lprob_dual_dual_3_1} becomes a quadratic function of~$v$ parametric in $q$. We denote the two roots of this function by $v^+$ and $v^-$ and define $u^+=\mu-v^+$ and $u^-=\mu-v^-$. A direct calculation yields
\begin{equation*}
	u^\pm = (1-q) \mu \pm \sigma \sqrt{1 + (T-1)\rho} \sqrt{\frac{q(1-q)}{T}} \quad\text{and}\quad 
	 v^\pm = q \mu \mp \sigma \sqrt{1 + (T-1)\rho} \sqrt{\frac{q(1-q)}{T}} .
\end{equation*}
By construction, both $(u^+,v^+)$ and $(u^-,v^-)$ satisfy~\eqref{eq:lprob_dual_dual_3_1} as an equality. However, there is no $q\in(0,1]$ for which $(u^+,v^+)$ is feasible in~\eqref{opt:lprob_dual_dual_1}. Indeed, a direct calculation reveals that
\iffalse
\begin{subequations}
\begin{equation}
	\label{eq:infeasibility1}
	v^+\geq 0 \quad \iff \quad q \geq \frac{\sigma^2(1 + (T-1)\rho)}{T\mu^2 + \sigma^2(1 + (T-1)\rho)}>0. 
\end{equation}
Moreover, the constraint $v^+ \geq q \gamma^{1/T}$ from~\eqref{opt:lprob_dual_dual_1} can hold only if
\begin{equation}
	\label{eq:infeasibility2}
	q (\mu - \gamma^{1/T})\geq \sigma \sqrt{1 + (T-1)\rho} \sqrt{\frac{q(1-q)}{T}}. 
\end{equation}
\end{subequations}
\fi
the constraint $v^+ \geq q \gamma^{1/T}$ from~\eqref{opt:lprob_dual_dual_1} can hold only if
\begin{equation}
	\label{eq:infeasibility2}
	q (\mu - \gamma^{1/T})\geq \sigma \sqrt{1 + (T-1)\rho} \sqrt{\frac{q(1-q)}{T}}. 
\end{equation}
However,~\eqref{eq:infeasibility2} is not satisfiable as its left hand side is strictly negative by assumption, whereas its right hand side is non-negative. Therefore, $(u^+,v^+)$ is infeasible in~\eqref{opt:lprob_dual_dual_1}.

In contrast, the second solution $(u^-, v^-)$ is feasible in~\eqref{opt:lprob_dual_dual_1} if we select $q\in(0,1]$ with
\begin{equation*}
	u^-\geq 0 \quad \iff \quad q \leq \frac{T\mu^2}{T\mu^2 + \sigma^2(1 + (T-1)\rho)}
\end{equation*}
and
\begin{equation*}
	 v^- \geq q \gamma^{1/T}\quad \iff \quad q \leq  \frac{\sigma^2(1 + (T-1)\rho)}{\sigma^2(1 + (T-1)\rho) + T(\mu - \gamma^{1/T})^2}.
\end{equation*}
Problem~\eqref{opt:lprob_dual_dual_1} aims to maximize $q$, which is tantamount to setting
\begin{equation*}
\begin{aligned}
q &= \min \left\lbrace \frac{T \mu^2}{T\mu^2 + \sigma^2(1 + (T-1)\rho)}, \frac{\sigma^2(1 + (T-1)\rho)}{\sigma^2(1 + (T-1)\rho) + T(\mu - \gamma^{1/T})^2}\right\rbrace \\
  &= \frac{\sigma^2(1 + (T-1)\rho)}{\sigma^2(1 + (T-1)\rho) + T(\mu - \gamma^{1/T})^2},
\end{aligned}
\end{equation*}
\iffalse
The optimal value of~\eqref{opt:lprob_dual_dual_1} is therefore given by 
\begin{equation*}
	\max \left\lbrace \frac{\sigma^2(1 + (T-1)\rho)}{T\mu^2 + \sigma^2(1 + (T-1)\rho)}, \frac{T(\mu - \gamma^{1/T})^2}{\sigma^2(1 + (T-1)\rho) + T(\mu - \gamma^{1/T})^2}\right\rbrace
\end{equation*}
\fi
where the second equality follows from $\gamma^{1/T} \geq \mu + \frac{(1+(T-1)\rho)\sigma^2}{T\mu}$. Thus, the claim follows. \qed
\end{proof}

\iffalse
\begin{lemma}
\label{lem:lprob_symmetry}
Problem~\eqref{opt:lprob_dual_dual} has a minimizer of the form $\bm{u} = u\bm{1}$ and $\bm{v} = v\bm{1}$ for $u, v \in \mathbb{R}_+$.
\end{lemma}
\begin{proof}
The proof relies on similar arguments as that of Lemma~\ref{prop:rprob_symmetry} and is thus omitted. 
%Denote the optimal solution of (\ref{opt:lprob_dual_dual}) by $(p, q, \bm{u}, \bm{v})$ and denote by $\Pi^{T}$ the set of all permutations $\pi$ of the integers $\lbrace 1, \hdots, T \rbrace$. We define the corresponding permutation matrix for each $\pi \in \Pi^{T}$ as $\mathbf{P}_\pi$. By slight abuse of notation, denote by $\bm{\pi}(\cdot)$ be a permutation function, resulting from $\pi$, applicable to both vector $\bm{v} \in \mathbb{R}^{T}$ or matrix $\mathbf{M} \in \mathbb{S}^{T}$  in the sense that $\bm{\pi}(\bm{v}) = \mathbf{P}_\pi \diag(\bm{v}) \bm{1}$ and $\bm{\pi}(\mathbf{M}) = \mathbf{P}_\pi \mathbf{M} \mathbf{P}_\pi$. It is clear that
%\begin{equation*}
%	\bm{u} + \bm{v} = \mu\bm{1} \Longleftrightarrow \bm{\pi}(\bm{u}) + \bm{\pi}(\bm{v}) = \mu\bm{1}.
%\end{equation*}
%Moreover, we also have that
%\begin{equation*}
%\begin{aligned}
%	\textstyle\frac{1}{p}\bm{uu}^\intercal + \frac{1}{q}\bm{vv}^\intercal &\preceq (1 - \rho)\sigma^2\mathbb{I} + \left( \mu^2 + \rho\sigma^2 \right) \bm{11}^\intercal \\
%	&\Longleftrightarrow
%	\mathbf{P}_\pi \left( \textstyle\frac{1}{p}\bm{uu}^\intercal + \frac{1}{q}\bm{vv}^\intercal \right)\mathbf{P}_\pi^\intercal \preceq \mathbf{P}_\pi \left( (1 - \rho)\sigma^2\mathbb{I} + \left( \mu^2 + \rho\sigma^2 \right) \bm{11}^\intercal \right) \mathbf{P}_\pi^\intercal \\
%	&\Longleftrightarrow
%	\textstyle \frac{1}{p}\bm{\pi}(\bm{u})\bm{\pi}(\bm{u})^\intercal + \frac{1}{q}\bm{\pi}(\bm{v})\bm{\pi}(\bm{v})^\intercal \preceq (1 - \rho)\sigma^2\mathbb{I} + \left( \mu^2 + \rho\sigma^2 \right) \bm{11}^\intercal 
%\end{aligned}
%\end{equation*}
%Hence, the permuted solution $(p, q, \bm{\pi}(\bm{u}), \bm{\pi}(\bm{v}))$ is feasible in the view of (\ref{opt:lprob_dual_dual}) and it also generates the same objective function value as the optimal solution. Therefore, it is also an optimal solution. Also, thanks to the reformulation (\ref{eq:lprob_dual_dual_sdp}), we observe that an equal weighted of all permuted solutions constitutes another optimal solution which is invariant under any permutation $\pi$. The statement thus follows.
\qed
\end{proof}
\fi

In addition to admitting an analytical solution, the relaxed right-sided Chebyshev bounds also allow us to determine a distribution $\mathbb{P}^\star \in \mathcal{P}'$ that attains the probability bound.

\begin{cor}[Extremal Distribution]\label{cor:dist}
A distribution $\mathbb{P}^\star \in \mathcal{P}'$ attaining the relaxed right-sided Chebyshev bound $\text{\em R}'(\gamma)$ is given by $\mathbb P^\star=p^\star \delta_{[u^\star/p^\star] \bm 1}+q^\star \delta_{[v^\star/q^\star] \bm 1}$, where
\begin{equation*}
q^\star = 
\begin{cases}
	1 & \text{if}\ 0<\gamma \leq \mu^T, \\
    \mu \gamma^{-1/T} & \text{if}\ \mu^T <\gamma < \left(\mu + \frac{\sigma^2\theta}{T\mu}\right)^T, \\
    \frac{\sigma^2\theta}{\sigma^2\theta + T(\mu - \gamma^{1/T})^2} & \text{if}\ \gamma \geq \left( \mu + \frac{\sigma^2\theta}{T\mu} \right)^T,
    %\min \left\lbrace \frac{T \mu^2}{T\mu^2 + \sigma^2\theta }, \frac{\sigma^2\theta}{\sigma^2\theta + T(\mu - \gamma^{1/T})^2}\right\rbrace & \text{if}\ \gamma^{1/T} \geq \mu + \frac{\sigma^2\theta}{T\mu}
\end{cases}
\end{equation*}
and $p^\star=1-q^\star$, as well as
\begin{equation*}
v^\star = 
\begin{cases}
	\mu & \text{if}\ 0<\gamma < \left( \mu + \frac{\sigma^2\theta}{T\mu} \right)^T, \\
    	q^\star \mu + \sigma \sqrt{\frac{\theta q^\star(1-q^\star)}{T}} & \text{if}\ \gamma \geq \left( \mu + \frac{\sigma^2\theta}{T\mu} \right)^T
\end{cases}
\end{equation*}
and $u^\star=\mu-v^\star$, where $\theta=1 + (T-1)\rho>0$.
\end{cor}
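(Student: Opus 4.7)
The plan is to recognise that Corollary~\ref{cor:dist} merely unwinds the chain of reformulations used in the proof of Theorem~\ref{thm:lprob_bnd}. Recall that problem~\eqref{opt:lprob_bidual} was interpreted there as the restriction of the worst-case probability problem to two-point distributions of the form $\mathbb P = p \, \delta_{\bm{\overline\xi}} + q \, \delta_{\bm{\underline\xi}}$ in $\mathcal P'$ with $\prod_{t=1}^T \underline\xi_t \geq \gamma$, and that by the primal-worst-equals-dual-best principle its optimal value coincides with $\text{R}'(\gamma)$. The substitutions $\bm u = p \bm{\overline\xi}$ and $\bm v = q \bm{\underline\xi}$ led to problem~\eqref{opt:lprob_dual_dual}, whose symmetric reduction~\eqref{opt:lprob_dual_dual_1} admits the explicit optimizers identified case by case in the proof of Theorem~\ref{thm:lprob_bnd}. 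The extremal distribution is then obtained by reversing these substitutions: set $\bm{\overline\xi}^\star = (u^\star/p^\star)\bm 1$ and $\bm{\underline\xi}^\star = (v^\star/q^\star)\bm 1$ (using the convention $0/0 = 0$) and form $\mathbb P^\star = p^\star \delta_{\bm{\overline\xi}^\star} + q^\star \delta_{\bm{\underline\xi}^\star}$.

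The main verification is that the $(p^\star, q^\star, u^\star, v^\star)$ given in the corollary statement indeed coincides, in each of the three regimes, with the optimizer of~\eqref{opt:lprob_dual_dual_1} constructed in the proof of Theorem~\ref{thm:lprob_bnd}: namely $(0, 1, 0, \mu)$ for $0 < \gamma \leq \mu^T$, $(1-\mu\gamma^{-1/T}, \mu\gamma^{-1/T}, 0, \mu)$ for $\mu^T < \gamma < (\mu + \sigma^2\theta/(T\mu))^T$, and $(1-q^\star, q^\star, \mu-v^\star, v^\star)$ with the closed-form expressions of the corollary in the remaining regime. In the last regime the identity $v^\star = q^\star \gamma^{1/T}$ must be checked by squaring the defining formula for $v^\star$ and matching it against the explicit expression for $q^\star$; this is a short algebraic calculation already implicit in the proof of Theorem~\ref{thm:lprob_bnd}.

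Once the correspondence is established, feasibility $\mathbb P^\star \in \mathcal P'$ and the tightness identity $\mathbb P^\star(\prod_t \tilde\xi_t \geq \gamma) = q^\star = \text{R}'(\gamma)$ follow immediately: the mean equals $(u^\star + v^\star)\bm 1 = \mu \bm 1$, the second-moment matrix reduces to $[(u^\star)^2/p^\star + (v^\star)^2/q^\star]\bm 1\bm 1^\intercal$, which is dominated by $\bm\Sigma + \bm{\mu\mu}^\intercal$ precisely because the matrix inequality in~\eqref{opt:lprob_dual_dual_1} holds at the optimizer, and the support condition $\prod_t (v^\star/q^\star) = \gamma$ is guaranteed either trivially (when $v^\star/q^\star = \mu$ and $\gamma \leq \mu^T$) or by the identity $v^\star/q^\star = \gamma^{1/T}$ in the remaining regimes. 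The only subtle point—and the likely main obstacle in a full write-up—is the careful handling of the boundary cases $p^\star = 0$ and $u^\star = 0$, where an atom collapses to a Dirac mass at the mean or at the origin and the $0/0$ convention must be applied consistently so that the degenerate atom contributes neither mass nor second-order moment.
\qed
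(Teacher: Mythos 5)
Your proposal is correct and follows exactly the route the paper intends: the published proof of Corollary~\ref{cor:dist} is simply the remark that it ``follows directly from that of Theorem~\ref{thm:lprob_bnd},'' and your sketch fills in precisely those steps — reading off the case-by-case optimizers of~\eqref{opt:lprob_dual_dual_1}, reversing the substitutions $\bm u = p\bm{\overline\xi}$, $\bm v = q\bm{\underline\xi}$ to recover the two-point distribution, and checking feasibility, the binding of~\eqref{eq:lprob_dual_dual_3_1} in the third regime, and the $0/0$ convention in the degenerate cases. No gaps.
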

%\begin{cor}[Extremal Distributions]
%\label{cor:dist}
%Fix $\gamma > 0$, and define $\theta=1 + (T-1)\rho>0$. Set
%\begin{equation*}
%q^\star = 
%\begin{cases}
%	1 & \text{if}\ 0<\gamma^{1/T} \leq \mu, \\
%    \mu \gamma^{-1/T} & \text{if}\ \mu<\gamma^{1/T} <\mu + \frac{\sigma^2\theta}{T\mu}, \\
%    \frac{\sigma^2\theta}{\sigma^2\theta + T(\mu - \gamma^{1/T})^2} & \text{if}\ \gamma^{1/T} \geq \mu + \frac{\sigma^2\theta}{T\mu},
%    %\min \left\lbrace \frac{T \mu^2}{T\mu^2 + \sigma^2\theta }, \frac{\sigma^2\theta}{\sigma^2\theta + T(\mu - \gamma^{1/T})^2}\right\rbrace & \text{if}\ \gamma^{1/T} \geq \mu + \frac{\sigma^2\theta}{T\mu},
%\end{cases}
%\end{equation*}
%and
%\begin{equation*}
%v^\star = 
%\begin{cases}
%	\mu & \text{if}\ 0<\gamma^{1/T} <\mu + \frac{\sigma^2\theta}{T\mu}, \\
%    	q^\star \mu + \sigma \sqrt{\frac{\theta q^\star(1-q^\star)}{T}} & \text{if}\ \gamma^{1/T} \geq \mu + \frac{\sigma^2\theta}{T\mu},
%\end{cases}
%\end{equation*}
%and define $p^\star=1-q^\star$ and $u^\star=\mu-v^\star$. Then, $(p^\star, q^\star, u^\star, v^\star)$ is a minimizer of~\eqref{opt:lprob_dual_dual_1}, while the two-point distribution $\mathbb P^\star=p^\star \theta_{u^\star/p^\star\bm 1}+q^\star \theta_{v^\star/q^\star\bm 1}$ satisfies $\mathbb{P}^{\star}\in\mathcal P'$ and $\mathbb{P}^{\star} \left( \prod_{t=1}^T \tilde{\xi}_t \geq \gamma \right) = \text{\em R}'(\gamma)$.
%\end{cor}
\begin{proof}
The proof follows directly from that of Theorem~\ref{thm:lprob_bnd} and is thus omitted. %for brevity.
\qed
\end{proof}
 
The relaxed left-sided and right-sided Chebyshev bounds differ in the sense that the left-sided bound coincides with $\text{L} (\gamma)$, whereas $\text{R}' (\gamma)$ does not equal $\text{R} (\gamma)$ in general. The relaxed right-sided Chebyshev bound does coincide with $\text{R} (\gamma)$, however, when $T$ is sufficiently large.

\begin{prop}
\label{prop:r_equals_r_prime}
If $\mu > \sqrt{\frac{1-\rho}{T}}\sigma$, then $\text{\em R}'(\gamma) = \text{\em R}(\gamma)$ for all $\gamma \geq \overline\gamma$, where 
\begin{equation*}
	\overline\gamma^{1/T} = \mu + \frac{1}{2ab}\left( 1 + \sqrt{4ab\sqrt{\frac{1-\rho}{T}}\sigma + 1}\right)
\end{equation*}  
with $a = \mu - \sqrt{\frac{1-\rho}{T}}\sigma$, $b = \frac{T}{\sigma^2\theta}$ and $\theta = 1 + (T-1)\rho$.
\end{prop}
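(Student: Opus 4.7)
Since $\mathcal{P}\subseteq\mathcal{P}'$, the inequality $\text{R}(\gamma)\leq\text{R}'(\gamma)$ is automatic, so the plan is to establish the reverse inequality by exhibiting a distribution $\mathbb{P}^\star\in\mathcal{P}$ with $\mathbb{P}^\star(\prod_t\tilde\xi_t\geq\gamma)=\text{R}'(\gamma)$ for every $\gamma\geq\overline\gamma$. A preliminary estimate $\overline\gamma^{1/T}-\mu\geq 1/(ab)=\sigma^2\theta/(T(\mu-s))\geq\sigma^2\theta/(T\mu)$ (valid since $s\geq 0$) places $\gamma\geq\overline\gamma$ firmly in the third regime of Theorem~\ref{thm:lprob_bnd}, so Corollary~\ref{cor:dist} supplies an extremal distribution for $\text{R}'(\gamma)$ of the two-point form $\mathbb{P}^\circ=p^\star\delta_{c_1^\star\bm 1}+q^\star\delta_{\gamma^{1/T}\bm 1}$, where $c_1^\star=\mu-\sigma^2\theta/(TR)$, $R=\gamma^{1/T}-\mu$, and $p^\star=1-q^\star=TR^2/(\sigma^2\theta+TR^2)$. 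This candidate belongs to $\mathcal{P}'$ but not $\mathcal{P}$, because its covariance matrix is proportional to $\bm 1\bm 1^\intercal$ and therefore misses the diagonal contribution $(1-\rho)\sigma^2\mathbb I$ required by $\bm\Sigma$.

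The fix I intend to apply is to replace the lower atom $\delta_{c_1^\star\bm 1}$ by a permutation-symmetric mixture of $T$ atoms lying on the slice $\{\bm\xi\geq\bm 0:\bm 1^\intercal\bm\xi=Tc_1^\star\}$:
\[
\mathbb{P}^\star\;:=\;q^\star\,\delta_{\gamma^{1/T}\bm 1}\;+\;\frac{p^\star}{T}\sum_{i=1}^T\delta_{y\bm 1+(x-y)\mathbf{e}_i},\qquad x=c_1^\star+(T-1)w,\quad y=c_1^\star-w,
\]
with $w:=\sigma\sqrt{(1-\rho)/(Tp^\star)}$. Direct moment computations will show $\mathbb{E}[\tilde\xi_j]=\mu$, while $\mathbb{E}[\tilde\xi_j^2]-\mathbb{E}[\tilde\xi_j\tilde\xi_k]=(p^\star/T)(x-y)^2=(1-\rho)\sigma^2$ by the very choice of $w$, and the diagonal identity $\mathbb{E}[\tilde\xi_j^2]=\mu^2+\sigma^2$ follows by combining this with the relation $p^\star(c_1^\star)^2+q^\star\gamma^{2/T}=\mu^2+\sigma^2\theta/T$ already established inside the proof of Theorem~\ref{thm:lprob_bnd}. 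Hence $\mathbb{P}^\star$ satisfies every moment constraint of $\mathcal{P}$. A single application of AM--GM gives $xy^{T-1}\leq((x+(T-1)y)/T)^T=(c_1^\star)^T<\mu^T\leq\gamma$, so the lower atoms contribute nothing to the target event and $\mathbb{P}^\star(\prod_t\tilde\xi_t\geq\gamma)=q^\star=\text{R}'(\gamma)$.

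What remains is to ensure $\mathbb{P}^\star$ is supported on $\mathbb{R}_+^T$, which reduces to $y\geq 0$, equivalently $c_1^\star\geq w$. Since $p^\star\in(0,1]$ implies $\sqrt{1/p^\star}\leq 1/p^\star$, it is enough to secure the stronger inequality $c_1^\star\geq s/p^\star$, that is, $p^\star c_1^\star\geq s$. Substituting $p^\star c_1^\star=\mu-q^\star\gamma^{1/T}$ together with the explicit expression for $q^\star$ and clearing denominators transforms this inequality into $T(\mu-s)R^2\geq\sigma^2\theta(R+s)$, which is precisely $abR^2-R-s\geq 0$. The hypothesis $\mu>s=\sigma\sqrt{(1-\rho)/T}$ guarantees $ab>0$, so this quadratic has the positive root $R_+=(1+\sqrt{1+4abs})/(2ab)=\overline\gamma^{1/T}-\mu$, and the inequality is satisfied exactly when $R\geq R_+$, i.e., when $\gamma\geq\overline\gamma$.

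The principal obstacle is the algebraic reduction in the last paragraph: relaxing the tight support condition $y\geq 0$ to the crude but clean sufficient form $p^\star c_1^\star\geq s$, and then massaging this into $abR^2-R-s\geq 0$ while tracking the Corollary's formulas for $c_1^\star$ and $p^\star$ in terms of $R$ together with the definitions of $a,b,s,\theta$. The rest consists of routine moment arithmetic, one AM--GM invocation, and the preliminary verification that $\overline\gamma\geq(\mu+\sigma^2\theta/(T\mu))^T$, so that the two-point extremizer of Corollary~\ref{cor:dist} is indeed the relevant starting point.
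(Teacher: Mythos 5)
Your proposal is correct and follows essentially the same route as the paper's proof: it places $\gamma\geq\overline\gamma$ in the third regime of Theorem~\ref{thm:lprob_bnd}, takes the two-point extremizer from Corollary~\ref{cor:dist}, spreads its lower atom into $T$ permutation-symmetric atoms with spread $w=\sigma\sqrt{(1-\rho)/(Tp^\star)}$ to recover the diagonal part of $\bm\Sigma$, and reduces the support condition $y\geq 0$ to $u^\star\geq\sigma\sqrt{(1-\rho)/T}$, which is exactly the quadratic inequality $abR^2-R-s\geq 0$ whose largest root defines $\overline\gamma^{1/T}$. The only (harmless) cosmetic differences are that you verify the regime membership via the direct estimate $\overline\gamma^{1/T}-\mu\geq 1/(ab)$ rather than evaluating the quadratic at $\mu+\sigma^2\theta/(T\mu)$, and you add an AM--GM step showing the lower atoms miss the target event, which the paper does not need since it only bounds the probability from below by the mass of the upper atom.
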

Note that $ab \rightarrow \infty$ and thus $\overline\gamma^{1/T} \rightarrow \mu$ whenever $T \rightarrow \infty$. The rate of convergence depends on $\mu$, $\sigma$ and $\rho$, and the fastest convergence is observed for large $\mu$ and small $\sigma$ and $\rho$.
\begin{proof}
We first show that $\overline\gamma^{1/T} > \mu + \frac{\sigma^2\theta}{T\mu}$ (Step~1), which allows us to invoke Theorem \ref{thm:lprob_bnd} to conclude that $\text{R}'(\gamma) = \frac{\sigma^2\theta}{\sigma^2\theta + T(\mu - \gamma^{1/T})^2}$. We then employ Corollary \ref{cor:dist} to construct a distribution $\mathbb P^\star \in \mathcal P'$ that satisfies $\mathbb P^\star \left( \prod_{t=1}^T \tilde\xi_t \geq \gamma \right) = \text{R}'(\gamma)$ (Step 2), and we show that a suitable perturbation of $\mathbb P^\star$ results in a distribution $\mathbb P \in \mathcal P$ that satisfies $\mathbb P \left( \prod_{t=1}^T \tilde\xi_t \geq \gamma \right) = \mathbb P^\star \left( \prod_{t=1}^T \tilde\xi_t \geq \gamma \right)$ (Step 3). The statement then follows from the fact that $\text{R} (\gamma)$ is bounded above by $\text{R}'(\gamma)$.

\paragraph{Step 1:}
We show that $\overline\gamma^{1/T}$ is the maximum root of the convex quadratic function
%first introduce an auxiliary convex quadratic function $q(x)$ defined through
\begin{equation*}
q(x) = \sigma^2\theta \left[ a\left( 1 +b (\mu - x)^2 \right) - x \right],
\end{equation*}
where $a$ and $b$ are defined in the statement of the theorem,
%with $a = \mu - \sqrt{\frac{1-\rho}{T}}\sigma$ and $b = \frac{T}{\sigma^2\theta}$,
and that this root satisfies $\overline\gamma^{1/T} > \mu + \frac{\sigma^2\theta}{T\mu}$. From the quadratic formula we know that the maximum root $x^\star$ of $q (x)$ satisfies
\begin{equation*}
	x^\star = \frac{2ab\mu + 1 + \sqrt{(2ab\mu + 1)^2 - 4a^2b(b\mu^2+1)}}{2ab}
	= \mu + \frac{1}{2ab} \left( 1 + \sqrt{4ab(\mu-a) + 1}\right),
\end{equation*}
and replacing $a$ and $b$ with their definitions reveals that $x^\star = \overline\gamma^{1/T}$. To show that $\overline\gamma^{1/T} > \mu + \frac{\sigma^2\theta}{T\mu}$, we observe that
\begin{equation*}
\begin{aligned}
	q\left(\mu + \frac{\sigma^2\theta}{T\mu}\right) &= \left( \mu - \sqrt{\frac{1-\rho}{T}}\sigma \right)  \left( \sigma^2\theta + \frac{\sigma^4\theta^2}{T\mu^2} \right) - \sigma^2\theta \left(\mu + \frac{\sigma^2\theta}{T\mu}\right) \\
	&= \sigma^2\theta (\sigma^2\theta + T\mu^2) \left( \frac{\mu - \sqrt{(1-\rho)/T}\sigma}{T  \mu^2} - \frac{1}{T\mu}\right) < 0,
\end{aligned}
\end{equation*}
as well as $q (x) \rightarrow \infty$ for $x \rightarrow \infty$ since $\mu > \sqrt{\frac{1-\rho}{T}}\sigma$. Since $q(x)$ is quadratic, both observations imply that the maximum root $x^\star = \overline\gamma^{1/T}$ of $q(x)$ indeed belongs to the interval $\left( \mu + \frac{\sigma^2\theta}{T\mu}, \infty \right)$.

\paragraph{Step 2:}
%Hence from this point onward, we can assume $\gamma^{1/T} \geq \overline\gamma^{1/T} \geq \mu + \frac{\sigma^2\theta}{T\mu}$.
%Since $\overline\gamma^{1/T} > \mu + \frac{\sigma^2\theta}{T\mu}$,
The distribution $\mathbb{P}^\star$ in Corollary~\ref{cor:dist} satisfies $\mathbb P^\star \left( \prod_{t=1}^T \tilde\xi_t \geq \gamma \right) = \text{R}'(\gamma)$. For later reference, we remark that $\mathbb{P}^\star = p^\star \delta_{[u^\star/p^\star] \bm 1} + q^\star \delta_{[v^\star/q^\star] \bm 1}$ satisfies the properties
%to conclude that 
%Corollary \ref{cor:dist} implies that $\mathbb P^\star \left( \prod_{t=1}^T \tilde\xi_t \geq \gamma \right) = \text{R}'(\gamma)$ for the distribution $\mathbb{P}^\star \in \mathcal{P}'$ satisfying $\mathbb{P}^\star = p^\star \delta_{u^\star/p^\star\bm 1} + q^\star \delta_{v^\star/q^\star\bm 1}$ with
%, we have that the extremal distribution is given by $\mathbb{P}^\star = p^\star \delta_{u^\star/p^\star\bm 1} + q^\star \delta_{v^\star/q^\star\bm 1}$, where $p^\star$ and $q^\star$ satisfy the following conditions
\begin{equation}\label{eq:property_of_P_star}
	v^\star = q^\star\gamma^{1/T}, \quad
	u^\star + v^\star = \mu \quad \text{and} \quad
	\frac{(u^\star)^2}{p^\star} + \frac{(v^\star)^2}{q^\star} = \mu^2 + \frac{1}{T}(1+(T-1)\rho)\sigma^2 .
\end{equation}
Note that the last condition holds because \eqref{eq:lprob_dual_dual_3_1} is binding when $\gamma^{1/T} \geq \mu + \frac{\sigma^2\theta}{T\mu}$.

\paragraph{Step 3:}
%Henceforth, we will prove that the proposition statement holds by constructing a new distribution $\mathbb P \in \mathcal P$ from $\mathbb P^\star$ such that $\mathbb{P} \left( \prod_{t=1}^T \tilde\xi_t \geq \gamma \right) = \text{R}'(\gamma)$.
Consider the distribution $\mathbb P$ defined through
\begin{equation*}
\begin{aligned}
	\mathbb{P} \left( \tilde{\bm\xi} = \left(\frac{u^\star}{p^\star} - \lambda\right)\bm{1} + T\lambda\mathbf e_i \right) = \frac{1}{T}p^\star, \quad i = 1, \hdots, T,  \quad \text{and} \quad
	\mathbb{P} \left( \tilde{\bm\xi} = \frac{v^\star}{q^\star}\bm{1} \right) = q^\star
\end{aligned}
\end{equation*}
with $\lambda = \sqrt{\frac{1-\rho}{p^\star T}}\sigma$.
%where parameter $\lambda$ is to be chosen later.
If $\mathbb{P} \in \mathcal{P}$, then we find that
\begin{equation*}
	\text{R}(\gamma) \geq 
	\mathbb{P} \left( \prod_{t=1}^T \tilde\xi_t = \gamma \right) \geq 
	\mathbb{P} \left( \tilde{\bm\xi} = \frac{v^\star}{q^\star}\bm{1} \right) =
	q^\star = \text{R}'(\gamma),
\end{equation*}
which implies $\text{R}(\gamma) = \text{R}'(\gamma)$. We thus need to show that $\mathbb{P} \in \mathcal{P}$. To this end, we first observe that the first two moments of $\bm{\tilde{\xi}}$ under $\mathbb P$ satisfy
\begin{equation*}
\begin{aligned}
	\mathbb{E}_{\mathbb P} \left( \tilde{\bm\xi} \right) &= 
	\frac{p^\star}{T}\sum_{i=1}^T \left(\left(\frac{u^\star}{p^\star} - \lambda\right)\bm{1} + T\lambda\mathbf e_i \right) + v^\star \bm 1 = (u^\star + v^\star)\bm 1 = \bm\mu \\
	\mathbb{E}_{\mathbb P} \left( \tilde{\bm\xi}\tilde{\bm\xi}^\intercal \right) &= 
	\frac{p^\star}{T}\sum_{i=1}^T \left( \left( \frac{u^\star}{p^\star} - \lambda \right)\bm{1} + T\lambda\mathbf e_i \right)\left( \left( \frac{u^\star}{p^\star} - \lambda \right)\bm{1} + T\lambda\mathbf e_i \right)^\intercal + \frac{(v^\star)^2}{q^\star} \bm{11}^\intercal \\
	&= \frac{p^\star}{T} \left( \left( T \left( \frac{u^\star}{p^\star} - \lambda \right)^2 + 2\left(\frac{u^\star}{p^\star} - \lambda\right)T\lambda \right)\bm{11}^\intercal + T^2\lambda^2\mathbb{I}\right) + \frac{(v^\star)^2}{q^\star} \bm{11}^\intercal \\
	&= \left( \frac{(u^\star)^2}{p^\star} + \frac{(v^\star)^2}{q^\star} - p^\star\lambda^2 \right)\bm{11}^\intercal + p^\star T\lambda^2\mathbb{I} \\
	&= (\mu^2 + \rho\sigma^2)\bm{11}^\intercal + (1-\rho)\sigma^2 \mathbb{I} = \bm\Sigma + \bm\mu\bm\mu^\intercal,
\end{aligned}
\end{equation*}
where the last row is due to~\eqref{eq:property_of_P_star} and our definition of $\lambda$. It remains to be shown that $\bm{\tilde{\xi}}$ is non-negative $\mathbb P$-a.s. By construction of $\mathbb{P}$, this is the case iff $u^\star \geq p^\star \lambda$. We now observe that
\begin{equation*}
	u^\star = \mu - q^\star \gamma^{1/T} = \mu - \frac{\sigma^2\theta \gamma^{1/T}}{\sigma^2\theta + T(\mu - \gamma^{1/T})^2} \geq \sqrt{\frac{1-\rho}{T}}\sigma,
\end{equation*}
where the first identity follows from~\eqref{eq:property_of_P_star}, the second one is due to the definition of $q^\star$ in Corollary~\ref{cor:dist}, and the inequality holds since there is $C > 0$ such that
%{\color{red}$q \left( \gamma^{1/T} \right) \geq 0$ for any $\gamma \geq \overline\gamma$.}
%\iffalse
\begin{equation*}
q \left( \gamma^{1/T} \right) = C \left[ \mu - \frac{\sigma^2\theta \gamma^{1/T}}{\sigma^2\theta + T(\mu - \gamma^{1/T})^2} - \sqrt{\frac{1-\rho}{T}}\sigma \right],
\end{equation*}
and this expression is non-negative whenever $\gamma \geq \overline{\gamma}$. 
%\fi
We thus conclude that 
\begin{equation*}
	\frac{(u^\star)^2}{p^\star} \geq (u^\star)^2 \geq \frac{(1-\rho)\sigma^2}{T},
\end{equation*}
which in turn implies that $u^\star \geq \sqrt{\frac{(1-\rho)p^\star}{T}}\sigma = p^\star\lambda$ as desired. The claim now follows.
\qed 
\end{proof}

%%%%%%%%%%%%%%%%%%%%%%%%%%%%%%%%%%%%%%%%%%%%%%%%%%%%%%%%%%%%%%%%%
\section{Extensions}
\label{sec:extensions}
The techniques developed in this paper can also be used to construct Chebyshev bounds for sums, minima and maxima of non-negative random variables. All these Cheybshev bounds can be reduced to computing $\sup_{\mathbb P \in \mathcal P} \mathbb{P} ( h(\tilde{\bm\xi}) \leq 0 )$ for some permutation-symmetric functional $h(\bm \xi)$. 
\begin{thm}
\label{thm:permutation-symmetric-h} For any permutation-symmetric continuous functional $h:\mathbb R_+^T\rightarrow\mathbb R$, we have
\begin{equation}
\label{eq:h}
\begin{array}{rcl}
	\displaystyle \sup_{\mathbb P \in \mathcal P} \mathbb{P} ( h(\tilde{\bm\xi}) \leq 0 ) ~= & 
	\inf & \alpha + T\mu\beta +	T(\mu^2 + \sigma^2)\gamma_1 + T\left[ T\mu^2 + \sigma^2 + (T-1)\rho\sigma^2\right] \gamma_2 \\
	&\st & \alpha, \lambda_1, \lambda_2 \in\mathbb R_+,~ \beta, \gamma_1, \gamma_2 \in \mathbb{R} \\
	&     & \gamma_1+\gamma_2\geq 0,~\gamma_2 + \gamma_1 + \alpha \geq \left\Vert \left( \beta - \lambda_1, \gamma_2 + \gamma_1 - \alpha \right) \right\Vert_2 \\
	&     & \frac{\gamma_1}{T}+\gamma_2\geq 0,~\gamma_2 + \frac{\gamma_1}{T} + \alpha \geq \left\Vert \left( \beta - \lambda_2, \gamma_2 + \frac{\gamma_1}{T} - \alpha \right) \right\Vert_2 \\
	&     &  \alpha + \beta s + \gamma_2 s^2 + \gamma_1 \underline{\phi}(s) \geq 1 \quad \forall s \in \mathcal S\\
	&     &  \alpha + \beta s + \gamma_2 s^2 + \gamma_1 \overline{\phi}(s) \geq 1\quad \forall s \in \mathcal S,
	\end{array}
\end{equation}
where the optimal value functions $\underline\phi(s)$ and $\overline\phi(s)$ are defined as
\begin{equation*}
	\underline\phi(s) = \inf_{\bm{\xi} \geq \bm{0}} \left\{ \Vert \bm{\xi} \Vert_2^2 : \Vert \bm{\xi} \Vert_1 = s, ~h(\bm\xi) \leq 0 \right\} \quad\text{and}\quad
	\overline\phi(s) = \sup_{\bm{\xi} \geq \bm{0}} \left\{ \Vert \bm{\xi} \Vert_2^2 : \Vert \bm{\xi} \Vert_1 = s, ~h(\bm\xi) \leq 0 \right\}
\end{equation*}
for all $s\geq 0$, while $\mathcal{S} = \left\{ s \in \mathbb R_+ : \underline\phi(s) < +\infty \right\}$ denotes the effective domain of $\underline\phi(s)$ and $\overline\phi(s)$.
\end{thm}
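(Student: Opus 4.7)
The plan is to follow the blueprint of Theorems~\ref{thm:rprob} and~\ref{thm:lprob}: cast the worst-case probability as a generalized moment problem, take its strong conic dual, exploit permutation symmetry to drastically reduce the dimension of the dual, and then collapse each semi-infinite constraint in $\bm\xi$ to a one-dimensional constraint in the scalar parameter $s=\|\bm\xi\|_1$. Continuity of $h$ ensures that $1_{\{h(\bm\xi)\leq 0\}}$ is a Borel function, so the moment problem
\begin{equation*}
	\sup_{\mathbb P \in \mathcal M_+(\mathbb R_+^T)} \left\{ \int 1_{\{h(\bm\xi)\leq 0\}}\, \mathbb P(\dd\bm\xi) : \int \mathbb P(\dd\bm\xi)=1,\ \int \bm\xi\,\mathbb P(\dd\bm\xi)=\bm\mu,\ \int \bm\xi\bm\xi^\intercal \mathbb P(\dd\bm\xi)=\bm\Sigma+\bm\mu\bm\mu^\intercal \right\}
\end{equation*}
is well-posed, and Theorem~\ref{thm:slater} together with~\cite[Proposition~3.4]{Shapiro01} gives it the same optimal value as its conic dual in multipliers $\alpha\in\mathbb R$, $\bm\beta\in\mathbb R^T$ and $\bm\Gamma\in\mathbb S^T$. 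The dual features two semi-infinite constraints: nonnegativity of $\alpha+\bm\xi^\intercal\bm\beta+\bm\xi^\intercal\bm\Gamma\bm\xi$ on $\mathbb R_+^T$, and the same quadratic being $\geq 1$ on the subset $\{\bm\xi\geq\bm 0:h(\bm\xi)\leq 0\}$.

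Next I would symmetrize the dual exactly as in Lemma~\ref{prop:rprob_symmetry}. Because $h$ is permutation-symmetric, the set $\{\bm\xi\geq\bm 0: h(\bm\xi)\leq 0\}$ is invariant under the symmetric group $\mathfrak P$, so averaging any dual optimizer over $\mathfrak P$ preserves feasibility and the objective value and produces an optimum with $\bm\beta = \beta\bm 1$ and $\bm\Gamma = \gamma_1 \mathbb I + \gamma_2 \bm 1\bm 1^\intercal$ for some scalars $\beta,\gamma_1,\gamma_2$. Substituting this form yields precisely the objective listed in~\eqref{eq:h} and rewrites both dual constraints as inequalities in $\|\bm\xi\|_1$ and $\|\bm\xi\|_2^2$ only.

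To eliminate $\bm\xi$ altogether I would parametrize the inner problems by $s=\|\bm\xi\|_1\geq 0$. For the first (unconstrained) semi-infinite constraint, the extrema of $\|\bm\xi\|_2^2$ on the slice $\{\bm\xi\geq\bm 0:\|\bm\xi\|_1=s\}$ are $s^2/T$ (attained at $(s/T)\bm 1$) and $s^2$ (attained at $s\mathbf e_i$); the case analysis on the sign of $\gamma_1$ used in Lemma~\ref{lem:semi_infinite_constraints} therefore reduces this constraint to the pair of univariate inequalities $\inf_{s\geq 0}\,\alpha+\beta s+(\gamma_2+\gamma_1/T)s^2\geq 0$ and $\inf_{s\geq 0}\,\alpha+\beta s+(\gamma_2+\gamma_1)s^2\geq 0$. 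Each is a nonnegative quadratic on the half-line, which by the $\mathcal S$-lemma with a multiplier $\lambda_i\geq 0$ for the constraint $s\geq 0$, combined with the hyperbolic-to-SOC reformulation of~\cite[p.~197]{Boyd04}, becomes exactly the $\lambda_1$- and $\lambda_2$-constraints displayed in~\eqref{eq:h}. Applying the same $s$-decomposition to the second, $h$-constrained semi-infinite constraint leaves the inner extrema of $\|\bm\xi\|_2^2$ as $\underline\phi(s)$ and $\overline\phi(s)$ by definition, and the restriction to $s\in\mathcal S$ is precisely the effective domain on which these inner problems are feasible.

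The main obstacle—and the reason the output remains semi-infinite rather than collapsing to a finite SDP—is that for a generic continuous $h$ the functions $\underline\phi$ and $\overline\phi$ need not be polynomial, convex, or even finitely representable, so the Markov--Lukacs and Nesterov sum-of-squares machinery that closed Theorem~\ref{thm:rprob} is not available at this level of generality. For the concrete permutation-symmetric $h$ that encode products, sums, minima and maxima of the components of $\bm\xi$, however, $\underline\phi$ and $\overline\phi$ admit closed-form descriptions analogous to $f_T$ and $g_T$ from Section~\ref{section:right-prob}, after which the two remaining semi-infinite constraints can be converted to tractable conic form on a case-by-case basis.
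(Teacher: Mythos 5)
Your proposal is correct and follows exactly the route the paper intends: the paper's own proof of Theorem~\ref{thm:permutation-symmetric-h} is omitted with a pointer to the arguments of Theorems~\ref{thm:rprob} and~\ref{thm:lprob}, and your write-up supplies precisely those steps (strong duality via Theorem~\ref{thm:slater}, symmetrization as in Lemma~\ref{prop:rprob_symmetry}, the slice decomposition and sign analysis of Lemma~\ref{lem:semi_infinite_constraints}, and the $\mathcal{S}$-lemma plus second-order-cone reformulation for the two unconstrained quadratic conditions). Your closing observation about why the last two constraints must remain semi-infinite for general $h$ is also consistent with the paper's subsequent discussion of piecewise polynomial $\underline\phi$ and $\overline\phi$.
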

\begin{proof}
The proof is largely based on arguments familiar from Theorems~\ref{thm:rprob} and~\ref{thm:lprob}. Details are omitted for brevity of exposition. \qed  
\end{proof}

The significance of Theorem~\ref{thm:permutation-symmetric-h} is that it enables us to compute $\sup_{\mathbb P \in \mathcal P} \mathbb{P} ( h(\tilde{\bm\xi}) \leq 0 )$ by solving a semidefinite program whenever $\underline\phi(s)$ and $\overline\phi(s)$ are piecewise polynomials. In this case the last two constraints in~\eqref{eq:h} reduce to the requirement that a univariate piecewise polynomial, whose coefficients depend affinely on the decision variables, must be non-negative uniformly on~$\mathcal S$. Such conditions can systematically be reformulated as linear matrix inequalities~\cite{Nesterov00}.

\begin{table}
\begin{center}
\begin{sideways}
\begin{small}
\begin{tabular}{C{4cm} || l | l@{}l | l@{}l}
\multicolumn{1}{c}{Chebyshev bound} & \multicolumn{1}{c}{$h(\bm\xi)$} &  \multicolumn{2}{c}{$\underline\phi(s)$}   & \multicolumn{2}{c}{$\overline\phi(s)$} \\[1ex] \hline
 $\displaystyle \sup_{\mathbb P\in\mathcal P}\mathbb{P} \Big( \min_{t=1,\ldots,T} \tilde\xi_t\leq \gamma\Big)$ 	& $\displaystyle \min_{t=1,\ldots,T} \xi_t -\gamma$ 		& 
 $\left\{ \begin{array}{l}
		\frac{s^2}{T} \\
		\gamma^2 + \frac{1}{T-1}(s - \gamma)^2 \end{array} \right.$ & $ \begin{array}{l}
		\text{if } s \geq \gamma T \\
		\text{if } 0 \leq s < \gamma T
	\end{array}$
	& $s^2$ 
	%$\left\{ \begin{array}{l} s^2 \\
	%+\infty \end{array}\right.$ & 
	%$\begin{array}{l} \text{if }T\geq 2 \text{ or }s\leq \gamma \\
	%\text{if }T=1 \text{ and }s>\gamma \end{array}$ 
	\\ \hline
 $\displaystyle \sup_{\mathbb P\in\mathcal P}\mathbb{P} \Big( \min_{t=1,\ldots,T} \tilde\xi_t\geq \gamma\Big)$ 	& $\displaystyle  \gamma - \min_{t=1,\ldots,T} \xi_t$ 		& 
 $\left\{ \begin{array}{l}
		\frac{s^2}{T} \\
		+\infty 
	\end{array} \right.$ 
	&$ \begin{array}{l}
		\text{if } s \geq \gamma T \\
		\text{if } 0 \leq s < \gamma T
	\end{array}$
	 & $\left\{ \begin{array}{l}
		(s-\gamma T)^2 + 2\gamma(s - \gamma T) + T\gamma^2 \\
		-\infty 
	\end{array} \right.$ & $\begin{array}{l}
		\text{if } s \geq \gamma T \\
		\text{if } 0 \leq s < \gamma T
	\end{array}$
	 \\ \hline
 $\displaystyle \sup_{\mathbb P\in\mathcal P}\mathbb{P} \Big( \max_{t=1,\ldots,T} \tilde\xi_t\leq \gamma\Big)$ 	& $\displaystyle \max_{t=1,\ldots,T} \xi_t- \gamma$		& 
 $\left\{ \begin{array}{l} \frac{s^2}{T} \\ +\infty \end{array}\right.$ & 
 $\begin{array}{l} \text{if } 0 \leq s \leq \gamma T\\ \text{if } s > \gamma T\end{array}$
  & $\left\{ \begin{array}{l}
		s^2 \\
		\gamma^2 + (s - \gamma)^2 \\
		~ \vdots \\
		(T-1)\gamma^2 + (s - (T-1)\gamma)^2 \\
		-\infty 
	\end{array} \right.$ & 
	$\begin{array}{l}
		\text{if } 0 \leq s \leq \gamma \\
		\text{if } \gamma < s \leq 2\gamma \\  ~\vdots \\
		\text{if } (T-1)\gamma < s \leq T\gamma \\
		\text{if } s > T\gamma
	\end{array}$\\ \hline
 $\displaystyle \sup_{\mathbb P\in\mathcal P}\mathbb{P} \Big( \max_{t=1,\ldots,T} \tilde\xi_t\geq \gamma\Big)$ 	& $\displaystyle \gamma - \max_{t=1,\ldots,T} \xi_t$		&  
 $\left\{ \begin{array}{l}
		\frac{s^2}{T} \\
		\gamma^2 + \frac{1}{T-1}(s - \gamma)^2 \\
		+\infty 
	\end{array} \right.$ &  $ \begin{array}{l}
		\text{if } s \geq \gamma T\\
		\text{if } \gamma \leq s < \gamma T \\
		\text{if } 0 \leq s < \gamma
	\end{array}$
	 &  
	$\left\{ \begin{array}{l}
		s^2 \\
		-\infty 
	\end{array} \right.$ & 
	$\begin{array}{l}
		\text{if } s \geq \gamma \\
		\text{if } 0 \leq s < \gamma
	\end{array}$  \\ \hline
 $\displaystyle \sup_{\mathbb P\in\mathcal P}\mathbb{P} \Big( \sum_{t=1}^T \tilde\xi_t\leq \gamma\Big)$ 	& $\displaystyle  \sum_{t=1}^T \tilde\xi_t-\gamma$		& 
 $\left\{ \begin{array}{l}
		\frac{s^2}{T} \\
		+\infty  
	\end{array} \right.$
	& $\begin{array}{l}
		\text{if } 0 \leq s \leq \gamma \\
		\text{if } s > \gamma 
	\end{array}$
	 & $\left\{ \begin{array}{l}
		s^2  \\
		-\infty 
	\end{array} \right.$ & $\begin{array}{l}
		 \text{if } 0 \leq s \leq \gamma \\
		\text{if } s > \gamma 
	\end{array}$ \\ \hline
 $\displaystyle \sup_{\mathbb P\in\mathcal P}\mathbb{P} \Big( \sum_{t=1}^T \tilde\xi_t\geq \gamma\Big)$ 	& $\displaystyle \gamma- \sum_{t=1}^T \tilde\xi_t$		& 
 $\left\{ \begin{array}{l}
		\frac{s^2}{T} \\
		+\infty 
	\end{array} \right.$ & 
	$\begin{array}{l}
		\text{if } s \geq \gamma \\
		\text{if }  0 \leq s < \gamma
	\end{array}$
	 & $\left\{ \begin{array}{l}
		s^2 \\
		-\infty 
	\end{array} \right.$
	& 
	$\begin{array}{l}
		\text{if } s \geq \gamma \\
		\text{if }  0 \leq s < \gamma
	\end{array}$ \\ \hline
\end{tabular}
\end{small}
\end{sideways}
\end{center}
\caption{Chebyshev bounds equivalent to $\sup_{\mathbb P \in \mathcal P} \mathbb{P} ( h(\tilde{\bm\xi}) \leq 0 )$ for some permutation symmetric functional~$h(\bm \xi)$. These bounds coincide with the optimal value of~\eqref{eq:h}, instantiated with the respective piecewise polynomials~$\underline\phi(s)$ and~$\overline\phi(s)$.}
\label{tab:Napat-on-fire}
\end{table}

Table~\ref{tab:Napat-on-fire} lists examples of permutation-symmetric functionals $h(\bm\xi)$ that lead to piecewise polynomial mappings $\underline\phi(s)$ and $\overline\phi(s)$ and thus to computable Chebyshev bounds. Theorems~\ref{thm:sum1} and~\ref{thm:sum2} below present two special cases in which these bounds can be evaluated analytically.

\begin{thm}[Left-Sided Chebyshev Bound for Sums]
\label{thm:sum1}
For any $\gamma > 0$ we have 
\begin{equation*}
	\sup_{\mathbb{P} \in \mathcal{P}}\mathbb{P} \left( \sum_{t=1}^T \tilde{\xi}_t \geq \gamma \right) = 
	\left\{\begin{array}{ll}
		\frac{T\sigma^2\theta}{T\sigma^2\theta+(\gamma-T\mu)^2} & \text{if } \gamma\geq  T\mu+\sigma^2\theta/\mu,\\
		\frac{T\mu}{\gamma} & \text{if }T\mu\leq \gamma<T\mu+\sigma^2\theta/\mu,\\
		1 & \text{if } \gamma<T\mu,
	\end{array}\right.
\end{equation*}
where $\theta = 1 + (T-1)\rho > 0$.
\end{thm}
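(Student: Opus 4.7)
The upper bound is immediate from applying the univariate generalized Chebyshev inequality~\eqref{eq:chebyshev-nonnegative} to the non-negative random variable $\tilde\zeta := \sum_{t=1}^T \tilde\xi_t$, which has mean $T\mu$ and variance $T\sigma^2\theta$: its three-regime formula coincides verbatim with the right-hand side of the claim. The substance of the theorem is therefore \emph{sharpness}, which I plan to establish by exhibiting, in each regime, a distribution (or sequence thereof) in $\mathcal P$ whose sum marginal is (resp.\ converges to) the univariate Chebyshev extremal on $\mathbb R_+$.

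In the regime $\gamma \geq T\mu + \sigma^2\theta/\mu$, I plan to use a two-atom permutation-symmetric distribution of the type from the proof of Proposition~\ref{prop:non-emptiness}, namely ``first'' atoms $y\bm 1 + (x-y)\mathbf e_i$ with weight $p^\star/T$ for each $i \in \{1,\ldots,T\}$ and a ``second'' atom $z\bm 1$ with weight $1-p^\star$, with $p^\star = T\sigma^2\theta/[T\sigma^2\theta + (\gamma-T\mu)^2]$, $x + (T-1)y = \gamma$, $Tz = T\mu - T\sigma^2\theta/(\gamma-T\mu)$, and $(x-y)^2 = T(1-\rho)\sigma^2/p^\star$. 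The first relation forces $\sum\xi_t = \gamma$ on every first atom and the third forces $\sum\xi_t = T\mu - T\sigma^2\theta/(\gamma-T\mu)$ on the second; together with the fourth these relations produce the moment vector $(\bm\mu,\bm\Sigma+\bm\mu\bm\mu^\intercal)$ by computations paralleling Step~3 in the proof of Proposition~\ref{prop:non-emptiness}. Consequently $\mathbb P(\tilde\zeta \geq \gamma) = p^\star$, matching the bound. For the Markov regime $T\mu \leq \gamma < T\mu + \sigma^2\theta/\mu$ and the trivial regime $\gamma < T\mu$, I plan an analogous family of three-atom permutation-symmetric distributions with atoms at $\bm 0$, a symmetric scenario on $\{\sum\xi_t = \gamma\}$, and a symmetric scenario on $\{\sum\xi_t = \kappa\}$; as $\kappa \uparrow \infty$, the probabilities that preserve membership in $\mathcal P$ force the middle atom's weight to approach $T\mu/\gamma$ (resp.\ $1$), mirroring the univariate sharpness constructions discussed just after~\eqref{eq:chebyshev-nonnegative}.

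The main technical obstacle is verifying that every atom lies in $\mathbb R^T_+$. In Case~1, $z \geq 0$ is equivalent to $\gamma \geq T\mu + \sigma^2\theta/\mu$ and is therefore immediate; $y \geq 0$ (taking $x \geq y$) reduces to $\gamma^2 \geq T(1-\rho)\sigma^2/p^\star$, which after clearing fractions becomes a quadratic inequality in $\gamma$ that (using $\theta - 1 + \rho = T\rho$ and the Slater condition $\mu^2 + \rho\sigma^2 > 0$) must be shown to hold on $\gamma \geq T\mu + \sigma^2\theta/\mu$. Cases~2 and~3 require analogous non-negativity and moment checks at the three-atom level, which are elementary but more tedious. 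Alternatively, one may invoke Theorem~\ref{thm:permutation-symmetric-h} with $h(\bm\xi) = \gamma - \sum_t \xi_t$, for which Table~\ref{tab:Napat-on-fire} yields $\mathcal S = [\gamma,\infty)$, $\underline\phi(s) = s^2/T$, and $\overline\phi(s) = s^2$; setting $\gamma_1 = 0$ in~\eqref{eq:h} collapses the two semi-infinite constraints into $\alpha + \beta s + \gamma_2 s^2 \geq 1$ on $[\gamma,\infty)$, reduces the two SOCP constraints to non-negativity of that polynomial on $\mathbb R_+$, and simplifies the objective to $\alpha + T\mu\beta + (T^2\mu^2 + T\sigma^2\theta)\gamma_2$, which is exactly the conic dual of the univariate Chebyshev moment problem for $\tilde\zeta$ and hence upper-bounds the sharp bound by the claimed formula, closing the loop with the explicit primal constructions.
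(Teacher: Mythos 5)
Your route is genuinely different from the paper's: the paper never constructs extremal distributions for this theorem, but instead invokes Theorem~\ref{thm:permutation-symmetric-h} with the full dual (keeping $\gamma_1$ free), distinguishes the cases $\gamma_1\geq 0$ and $\gamma_1<0$, and reduces the resulting semi-infinite program to the univariate Chebyshev dual via the ``primal worst equals dual best'' scheme of~\cite{Beck09}. Your upper bound is correct and cleaner than anything in the paper (it is just~\eqref{eq:chebyshev-sum-nonnegative}), and your $\gamma_1=0$ restriction of~\eqref{eq:h} is a valid second derivation of that same upper bound. The burden of your proof therefore rests entirely on the primal constructions, and there the plan has a genuine gap.

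The deferred inequality $\gamma^2\geq T(1-\rho)\sigma^2/p^\star$, which you correctly identify as equivalent to $y\geq 0$, is \emph{false} on part of the first regime whenever $\rho<0$. Indeed, $y=\gamma/T-\sqrt{(1-\rho)\sigma^2/(p^\star T)}$ and $p^\star\sim T\sigma^2\theta/\gamma^2$ as $\gamma\to\infty$, so $y\sim\frac{\gamma}{T}\bigl(1-\sqrt{(1-\rho)/\theta}\,\bigr)$, which is negative precisely when $\theta<1-\rho$, i.e., when $\rho<0$. (Concretely, $T=3$, $\mu=\sigma=1$, $\rho=-0.4$, $\gamma=10$ satisfies all standing assumptions and lies in regime~1, yet $\gamma^2=100<T(1-\rho)\sigma^2/p^\star\approx 347$.) The failure is not an artefact of the specific orbit $\{y\bm 1+(x-y)\mathbf e_i\}$: by Lemma~\ref{prop:minprod}~(i), \emph{any} permutation-symmetric mass placed on $\{\bm\xi\geq\bm 0:\sum_t\xi_t=\gamma\}$ has within-scenario ratio $m_2/m_1^2\leq T$, which is exactly the same condition, so no distribution putting all of its remaining mass at the single point $z\bm 1$ can match the diagonal of $\bm\Sigma+\bm\mu\bm\mu^\intercal$ in this parameter range. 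The construction can be repaired --- one must also spread the low atom over a non-degenerate orbit $z'\bm 1+(w-z')\mathbf e_i$ so that both scenario families contribute to $\mathbb E[\sum_t\tilde\xi_t^2]$, and the required split is feasible precisely because $\mu^2+\rho\sigma^2>0$ and $\rho<1$ --- but as written your Case~1 would break down, and Cases~2 and~3 carry analogous unverified non-negativity conditions. This is exactly the kind of difficulty the paper's dual argument is designed to avoid.
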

\begin{proof}
By Theorem~\ref{thm:permutation-symmetric-h} the Chebyshev bound $\sup_{\mathbb P \in \mathcal P}\mathbb{P}( \sum_{t=1}^T \tilde{\xi}_t \geq \gamma )$ can be reformulated as the semi-infinite program~\eqref{eq:h}, where the functions $\underline\phi(s)$ and $\overline\phi(s)$ are specified in Table~\ref{tab:Napat-on-fire}. Distinguishing the cases $\gamma_1\geq 0$ and $\gamma_1< 0$, this semi-infinite program can be reduced to a robust optimization problem with a scalar uncertain parameter by using the `{\em primal worst equals dual best}' duality scheme from robust optimization~\cite{Beck09}. One can further show that the optimal value of this problem coincides with the univariate Chebyshev bound $\sup_{\mathbb P_1 \in \mathcal P}\mathbb{P}( \tilde{\xi}\geq \gamma)$, where $\mathcal P_1$ contains all distributions of $\tilde\xi$ supported on $\mathbb R_+$ with mean $T\mu$ and variance $\sigma^2T(1+(T-1)\rho)$. The latter Chebyshev bound has an analytical formula, which can be obtained from~\eqref{eq:chebyshev-nonnegative}. \qed
\end{proof}

\begin{thm}[Right-Sided Chebyshev Bound for Sums]
\label{thm:sum2}
For any $\gamma > 0$ we have
\begin{equation*}
	\sup_{\mathbb P \in \mathcal P}\mathbb{P} \left( \sum_{t=1}^T \tilde{\xi}_t \leq \gamma \right) = 
	\left\{\begin{array}{ll}
		1 & \text{if } \gamma \geq T\mu, \\
		\frac{T\sigma^2\theta}{T\sigma^2\theta+(\gamma-T\mu)^2} & \text{otherwise,}
	\end{array}\right.
\end{equation*}
where $\theta = 1 + (T-1)\rho > 0$.
\end{thm}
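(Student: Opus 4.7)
The plan is to establish matching upper and lower bounds. For the upper bound, I would observe that under any $\mathbb P\in\mathcal P$ the aggregate $\tilde\zeta=\sum_{t=1}^T\tilde\xi_t$ is a scalar random variable with mean $T\mu$ and variance $T\sigma^2\theta$. The classical one-sided Chebyshev (Cantelli) inequality then gives $\mathbb P(\tilde\zeta\le\gamma)\le T\sigma^2\theta/[T\sigma^2\theta+(T\mu-\gamma)^2]$ for $\gamma<T\mu$ and $\mathbb P(\tilde\zeta\le\gamma)\le 1$ for $\gamma\ge T\mu$. The same bounds can also be recovered from Theorem~\ref{thm:permutation-symmetric-h} by instantiating $\underline\phi(s)=s^2/T$ and $\overline\phi(s)=s^2$ on $\mathcal S=[0,\gamma]$ (cf.\ Table~\ref{tab:Napat-on-fire}) and invoking the ``primal worst equals dual best'' reduction used in the proof of Theorem~\ref{thm:sum1}.

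For sharpness in the regime $0<\gamma<T\mu$, I would exhibit an extremal distribution $\mathbb P^\star\in\mathcal P$ of the form $\mathbb P^\star=p^\star\mathbb P^\star_\gamma+(1-p^\star)\mathbb P^\star_\lambda$, where $p^\star$ is the claimed bound, $\lambda=T\mu+T\sigma^2\theta/(T\mu-\gamma)>0$, and each $\mathbb P^\star_c$ for $c\in\{\gamma,\lambda\}$ is supported on the simplex $\{\bm\xi\ge\bm 0:\sum_t\xi_t=c\}$. Concretely, I would take $\mathbb P^\star_c$ to be the convex combination (with weight $r_c$) of the ``vertex'' distribution $\frac{1}{T}\sum_{i=1}^T\delta_{c\mathbf e_i}$ and the ``centroid'' point mass $\delta_{(c/T)\bm 1}$. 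Permutation symmetry yields $\mathbb E[\tilde\xi_t]=\mu$, and a direct computation shows that---given the identity $p^\star\gamma^2+(1-p^\star)\lambda^2=T\sigma^2\theta+T^2\mu^2$ built into Cantelli---both the variance and the covariance conditions collapse to the single linear equation
\begin{equation*}
p^\star r_\gamma\gamma^2+(1-p^\star)r_\lambda\lambda^2=T(1-\rho)\sigma^2.
\end{equation*}
This admits a solution $(r_\gamma,r_\lambda)\in[0,1]^2$ precisely because $p^\star\gamma^2+(1-p^\star)\lambda^2\ge T(1-\rho)\sigma^2$, which upon substitution simplifies to the standing assumption $\mu^2+\rho\sigma^2\ge 0$. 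Since every atom of $\mathbb P^\star$ lies in $\mathbb R_+^T$, one has $\mathbb P^\star\in\mathcal P$ with $\mathbb P^\star(\sum_t\tilde\xi_t\le\gamma)=p^\star$.

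To treat $\gamma\ge T\mu$, I would perform the same two-atom lifting with atoms $\alpha(\epsilon)=T\mu-\epsilon$ and $\beta(\epsilon)$ chosen so that the induced distribution of the sum has mean $T\mu$ and variance $T\sigma^2\theta$; this forces $p(\epsilon)=T\sigma^2\theta/(T\sigma^2\theta+\epsilon^2)\to 1$ as $\epsilon\downarrow 0$, with $\alpha\le\gamma$ and $\beta>\gamma$ for small $\epsilon$, delivering the supremum $1$. The main technical hurdle throughout is verifying feasibility of the $(r_\gamma,r_\lambda)$ system; this is the sole point at which the standing assumption $\mu^2+\rho\sigma^2\ge 0$ is genuinely used, and once it is secured the remaining verifications are routine second-moment computations.
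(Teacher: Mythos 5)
Your proof is correct, but it takes a genuinely different route from the paper's. The paper derives Theorem~\ref{thm:sum2} by the same dual machinery as Theorem~\ref{thm:sum1}: it instantiates the semi-infinite program of Theorem~\ref{thm:permutation-symmetric-h} with the piecewise-quadratic $\underline\phi,\overline\phi$ from Table~\ref{tab:Napat-on-fire}, applies the ``primal worst equals dual best'' scheme, and identifies the result with a univariate Chebyshev bound for a non-negative scalar with mean $T\mu$ and variance $T\sigma^2\theta$. You instead argue entirely on the primal side: the upper bound is immediate from Cantelli applied to $\tilde\zeta=\sum_t\tilde\xi_t$ (the support constraint can only shrink the supremum), and sharpness comes from an explicit lifting of the univariate Cantelli-extremal two-point law onto the simplices $\{\bm\xi\ge\bm 0:\sum_t\xi_t=c\}$, mixing the vertex measure $\frac{1}{T}\sum_i\delta_{c\mathbf e_i}$ with the centroid $\delta_{(c/T)\bm 1}$. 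I checked the key computations: conditional on each simplex the mean is $c/T$ per coordinate; the diagonal and off-diagonal second-moment conditions, combined with the identity $p^\star\gamma^2+(1-p^\star)\lambda^2=T^2\mu^2+T\sigma^2\theta$, do collapse to $p^\star r_\gamma\gamma^2+(1-p^\star)r_\lambda\lambda^2=T(1-\rho)\sigma^2$; and solvability over $(r_\gamma,r_\lambda)\in[0,1]^2$ is exactly $T^2\mu^2+T\sigma^2\theta\ge T(1-\rho)\sigma^2$, i.e.\ $\mu^2+\rho\sigma^2\ge 0$. The limiting construction for $\gamma\ge T\mu$ is also sound (the supremum need not be attained there, consistent with the discussion of the degenerate case in Section~\ref{section:introduction}). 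What your approach buys is an elementary, self-contained proof that also exhibits the extremal (or near-extremal) distributions explicitly — in effect a constructive non-negative-orthant analogue of the projection property invoked in the introduction — whereas the paper's approach buys uniformity, since the same dual template covers all the functionals in Table~\ref{tab:Napat-on-fire} at once.
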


\begin{proof}
The proof is widely parallel to that of Theorem \ref{thm:sum1} and is thus omitted for brevity.
\end{proof}

\section{Numerical Experiments}
\label{sec:examples}

We first compare our Chebyshev bounds $\text{R} (\gamma)$ and $\text{L} (\gamma)$ with alternative bounds proposed in the literature, as well as the relaxed Chebyshev bound $\text{R}' (\gamma)$ from Section~\ref{section:cov_bounds}.
%We then show how the bounds $\text{L} (\gamma)$ and $\text{R} (\gamma)$ can be combined to construct a two-sided confidence interval for the product of permutation-symmetric random variables.
%We close with 
We then present a case study that employs our left-sided Chebyshev bound $\text{L} (\gamma)$ to select financial portfolios under imprecise knowledge of the asset return distributions. All optimization problems are solved with the SDPT3 optimization software using the YALMIP interface \cite{Lofberg04, SDPT3}.

\subsection{Comparison of Chebyshev Bounds}\label{sec:num_exp:confidence_intervals}

Instead of employing the bounds $\text{R} (\gamma)$ and $\text{L} (\gamma)$ from Sections~\ref{section:right-prob} and~\ref{section:left-prob}, which are exact but may result in computationally challenging optimization problems, one can employ existing results to derive approximate bounds on the tail probabilities of a product of non-negative, permutation-symmetric random variables. In the following, we compare our bounds with two such approximations based on earlier results of Marshall and Olkin \cite{Marshall60} and Vandenberghe et al.~\cite{Vandenberghe07}. Both approximations rely on the larger ambiguity set
\begin{equation*}
\begin{aligned}
	\mathcal{P}^0 = 
	\left \lbrace 
		\mathbb{P} \in \mathcal M_+(\mathbb R^T)~: 
		%\begin{array}{l}
			\mathbb{E}_\mathbb{P} \left( \tilde{\bm{\xi}} \right) = \bm \mu ,\;
			\mathbb{E}_\mathbb{P} \left( \tilde{\bm{\xi}}\tilde{\bm{\xi}}^\intercal \right) = \bm \Sigma+\bm \mu\bm\mu^\intercal  %(1 - \rho)\sigma^2\mathbb{I} + \left( \mu^2 + \rho\sigma^2 \right) \bm{11}^\intercal
		%\end{array}
	\right \rbrace
\end{aligned}
\end{equation*}
with support $\mathbb{R}^T$, where $\bm{\mu} \in \mathbb{R}^T$ and $\bm{\Sigma} \in \mathbb{S}^T_+$, $\bm{\Sigma} \succ \bm{0}$, need not be permutation-symmetric.

Marshall and Olkin~\cite{Marshall60} derive a convex optimization problem that provides a tight upper bound on the probability that the random vector $\bm{\tilde{\xi}}$ is contained in a closed convex set $\mathcal{C}$, assuming that $\bm{\tilde{\xi}}$ can be governed by any distribution from the ambiguity set $\mathcal{P}^0$. The choice $\mathcal{C} = \big\{ \bm{\xi} \in \mathbb{R}^T \, : \, \prod_{t=1}^T \xi_t \geq \gamma \big\}$ allows us to approximate the right-sided Chebyshev bound $\text{R} (\gamma)$. For this special case, the bound of Marshall and Olkin has the analytical solution
\begin{equation*}
	\text{R}^\text{MO} (\gamma) = \left\{ \begin{array}{ll}
		1 & \text{if } 0 < \gamma \leq \mu^T, \\
		\frac{\sigma^2(1 + (T-1)\rho)}{\sigma^2(1 + (T-1)\rho) + T(\mu - \gamma^{1/T})^2} & \text{if } \gamma > \mu^T,
	\end{array} \right.
\end{equation*}
which follows from~\cite[Theorem~6.1]{Bertsimas05}. By construction, $\text{R}^\text{MO} (\gamma) \geq \text{R} (\gamma)$ since $\mathcal{P} \subset \mathcal{P}^0$. Note that $\text{R}^\text{MO} (\gamma)$ coincides with our relaxed Chebyshev bound $\text{R}' (\gamma)$ for $\gamma \geq \big( \mu + \frac{\sigma^2 \theta}{T \mu} \big)^T$, see Theorem~\ref{thm:lprob_bnd}. Thus, $\text{R}^\text{MO} (\gamma)$ also coincides with our right-sided Chebyshev bound $\text{R} (\gamma)$ for large values of $\gamma$, see Proposition~\ref{prop:r_equals_r_prime}. Note that the bound of Marshall and Olkin cannot be used to approximate our left-sided Chebyshev bound $\text{L} (\gamma)$ since the complement of $\mathcal{C}$ fails to be convex.

Vandenberghe et al.~\cite{Vandenberghe07} derive a semidefinite program that provides a tight upper bound on the probability that $\bm{\tilde{\xi}} \in \mathcal{C}$ for a (not necessarily convex) set $\mathcal{C} = \{ \bm{\xi} \in \mathbb{R}^T \, : \, \bm{\xi}^\intercal \bm{A}_i \bm{\xi} + 2 \bm{b}_i^\intercal \bm{\xi} + c_i < 0 \;\; \forall i = 1, \ldots, m \}$, assuming that the random vector $\bm{\tilde{\xi}}$ can be governed by any distribution from the ambiguity set $\mathcal{P}^0$. Employing a second-order Taylor approximation of $\prod_{t=1}^T \xi_t$ around $\mu \bm{1}$,
\begin{equation*}
\begin{aligned}
	\prod_{t=1}^T \xi_t
	&\approx \mu^{T-2} \left( \mu^2 + \mu(\bm{\xi} - \mu\bm{1})^\intercal \bm{1} + \frac{1}{2} (\bm{\xi} - \mu\bm{1})^\intercal (\bm{11}^\intercal - \mathbb{I})(\bm{\xi} - \mu\bm{1})\right) \\
	&= \mu^{T-2} \left( (1-T)\mu^2 + \mu \bm{\xi}^\intercal\bm{1} + \frac{1}{2} \bm{\xi}^\intercal (\bm{11}^\intercal - \mathbb{I})\bm{\xi} + \frac{1}{2}\mu^2 T(T-1) - (T-1)\mu\bm{\xi}^\intercal\bm{1} \right) \\
	&= \frac{1}{2}\mu^{T-2} \left( (T-1)(T-2)\mu^2 - 2(T-2)\mu\bm{\xi}^\intercal\bm{1} + \bm{\xi}^\intercal (\bm{11}^\intercal - \mathbb{I})\bm{\xi} \right),
\end{aligned}
\end{equation*}
we can derive an approximate right-sided Chebyshev bound $\text{R}^\text{VBC} (\gamma) = \sup_{\mathbb{P} \in \mathcal{P}^0} \mathbb{P} \left( \bm{\tilde{\xi}} \in \mathcal{C} \right)$ by replacing the product $\prod_{t=1}^T \xi_t$ with its Taylor approximation in the definition of the set $\mathcal{C}$:
\begin{equation*}
\mathcal{C} = \left\{ \bm{\xi} \in \mathbb{R}^T \, : \, \frac{1}{2}\mu^{T-2} \left( (T-1)(T-2)\mu^2 - 2(T-2)\mu\bm{\xi}^\intercal\bm{1} + \bm{\xi}^\intercal (\bm{11}^\intercal - \mathbb{I})\bm{\xi} \right) > \gamma \right\}
\end{equation*}
A similar approximation $\text{L}^\text{VBC} (\gamma)$ can be derived for our left-sided Chebyshev bound $\text{L} (\gamma)$ by considering the strict complement of $\mathcal{C}$. Note that $\text{R}^\text{VBC} (\gamma)$ and $\text{L}^\text{VBC} (\gamma)$ can over- or underestimate our bounds $\text{R} (\gamma)$ and $\text{L} (\gamma)$ due to the use of the Taylor approximation.

\begin{figure}[tb]
 	\centering
	\includegraphics[width=0.37\paperwidth]{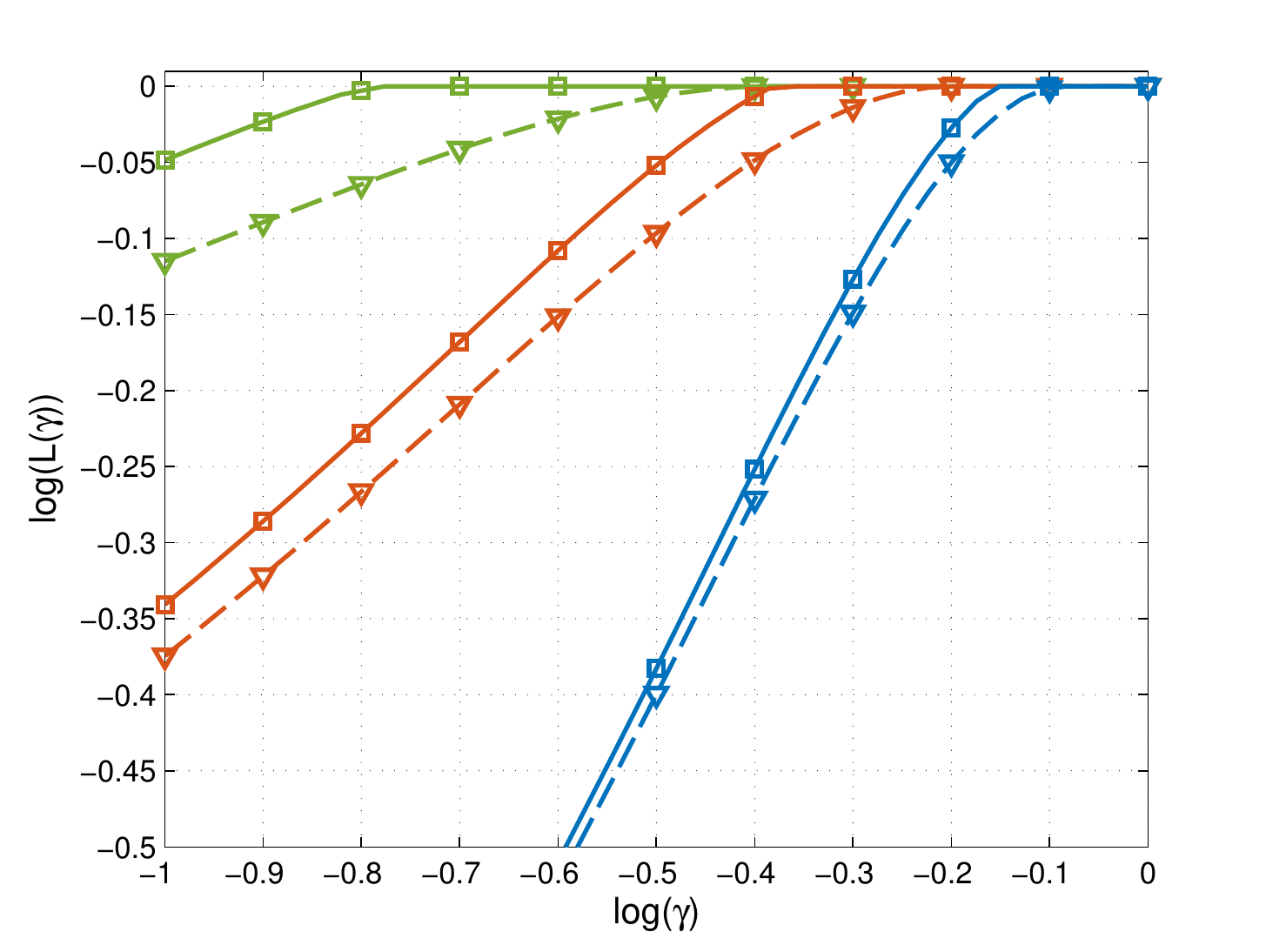}
    \includegraphics[width=0.37\paperwidth]{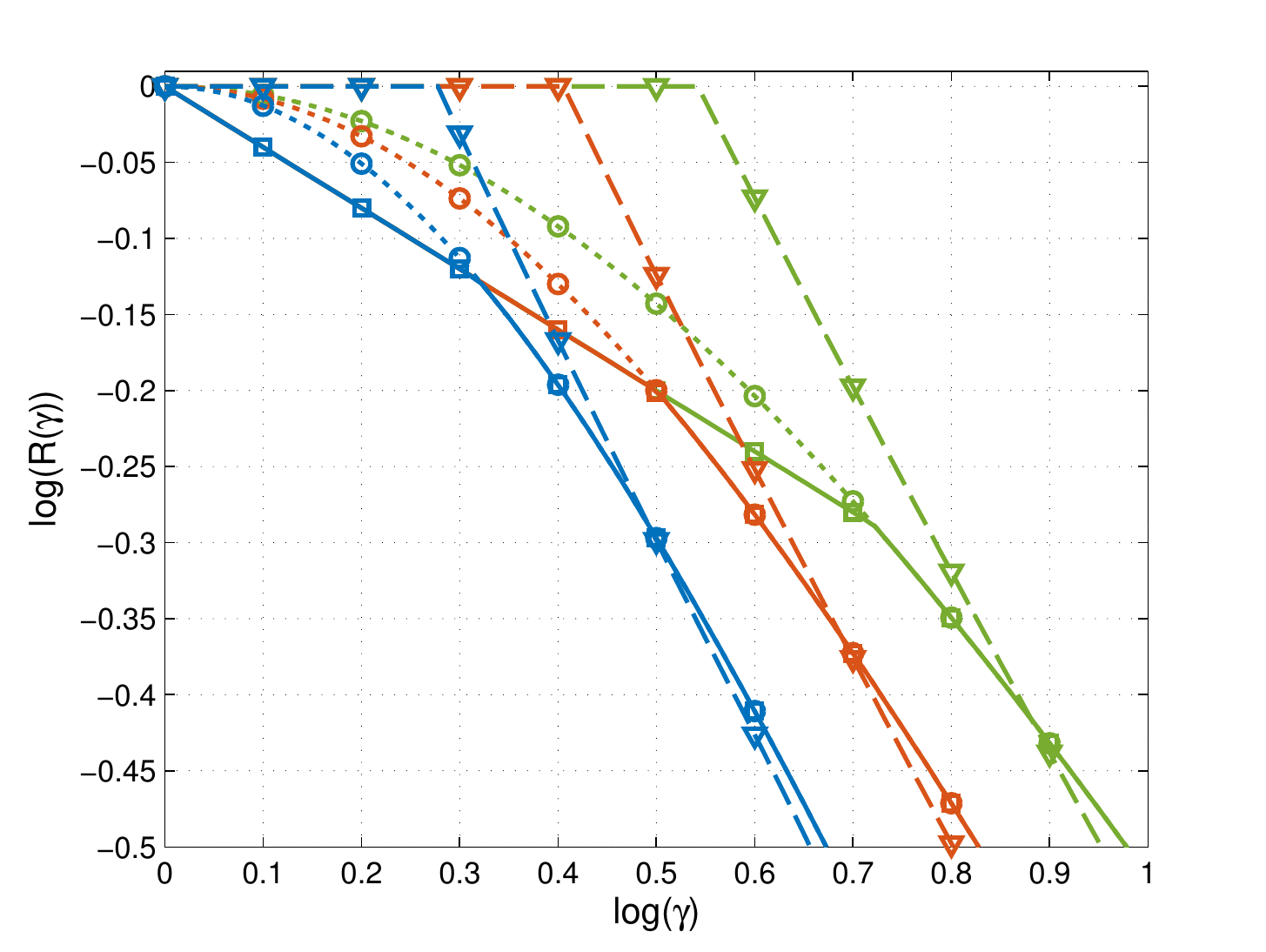} \\
    \includegraphics[width=0.37\paperwidth]{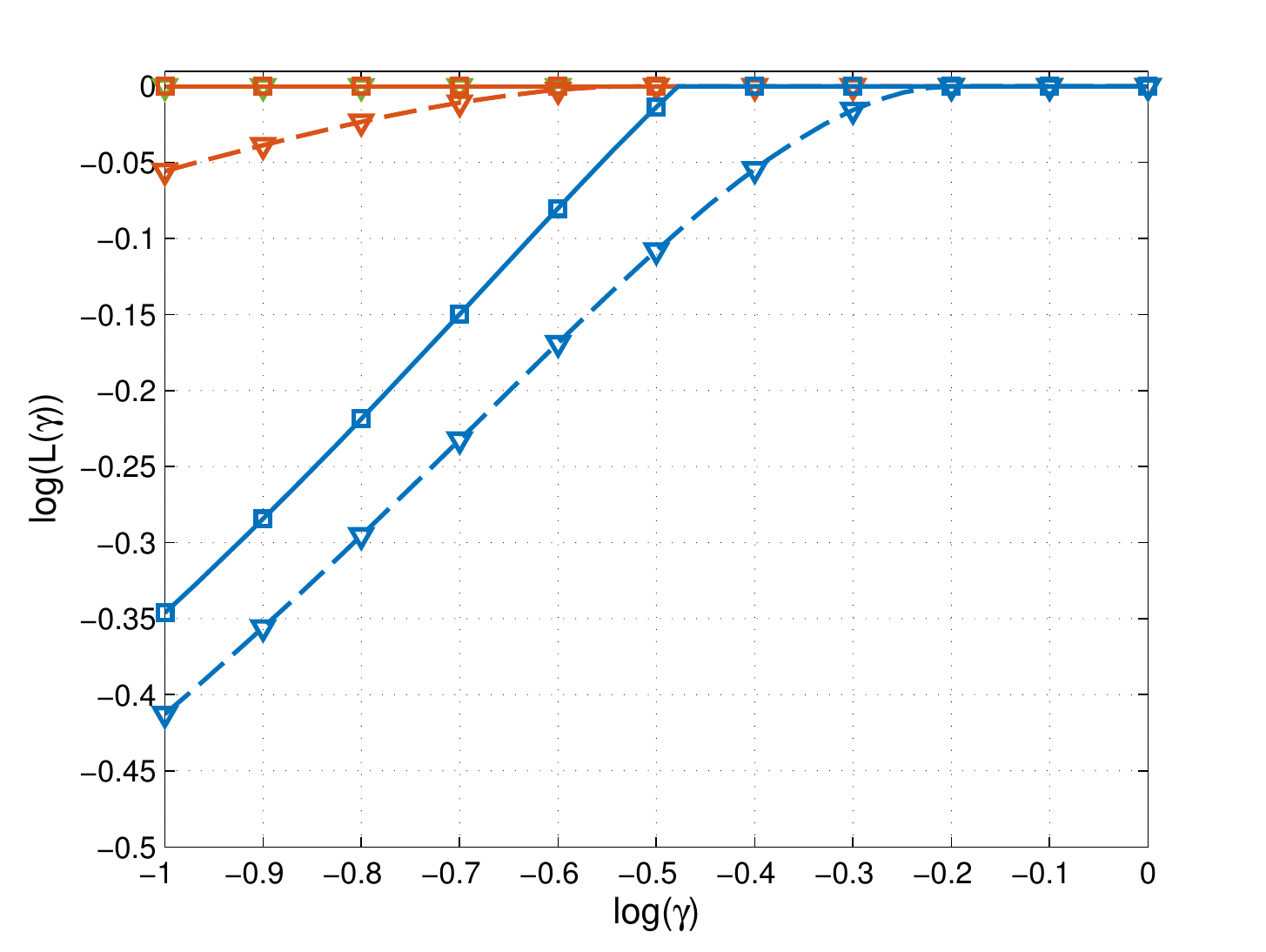}
    \includegraphics[width=0.37\paperwidth]{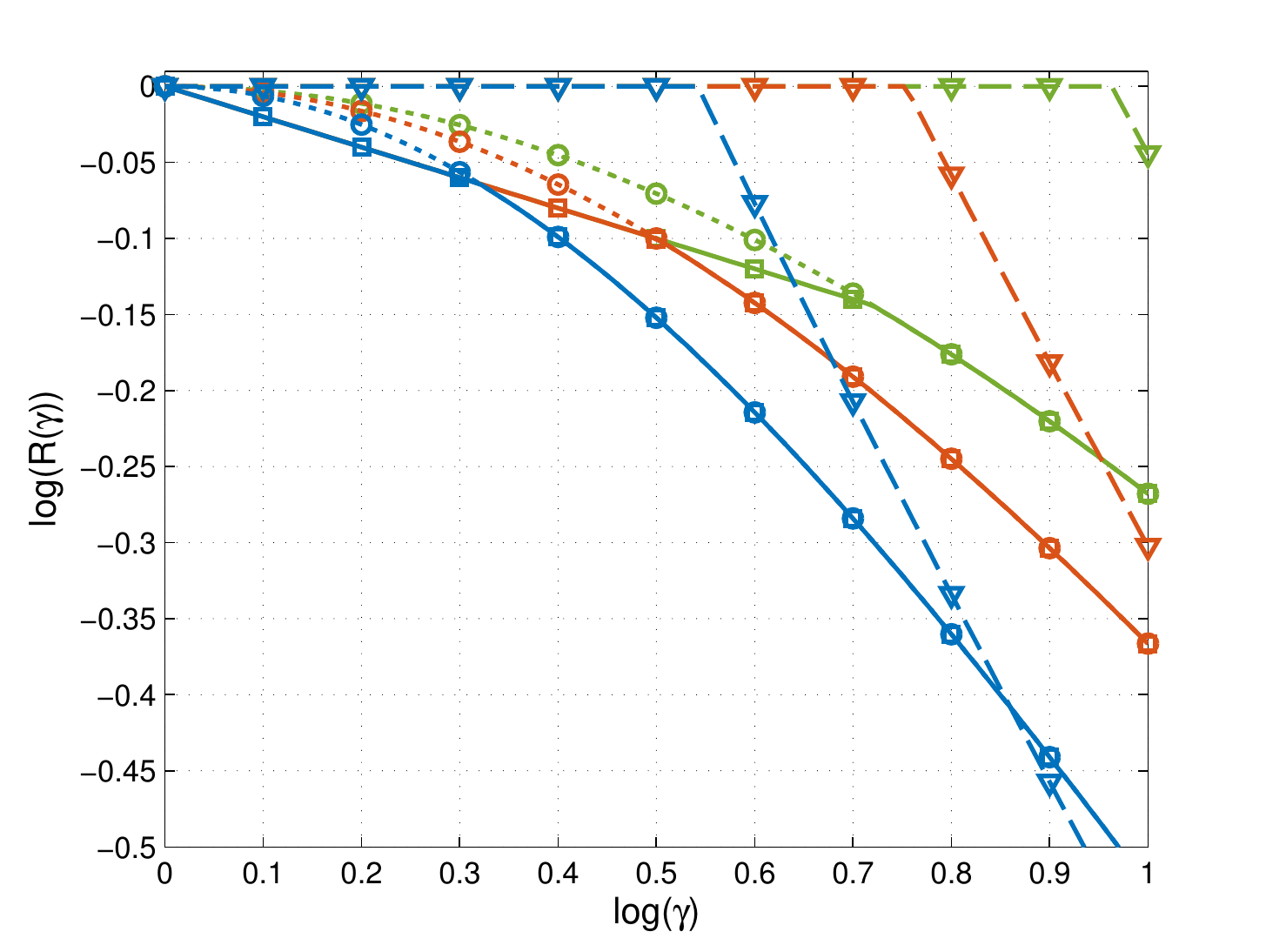}
	\caption{Comparison of the left-sided (left) and right-sided (right) Chebyshev bounds for the products of $T = 5$ (top) and $T = 10$ (bottom) random variables with $\mu = 1$ and $\rho = 0$. The solid lines with squares, the dashed lines with triangles and the dotted lines with circles represent our bounds, the VBC bounds and the MO bounds, respectively. From bottom to top, the blue, red and green lines correspond to $\sigma = 0.2$, $0.3$ and $0.4$ (left) and $\sigma = 0.4$, $0.5$ and $0.6$ (right), respectively.}
	\label{fig:compare_boyd_marshall}
\end{figure}

Figure~\ref{fig:compare_boyd_marshall} compares our Chebyshev bounds $\text{L} (\gamma)$ and $\text{R} (\gamma)$ with the approximate bounds $\text{L}^\text{VBC} (\gamma)$ and $\text{R}^\text{VBC} (\gamma)$ (`VBC bounds') as well as $\text{R}^\text{MO} (\gamma)$ (`MO bound'). As expected, the VBC bounds can over- and underestimate our bounds $\text{L} (\gamma)$ and $\text{R} (\gamma)$, whereas the MO bound consistently overestimates $\text{R} (\gamma)$. Moreover, the MO bound coincides with our right-sided Chebyshev bound for large values of $\gamma$. The quality of both approximations deteriorates with increasing $\sigma$ and decreasing $\gamma$. Interestingly, the VBC bound deterioates with increasing numbers of random variables, whereas the MO bound improves with increasing $T$. The figure shows that both approximate bounds can misestimate the bounds $\text{L} (\gamma)$ and $\text{R} (\gamma)$ substantially.

\begin{table}[tb]
\centering
\begin{tabular}{r|rrrrrrrrrr}
     \multicolumn{1}{c}{} & \multicolumn{10}{c}{Number of random variables $T$} \\
     \multicolumn{1}{c}{} & \multicolumn{1}{c}{4}    & \multicolumn{1}{c}{8}    & \multicolumn{1}{c}{12}   & \multicolumn{1}{c}{16}   & \multicolumn{1}{c}{20}   & \multicolumn{1}{c}{24}  & \multicolumn{1}{c}{28}   & \multicolumn{1}{c}{32}   & \multicolumn{1}{c}{36}    & \multicolumn{1}{c}{40}    \\ \hline
VBC bounds  & 1.02 & 1.01 & 1.06 & 1.07 & 1.11 & 1.23 & 1.29 & 1.48 & 1.72  & 2.02  \\
Our bounds  & 1.63 & 1.81 & 2.19 & 2.64 & 3.43 & 4.71 & 6.38 & 9.34 & 13.37 & 18.35 \\ \hline \hline
\end{tabular}
\caption{Runtimes (secs) required to calculate the Chebyshev bounds. Each runtime is averaged over 10 instances with randomly selected $\mu$, $\sigma$ and $\gamma$, and it includes the calculation of both the left-sided and the right-sided bounds. \label{tab:runtimes}}
\end{table}

The MO bound has an analytical solution and can therefore be computed in negligible time. In contrast, the VBC bounds and our bounds require the solution of semidefinite programs with two LMIs of size $\mathcal{O} (T^2)$. Table~\ref{tab:runtimes} compares the computation times of both bounds for products of different size $T$ on a computer with a 3.40GHz i7 CPU and 16GB RAM. While both bounds can be computed within seconds, the VBC bounds require significantly less runtime than our bounds. We attribute this to the LMI reformulations of the polynomial constraints in Theorems~\ref{thm:rprob} and~\ref{thm:lprob}, which seem to lack structure that can be exploited by SDPT3.

\begin{figure}[tb]
 	\centering
	\includegraphics[width=0.37\paperwidth]{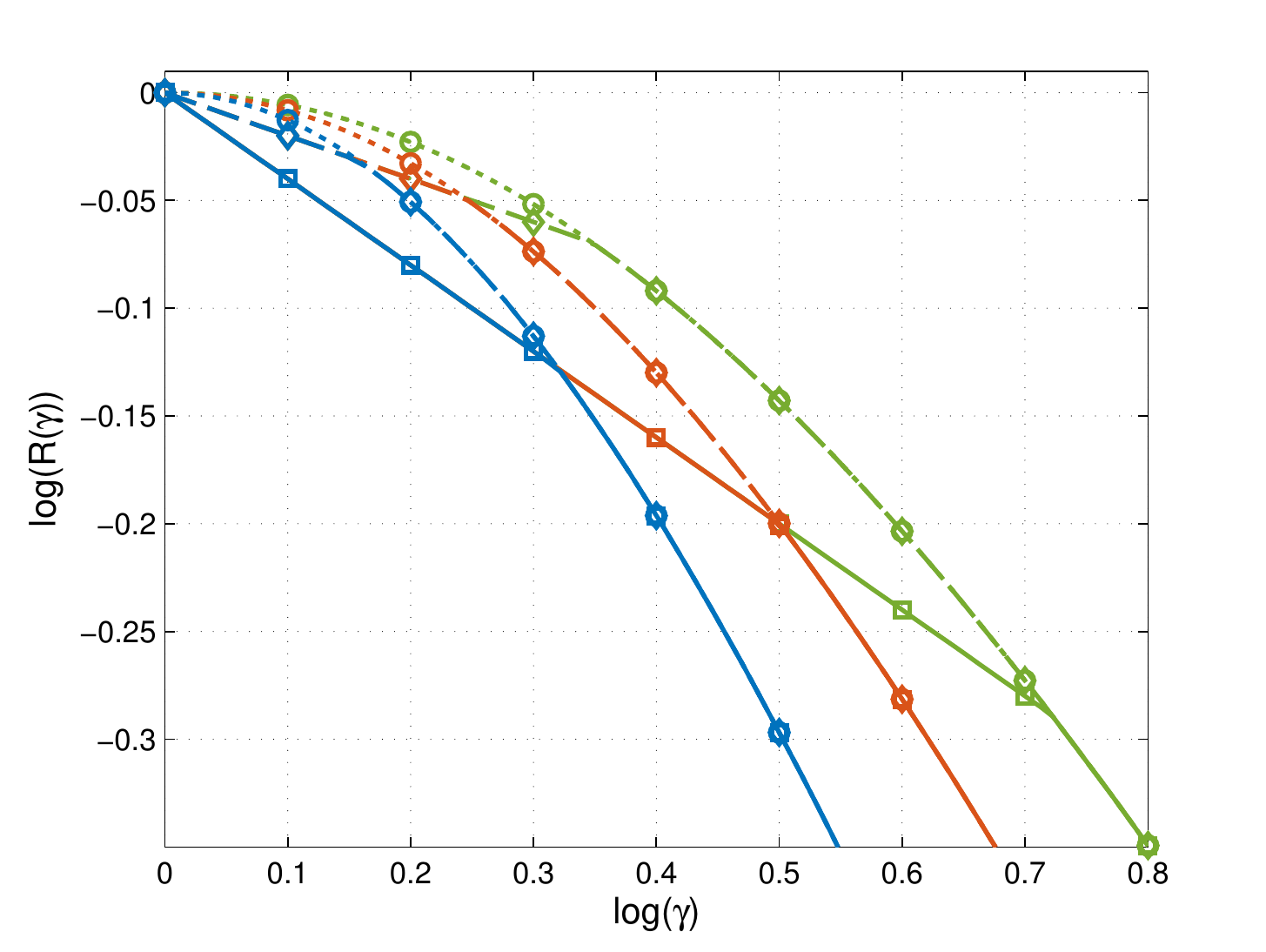}
    \includegraphics[width=0.37\paperwidth]{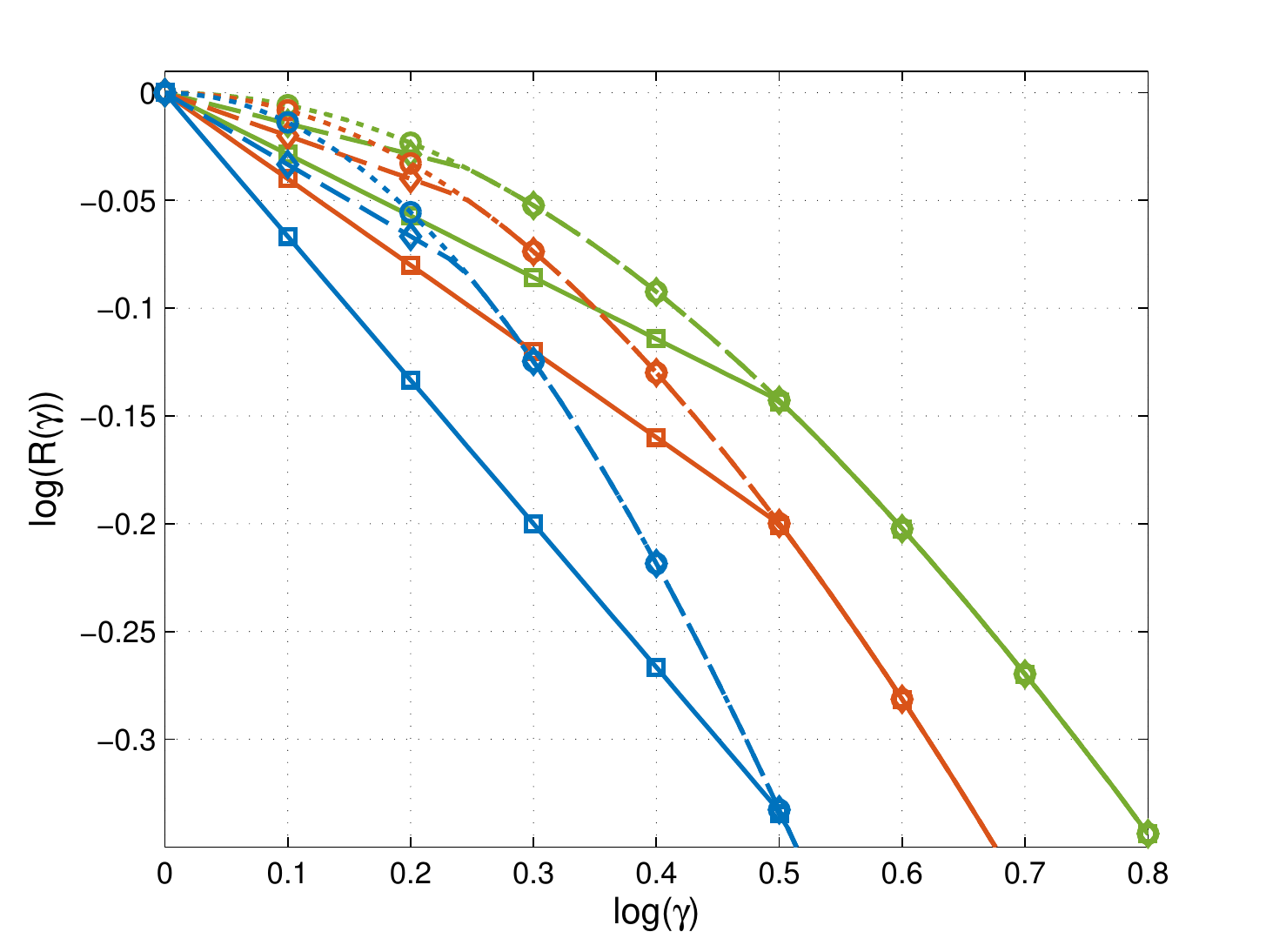} 
	\caption{Comparison of the right-sided Chebyshev bounds $\text{R} (\gamma)$ (solid lines with squares), $\text{R}' (\gamma)$ (dashed lines with diamonds) and $\text{R}^\text{MO} (\gamma)$ (dotted lines with circles) with $\mu = 1$ and $\rho = 0$. From bottom to top, the blue, red and green lines correspond to $\sigma = 0.4$, $0.5$ and $0.6$ in the left graph (with $T = 5$ fixed) and to $T = 3$, $5$ and $7$ in the right graph (with $\sigma = 0.5$ fixed), respectively. \label{fig:rprob}}
\end{figure}

Figure~\ref{fig:rprob} compares the right-sided Chebyshev bound $\text{R}(\gamma)$ with the relaxed right-sided bound $\text{R}'(\gamma)$ and the MO bound $\text{R}^\text{MO} (\gamma)$. The figure illustrates that $\text{R}^\text{MO} (\gamma)$ coincides with $\text{R}' (\gamma)$ for $\gamma \geq \big( \mu + \frac{\sigma^2 \theta}{T \mu} \big)^T$, and subsequently both bounds coincide with $\text{R} (\gamma)$ for large values of $\gamma$. The gaps between the bounds increase with larger variances $\sigma^2$, and they decrease with larger numbers of random variables $T$.

\subsection{Case Study: Financial Risk Management}

Consider an investor who allocates a limited budget to a fixed pool of $n$ assets over a time horizon of $T$ periods. We denote by $\tilde{r}_{t,i} \geq -1$, $t = 1, \ldots, T$ and $i = 1, \ldots, n$, the relative price change of asset $i$ between periods $t$ and $t + 1$. We assume that the investor pursues a fixed-mix (or constant proportions) strategy which rebalances the portfolio composition to a pre-selected set of weights $\bm{w} \in \mathcal{W} = \{ \bm{z} \in \mathbb{R}^n_+ \, : \, \mathbf{e}^\intercal \bm{z} = 1 \}$ at the beginning of each period. Note that despite being memoryless, fixed-mix strategies are dynamic since they recapitalize those assets whose returns were below average (`buy low') and divest assets whose returns were above average (`sell high'). Fixed-mix strategies generalize the well-known $1/N$-portfolio~\cite{DeMiguel09}, and they have received significant attention among both academics and practitioners.

We assume that the investor assesses the fixed-mix strategy $\bm{w}$ in view of the value-at-risk of the portfolio's terminal wealth, which is defined as
\begin{equation*}
\begin{aligned}
	\text{VaR}_{\epsilon}(\bm w)
	=~&\displaystyle \sup_{\gamma \in \mathbb{R}} \left\{ \gamma :
	 \mathbb{P} 
		\left( \prod_{t=1}^{T} (1 + \bm{w^{\intercal}}\bm{\tilde{r}}_t) > \gamma \right)
		\geq 1 - \epsilon \right\}.
\end{aligned}
\end{equation*}
Here, the asset returns $\bm{\tilde{r}}_t = (\tilde{r}_{t,i})_{i=1}^n$ are governed by the probability distribution $\mathbb{P}$, and $\epsilon$ is a pre-specified parameter that reflects the investor's risk tolerance.

Calculating the value-at-risk of a portfolio's terminal wealth requires perfect knowledge of the joint asset return distribution $\mathbb{P}$, which is unavailable in practice. Following~\cite{Rujeerapaiboon16}, we will assume that it is only known that the asset returns $\left( \tilde{\bm{r}}_t \right)_{t=1}^T$ follow a weak-sense white noise process with mean $\bm{\mu}$ and variance $\bm{\Sigma}$, that is, the asset returns are serially uncorrelated and have period-wise identical first and second-order moments. In that case, the wealth evolution $(\tilde{\xi}_t)_{t=1}^T = \left( 1 + \bm{w}^\intercal\tilde{\bm{r}}_t \right)_{t=1}^T$ also follows a weak-sense stochastic process governed by a distribution $\mathbb{P}_{\bm w}$ supported on $\mathbb R_+^T$, under which the $\tilde \xi_t$ have mean $\bm{w}^\intercal \bm{\mu}$ and variance $\bm{w}^\intercal\bm{\Sigma w}$ and are serially uncorrelated. We denote the set of all these distributions by $\mathcal{P}_{\bm w}$. In this setting, an ambiguity-averse investor may assess the fixed-mix strategy $\bm{w}$ in view of the \emph{worst-case} value-at-risk of the portfolio's terminal wealth over all distributions $\mathbb{P}_{\bm w}\in \mathcal{P}_{\bm w}$:
\begin{equation*}
\begin{aligned}
	\text{WVaR}_{\epsilon}(\bm w)
	=~&\displaystyle \sup_{\gamma \in \mathbb{R}} \left\{ \gamma :
	 \inf_{\mathbb{P}_{\bm w} \in \mathcal{P}_{\bm w}} \mathbb{P}_{\bm w} 
		\left( \prod_{t=1}^{T} \tilde \xi_t > \gamma \right)
		\geq 1 - \epsilon \right\}.
\end{aligned}
\end{equation*}
In \cite{Rujeerapaiboon16}, the worst-case value-at-risk of the portfolio's terminal wealth is replaced with a quadratic approximation. The Chebyshev bounds proposed in this paper allow us to calculate the worst-case value-at-risk exactly without resorting to any approximation. Indeed, one verifies that
\begin{equation*}
\text{WVaR}_{\epsilon}(\bm w)
		\;\; = \;\;
\sup_{\gamma \in \mathbb{R}} \left\{ \gamma :
		\sup_{\mathbb{P}_{\bm w} \in \mathcal{P}_{\bm w}} \mathbb{P}_{\bm w} \left( \prod_{t=1}^{T} \tilde{\xi}_t \leq \gamma \right)
		\leq \epsilon \right\}
		\;\; = \;\; \sup_{\gamma \in \mathbb{R}} \left\{ \gamma :
		L (\gamma; \bm{w}^\intercal\bm{\mu}, \bm{w}^\intercal\bm{\Sigma w}) \leq \epsilon \right\},
\end{equation*}
where we have made explicit the dependence of the left-sided Chebyshev bound $L$ on the mean $\bm{w}^\intercal\bm{\mu}$ and the variance $\bm{w}^\intercal\bm{\Sigma w}$ of the wealth evolution $(\tilde{\xi}_t)_{t=1}^T$. Since $L$ is monotonically non-decreasing in $\gamma$, the last expression can be evaluated efficiently through bisection on $\gamma$.

\begin{figure}[tb]
 	\centering
	\includegraphics[width=0.37\paperwidth]{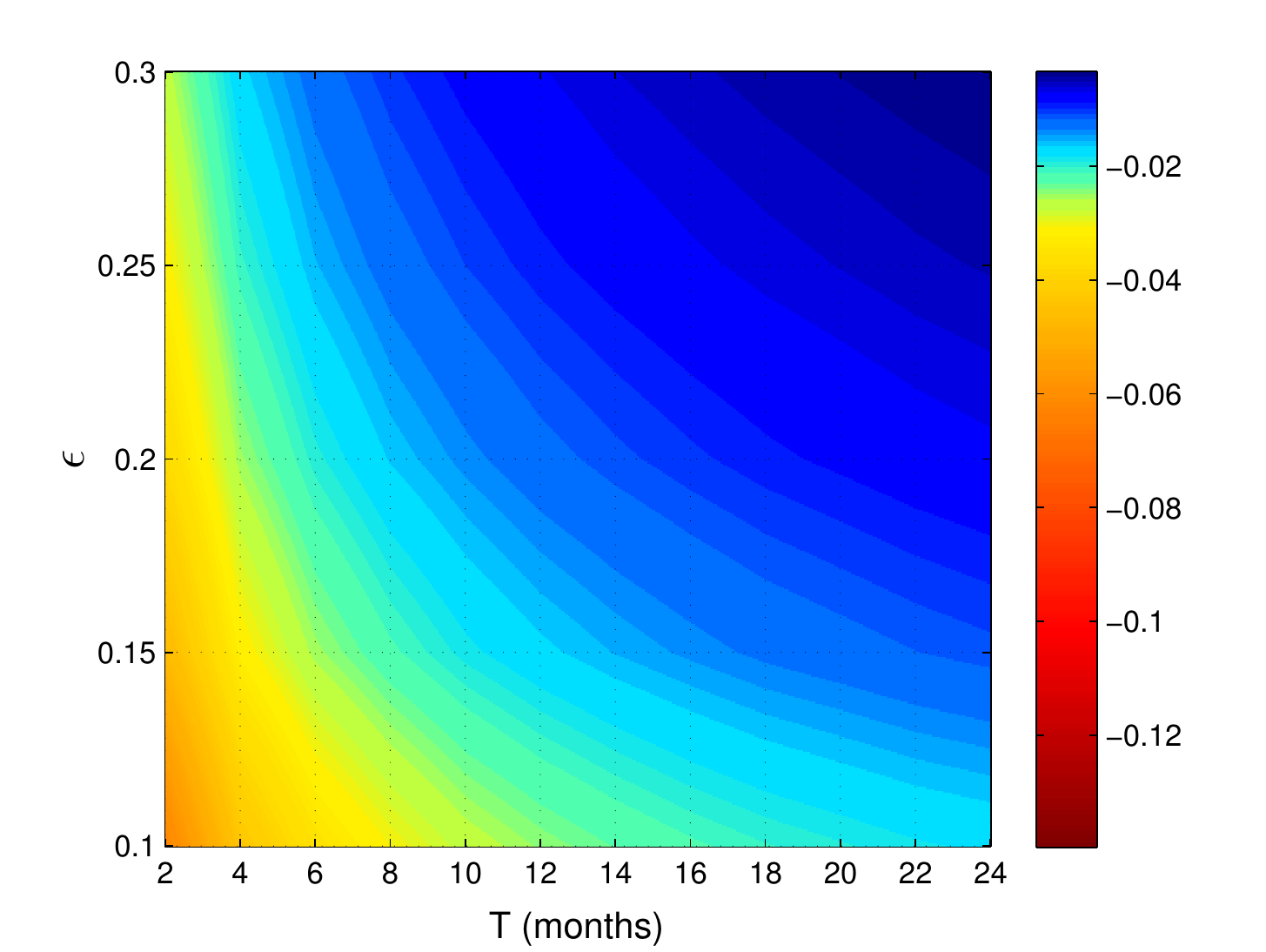}
    \includegraphics[width=0.37\paperwidth]{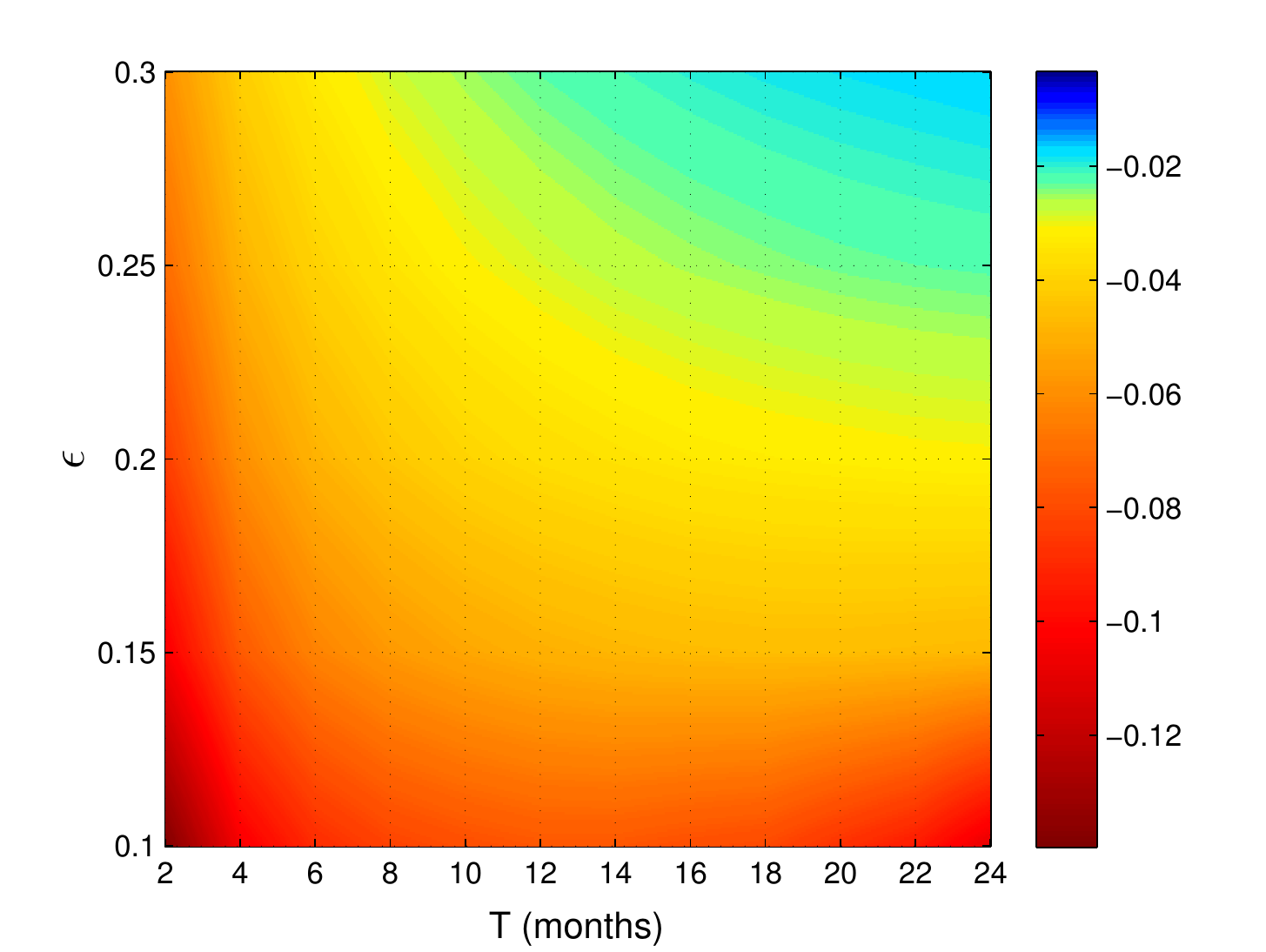}
	\caption{Wort-case value-at-risk of the growth rates of the minimum-variance (left) and maximum-expectation (right) portfolios for different investment horizons $T$ and risk tolerances $\epsilon$.} 
	\label{fig:portfolio}
\end{figure}

Figure~\ref{fig:portfolio} reports the worst-case value-at-risk of two portfolios over different time horizons $T$, where $\bm{\mu}$ and $\bm{\Sigma}$ are calibrated to the 2003--2012 period of Fama and French's 10 Industry Portfolios data set.\footnote{See \texttt{http://mba.tuck.dartmouth.edu/pages/faculty/ken.french/data\underline{~}library.html}.} The minimum-variance portfolio (left graph) corresponds to the weight vector $\bm{w} \in \mathcal{W}$ that minimizes $\bm{w}^\intercal \bm{\Sigma} \bm{w}$, whereas the maximum-expectation portfolio (right graph) invests all wealth into the asset $i$ with the highest expected return $\mu_i$. To facilitate a fair comparison among different time horizons, the graphs report the growth rates of the portfolios, that is, the logarithms of the terminal wealth, divided by the number of investment periods $T$. As expected, the minimum-variance portfolio is less risky than the maximum-expectation portfolio, and the risk of both portfolios tends to decrease when the investment horizon $T$ grows. Interestingly, however, the risk of the maximum-expectation portfolio \emph{increases} with large $T$ for low risk tolerances $\epsilon \lesssim 0.15$. This seemingly counter-intuitive effect is explained by Theorem~\ref{thm:rprob_eq0}, which states that the wealth evolution $\prod_{t=1}^T \tilde{\xi}_t$ is absorbed at $0$ for large investment horizons $T$.

In addition to \emph{evaluating} the worst-case value-at-risk of a pre-selected portfolio $\bm{w}$, an investor often seeks to determine a portfolio $\bm{w}^\star$ that \emph{optimizes} the worst-case value-at-risk. The search for optimal portfolios is greatly simplified by the observation that there is always a portfolio $\bm{w}^\star$ on the mean-variance efficient frontier that maximizes $\text{WVaR}_\epsilon (\bm{w})$ over (subsets of) $\mathcal{W}$. Indeed, Theorem~\ref{thm:rprob_bnd} implies that $\text{L}(\gamma; \bm{w}^\intercal\bm{\mu}, \bm{w}^\intercal\bm{\Sigma w}) = \text{L}'(\gamma; \bm{w}^\intercal\bm{\mu}, \bm{w}^\intercal\bm{\Sigma w})$, and one readily verifies that $\text{L}'(\gamma; \bm{w}^\intercal\bm{\mu}, \bm{w}^\intercal\bm{\Sigma w})$ is non-decreasing in both $\gamma$ and $\bm{w}^\intercal\bm{\Sigma w}$. This implies that
\begin{equation*}
\sup_{\gamma \in \mathbb{R}} \left\{ \gamma : \text{L}'(\gamma; \bm{w}^\intercal\bm{\mu}, \bm{w}^\intercal\bm{\Sigma w}) \leq \epsilon \right\}
\; \leq \;
\sup_{\gamma \in \mathbb{R}} \left\{ \gamma : \text{L}'(\gamma; \bm{w}'{}^\intercal\bm{\mu}, \bm{w}'{}^\intercal\bm{\Sigma w}') \leq \epsilon \right\}
\end{equation*}
for two portfolios $\bm{w}$ and $\bm{w}'$ that satisfy $\bm{w}^\intercal\bm{\mu} = \bm{w}'{}^\intercal\bm{\mu}$ and $\bm{w}^\intercal\bm{\Sigma w} \geq \bm{w}'{}^\intercal\bm{\Sigma w}'$. We thus conclude that among all portfolios $\bm{w} \in \mathcal{W}$ that achieve the same mean return $\bm{w}^\intercal \bm{\mu}$, the portfolio with smallest variance $\bm{w}^\intercal\bm{\Sigma w}$ provides the best worst-case value-at-risk. We can therefore identify an optimal portfolio through a one-dimensional line search over the mean-variance efficient frontier.

\paragraph{Acknowledgements} This research was supported by the Swiss National Science Foundation grant BSCGI0\_157733 and the EPSRC grants EP/M028240/1 and EP/M027856/1.

%%%%%%%%%%%%%%%%%%%%%%%%%%%%%%%%%%%%%%%%%%%%%%%%%%%%%%%%%%%%%%%%%
\bibliography{bibliography}

\begin{thebibliography}{10}

\bibitem{Aitchison57}
{\sc Aitchison, J., and Brown, J.}
\newblock {\em The Lognormal Distribution with Special Reference to Its Use in
  Economics}.
\newblock Cambridge University Press, 1957.

\bibitem{Beck09}
{\sc Beck, A., and Ben-Tal, A.}
\newblock Duality in robust optimization: Primal worst equals dual best.
\newblock {\em Operations Research Letters 37}, 1 (2009), 1--6.

\bibitem{BenTal09}
{\sc Ben-Tal, A., El~Ghaoui, L., and Nemirovski, A.}
\newblock {\em Robust Optimization}.
\newblock Princeton University Press, 2009.

\bibitem{Bertsimas05}
{\sc Bertsimas, D., and Popescu, I.}
\newblock Optimal inequalities in probability theory: A convex optimization
  approach.
\newblock {\em SIAM Journal on Optimization 15}, 3 (2005), 780--804.

\bibitem{Bienayme53}
{\sc Bienayme{\'e}, I.}
\newblock Consid{\'e}rations {\`a} l'appui de la d{\'e}couverte de laplace sur
  la loi de probabilit{\'e} dans la m{\'e}thode des moindres carr{\'e}s.
\newblock {\em Comptes Rendus Hebdomadaires des S{\'e}ances de l'Acad{\'e}mie
  des Sciences de Paris 37\/} (1853), 309--324.

\bibitem{Boyd04}
{\sc Boyd, S., and Vandenberghe, L.}
\newblock {\em Convex Optimization}.
\newblock Cambridge University Press, 2004.

\bibitem{Chebyshev67}
{\sc Chebyshev, P.}
\newblock Des valeurs moyennes.
\newblock {\em Journal de Math{\'e}matiques Pures et Appliqu{\'e}es 12}, 2
  (1867), 177--184.

\bibitem{Delage09}
{\sc Delage, E., and Ye, Y.}
\newblock Distributionally robust optimization under moment uncertainty with
  application to data-driven problems.
\newblock {\em Operations Research 58}, 3 (2010), 595--612.

\bibitem{DeMiguel09}
{\sc DeMiguel, V., Garlappi, L., and Uppal, R.}
\newblock {Optimal versus naive diversification: How inefficient is the 1/N
  portfolio strategy?}
\newblock {\em Review of Financial Studies 22}, 5 (2009), 1915--1953.

\bibitem{Frisch97}
{\sc Frisch, U., and Sornette, D.}
\newblock Extreme deviations and applications.
\newblock {\em Journal of Physics I France 7\/} (1997), 1155--1171.

\bibitem{Galambos04}
{\sc Galambos, J., and Simonelli, I.}
\newblock {\em Products of Random Variables: Applications to Problems of
  Physics and to Arithmetical Functions}.
\newblock Taylor \& Francis, 2004.

\bibitem{Godwin55}
{\sc Godwin, H.}
\newblock On generalizations of {T}chebycheff's inequality.
\newblock {\em Journal of the American Statistical Association 50\/} (1955),
  923--945.

\bibitem{Goh10}
{\sc Goh, J., and Sim, M.}
\newblock Distributionally robust optimization and its tractable
  approximations.
\newblock {\em Operations Research 58}, 4 (2010), 902--917.

\bibitem{Hanasusanto15}
{\sc Hanasusanto, G.~A., Roitch, V., Kuhn, D., and Wiesemann, W.}
\newblock A distributionally robust perspective on uncertainty quantification
  and chance constrained programming.
\newblock {\em Mathematical Programming 151}, 1 (2015), 35--62.

\bibitem{Isii60}
{\sc Isii, K.}
\newblock The extrema of probability determined by generalized moments ({I})
  bounded random variables.
\newblock {\em Annals of the Institute of Statistical Mathematics 12}, 2
  (1960), 119--134.

\bibitem{Isii62}
{\sc Isii, K.}
\newblock On sharpness of {T}chebycheff-type inequalities.
\newblock {\em Annals of the Institute of Statistical Mathematics 14}, 1
  (1962), 185--197.

\bibitem{Karatzas91}
{\sc Karatzas, I., and Shreve, S.}
\newblock {\em Brownian Motion and Stochastic Calculus}.
\newblock Springer, 1991.

\bibitem{Karlin66}
{\sc Karlin, S., and Studden, W.}
\newblock {\em {T}chebycheff Systems: With Applications in Analysis and
  Statistics}.
\newblock Interscience Publishers, 1966.

\bibitem{Nudelman}
{\sc Krein, M., and Nudelman, A.}
\newblock {\em The Markov Moment Problem and Extremal Problems}.
\newblock American Mathematical Society, 1977.

\bibitem{Lindgren12}
{\sc Lindgren, G.}
\newblock {\em Stationary Stochastic Processes: Theory and Applications}.
\newblock Taylor \& Francis, 2012.

\bibitem{Lofberg04}
{\sc L{\"o}fberg, J.}
\newblock {YALMIP}: A toolbox for modeling and optimization in {M}atlab.
\newblock In {\em Proceedings of the 2004 International Symposium on Computer
  Aided Control Systems Design\/} (2004), pp.~284--289.

\bibitem{Marshall60}
{\sc Marshall, A.~W., and Olkin, I.}
\newblock Multivariate {C}hebyshev inequalities.
\newblock {\em The Annals of Mathematical Statistics 31}, 4 (1960), 1001--1014.

\bibitem{Nesterov00}
{\sc Nesterov, Y.}
\newblock Squared functional systems and optimization problems.
\newblock In {\em High Performance Optimization}. Kluwer Academic Publishers,
  2000, pp.~405--440.

\bibitem{Owhadi13}
{\sc Owhadi, H., Scovel, C., Sullivan, T.~J., McKerns, M., and Ortiz, M.}
\newblock Optimal uncertainty quantification.
\newblock {\em SIAM Review 55}, 2 (2013), 271--345.

\bibitem{Polik07}
{\sc P\'{o}lik, I., and Terlaky, T.}
\newblock A survey of the {S}-lemma.
\newblock {\em SIAM Review 49}, 3 (2007), 371--481.

\bibitem{Rujeerapaiboon16}
{\sc Rujeerapaiboon, N., Kuhn, D., and Wiesemann, W.}
\newblock Robust growth-optimal portfolios.
\newblock {\em To appear in Management Science\/}.

\bibitem{Shapiro01}
{\sc Shapiro, A.}
\newblock On duality theory of conic linear problems.
\newblock In {\em Semi-Infinite Programming: Recent Advances}. Kluwer Academic
  Publishers, 2001, pp.~135--165.

\bibitem{Shohat43}
{\sc Shohat, J., and Tamarkin, J.}
\newblock {\em The Problem of Moments}.
\newblock American Mathematical Society, 1943.

\bibitem{SDPT3}
{\sc Toh, K.~C., Todd, M.~J., and Tutuncu, R.~H.}
\newblock {SDPT3}--a {M}atlab software package for semidefinite programming.
\newblock {\em Optimization Methods and Software 11}, 1--4 (1999), 545--581.

\bibitem{Vandenberghe07}
{\sc Vandenberghe, L., Boyd, S., and Comanor, K.}
\newblock {Generalized Chebyshev bounds via semidefinite programming.}
\newblock {\em {SIAM Review} 49}, 1 (2007), 52--64.

\bibitem{Wiesemann14}
{\sc Wiesemann, W., Kuhn, D., and Sim, M.}
\newblock Distributionally robust convex optimization.
\newblock {\em Operations Research 62}, 6 (2014), 1358--1376.

\bibitem{Yu09}
{\sc Yu, Y., Li, Y., Schuurmans, D., and Szepesv{\'a}ri, C.}
\newblock A general projection property for distribution families.
\newblock In {\em Advances in Neural Information Processing Systems 22}. Curran
  Associates, Inc., 2009, pp.~2232--2240.

\end{thebibliography}
\bibliographystyle{acm}

\end{document}